\newcommand{\wt}{\widetilde}
\newcommand{\wh}{\widehat}
\newcommand{\End}{\mathrm{End}}
\newcommand{\Hom}{\mathrm{Hom}}
\newcommand{\Mod}{\mathrm{-Mod}}
\newcommand{\Gr}{\mathrm{Gr}}
\newcommand{\bg}{V_{\beta\gamma}}
\newcommand{\FF}{V_{bc}}
\newcommand{\lp}{\left(}
\newcommand{\rp}{\right)}
\newcommand{\Ind}{\mathrm{Ind}}
\newcommand{\QCoh}{\mathrm{QCoh}}
\newcommand{\lpp}{(\!(}
\newcommand{\rpp}{)\!)}
\newcommand{\lbb}{[\![}
\newcommand{\rbb}{]\!]}
\newcommand{\pd}{{\partial}}
\newcommand{\lag}{\langle}
\newcommand{\rag}{\rangle}
\newcommand{\C}{\mathbb C}
\newcommand{\Z}{\mathbb Z}
\newcommand{\N}{\mathbb N}
\newcommand{\PP}{\mathbb P}
\newcommand{\fgl}{\mathfrak{gl}}
\newcommand{\fd}{\mathfrak{d}}
\newcommand{\fg}{\mathfrak{g}}
\newcommand{\fh}{\mathfrak{h}}
\newcommand{\fp}{\mathfrak{p}}
\newcommand{\CA}{{\mathcal A}}
\newcommand{\CD}{{\mathcal D}}
\newcommand{\CJ}{{\mathcal J}}
\newcommand{\CK}{{\mathcal K}}
\newcommand{\CM}{{\mathcal M}}
\newcommand{\CN}{{\mathcal N}}
\newcommand{\CO}{{\mathcal O}}
\newcommand{\CS}{{\mathcal S}}
\newcommand{\CV}{{\mathcal V}}
\newcommand{\CY}{{\mathcal Y}}
\newcommand{\CZ}{{\mathcal Z}}
\newcommand{\be}{\begin{equation}}
\newcommand{\ee}{\end{equation}}
\newcommand{\bsp}{\begin{split}}
\newcommand{\esp}{\end{split}}
\newcommand{\btik}{\begin{tikzcd}}
\newcommand{\etik}{\end{tikzcd}}
\newcommand{\Bun}{\textnormal{Bun}_G}
\newcommand{\conf}{\mathrm{conf}}
\newtheorem{Def}{Definition}[section]
\newtheorem{Thm}[Def]{Theorem}
\newtheorem{Prop}[Def]{Proposition}
\newtheorem{Cor}[Def]{Corollary}
\newtheorem{Lem}[Def]{Lemma}
\newtheorem{Rem}[Def]{Remark}
\newtheorem{Conj}[Def]{Conjecture}
\numberwithin{equation}{section}
\title{Double Yangian, Factorization, and qKZ-equation for Cotangent Lie Algebras}
\author{Raschid Abedin \& Wenjun Niu}
\date{\today}
\begin{document}

\maketitle

\begin{abstract}

    In this paper, we construct the dual \(Y^*_\hbar(\fd)\) and double $DY_\hbar (\fd)$ of the Yangian $Y_\hbar (\fd)$ associated with a cotangent Lie algebra $\fd=T^*\fg$. We define a coherent factorization algebra version of the dual Yangian \(Y^*_\hbar(\fd)^{\textnormal{co-op}}\) with opposite coproduct. Furthermore, we define a quantum vertex algebra structure on the quantum vacuum module $\CV_{\hbar,k}(\fd)$ of central extensions $\widehat{DY}_{\hbar,\ell} (\fd)$ of this double Yangian and show that its conformal blocks satisfy quantum KZ equations. We discuss examples of $\fd$ that arise from 3d $\CN=4$ gauge theories via the work of Costello-Gaiotto. These examples include Takiff Lie algebras $T^*\fg$, whose affine VOA is a large subalgebra of the chiral differential operator algebra of $G$, as well as the smallest type-A Lie superalgebra $\fgl (1|1)$. 
    
\end{abstract}

\tableofcontents

\section{Introduction}

\subsection{Main results}

Let $\fg$ be a finite-dimensional complex Lie algebra and $\fd\coloneqq T^*\fg = \fg \ltimes \fg^*$ be its cotangent Lie algebra. In \cite{ANyangian}, we constructed a quantum group $Y_\hbar(\fd)$, which we called the cotangent Yangian, since it is a quantization of Yang's Lie bialgebra structure on $\fd(\CO) = \fd \otimes \CO$, where \(\CO = \C[\![t]\!]\) is the algebra of formal Taylor power series in a variable \(t\). In \cite{abedin2024quantum}, we also constructed dynamical twists of this quantum algebra locally over the moduli space $\Bun (\Sigma)$ of $G$-bundles over a curve $\Sigma$, which provides quantizations of the classical dynamical  $r$-matrices of \cite{felder_kzb,abedin2024r}. In this paper, we study the factorization structure associated with $Y_\hbar(\fd)$ in the vein of \cite{EK3, EK4, EK5}.

In \cite{ANyangian}, we developed a way to construct a Hopf algebra quantizing a Lie bialgebra structure on the conormal \(N^*\fp_+\) from any splitting of Lie algebras $\mathfrak p=\fp_+\oplus \fp_-$ resembling the monoidal structure of the double quotient ${\exp(\fp_+)\setminus} \exp(\fp)/ \exp(\fp_+)$. We will review this construction in Section \ref{subsec:doublequotient}. Our Yangian $Y_\hbar (\fd)$ is defined as the Hopf algebra associated in this way to the datum $\fp=\fg(\CK)$, $\fp_+=\fg(\CO)\)  and \(\fp_-=t^{-1}\fg[t^{-1}]\). In Section \ref{subsec:dual+double}, we construct the Hopf dual and double of $Y_\hbar (\fd)$ in a similar fashion.\footnote{The construction of the dual and double applies to any Hopf algebra associated to a Lie algebra splitting.}

\begin{Thm}[Section \ref{subsec:dual+double}]\label{thm:main_dualdouble}

The following statements are true. 

    \begin{enumerate}
    
        \item The dual Hopf algebra\footnote{Here and in the following, by \((-)^*\) we mean the topological dual.} $Y_\hbar^*(\fd)$ of \(Y_\hbar(\fd)\) is the Hopf algebra associated to the datum $\fp=\fg (\CK)$, $\fp_+=t^{-1}\fg[t^{-1}]$, and $\fp_-=\fg (\CO)$. 

        \item The double $DY_\hbar (\fd)$ of $Y_\hbar(\fd)$ is the Hopf algebra associated to the datum $\fp=\fg (\CK)\times \fg (\CK)$, $\fp_+=\mathrm{diag}(\fg (\CK)) \coloneqq \{(x,x)\mid x\in \fg(\CK)\}$, and $\fp_-=\fg (\CO)\times t^{-1}\fg[t^{-1}]$. 

        \item The double Yangian $DY_\hbar (\fd)$ is twist equivalent (as a quasi-triangular Hopf algebra) to the double $DU(\fg(\CK))[\![\hbar]\!]$ of \(U(\fg(\CK))[\![\hbar]\!]\).

    \end{enumerate}
\end{Thm}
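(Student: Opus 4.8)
The plan is to realise both quantum doubles as quantizations of a single metric Lie algebra equipped with two different Manin-triple decompositions, and then to exhibit the change of decomposition as a twist. First I would unwind the classical limits. Using that the double of a quantization quantizes the double of the underlying Lie bialgebra (an Etingof--Kazhdan-type compatibility), $DY_\hbar(\fd)$ quantizes the Drinfeld double $D(\fd(\CO))$ of Yang's Lie bialgebra $\fd(\CO)$, while $DU(\fg(\CK))[\![\hbar]\!]$ quantizes $D(\fg(\CK))$, the double of $\fg(\CK)$ with trivial cobracket. The essential point --- and the place where the cotangent hypothesis $\fd=T^*\fg$ is used --- is that the canonical pairing $\langle (x,\xi),(y,\eta)\rangle=\xi(y)+\eta(x)$ makes $\fd$ self-dual even though $\fg$ carries no invariant form. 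Its residue extension is a nondegenerate invariant form on $\fd(\CK)$, under which $\fd(\CO)^*\cong t^{-1}\fd[t^{-1}]$, so that $D(\fd(\CO))\cong\fd(\CK)$ with the Lagrangian splitting $\fd(\CK)=\fd(\CO)\oplus t^{-1}\fd[t^{-1}]$. On the other side $D(\fg(\CK))\cong T^*(\fg(\CK))=\fg(\CK)\ltimes\fg^*(\CK)=\fd(\CK)$, now with the Lagrangian splitting $\fd(\CK)=\fg(\CK)\oplus\fg^*(\CK)$ for the \emph{same} residue form. Thus both doubles quantize the one Lie algebra $\fd(\CK)$, endowed with two distinct Manin-triple structures.

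Second, I would compare the two classical quasi-triangular structures. Each Lagrangian splitting determines an $r$-matrix, $r_1$ from $\fd(\CO)\oplus t^{-1}\fd[t^{-1}]$ and $r_2$ from $\fg(\CK)\oplus\fg^*(\CK)$, both solving the classical Yang--Baxter equation (CYBE) and both satisfying $r_i+r_i^{21}=\Omega$ for the Casimir $\Omega$ of the residue form. Hence $t:=r_1-r_2$ is antisymmetric; and since $r_1=r_2+t$ with both $r_1,r_2$ solving CYBE, the element $t$ automatically satisfies the classical twist equation relative to $r_2$, with $\delta_1-\delta_2=\partial t$. Therefore the two quasi-triangular Lie bialgebra structures $(\fd(\CK),\delta_1,r_1)$ and $(\fd(\CK),\delta_2,r_2)$ are twist-equivalent, via the twisting element $t$.

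Finally I would lift this twist to the quantum level. Since the Etingof--Kazhdan quantization functor of \cite{EK3,EK4,EK5} is compatible with doubling and carries twist-equivalences of Lie bialgebras to twist-equivalences of quantized enveloping algebras, the classical twist $t$ integrates to a quantum twist $J=1+\tfrac{\hbar}{2}t+O(\hbar^2)$ realising $DY_\hbar(\fd)$ as the $J$-twist of $DU(\fg(\CK))[\![\hbar]\!]$ as quasi-triangular Hopf algebras. The main obstacle is precisely this last step: one must produce $J$ to all orders in $\hbar$ and verify that it intertwines not only the coproducts but the universal $R$-matrices, all within the completed, infinite-dimensional topological setting of $\fd(\CK)$ rather than the finite-dimensional EK framework. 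A secondary technical point is to match the explicitly constructed $DY_\hbar(\fd)$ from the double-quotient recipe of \cite{ANyangian} with the abstract EK quantization of $D(\fd(\CO))$ --- a uniqueness-of-quantization input that must be in place before the functorial transport of twists applies to the concrete algebras.
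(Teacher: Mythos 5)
Your classical analysis is sound: both doubles are indeed quantizations of the single metric Lie algebra $\fd(\CK)$ equipped with the two Lagrangian splittings $\fd(\CO)\oplus t^{-1}\fd[t^{-1}]$ and $\fg(\CK)\oplus\fg^*(\CK)$, and the difference of the two $r$-matrices does satisfy the classical twist equation. However, the proposal has a genuine gap, and you have located it yourself: everything quantum is deferred to (a) the compatibility of the Etingof--Kazhdan functor with doubles and twists in this topological, infinite-dimensional setting, and (b) a uniqueness-of-quantization statement identifying the double-quotient construction of \cite{ANyangian} with the abstract EK quantization of $D(\fd(\CO))$. Neither input is available here --- $Y_\hbar(\fd)$ and $DY_\hbar(\fd)$ are simply not built by the EK functor --- so the proof of part 3 reduces to two unproven claims that together constitute the entire difficulty. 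The paper sidesteps both: it observes that $U_\hbar(\fd(\CK))$ and $DY_\hbar(\fd)$ arise from the \emph{same} double quotient datum $\fp=\fg(\CK)\times\fg(\CK)$, $\fp_+=\mathrm{diag}(\fg(\CK))$, differing only in the choice of complementary subalgebra ($\fg(\CK)\times\{0\}$ versus $\fg(\CO)\times\fg_{<0}$), and then invokes the general twisting result of \cite[Section 3.1.3]{ANyangian}: changing the complement changes only the fiber functor, not the monoidal category, hence yields a twist-equivalent Hopf algebra. This is an internal argument with an explicit twist, requiring no quantization functoriality.

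A second, independent gap: your proposal addresses only part 3 of the theorem. Parts 1 and 2 assert concrete identifications --- that the topological dual $Y_\hbar(\fd)^*$ \emph{is} the Hopf algebra attached to the splitting $\fg(\CK)=t^{-1}\fg[t^{-1}]\oplus\fg(\CO)$, and that the Drinfeld double bicrossed product $Y_\hbar(\fd)\bowtie Y_\hbar^*(\fd)^{\textnormal{co-op}}$ \emph{is} the Hopf algebra attached to $\mathrm{diag}(\fg(\CK))\oplus(\fg(\CO)\times\fg_{<0})$. In the paper these require genuine work: an explicit computation with the $\hbar^{-1}$-rescaled canonical pairing (matching multiplication against comultiplication on both sides, using the actions $\lhd$ and $\rhd$ and the elements $\phi_\lhd$, $\phi_\rhd$) for part 1, and the explicit coalgebra-then-algebra isomorphism $x\otimes f\otimes y\mapsto x\sigma(y)\otimes f$, with the anti-involution $\sigma$ on $U(\fg_{<0})$, for part 2. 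Working purely at the level of classical limits and appealing to functorial transport cannot produce these identifications of the concrete algebras.
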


We also give an explicit formula for the \(R\)-matrix of \(DY_\hbar(\fd)\) in Section \ref{subsec:Rmat_double}.
Furthermore, by converting the pseudotriangular $R$-matrix $R(z)$ of \(Y_\hbar(\fd)\) into a Hopf algebra map $\rho(z)\colon DY_\hbar (\fd)\to Y_\hbar (\fd)\lpp z^{-1}\rpp$, we interpret the algebraic structure on the category $Y_\hbar (\fd)\Mod^{\textnormal{sm}}$ (the category of smooth modules) in terms of a categorical state-operator correspondence. 

\begin{Prop}[Section \ref{subsec:state-op}]
    Restriction along $\rho(z)$ provides a monoidal functor
    \be
\CY(-, z)\colon Y_\hbar(\fd)\Mod^{\textnormal{sm}}\longrightarrow DY_\hbar(\fd)\Mod\longrightarrow \End (Y_\hbar(\fd)\Mod)
    \ee
    such that we have the following functorial isomorphisms
    \be
\CY (M,z)\CY (N, w)P\cong \CY(N, w)\CY (M, z)P,\qquad M,N, P\in Y_\hbar(\fd)\Mod^{\textnormal{sm}}.
    \ee
    
\end{Prop}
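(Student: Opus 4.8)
The plan is to realize the composite $\CY(-,z)$ through the Drinfeld-center description of modules over the double, so that the asserted commutativity becomes the braiding of central objects. First I would check that $\CY(-,z)$ is well-defined and monoidal. Since $\rho(z)\colon DY_\hbar(\fd)\to Y_\hbar(\fd)\lpp z^{-1}\rpp$ is a homomorphism of Hopf algebras, restriction along it is automatically monoidal. For $M\in Y_\hbar(\fd)\Mod^{\textnormal{sm}}$ one forms the $Y_\hbar(\fd)\lpp z^{-1}\rpp$-module $M\lpp z^{-1}\rpp$ and lets $DY_\hbar(\fd)$ act through $\rho(z)$; smoothness of $M$ is exactly the condition guaranteeing that the series in $z^{-1}$ occurring in $\rho(z)$ act with only finitely many nonzero contributions on each vector, so that $\CY(M,z)$ is a genuine $DY_\hbar(\fd)$-module. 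Monoidality of $M\mapsto M\lpp z^{-1}\rpp$ together with monoidality of restriction then gives the first functor.

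Next I would construct the functor $DY_\hbar(\fd)\Mod\to\End(Y_\hbar(\fd)\Mod)$. By Theorem \ref{thm:main_dualdouble}(2) the double admits a triangular factorization $DY_\hbar(\fd)\cong Y_\hbar(\fd)\otimes Y_\hbar^*(\fd)$, so a $DY_\hbar(\fd)$-module $W$ is a $Y_\hbar(\fd)$-module together with a compatible $Y_\hbar^*(\fd)$-action; via the Hopf pairing between $Y_\hbar(\fd)$ and $Y_\hbar^*(\fd)$ the latter is the same datum as a half-braiding $\gamma_W\colon W\otimes(-)\Rightarrow(-)\otimes W$ on $Y_\hbar(\fd)\Mod$. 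In other words $DY_\hbar(\fd)\Mod$ is (a spectrally completed version of) the Drinfeld center $Z(Y_\hbar(\fd)\Mod)$, and I would use the canonical monoidal functor $W\mapsto(P\mapsto W\otimes P)$ from the center to endofunctors, under which the tensor product of central objects corresponds to composition of endofunctors. Composing with $\CY(-,z)$ produces the asserted monoidal functor, and the image of $\CY(M,z)$ is the central object $M\lpp z^{-1}\rpp$ equipped with the half-braiding given by the $R$-matrix.

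The commutativity isomorphism is then the braiding of central objects. Because $\CY(M,z)$ and $\CY(N,w)$ both lie in the center, the half-braiding supplies a natural isomorphism $\CY(M,z)\otimes\CY(N,w)\cong\CY(N,w)\otimes\CY(M,z)$, implemented concretely by $R(z-w)$, which is invertible by pseudotriangularity; evaluating on $P$ and using naturality yields the functorial isomorphism $\CY(M,z)\CY(N,w)P\cong\CY(N,w)\CY(M,z)P$. Its coherence---that these isomorphisms satisfy the hexagon and associativity constraints---is precisely the spectral Yang--Baxter equation for $R(z)$ already established for $Y_\hbar(\fd)$.

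I expect the main obstacle to be the analytic bookkeeping of the spectral parameters: one must verify that the identification $DY_\hbar(\fd)\Mod\simeq Z(Y_\hbar(\fd)\Mod)$ and the half-braiding $R(z-w)$ are well-defined in the completed setting (for instance over $\C\lpp(z-w)^{-1}\rpp$) rather than merely formally, and that the resulting braiding is a genuine isomorphism on smooth modules. Here the explicit $R$-matrix formula of Section \ref{subsec:Rmat_double} and the twist-equivalence of Theorem \ref{thm:main_dualdouble}(3) to the classical double $DU(\fg(\CK))[\![\hbar]\!]$, which trivializes the center-level braiding data, are the tools I would rely on.
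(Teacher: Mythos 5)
Your proposal is correct and follows essentially the same route as the paper: you realize $\CY(-,z)$ as restriction along $\rho(z)$ (with smoothness truncating the $z^{-1}$-series so the action lands in Laurent polynomials), identify $DY_\hbar(\fd)\Mod$ with the Drinfeld center of $Y_\hbar(\fd)\Mod$, pass to endofunctors via $W\mapsto W\otimes(-)$, and obtain the commutativity isomorphisms from the half-braidings implemented by the spectral $R$-matrix, with coherence coming from the cocycle/Yang--Baxter identities. The only cosmetic difference is your appeal to the twist equivalence with $DU(\fg(\CK))[\![\hbar]\!]$ for the spectral-parameter bookkeeping, which the paper does not need.
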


In \cite{EK3}, the authors use the $R$-matrix $R(z)$ to demonstrate that the dual Hopf algebra $Y_\hbar^* (\fd)^{\textnormal{co-op}}$ admits a local factorization Hopf algebra structure over $\C = \PP^1\setminus\{\infty\}$. In our case, these local factorization Hopf algebras can be written down very explicitly using an appropriate sheaf of Lie algebras on \(\mathbb{P}^1\); see Section \ref{subsec:EKfac}. In Section \ref{subsec:SheafFac} and Section  \ref{subsec:cohFac}, we construct an honest sheaf factorization structure using this method and the factorization structure of the double quotient stack  $\fg(\CO)\!\setminus\! \fg(\CK)/\fg(\CO)$:

\begin{Thm}[Section \ref{sec:cohfact}]
    There exists a factorization algebra \(\mathscr{Y}^*_\hbar(\fd)^{\textnormal{co-op}}\) on the \(\hbar\)-adic extension of \(\mathbb{P}^1\) such that:
    \begin{enumerate}
        \item The restriction of \(\mathscr{Y}^*_\hbar(\fd)^{\textnormal{co-op}}\) to the configuration space of distinct points is a sheaf of Hopf algebras, whose coalgebra structure is compatible with the factorization structure. 
        
        \item Over distinct points of \(\C = \mathbb{P}^1\setminus\{\infty\}\), the fibers of \(\mathscr{Y}^*_\hbar(\fd)^{\textnormal{co-op}}\) are isomorphic to the locally factorized versions of the dual Yangian \(Y_\hbar^*(\fd)^{\textnormal{co-op}}\) provided by \cite{EK3}.
    \end{enumerate}
\end{Thm}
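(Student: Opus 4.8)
The plan is to realize $\mathscr{Y}^*_\hbar(\fd)^{\textnormal{co-op}}$ as a quantized enveloping object of an explicit sheaf of Lie algebras on $\PP^1$, and to extract both its factorization and its coalgebra structures from the factorization structure of the double quotient stack $\fg(\CO)\!\setminus\! \fg(\CK)/\fg(\CO)$, matching the fibers against the local construction of \cite{EK3} via the $R$-matrix $R(z)$.

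First I would fix the sheaf of Lie algebras. Over the configuration spaces of $\PP^1$ I assign to a finite set of distinct points the Lie algebra of $\fg$-valued rational functions regular away from the marked points and vanishing at $\infty$; this globalizes the local datum $\fp_+ = t^{-1}\fg[t^{-1}]$ characterizing $Y^*_\hbar(\fd)$ in Theorem~\ref{thm:main_dualdouble}(1), with the cotangent direction $\fd = T^*\fg$ entering as the conormal to $\fg(\CO)$ inside $\fg(\CK)$. Near each marked point the sheaf restricts to the negative half $t^{-1}\fg[t^{-1}]$ in the local coordinate, so its fiberwise enveloping algebra is the dual Yangian. I would then form the $\hbar$-deformed enveloping object of this sheaf, using the pseudotriangular $R$-matrix $R(z)$ together with the Hopf algebra map $\rho(z)$ to supply the braiding that deforms the naive commutative structure; this yields the underlying sheaf of $\mathscr{Y}^*_\hbar(\fd)^{\textnormal{co-op}}$ over the $\hbar$-adic $\PP^1$.

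Next I would install the two structures and compare fibers. The factorization structure is transported from that of the double quotient stack $\fg(\CO)\!\setminus\! \fg(\CK)/\fg(\CO)$: when the marked points separate into disjoint clusters, the rational functions regular off those clusters split as a direct sum by partial fractions (the vanishing condition at $\infty$ is precisely what kills the common regular part), and the associated enveloping objects factorize as a tensor product accordingly, producing the required factorization isomorphisms. The coalgebra structure on the restriction to distinct points is the one carried by $Y^*_\hbar(\fd)^{\textnormal{co-op}}$, and part (1) amounts to checking that the factorization isomorphisms are coalgebra maps, which reduces to the compatibility of $R(z)$ with the coproduct and with the restriction maps in the sheaf. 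For part (2), over a point $z \in \C = \PP^1\setminus\{\infty\}$ the sheaf-theoretic fiber is by construction the enveloping object of the local Lie algebra with its $R(z)$-twisted structure, and I would identify it with the locally factorized dual Yangian of \cite{EK3} by matching $R$-matrix data, since the Etingof--Kazhdan local construction is built from the very same $R(z)$.

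The main obstacle I anticipate is the compatibility claim in part (1). The factorization isomorphisms originate from the geometry of the double quotient stack and are commutative in nature, whereas the coproduct is the genuinely deformed one controlled by $R(z)$; reconciling them is a coherence statement rather than a formal consequence. I expect the argument to hinge on the explicit formula for $R(z)$ together with the twist-equivalence of Theorem~\ref{thm:main_dualdouble}(3), which should let me reduce the coherence to the undeformed case of $U(\fg(\CK))[\![\hbar]\!]$ and then track the twist through the factorization isomorphisms.
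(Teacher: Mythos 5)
Your starting point coincides with the paper's: the Lie algebra of $\fg$-valued rational functions with poles only at the marked points and vanishing at $\infty$ is exactly the space of out-sections $\CA^{\mathrm{out}}_{\mathbf z}$ of the sheaf $\CA = \fg\otimes\CO(-1)$ used in Section \ref{subsec:SheafFac}. But your quantization step diverges in a way that creates a genuine gap. You propose to build the deformed structure by letting $R(z)$ and $\rho(z)$ ``supply the braiding''. Any definition whose structure constants involve $R(z_i-z_j)$ is only available while the points stay distinct, since the spectral $R$-matrix degenerates on the diagonals; such a construction can at best produce the sheaf of Hopf algebras over $\conf_I(X)$, not a factorization algebra on $X^I$ in the Beilinson--Drinfeld sense, which is what the theorem asserts (the factorization data must live over, and be compatible across, the diagonals, and your $R$-based definition also gives no uniform treatment of the point $\infty$, where the fiber is $Y^\circ_\hbar(\fd)^{\textnormal{op}}$ rather than the dual Yangian). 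The paper's resolution is to make the definition purely geometric and $R$-free: the factorization algebra is $\mathrm{Sym}_{X^I}(\CA_I^{\mathrm{out}}\lbb\hbar\rbb)\otimes\mathrm{Sym}_{X^I}(\CA_I^{\vee,\mathrm{out}}\lbb\hbar\rbb)$ as in \eqref{eq:Yfactalg}, defined over all of $X^I$ --- this requires the flatness Lemma \ref{Lem:CAflat}, which you never address --- while the Hopf structure is the smash product $S(\CA^{\vee,\mathrm{out}})\lbb\hbar\rbb\rtimes U(\CA^{\mathrm{out}})\lbb\hbar\rbb$, installed only over $\conf_I(X)$, where the residue pairing with the Serre-dual sheaf $\CA^\vee=\fg^*\otimes\CO(-1)$ is topologically perfect. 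Note also that your plan essentially builds only the $U(\CA^{\mathrm{out}})$ half; since the dual Yangian is $S(\fg^*_{<0})\rtimes U(\fg_{<0})$, the symmetric-algebra half on $\CA^{\vee,\mathrm{out}}$ is an essential part of the data, not a decoration, and you only gesture at it.

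This inversion also empties your part (2) of content: in your setup the fiber identification with the Etingof--Kazhdan construction is ``nearly tautological'' precisely because the nontrivial work has been hidden inside the ill-defined construction step. In the paper, part (2) is Theorem \ref{Thm:IdenFac}: one must prove that the geometric commutation relations --- obtained by splitting a rational section $x\in\CA_z^{\mathrm{out}}$ into its Taylor part at $0$ and its principal part at $z$ --- coincide with the relations dictated by pairing against $R(z)$, and the key input is that Taylor expansion at $0$ is literally the homomorphism \eqref{eq:double_to_yangian_hom} given by pairing with $R(z)$. Finally, your anticipated coherence argument for part (1), reducing to $U(\fg(\CK))\lbb\hbar\rbb$ via the twist equivalence of Theorem \ref{thm:main_dualdouble}, is neither needed nor clearly workable: that twist statement concerns the double at a single point and says nothing about multi-point factorization isomorphisms. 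The paper's compatibility check is instead elementary and direct: for a primitive $x\in\CA^{\mathrm{out}}_{z_i}$ one has $\Delta_\hbar^{\gamma,\textnormal{op}}(x)=1\otimes x+\phi_\rhd(x)$ with $\phi_\rhd(x)$ supported in the $z_i$-factor, because $e^{\hbar y}\rhd x=x$ whenever $y\in\CA^{\CO}_{z_j}$ with $j\neq i$.
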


Finally, we apply the same strategy as in the proof of Theorem \ref{thm:main_dualdouble} to construct central extensions $\widehat{DY}_{\hbar,\ell} (\fd)$ of $DY_\hbar (\fd)$ as well. These quantize the classical central extensions of $\fd(\CK)$ with respect to bilinear forms of the form \(\kappa_\ell = \kappa_0 + \ell \kappa_\fg\) for \(\ell \in \C\), where \(\kappa_0\) is the canonical pairing form of \(\fd\) and \(\kappa_\fg\) is the Killing form of \(\fg\) extended by 0 to \(\fd\). We define the quantum vacuum module $\CV_{\hbar,k}(\fd)$ at level \(k\) of $\widehat{DY_\hbar} (\fd)$ as
\be
\CV_{\hbar,k}(\fd)\coloneqq\Ind_{Y_\hbar(\fd)\otimes \C[ c_\ell]}^{\widehat{DY}_{\hbar,\ell} (\fd)} \C_k\lbb\hbar\rbb,
\ee
where \(\C_k[\![\hbar]\!]\) is the representation on which \(c_\ell\) acts by multiplying with \(k\). We show that:

\begin{Thm}[Theorem \ref{Thm:QVAd} and Theorem \ref{Thm:qKZ}]\label{Thm:QVAintro}
    The following statements are true.
    \begin{enumerate}
        \item The space $\CV_{\hbar,k}(\fd)$ has the structure of a quantum vertex algebra.

        \item Under a restriction of the bilinear form and level, there is a quantum Segal-Sugawara operator $Q$ that acts on modules of $\widehat{DY}_{\hbar,\ell} (\fd)$. The action of $Q$ on quantum conformal blocks gives rise to the following quantum Knizhnik–Zamolodchikov (qKZ) equations
        \be
\nabla_i(\mathbf{z})=R^{i-1, i}(z_{i-1}-z_i+k\hbar) \cdots R^{1,i}(z_1-z_i+k\hbar)\cdot R^{n,i} (z_n-z_i)\cdots R^{i+1, i}(z_{i+1}-z_i),
        \ee
        where \(\mathbf{z} = (z_i)_{i = 1}^n \subset  \mathbb{P}^1\) is a finite set of ordered distinct points on the Riemann sphere.
    \end{enumerate}
\end{Thm}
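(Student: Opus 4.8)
The plan is to realize $\CV_{\hbar,k}(\fd)$ as a quantum vertex algebra in the sense of Etingof--Kazhdan \cite{EK3, EK5}, and to transport the requisite analytic control on the braiding through the twist equivalence of Theorem~\ref{thm:main_dualdouble}(3). First, for part (1), I would assemble the defining data $(\CV_{\hbar,k}(\fd), Y(-,z), \unit, T, S(z))$. The vacuum vector $\unit$ is the image of $1$ under the induction defining $\CV_{\hbar,k}(\fd)$; the translation operator $T$ is induced by the derivation $\partial_t$ of $\fg(\CK)$, which lifts to $\widehat{DY}_{\hbar,\ell}(\fd)$ and preserves the level $k$. The field map $Y(-,z)$ is the crucial ingredient, and I would obtain it from the categorical state--operator correspondence $\CY(-,z)$ of Section~\ref{subsec:state-op}: restricting the $DY_\hbar(\fd)$-action along $\rho(z)$ and letting $M=\CV_{\hbar,k}(\fd)$ act on $P=\CV_{\hbar,k}(\fd)$ yields $Y(v,z)\in\End(\CV_{\hbar,k}(\fd))\lpp z\rpp$ (completed $\hbar$-adically), with $v=Y(v,z)\unit|_{z=0}$ recovering the state--field identification. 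The vacuum axiom ($Y(\unit,z)=\id$) and translation covariance ($[T,Y(v,z)]=\partial_z Y(v,z)$) then follow from the compatibility of $\rho(z)$ with the coproduct and the grading, which are Hopf-algebraic identities already packaged into the construction of $\rho(z)$ in Section~\ref{subsec:Rmat_double}.

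The heart of part (1) is the $S$-locality axiom, for which the braiding $S(z)\colon \CV_{\hbar,k}(\fd)^{\otimes 2}\to \CV_{\hbar,k}(\fd)^{\otimes 2}\lpp z\rpp$ is built from the $R$-matrix $R(z)$ of $Y_\hbar(\fd)$. Concretely, $S$-locality asserts that the two orderings $Y(u,z)Y(v,w)$ and $Y(v,w)Y(u,z)$ agree after applying $S(z-w)$ to the inserted states, understood as an identity of matrix coefficients after meromorphic continuation in $z-w$. This is exactly the field-theoretic shadow of the functorial exchange isomorphism $\CY(M,z)\CY(N,w)P\cong\CY(N,w)\CY(M,z)P$ of the Proposition in Section~\ref{subsec:state-op}, specialized to $M=N=P=\CV_{\hbar,k}(\fd)$. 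I expect this to be the main obstacle: one must show that the abstract exchange isomorphism, written out on fields, is governed by a braiding $S(z-w)$ that is rational in $z-w$ and satisfies the unitarity and hexagon constraints demanded by the quantum vertex algebra axioms. This is precisely where Theorem~\ref{thm:main_dualdouble}(3) enters --- the rationality and unitarity of the $\fg(\CK)$ $R$-matrix are classical, and the twist relating $DY_\hbar(\fd)$ to $DU(\fg(\CK))[\![\hbar]\!]$ transforms $R(z)$ in a controlled, rational manner, so the required properties can be imported as in \cite{EK5}.

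For part (2), I would construct the quantum Segal--Sugawara operator $Q$ as the quantum analogue of the Sugawara element: a suitably normal-ordered quadratic expression in the currents of $\widehat{DY}_{\hbar,\ell}(\fd)$ formed using the pairing $\kappa_\ell=\kappa_0+\ell\kappa_\fg$. The ``restriction of the bilinear form and level'' is the requirement that $\kappa_\ell$ be nondegenerate and the level $k$ be noncritical, so that $Q$ is well-defined and realizes $T$ as an inner derivation up to the level shift. The qKZ equations then follow by computing the action of $Q$ on a quantum conformal block $\langle Y(v_1,z_1)\cdots Y(v_n,z_n)\rangle$: commuting $Q$ past each field via translation covariance implements the spectral shift $z_i\mapsto z_i+k\hbar$, while dragging the shifted insertion past the remaining fields forces a crossing at each step, each contributing a factor $R^{j,i}$ by the braided exchange relation of part (1). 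Bookkeeping the order of crossings --- shifted arguments $z_j-z_i+k\hbar$ for the operators already passed and unshifted $z_j-z_i$ for the rest --- reproduces exactly the stated operator $\nabla_i(\mathbf{z})$.

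The substantive points in part (2) are establishing that $Q$ generates a flat connection (equivalently, that the $\nabla_i$ commute, which reduces to the Yang--Baxter equation and unitarity for $R(z)$) and pinning down the $k\hbar$ shift as the contribution of the central extension at level $k$. Both are, once more, most cleanly verified by transporting the known $\fg(\CK)$ computation of \cite{EK5} through the twist equivalence of Theorem~\ref{thm:main_dualdouble}(3), so that the overall logical structure of the argument reduces the cotangent case to the standard one after accounting for how the Drinfeld twist dresses the vertex operators and gauges the connection.
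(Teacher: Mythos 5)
Your proposal has the right general silhouette (build the braiding from $R(z)$, get qKZ from a Segal--Sugawara-type operator acting on conformal blocks), but at the two places where the real work happens it either leaves a gap or takes a step that would fail. For part (1), the field map cannot simply be read off from the categorical correspondence $\CY(-,z)$ of Section \ref{subsec:state-op}: that functor sends smooth modules to endofunctors of a module category, and writing ``$\CY(M,z)P$ with $M=P=\CV_{\hbar,k}(\fd)$'' does not by itself produce a linear map $\CV_{\hbar,k}(\fd)\to \End(\CV_{\hbar,k}(\fd))[\![z,z^{-1}]\!]$, let alone one satisfying $S$-locality and the hexagon axiom. The paper's actual construction is an explicit operator product $\CY_\hbar(z)=T(z)\,T^\dagger(z+k\hbar e_\ell/2)^{-1}$, where $T$ and $T^\dagger$ come from $(\tau_z\otimes 1)R$ and $R(z)$ acting on the vacuum module, and the shift by $k\hbar e_\ell/2$ coming from the central extension is essential: without it the RTT-type exchange relations \eqref{eq:RTT} acquire mismatched shifts and $S$-locality fails. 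Your proposal never engages the central element in part (1). Moreover, your fallback --- importing rationality/unitarity of the braiding ``as in \cite{EK5}'' through the twist equivalence with $DU(\fg(\CK))[\![\hbar]\!]$ --- is precisely what the authors state they could \emph{not} do (they could not reproduce the omitted calculations of \cite{EK5}); and the twist equivalence of Theorem \ref{thm:main_dualdouble}(3) concerns the non-spectral double as a Hopf algebra, so it does not transport the analytic properties of the spectral $R$-matrix, whose counterpart for $DU(\fg(\CK))[\![\hbar]\!]$ is not rational in any useful sense. The paper instead identifies the braiding intrinsically: $\CS^\perp(z)=L_2(z)^{-1}L_1(z)$ is computed to equal the $\rhd_k$-action of $R(z)^{-1}$, whence $\CS(z)$ is the module action of $R(z)$, and all axioms (S1)--(S3), (QV4) follow from properties of $R(z)$ plus the fact that $\CY_\hbar$ is a $Y_\hbar(\fd)$-module map (Corollary \ref{Cor:Ymodulemap}).

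For part (2), two concrete errors. First, the quantum Segal--Sugawara operator is \emph{not} ``a suitably normal-ordered quadratic expression in the currents'': in the paper it is the Drinfeld-element-type expression $Q=\widehat{\nabla}_\hbar^\gamma\bigl((S\otimes e^{k\hbar\,\mathrm{ad}(\partial_\ell)/2})R^{21}\bigr)$, whose defining property (Lemma \ref{lem:Q_and_alpha}) is that conjugation by $Q$ implements the automorphism $\alpha=\exp\bigl(\hbar(k\partial_\ell+\partial_\epsilon)\bigr)$, via Drinfeld's description of $S^2$. A normal-ordering construction is not available in this quantum-group setting and would not yield that intertwining property, which is what drives the whole argument. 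Second, the ``restriction of the bilinear form and level'' is not nondegeneracy of $\kappa_\ell$ plus noncriticality of $k$; it is the sharp relation $k\ell+1=0$. This condition is exactly what makes $\alpha=\exp(k\hbar\,\partial_t)$ a pure translation, so that the difference operators $A_i(\mathbf{z})=I_{\mathbf{z},\mathbf{z}+k\hbar\mathbf{e}_i}Q_i$ normalize $Y^*_{\hbar,\ell}(\fd)_{\mathbf{z}}$-invariants and hence descend to conformal blocks; the paper's remark following \eqref{eq:classical_Segal_Sugawara_and_derivation} shows explicitly that for $k\ell+1\neq 0$ the connection does \emph{not} preserve the coinvariants, so your version of the hypothesis would break the theorem. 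Finally, flatness is not the substantive issue you make it out to be (the $Q_i$ act on distinct tensor factors and commute trivially); the substance is the descent to blocks and the explicit dualization $\nabla_i(\mathbf{z})=(A_i^*(\mathbf{z}+k\hbar\mathbf{e}_i))^{-1}$ producing the ordered product of $R$-matrices with the correct pattern of $k\hbar$ shifts.
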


The second statement in this theorem was proven by mimicking the proof of \cite{EK5}. However, for the first statement, our derivation is independent of \cite{EK5}, since we were not able to reproduce some steps in the calculations omitted there. An advantage of our new proof is that it is quite conceptual and clearly adaptable to general pseudotriangular Hopf algebras that come equipped with a nilpotent derivation, so we essentially show that all such Hopf algebras can be used to construct quantum vertex algebras.

Let us remark that  our constructions are very general and actually apply to DG Lie algebras as well, as long as one carefully follows the Koszul sign rule. Thus, for every DG Lie algebra $\fg$ and $\fd=T^*\fg$, our construction provides a quantum vertex algebra $\CV_{\hbar,k}(\fd)$ quantizing the classical vacuum vertex algebra $V_k(\fd)$ associated to \(\fd\). This is a large new class of quantum vertex algebras. We now introduce a class of $\fd$ that arise from the topological B twist of 3d $\CN=4$ gauge theories via the work of Costello-Gaiotto \cite{costello2019vertex}, which motivated the current study. Their work indicates that $V_k(\fd)$ is closely related to the geometry of Higgs branches, and can be connected to chiral differential operators via 3d mirror symmetry. We thus hope that our results can be used to construct quantizations of these vertex algebras. We now go into more detail about this direction.

\subsection{Motivation: quantum vertex algebras and 3d $\CN=4$ gauge theories}

In \cite{costello2019vertex}, Costello-Gaiotto described holomorphic boundary conditions in topological twists of 3d $\CN=4$ gauge theories. The boundary vertex algebras have now become an important ingredient in the study of 3d mirror symmetry. We consider a 3d $\CN=4$ theory defined by a pair $(G, T^*V)$, consisting of a Lie group \(G\) and the cotangent space of a representation \(V\) of \(G\). The (perturbative) VOA for the B-twist of this gauge theory is identified as the one associated to the affine Lie superalgebra of a finite-dimensional Lie superalgebra. This should receive non-perturbative corrections, leading to the non-perturbative VOA. These are best understood in abelian gauge theories \cite{BCDN23}, which can be used to prove abelian mirror symmetry. 

For the theory defined by $(G, T^*V)$ as above, the Lie superalgebra $\fd_{G, T^*V}$ underlying the VOA is of cotangent type. More precisely, let $\fh\coloneqq \fg\ltimes V[-1]$, where $V$ is of homological degree $1$, then $\fd_{G, T^*V}=T^*[-2]\fh=\fh\ltimes \fh^*[-2]$. This Lie superalgebra was shown to be the tangent Lie algebra of the symplectic reduction $T^*V/\!/\!/\!/ G$ (see \cite{NiuQGSR}), i.e.\ the Higgs branch of the 3d $\CN=4$ theory.  Under a specific choice of the level  \cite{garner2023vertex}, the vacuum module $V_k(\fd_{G, T^*V})$ of the central extension $\wh{\fd}_{G, T^*V}$ is the perturbative VOA of \cite{costello2019vertex}. Our construction, applied to $\fd_{G, T^*V}=T^*[-2]\fh$, results in a quantization $\CV_{\hbar,k}(\fd_{G, T^*V})$ of this perturbative VOA. 

This is consistent with the following intuition. Our study of the Yangian of $\fd_{G, T^*V}$ in \cite{ANyangian} was motivated by 4d $\CN=2$ gauge theories. It is well-known to experts (see for instance \cite{elliott2022taxonomy}) that the $S^1$-reduction of the 4d $\CN=2$ gauge theory defined by $(G, T^*V)$ is the 3d $\CN=4$ theory defined by the same data. The classical VOA $V_k(\fd_{G, T^*V})$ is the (perturbative) boundary VOA of the 3d gauge theory. Intuitively, the quantum VOA $\CV_{\hbar,k}(\fd_{G, T^*V})$ is the boundary VOA of the $S^1$-wrapped 4d gauge theory\footnote{Physically, the difference between the two is whether we shrink the size of $S^1$ to zero. Mathematically, it amounts to the difference between $S^1$ and its affinization $\mathrm{Spec}(\C[\epsilon])$.}, so differential equations satisfied by conformal blocks are replaced by difference equations, and the boundary anomaly cancellation is identical (see Remark \ref{Rem:anomaly}). Naturally, we can hope to lift the known results about $V_k(\fd_{G, T^*V})$ and the 3d B twist into this quantum/difference context. Let us mention some of them here, while a more detailed account will be given in Section \ref{subsec:examples}. 

Consider the case where $G$ is a reductive algebraic group and $V=0$. In this case, the Lie algebra $\fd=T^*\fg$ is also known as the \textit{Takiff Lie algebra}. It turns out that the associated VOA $V_k(\fd)$ can be embedded in $D_{G, k}$, the algebra of chiral differential operators (CDOs) on $G$ at level $k$\footnote{We learned about this, and some subsequent facts about CDOs, from a discussion with S. Nakatsuka.}.  Moreover, $D_{G, k}$ is an extension of $V_k(\fd)$ of the form
\be
D_{G, k}=\bigoplus_{V\in \mathrm{Irred}(G)} \Ind (V)\otimes V^*,
\ee
where $\Ind (V)$ is the induction of $V$ from $\fd(\CO)$ to $\wh{\fd}(\CK)$. We can then hope that there is a similar extension as a quantum vertex algebra for $\CV_{\hbar,k}(\fd)$, which can provide a quantization of the CDOs in $G$. We formulate this conjecture in Section \ref{subsec:examples}. By invoking mirror symmetry, the non-perturbative completion of $V_k(\fd)$ should be related to (more precisely, Morita equivalent to) chiral universal centralizer VOA $I_{G, k}$, we therefore expect a quantum VOA version of this as well. 

For a general gauge theory, it is not known how to compute the perturbative completion of $V_k(\fd_{G, T^*V})$. A proposal based on cohomology over the affine Grassmannian $\Gr_G$ was contained in \cite{dimofte2018dual}. In the case where $G=(\C^\times)^r$ is abelian, this completion is known to be a simple current extension \cite{BCDN23}. The extension has been shown to be related to BRST reductions of beta-gamma VOAs $\bg$ (\textit{a.k.a.}\ CDOs on Higgs branches). For concreteness, let us take $G=\C^\times$ and $V=\C$ with weight $1$. Then $\fd_{G, T^*V}=\fgl (1|1)$ is the simplest type-A Lie superalgebra, and it is well-known \cite{creutzig2013w} that $\bg\otimes \FF$ is a simple current extension of $V_k(\fgl (1|1))$. We therefore conjecture that a similar extension of $\CV_{\hbar,k}(\fgl(1|1))$ exists and provides a quantization of $\bg\otimes \FF$. In general, for abelian $G$, we expect the existence of an extension of $\CV_{\hbar,k}(\fd_{G, T^*V})$ similar to the one studied in \cite{BCDN23}, and it provides a quantization of the corresponding BRST cohomology of many copies of $\bg\otimes \FF$. These conjectures are also formulated more precisely in Section \ref{subsec:examples}.

Unfortunately, we don't know the role or meaning of these quantum chiral differential operator VOAs in physics.

\subsection*{Acknowledgements}
R.A.\ acknowledges the support by the Deutsche Forschungsgemeinschaft (DFG, German Research Foundation) – SFB 1624 – ``Higher structures, moduli spaces and integrability'' – 506632645.

\noindent W.N.\ thanks K.\ Costello and S.\ Nakatsuka for helpful discussions and his friend Don Manual for continued support. W.N.’s research is supported by the Perimeter Institute for Theoretical Physics, which, in turn, is supported in part by the Government of Canada through the Department of Innovation, Science and Economic Development Canada and the Province of Ontario through the Ministry of Colleges and Universities.

\section{Double Yangian for cotangent Lie algebras}\label{sec:double}

In this section, we compute the dual \(Y^*_\hbar(\fd)\) and the double \(DY_\hbar(\fd)\) of the cotangent Yangian $Y_\hbar (\fd)$ as well as \(R\)-matrix of \(DY_\hbar(\fd)\). Furthermore, we construct central extensions of $D Y_\hbar (\fd)$, which resemble the ones for the ordinary Yangians from \cite{EK4}. 

We proceed as follows. In Section \ref{subsec:doublequotient}, we recall the construction of Hopf algebras from double quotients of formal groups from \cite{ANyangian} and explain the intuition behind the computation of the dual and the double of the Hopf algebras constructed in this way. We also recall how the cotangent Yangian \(Y_\hbar(\fd)\) can be constructed from an appropriate double quotient. 
In Section \ref{subsec:dual+double}, we explicitly compute the dual Yangian $Y_\hbar^* (\fd)$ and the double Yangian \(DY_\hbar(\fd)\). Furthermore, we give an explicit formula for the \(R\)-matrix of the double in Section \ref{subsec:Rmat_double} and show in Section \ref{subsec:state-op} that the spectral \(R\)-matrix of the \(Y_\hbar(\fd)\) constructed in \cite{ANyangian} can now be understood in terms of a categorical state-operator correspondence for the category $Y_\hbar (\fd)\Mod$. In Section \ref{subsec:centralext}, we construct the central extensions of $DY_\hbar (\fd)$ using certain bilinear forms on $\fd$.

\subsection{Quantum groups from double quotients}\label{subsec:doublequotient}

\subsubsection{The general idea}
Following our previous work in \cite{ANyangian}, we use double quotients of formal groups associated with Lie algebra decompositions to construct various quantum groups. 
For this purpose, let us briefly repeat this construction and give a strategy of what we will do in Section \ref{sec:double}. In the Section \ref{subsec:dual+double} below, we will make the more sketchy outline of this section explicit in our case of interest.

To any Lie algebra \(\mathfrak{p}\) whose base space is a complex ind-vector space, one can associate a formal group \(\textnormal{exp}(\fp) \coloneqq \widehat{\fp}_0\). A convenient way to think of this formal group in our setting can be achieved in the \(\hbar\)-adic language:\ we extend the setting by \(\C[\![\hbar]\!]\) and scale the maximal ideal of \(0\) by \(\hbar\). Then \(\textnormal{exp}(\fp)\) is generated by \(e^{\hbar x}\) for \(x \in \fp\) and the multiplication is given by the Baker-Campbell-Hausdorff series.

Let now \(\fp_+ \subseteq \fp\) be a subalgebra. Then quasi-coherent sheaves on the double quotient stack \({\textnormal{exp}(\fp_+) \setminus }\textnormal{exp}(\fp) / \textnormal{exp}(\fp_+)\) admit a natural monoidal structure, given by a push-pull along a correspondence diagram induced from the multiplication of \(\textnormal{exp}(\fp)\). This category is usually called the Hecke category. 

If \(\fp_+\) admits a complementary subalgebra \(\fp_- \subseteq \fp\), i.e.\ \(\fp = \fp_+ \oplus \fp_-\), then we can construct a fiber functor for this monoidal structure and use this to represent the aforementioned category of sheaves using a Hopf algebra. The fiber functor is given by the Hecke action of ${\textnormal{exp}(\fp_+)\setminus}\textnormal{exp}(\fp) / \textnormal{exp}(\fp_+)$ on ${\textnormal{exp}(\fp_+) \setminus}\textnormal{exp}(\fp) / \textnormal{exp}(\fp_-)$, where the latter is equal to a single point. 

Algebraically, we may identify \({\textnormal{exp}(\fp_+) \setminus}\textnormal{exp}(\fp) \cong \textnormal{exp}(\fp_-) \cong \textnormal{Spf}(U(\fp_-)^*) = \textnormal{Spf}(S(\hbar\fp_-^*))\) and therefore 
\begin{equation}
    U_\hbar(\fp_+ \ltimes \fp_-^*) = U(\fp_+) \ltimes_{\C[\![\hbar]\!]} S(\fp_-^*)
\end{equation}
is an algebra whose modules are sheaves over the double quotient stack.
The monoidal structure can now be represented by an explicit coproduct \(\Delta_\hbar\) of \(U_\hbar(\fp_+ \ltimes \fp_-^*)\) which gives it the structure of a Hopf algebra.

As the notation suggests, this Hopf algebra is the quantization of a Lie bialgebra, i.e.\ a quantum group. Indeed, the Lie algebra decomposition \(\fp = \fp_+ \oplus \fp_-\) induces a Manin triple \(\fp \ltimes \fp^* = (\fp_+ \ltimes \fp_-^*) \oplus (\fp_- \ltimes \fp_+^*)\). The Hopf algebra \(U_\hbar(\fp_+ \ltimes \fp_-^*)\) quantizes the Lie bialgebra structure on \(\fp_+ \ltimes \fp_-^*\) which is determined by this Manin triple.

Let us remark that the choice of a different complementary subalgebra \(\fp_-\) leads to a twist equivalent Hopf algebra. This is intuitive, since the double quotient does not depend on this choice, so the monoidal structure should also not depend on this choice, but the fiber functor does.

In this paper, we want to extend the picture outlined so far by describing the dual and double of \(U_\hbar(\fp_+\ltimes \fp_-^*)\). This can also be done using the intuition from double quotients as well. In particular, a natural candidate for the dual is the Hopf algebra \(U_\hbar(\fp_- \ltimes \fp_+^*)\) determined by the ``opposite'' double quotient \({\textnormal{exp}(\fp_-) \setminus}\textnormal{exp}(\fp) / \textnormal{exp}(\fp_-)\). Indeed, the right Hecke action of this stack on ${\textnormal{exp}(\fp_+) \setminus }\textnormal{exp}(\fp) / \textnormal{exp}(\fp_-)$ commutes with the left Hecke action of ${\textnormal{exp}(\fp_+) \setminus}\textnormal{exp}(\fp) / \textnormal{exp}(\fp_+)$. Moreover, by adapting the calculations of \cite{BFNloop}, one can show that $\QCoh \lp {\textnormal{exp}(\fp_+) \setminus}\textnormal{exp}(\fp) / \textnormal{exp}(\fp_+)\rp$ and $\QCoh\lp {\textnormal{exp}(\fp_-) \setminus}\textnormal{exp}(\fp) / \textnormal{exp}(\fp_-)\rp$ are dual categories (namely they are commutants) acting on $\mathrm{Vect}=\QCoh \lp{\textnormal{exp}(\fp_+) \setminus}\textnormal{exp}(\fp) / \textnormal{exp}(\fp_-)\rp$. Furthermore, the calculations of \cite{BFNloop} show that the center of these two categories agree, and is isomorphic to the category of quasi-coherent sheaves on $\textnormal{exp}(\fp)/\textnormal{exp}(\fp)$, where the quotient is by the adjoint action. 

Algebraically, we may consider the Hopf algebra \(U_\hbar(\fp\ltimes \fp^*)\) induced by
\begin{equation}
    {\textnormal{exp}(\textnormal{diag}(\fp)) \setminus}\textnormal{exp}(\fp \times \fp) / \textnormal{exp}(\textnormal{diag}(\fp))
\end{equation}
defined by considering the subalgebras \(\textnormal{diag}(\fp) \coloneqq \{(a,a)\mid a \in \fp\}\subset \fp \times \fp\) and \(\fp_+ \times \fp_- \subseteq \fp \times \fp\), which are complementary to each other. We can naturally embed \(U_\hbar(\fp_- \ltimes \fp_+^*)\) and \(U_\hbar(\fp_- \ltimes \fp_+^*)^{\textnormal{co-op}}\) into \(U_\hbar(\fp\ltimes \fp^*)\) and are paired dually inside this algebra. 
In particular, this \(U_\hbar(\fp\ltimes \fp^*)\) becomes that double and \(U_\hbar(\fp_- \ltimes \fp_+^*)\) becomes the dual of \(U_\hbar(\fp_+\ltimes \fp_-^*)\)

In order to describe the structure of the Hopf algebra to \(\fp \times \fp = \textnormal{diag}(\fp) \oplus (\fp_+ \times \fp_-)\) it is convenient to remember that changing the subalgebra complementary to \(\Delta_\fp\) leads to a twist equivalent Hopf algebra. Now \(\textnormal{diag}(\fp)\) has the far simpler complementary Lie algebra \(\fp \times \{0\}\).
The Hopf algebra obtained from this subalgebra coincides with the quantum group from \cite{NiuQGSR} and therefore our double is twist equivalent to this Hopf algebra. In particular, these two Hopf algebras, while not isomorphic, are isomorphic as algebras and have equivalent monoidal categories attached to them.

\subsubsection{The cotangent Yangian}
Let \(\fg\) be a finite-dimensional Lie algebra with basis \(\{I_{\alpha}\}_{\alpha = 1}^d\subseteq \fg\) and \(\{I^{\alpha}\}_{\alpha = 1}^d \subset \fg^*\) be a dual basis of \(\fg\). 
The Yangian \(Y_\hbar(\fd)\) for the cotangent Lie algebra \(\fd = T^*\fg = \fg \ltimes \fg^*\) of \(\fg\) was introduced in \cite{ANyangian} as the quantum group corresponding to the Lie algebra decomposition \(\fg(\CK) = \fg(\CO) \oplus \fg_{<0}\) in the scheme outlined in the Section \ref{subsec:doublequotient}. Here, 
\begin{equation}
    \fg_{<0} \coloneqq t^{-1}\fg[t^{-1}]\,,\,\,\, \fg(\CO) \coloneqq \fg \otimes \CO \subseteq \fg \otimes \CK \eqqcolon\fg(\CK) \quad \textnormal{ for }\quad \C[\![t]\!] = \CO \subset \CK = \C(\!(t)\!).
\end{equation}
In a similar fashion, we define \(\fd_{<0},\fd(\CO) \subseteq \fd(\CK)\) and we write \(a_n = a \otimes t^n \in \fd(\CK)\) for any \(a \in \fd\) and \(n \in \Z\).

More explicitly, the construction of \(Y_\hbar(\fd)\) proceeds as follows:

\begin{enumerate}
    \item Consider the canonical invariant element \(C_\fd = \sum_{a = 1}^d( I^{\alpha} \otimes I_{\alpha} + I_{\alpha} \otimes I^{\alpha}) \in \fd \otimes \fd\), where \(\{I_{\alpha}\}_{\alpha = 1}^d \subseteq \fg, \{I^{\alpha}\}_{\alpha = 1}^d \subseteq \fg^*\) are bases dual to each other. Then the expression \begin{equation}
        \gamma(t_1,t_2) = \frac{C_\fd}{t_1-t_2} \in (\fd \otimes \fd)\left[t_1,t_2,\frac{1}{t_1-t_2}\right]  
    \end{equation}
    satisfies the spectral classical Yang-Baxter equation. Therefore,
    \begin{equation}\label{eq:delta_gamma}
        \delta^\gamma(x) = [x(t_1) \otimes 1 + 1 \otimes x(t_2),\gamma(t_1,t_2)]
    \end{equation}
    defines a topological Lie bialgebra structure \(\delta^\gamma \colon \fd(\CO) \to \fd(\CO) \otimes \fd(\CO)\). This Lie bialgebra structure is defined by the Manin triple \(\fd(\CK) = \fd(\CO) \oplus \fd_{<0}\) and \(Y_\hbar(\fd)\) will turn out to quantize this Lie bialgebra structure.

    \item The splitting \(\fg(\CK) = \fg(\CO) \oplus \fg_{<0}\) gives rise to a right action \(\lhd\) of \(U(\fg_{<0})\) on \(U(\fg(\CO))\) and a left action \(\rhd\) of \(U(\fg(\CO))\) on \(U(\fg_{<0})\). Furthermore, we have a canonical isomorphism \(S(\hbar\fg^*(\CO))[\![\hbar]\!] = S(\hbar(\fg_{<0})^*)[\![\hbar]\!] = U(\fg_{<0})^*[\![\hbar]\!]\). Dualizing \(\rhd\) and rescaling by \(\hbar^{-1}\), we get an action of \(U(\fg(\CO))\) on \(S(\fg(\CO))\). As an algebra
    \begin{equation}
        Y_\hbar(\fd) \coloneqq U(\fg(\CO))[\![\hbar]\!] \ltimes_{\C[\![\hbar]\!]} S(\fg^*(\CO)))[\![\hbar]\!].
    \end{equation}

    \item The comultiplication \(\Delta^\gamma_\hbar\) of \(Y_\hbar(\fd)\) can be defined using the right action \(\lhd\) of \(U(\fg_{<0})\) on \(U(\fg(\CO))\). Namely, 
    \begin{equation}\label{eq:comultiplication_on_g(O)}
        \Delta_\hbar^\gamma(x) = 1 \otimes x + \phi_\lhd(x)\,,\qquad x\in \fg(\CO)
    \end{equation}
    for \(\phi_{\lhd}(x) = \sum_{i \in I}x_i \otimes h_i\) for some index set \(I\) and some \(x_i \in U(\fg(\CO))[\![\hbar]\!]\) and \(h_i \in S(\fg^*(\CO))[\![\hbar]\!]\) satisfying \(\sum_{i \in I}\langle h_i,e^{\hbar y}\rangle x_i = x \lhd e^{\hbar y}\) for all \(y \in \fg_{<0}\). A convenient way to describe \eqref{eq:comultiplication_on_g(O)} is
    \begin{equation}
        \Delta_\hbar^\gamma(x) = \exp(\hbar \gamma_{<})(x \otimes 1+ 1 \otimes x)\exp(-\hbar \gamma_<),
    \end{equation}
    where \(\gamma_{<}\) is the \((\fg \otimes \fg^*)\)-component of the Taylor expansion of \(\gamma = \gamma(t_1,t_2)\) in \(t_1 < t_2\):
    \begin{equation}\label{eq:half_of_gamma}
        \gamma_< = \sum_{n = 0}^\infty\sum_{\alpha = 1}^d I_{\alpha,n} \otimes I^\alpha_{-n-1}.
    \end{equation}
\end{enumerate}

\begin{Rem}
    Let us note that we use the opposite coproduct in this paper compared to \cite{ANyangian}.
\end{Rem}

\begin{Rem}\label{rem:dense_subalgebra}
    Let us note that \(Y_\hbar(\fd)\) admits a dense Hopf subalgebra \(Y_\hbar^\circ(\fd)\), namely the one generated by polynomial elements \(\fd[t] \subset \fd(\CO)\). This can be viewed as the Hopf algebra corresponding to the Lie algebra splitting \(\fd[t,t^{-1}] = \fd[t] \oplus t^{-1}\fd[t^{-1}]\). In some circumstances it is necessary to work with \(Y_\hbar^\circ(\fd)\) instead of \(Y_\hbar(\fd)\). 
\end{Rem}

\subsection{The dual and the double of the cotangent Yangian}\label{subsec:dual+double}

\subsubsection{The dual}

We now want to construct what will become the dual \(Y_\hbar^*(\fd)\) of \(Y_\hbar(\fd)\). Essentially, it is the co-opposite of the Hopf algebra associated to \(\fg(\CK) = \fg_{<0} \oplus \fg(\CO)\), i.e.\ of the quantum group describing the monoidal structure of the double quotient \({\exp(\fg_{<0})\setminus} \exp(\fg(\CK))/\exp(\fg_{<0})\). More precisely:

\begin{enumerate}
    \item The negative of \eqref{eq:delta_gamma} for \(x \in \fd_{<0}\)
    defines the dual Lie bialgebra structure on \(\fd_{<0}\);

    \item We have a canonical isomorphism \(S(\hbar\fg^*_{<0})[\![\hbar]\!] = S(\hbar\fg(\CO)^*)[\![\hbar]\!] = U(\fg(\CO))^*[\![\hbar]\!]\). The \(\hbar^{-1}\)-rescaled dualization of \(\lhd\) defines an action of \(U(\fg_{<0})\) on \(S(\fg_{<0}^*)\) and as an algebra we define the dual Yangian by
    \begin{equation}
        Y^*_\hbar(\fd) \coloneqq  S(\fg^*_{<0})[\![\hbar]\!] \rtimes_{\C[\![\hbar]\!]}U(\fg_{<0})[\![\hbar]\!] .
    \end{equation}

    \item The comultiplication\footnote{The notation is chosen so that \(\Delta^\gamma_\hbar\) is the comultiplication of the whole double Yangian later.} \(\Delta_\hbar^{\gamma,\textnormal{op}}\) on \(Y_\hbar^*(\fd)\) is then defined by taking \(\Delta_\hbar^{\gamma,\textnormal{op}}\) dual to the Baker-Campbell-Hausdorff series on \(S(\fg_{<0}^*)[\![\hbar]\!]\) and define \(\Delta_\hbar^{\gamma,\textnormal{op}}\) on \(U(\fg_{<0})[\![\hbar]\!]\) by
    \begin{equation}\label{eq:comultiplication_on_g(O)_dual}
        \Delta_\hbar^{\gamma,\textnormal{op}} = x \otimes 1 + \phi_{\rhd}(x)\,,\qquad x\in \fg_{<0}
    \end{equation}
    for \(\phi_{\rhd}(x) = \sum_{i \in I}h_i \otimes x_i\) for some index set \(I\) and some \(x_i \in U(\fg_{<0})[\![\hbar]\!]\) and \(h_i \in S((\fg^*)_{<0})[\![\hbar]\!]\) satisfying \(\sum_{i \in I}\langle h_i,e^{\hbar y}\rangle x_i = e^{\hbar y} \rhd x\) for all \(y \in \fg(\CO)\). Another convenient way to describe \eqref{eq:comultiplication_on_g(O)_dual} is
    \begin{equation}
        \Delta_\hbar^{\gamma,\textnormal{op}}(x) = \exp(\hbar \gamma_{<}^{21})(x \otimes 1+ 1 \otimes x)\exp(-\hbar \gamma_<^{21}),
    \end{equation}
    where we wrote \(
        \gamma_<^{21} = \sum_{n = 0}^\infty\sum_{a = 1}^d I_{n}^a \otimes I_{a,-n-1}\) for the coefficient-wise tensor flip.
\end{enumerate}

\begin{Prop}
    The canonical pairing scaled by \(\hbar^{-1}\) defines an isomorphism \begin{equation}
        Y_\hbar^*(\fd) \cong Y_\hbar(\fd)^*.  
    \end{equation}
\end{Prop}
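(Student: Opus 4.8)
The statement is forced conceptually by the Manin-triple picture of Section \ref{subsec:doublequotient}: the decomposition $\fd(\CK)=\fd(\CO)\oplus\fd_{<0}$ exhibits $\fg(\CO)\ltimes\fg^*(\CO)$ and $\fg_{<0}\ltimes\fg^*_{<0}$ as dually paired Lie bialgebras, so their quantizations $Y_\hbar(\fd)$ and $Y^*_\hbar(\fd)$ ought to be dual Hopf algebras; the role of the Proposition is to pin down this abstract duality as the explicit residue pairing and to verify the Hopf-pairing axioms. Concretely, the invariant form $C_\fd$ together with the residue gives $\langle a_m,b_n\rangle = C_\fd(a,b)\,\delta_{m+n,-1}$, which makes $\fd(\CO)$ and $\fd_{<0}$ into mutually dual restricted vector spaces, pairing $\fg(\CO)$ with $\fg^*_{<0}$ and $\fg^*(\CO)$ with $\fg_{<0}$. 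I would define the candidate pairing $Y_\hbar(\fd)\times Y^*_\hbar(\fd)\to\C[\![\hbar]\!]$ as the $\hbar^{-1}$-rescaled product of the two exponential dualities recorded in Section \ref{subsec:dual+double},
\[
\langle u\, f,\; g\, v\rangle = \hbar^{-1}\,\langle u, g\rangle\,\langle f, v\rangle,\qquad u\in U(\fg(\CO)),\ f\in S(\fg^*(\CO)),\ g\in S(\fg^*_{<0}),\ v\in U(\fg_{<0}),
\]
where $\langle u,g\rangle$ uses $U(\fg(\CO))^*[\![\hbar]\!]=S(\hbar\fg^*_{<0})[\![\hbar]\!]$ and $\langle f,v\rangle$ uses $U(\fg_{<0})^*[\![\hbar]\!]=S(\hbar\fg^*(\CO))[\![\hbar]\!]$. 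Each factor pairing is perfect, so once the Hopf-pairing axioms hold the induced map $Y^*_\hbar(\fd)\to Y_\hbar(\fd)^*$ is automatically an isomorphism onto the topological dual.

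The axioms to establish are $\langle xy,c\rangle=\langle x\otimes y,\Delta^{\gamma,\textnormal{op}}_\hbar(c)\rangle$ and $\langle x,cd\rangle=\langle\Delta^\gamma_\hbar(x),c\otimes d\rangle$, together with the unit/counit matching and antipode compatibility; the op-convention built into $\Delta^{\gamma,\textnormal{op}}_\hbar$ is precisely what makes this a strict Hopf-algebra isomorphism rather than one up to a flip. I would check these on the topological generators, i.e.\ on $U(\fg(\CO))$ and the group-likes $e^{\hbar y}$ ($y\in\fg_{<0}$), whose span is dense, and symmetrically on $Y^*_\hbar(\fd)$. Evaluated on group-likes the coproduct-compatibility collapses to multiplicativity, $\langle f,e^{\hbar y}e^{\hbar y'}\rangle=\langle\Delta f,e^{\hbar y}\otimes e^{\hbar y'}\rangle$, so the two ``diagonal'' blocks — product on $U(\fg(\CO))$ dual to the coproduct on $S(\fg^*_{<0})$, and product on the commutative $S(\fg^*(\CO))$ dual to the primitive part $x\mapsto x\otimes1+1\otimes x$ of the coproduct on $U(\fg_{<0})$ — reduce to the statement that $S(\hbar\fp_\mp^*)=U(\fp_\pm)^*$ is a duality of Hopf algebras (BCH product dual to symmetric coproduct and vice versa). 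This part is routine bookkeeping of the two exponential dualities.

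The substance, and the step I expect to be the main obstacle, is the off-diagonal compatibility: the smash relations of $Y_\hbar(\fd)$ between $U(\fg(\CO))$ and $S(\fg^*(\CO))$ must be dual to the non-primitive term $\phi_\rhd$ of $\Delta^{\gamma,\textnormal{op}}_\hbar$ on $U(\fg_{<0})$, and symmetrically $\phi_\lhd$ in $\Delta^\gamma_\hbar$ on $U(\fg(\CO))$ must be dual to the smash relations of $Y^*_\hbar(\fd)$. The point that makes this tractable is that both sides are manufactured from the \emph{same} actions: the multiplication of $Y_\hbar(\fd)$ is the $\hbar^{-1}$-dualized $\rhd$-action of $U(\fg(\CO))$ on $S(\fg^*(\CO))=U(\fg_{<0})^*$, while $\phi_\rhd$ is fixed by $\sum_i\langle h_i,e^{\hbar y}\rangle x_i=e^{\hbar y}\rhd x$. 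I would pair a normal-ordered product $u\,f$ against $\Delta^{\gamma,\textnormal{op}}_\hbar$ evaluated on suitable group-likes, push the group-likes through the two tensor factors, and read off that the $\phi_\rhd$-contribution reproduces exactly the reordering prescribed by $\rhd$; the $\lhd\leftrightarrow\phi_\lhd$ matching is the mirror computation. The only genuine labor is tracking the single factor of $\hbar$ through the rescaled dualizations so the cross-terms balance, and confirming that the conjugated presentations $\exp(\hbar\gamma_<)(x\otimes1+1\otimes x)\exp(-\hbar\gamma_<)$ of the coproducts are literally adjoint to the smash multiplication under the residue pairing. With both the diagonal and off-diagonal compatibilities in hand, nondegeneracy promotes the pairing to the asserted isomorphism $Y^*_\hbar(\fd)\cong Y_\hbar(\fd)^*$, and antipode compatibility follows formally.
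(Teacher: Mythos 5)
Your proposal is correct and follows essentially the same route as the paper: the diagonal blocks (BCH product dual to symmetric coproduct, via the canonical identifications $S(\hbar\fp_\mp^*)=U(\fp_\pm)^*$) are taken as given by construction, and the real work is matching the smash-product commutation relations on each side against the $\phi_\lhd$/$\phi_\rhd$ terms of the coproduct on the other side, using their defining property $\sum_i\langle h_i,e^{\hbar y}\rangle x_i = x\lhd e^{\hbar y}$. The paper carries out exactly this cross-compatibility check, computing $\langle xf, yg\rangle$ in two ways via Sweedler expansions and identifying the discrepancy with the pairing against the commutators $[y,g]$ and $[x,f]$ respectively.
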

\begin{proof}
    By definition, the multiplication of \(U(\fg(\CO))[\![\hbar]\!]\) and \(U(\fg_{<0})[\![\hbar]\!]\) is dual to the coproduct on \(S(\fg^*_{<0})[\![\hbar]\!]\) and \(S(\fg^*(\CO))[\![\hbar]\!]\), respectively. In particular, the counits and units are in duality.

    Now we are left to check their commutation relation. Let $x \in U(\fg(\CO)), f\in S(\fg^*(\CO)), y\in \fg_{<0}$, and $g\in S (\fg_{<0}^*)\lbb\hbar\rbb$, then
    \be
        \lag x f, yg\rag= \lag \Delta_\hbar^\gamma (xf), y\otimes g\rag
    \ee
    We can assume that $g\in \fg_{<0}^*$ since that generates the entire symmetric algebra, and therefore we can assume that $x\in \fg (\CO)$. Using \eqref{eq:comultiplication_on_g(O)}, we obtain
    \be
        \Delta_\hbar^\gamma (xf)= (1\otimes x+\phi_{\lhd}(x)) \Delta_\hbar^\gamma(f)= \sum_{(f),i \in I}\left(f^{(1)}\otimes xf^{(2)}+x_if^{(1)}\otimes h_i f^{(2)}\right). 
\ee
Pairing this with $y \otimes g$ and using the fact that \(g\) pairs trivially with \(S(\fg^*(\CO))\), we get
\be\label{eq:proof_yangian_dual_difference_1}
\sum_{(f)} \lag f^{(1)},y\rag \lag g,x\rag \epsilon (f^{(2)}) = \lag f,y\rag \lag g,x\rag . 
\ee
On the other hand, if we pair this with $g\otimes y$, we get
\be\label{eq:proof_yangian_dual_difference_2}
\begin{split}
    &\sum_{(f),i \in I} \lag g,x_i\rag \epsilon (f^{(1)}) \langle h_i f^{(2)},y\rangle
    \\&=\sum_{(f),i \in I}\left( \lag g,x_i\rag \epsilon (f^{(1)}) \lag h_i,y\rag \epsilon (f^{(2)})+ \lag g,x_i\rag \epsilon (f^{(1)}) \epsilon (h_i) \lag f^{(2)},y\rag\right)
    \\&=\sum_{i\in I} \lag g,x_i\rag \lag h_i,y\rag \epsilon (f)+ \lag g,x\rag \lag f,y\rag.     
\end{split}
\ee
The difference between \eqref{eq:proof_yangian_dual_difference_1} and \eqref{eq:proof_yangian_dual_difference_2} is given by
\be\label{eq:proof_yangian_dual_difference_3}
    \sum_{i\in I} \lag g,x_i\rag \lag h_i,y\rag \epsilon (f).
\ee
We now claim that this is equal to $\lag xf, [y, g]\rag$ as calculated in $Y_\hbar^* (\fd)$. Indeed, we have
\be
\lag x f, [y, g]\rag= \langle g,x\lhd y\rangle\epsilon (f)= \sum_{i \in I}\lag g,x_i\rag \lag h_i ,y\rag \epsilon (f),
\ee
thanks to the definition of $\phi(x) = \sum_{i \in I}x_i\otimes h_i$. This completes the identification of the algebra structure of $Y_\hbar (\fd)^*$ with the coalgebra structure of $Y_\hbar^* (\fd)$. 

Next, let us identify the coalgebra structure of $Y_\hbar (\fd)^*$ with the algebra structure of $Y_\hbar^* (\fd)$. Therefore, for $\phi_\rhd(y) = \sum_{i \in I}r_i\otimes y_i$, we consider instead
    \be\label{eq:proof_yangian_dual_difference_3}
\begin{split}
    &\langle xf,yg\rangle = \sum_{(g),i\in I}\lag x \otimes f,yg^{(1)}\otimes g^{(2)}+r_ig^{(1)}\otimes y_i g^{(2)}\rag
    =\sum_{(g),i \in I} \langle \Delta_\hbar^\gamma(x),r_i \otimes g^{(1)}\rangle \lag f,y_i\rag \epsilon (g^{(2)})
    \\& =\sum_{(g),i \in I}\left( \langle r_i,x\rangle \epsilon(g^{(1)}) \lag f,y_i\rag \epsilon (g^{(2)}) + \epsilon(r_i)\langle g^{(1)},x\rag \lag f,y_i\rag \epsilon (g^{(2)})\right) 
    \\&=\sum_{i\in I} \lag f,y_i\rag \lag r_i,x\rag \epsilon (g)+ \lag g,x\rag \lag f,y\rag.    
\end{split}
\ee
for \(\phi_{\rhd}(y) = \sum_{i \in I} r_i \otimes y_i\).
On the other hand, we get
\be
        \begin{split}
            \lag fx, yg\rag = \lag f \otimes x,\Delta_\hbar^{\gamma,\textnormal{op}}(y g)\rag = \sum_{(g),i\in I}\lag f \otimes x,yg^{(1)}\otimes g^{(2)}+r_ig^{(1)}\otimes y_i g^{(2)}\rag
            = \lag f,y\rag \lag g, x\rag 
        \end{split}
    \ee
As before, the difference between these is equal to $\lag [x,f], yg\rag$ as calculated in $Y_\hbar (\fd)$, since
\be
\lag [x ,f], y g\rag= -\langle f,x\rhd y\rangle\epsilon (g)= -\sum_{i \in I}\lag f,y_i\rag \lag r_i ,x\rag \epsilon (g),
\ee
thanks to the definition of $\phi_\rhd(y) = \sum_{i \in I}r_i\otimes y_i$. This completes the identification of the algebra structure of $Y_\hbar (\fd)^*$ with $Y_\hbar^* (\fd)$. 
\end{proof}

\subsubsection{The double}

We now want to identify the Drinfeld double \(DY_\hbar(\fd) \coloneqq D(Y_\hbar(\fd))=Y_\hbar (\fd)\otimes Y^*_\hbar (\fd)^{\textnormal{co-op}}\) of \(Y_\hbar(\fd)\) as an algebra. In order to do so, let us recall the quantum group associated to cotangent Lie algebras from \cite{NiuQGSR} and apply it to the cotangent Lie algebra \(\fd(\CK) = T^*\fg(\CK)\). As an algebra, 
\begin{equation}
    U_\hbar(\fd(\CK)) = U(\fg(\CK))[\![\hbar]\!] \ltimes_{\C[\![\hbar]\!]}  S(\fg^*(\CK))[\![\hbar]\!],
\end{equation}
where the action of \(U(\fg(\CK))\) on \(S(\fg^*(\CK))\) is the canonical one induced by \(\fg(\CK)^* = \fg^*(\CK)\).

This algebra becomes a Hopf algebra, quantizing the Lie cobracket on \(\fd(\CK)\) obtained as a double of the trivial Lie bialgebra structure on \(\fg(\CK)\), if we equip it with the usual coproduct on \(U(\fg(\CK))[\![\hbar]\!]\) and the coproduct defined by the Baker-Campbell-Hausdorff series on \(S(\fg^*(\CK))[\![\hbar]\!]\). 

Our goal is to prove the following.

\begin{Thm}
    The Hopf algebras \(U_\hbar(\fd(\CK))\) and \(DY_\hbar(\fd)\) are twist equivalent. In particular, there is a canonical isomorphism \(
        U_\hbar(\fd(\CK)) \cong DY_\hbar(\fd)\)
    of algebras.
\end{Thm}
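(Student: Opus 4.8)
The plan is to realize both Hopf algebras through the double-quotient construction of Section \ref{subsec:doublequotient}, applied to the \emph{same} pair $(\fP,\fP_+)$ but with two different complements. Set $\fP = \fg(\CK)\times\fg(\CK)$ and $\fP_+ = \mathrm{diag}(\fg(\CK))$. Recall from Section \ref{subsec:doublequotient} (and Theorem \ref{thm:main_dualdouble}(2)) that the double $DY_\hbar(\fd)$ is realized as the Hopf algebra attached to the complement $\fP_- = \fg(\CO)\times\fg_{<0}$. I would first show that $U_\hbar(\fd(\CK))$ is the Hopf algebra attached to the far simpler complement $\wt\fP_- = \fg(\CK)\times\{0\}$. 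Both $\fP_-$ and $\wt\fP_-$ are subalgebras complementary to $\fP_+$, so both data are admissible, and the entire argument rests on the fact that they share the same $\fP_+$.

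To obtain the algebra isomorphism I would observe that the underlying algebra $U(\fP_+)\ltimes S(\fP_+^\perp)$ of a double-quotient Hopf algebra depends only on $\fP_+$: the dual piece is canonically the annihilator $\fP_+^\perp\subseteq\fP^*$, and here a direct computation gives $\fP_+^\perp = \{(\xi,-\xi)\mid\xi\in\fg^*(\CK)\}\cong\fg^*(\CK)$, with no reference to the complement. The bracket of $\fP_+\ltimes\fP_+^\perp$ is inherited from the ambient Manin triple $T^*\fP=\fP\ltimes\fP^*$, and under $\fP_+\cong\fg(\CK)$ and $\fP_+^\perp\cong\fg^*(\CK)$ it is exactly the cotangent bracket of $\fd(\CK)=\fg(\CK)\ltimes\fg^*(\CK)$. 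Hence for \emph{both} complements the underlying algebra is $U(\fg(\CK))\ltimes S(\fg^*(\CK)) = U_\hbar(\fd(\CK))$, and the identity map furnishes the asserted canonical algebra isomorphism.

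It then remains to pin down the two Hopf structures and exhibit the twist. For $\wt\fP_-$ the opposite Lagrangian is $\wt\fP_-\ltimes\fP_+^*$ with $\fP_+^* = (\wt\fP_-)^\perp = \{0\}\times\fg^*(\CK)$; since this $\fg^*(\CK)$ is an abelian ideal on which $\wt\fP_- = \fg(\CK)\times\{0\}$ acts trivially, the dual cobracket on $\fP_+\ltimes\fP_+^\perp$ vanishes on $\mathrm{diag}(\fg(\CK))$ and restricts to the Kirillov--Kostant cobracket on the antidiagonal. This is precisely the classical limit of the standard Hopf structure of $U_\hbar(\fd(\CK))$, namely primitive on $\fg(\CK)$ and of Baker--Campbell--Hausdorff type on $S(\fg^*(\CK))$, so the $\wt\fP_-$-datum reproduces $U_\hbar(\fd(\CK))$ on the nose. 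For $\fP_-$ the coproduct instead takes the conjugated shape $\Delta_\hbar^\gamma(x) = \exp(\hbar\gamma_<)(x\otimes 1 + 1\otimes x)\exp(-\hbar\gamma_<)$, with $\gamma_<$ the lower-triangular half of the canonical element of the decomposition $\fP = \fP_+\oplus\fP_-$; since the $\wt\fP_-$-coproduct is untwisted, the candidate twist relating the two is $\mathcal{J} = \exp(\hbar\gamma_<)$.

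I expect the main obstacle to be this last step: verifying that $\mathcal{J}$ is a genuine Hopf $2$-cocycle twist, i.e.\ that $(\mathcal{J}\otimes 1)(\Delta\otimes\mathrm{id})(\mathcal{J}) = (1\otimes\mathcal{J})(\mathrm{id}\otimes\Delta)(\mathcal{J})$, rather than a mere conjugation making the two coproducts abstractly resemble one another. I would settle this in one of two ways. Conceptually, the double quotient ${\exp(\fP_+)\setminus}\exp(\fP)/\exp(\fP_+)$ carries an intrinsic Hecke monoidal structure, and the two complements only choose two $\C[\![\hbar]\!]$-linear fiber functors representing it; since any two fiber functors on a fixed monoidal category differ by a twist, the twist equivalence follows abstractly, matching the heuristic of Section \ref{subsec:doublequotient}. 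Alternatively, the cocycle identity for $\mathcal{J}$ can be checked order by order in $\hbar$, where it reduces to the classical Yang--Baxter equation for $\gamma$ recorded in Section \ref{subsec:doublequotient}. Either route yields the twist equivalence, and because a twist leaves the multiplication unchanged it simultaneously recovers the canonical algebra isomorphism.
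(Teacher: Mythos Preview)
Your overall strategy matches the paper's: realize both Hopf algebras via the double-quotient construction for $\fP=\fg(\CK)\times\fg(\CK)$ and $\fP_+=\mathrm{diag}(\fg(\CK))$ with two different complements, then invoke the general twisting principle. Your treatment of the twist side is actually more detailed than the paper's, which simply cites \cite[Section 3.1.3]{ANyangian} for the fact that changing the complement to $\fP_+$ produces a twist-equivalent Hopf algebra.

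The gap is that you \emph{assume} the hard step rather than prove it. You write ``Recall from Section \ref{subsec:doublequotient} (and Theorem \ref{thm:main_dualdouble}(2)) that the double $DY_\hbar(\fd)$ is realized as the Hopf algebra attached to the complement $\fP_-=\fg(\CO)\times\fg_{<0}$.'' But Theorem \ref{thm:main_dualdouble} is the summary theorem whose proof \emph{is} Section \ref{subsec:dual+double}; in particular, item (2) is established precisely inside the proof of the statement you are trying to prove. Section \ref{subsec:doublequotient} only gives heuristics, explicitly deferring the rigorous verification to Section \ref{subsec:dual+double}. So your argument is circular at this point.

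Concretely, $DY_\hbar(\fd)$ is \emph{defined} as the Drinfeld double, i.e.\ the bicrossed product $Y_\hbar(\fd)\bowtie_{\C[\![\hbar]\!]} Y_\hbar^*(\fd)^{\textnormal{co-op}}$ with its standard double coproduct. The substantive content of the paper's proof is to build an explicit map from this bicrossed product to the double-quotient Hopf algebra $H$ attached to $(\fP_+,\fg(\CO)\times\fg_{<0})$ and to check that it is an isomorphism of Hopf algebras: one matches $S(\fg^*(\CO))\bowtie S(\fg_{<0}^*)\cong (U(\fg(\CO))\otimes U(\fg_{<0}))^*$, identifies $U(\fg(\CO))\bowtie U(\fg_{<0})\cong U(\fg(\CK))$, and then verifies that the mixed multiplication and the coproduct agree (including the antipode twist needed to convert the right action in $Y_\hbar^*(\fd)$ into a left action in $H$). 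None of this appears in your proposal. Once this identification is in place, your remaining paragraphs (algebra independence of the complement, the explicit twist $\exp(\hbar\gamma_<)$, the fiber-functor argument) are fine but are the easy part.
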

\begin{proof}
    It is easy to see that \(U_\hbar(\fd(\CK))\) is defined by the Lie algebra decomposition
    \begin{equation}
        \fg(\CK) \times \fg(\CK) = \textnormal{diag}(\fg(\CK)) \oplus (\fg(\CK) \times \{0\})    
    \end{equation}
    in terms of our general construction of quantum groups.
    Here, \(\textnormal{diag} \colon \fg(\CK) \to \fg(\CK) \times \fg(\CK)\) is the diagonal embedding.
    
    We want to recognize that \(D(Y_\hbar(\fd))\) is isomorphic to the Hopf algebra \(H\) defined by the Lie algebra decomposition 
    \begin{equation}\label{eq:decomposition_double}
        \fg(\CK) \times \fg(\CK) = \textnormal{diag}(\fg(\CK)) \oplus (\fg(\CO) \times \fg_{<0}).    
    \end{equation}
    Then the result follows using the twisting procedure from \cite[Section 3.1.3]{ANyangian}.

    Let us first describe \(H\) in more detail. The action of \(U(\fg(\CK))\) on \(U(\fg(\CO))\) and \(U(\fg_{<0})\) combines to an action of \(U(\fg(\CK))\) on \(U(\fg(\CO) \times \fg_{<0}) \cong U(\fg(\CO)) \otimes U(\fg_{<0})\) and consequently on its dual \((U(\fg(\CO)) \otimes U(\fg_{<0}))^*\). Then 
    \begin{equation}
        H = U(\fg(\CK))[\![\hbar]\!] \ltimes_{\C[\![\hbar]\!]} (U(\fg(\CO)) \otimes U(\fg_{<0}))^*[\![\hbar]\!]
    \end{equation}

    Now let us described the double \(DY_\hbar(\fd)\). By definition, it is the bicrossed product
    \begin{equation}
        \begin{split}
            DY_\hbar(\fd) &\coloneqq Y_\hbar(\fd) \bowtie_{\C[\![\hbar]\!]} Y^*_\hbar (\fd)^{\textnormal{co-op}}
            \\&= ((U(\fg(\CO))[\![\hbar]\!] \ltimes_{\C[\![\hbar]\!]} S(\fg^*(\CO))[\![\hbar]\!]) \bowtie_{\C[\![\hbar]\!]} (S(\fg^*_{<0})[\![\hbar]\!] \rtimes_{\C[\![\hbar]\!]} U(\fg_{<0})[\![\hbar]\!]) )
        \end{split}
    \end{equation}
    with the coproduct given by the product of the coproduct of \(Y_\hbar(\fd)\) and the opposite coproduct of \(Y_\hbar^*(\fd)\). Here, the algebra structure of the bicrossed product is determined by
    \begin{equation}
        (x_1 \otimes x_2) (y_1 \otimes y_2) = \sum_{(x_2),(y_1)} x(x_2^{(1)} \rhd y_1^{(1)}) \otimes (y_1^{(2)} \lhd x_2^{(2)})g.
    \end{equation}
    for \(x_1,y_1 \in Y_\hbar(\fd)\) and \(x_2,y_2 \in Y_\hbar^*(\fd)\); see e.g. \cite[Chapter IX]{Kassel1995quantum}.
    
    Now it is clear that the Hopf subalgebra \(S(\fg^*(\CO)) \bowtie S(\fg^*_{<0}) \cong (U(\fg(\CO)) \otimes U(\fg_{<0}))^*\). Therefore, we have an isomorphism of coalgebras \( DY_\hbar(\fd)\to H\), defined by 
    \begin{equation}
        x \otimes f\otimes y \mapsto x\sigma(y) \otimes f,
    \end{equation}
    where \(x \in U(\fg(\CO))[\![\hbar]\!], y\in U(\fg_{<0})[\![\hbar]\!]\), \(f \in S(\fg^*(\CO)) \bowtie S(\fg^*_{<0}) \cong (U(\fg(\CO)) \otimes U(\fg_{<0}))^*\), and \(\sigma \colon U(\fg_{<0}) \to U(\fg_{<0})\) is the natural anti-involution.
    Here, one has to observe that the comultiplication of \(Y^*_\hbar(\fd)\) uses 
    the right action of \(U(\fg_{<0})\) on \(S(\fg^*_{<0})\), so the co-opposite uses the opposite action, consistently with the left action of \(U(\textnormal{diag}(\fg_{<0}))\subseteq U(\textnormal{diag}(\fg(\CK)))\) on \(S(\fg_{<0}^*) \subseteq S(\fg^*(\CO))\bowtie S(\fg^*_{<0})\) used to construct the comultiplication of \(H\).
    
    Now \(U(\fg(\CO)) \bowtie U(\fg_{<0}) \cong U(\fg(\CK))\) holds as algebras, so the coalgebra isomorphism \(DY_\hbar(\fd) \to H\) is also an isomorphism of algebras on \(U(\fg(\CK)), (U(\fg(\CO) \otimes U(\fg_{<0}))^* \subseteq H\). It remains to match up the mixed multiplications. But since these are defined using the action of \(U(\fg(\CK))\) on \(U(\fg(\CO))\) and \(U(\fg_{<0})\), which is equivalently described by the action of \(U(\textnormal{diag}(\fg(\CK))\) on \(U(\fg(\CO) \times \fg_{<0})\), we obtain that the defined map is indeed an isomorphism of Hopf algebras. The only thing to keep in mind is that \(U(\fg_{<0})\subseteq U(\fg(\CK))\) acts from the left on  \(U(\fg(\CO))\) so, after we apply the anti-involution \(\sigma\), we obtain the left action of \(U(\textnormal{diag}(\fg_{<0})) \subseteq U(\textnormal{diag}(\fg(\CK)))\) on \(U(\fg(\CO))\subseteq U(\fg(\CO))\otimes U(\fg_{<0})\).
\end{proof}

\subsubsection{The \(R\)-matrix of \(DY_\hbar(\fd)\)}\label{subsec:Rmat_double}
As the double of a Hopf algebra, \(DY_\hbar(\fd)\) is quasi-triangular, that is, it admits an \(R\)-matrix. We want to derive an explicit formula for this $R$-matrix. It turns out that it is essentially the non-spectral version of the spectral \(R\)-matrix of \(Y_\hbar(\fd)\) which was shown in \cite{ANyangian} to be given by 
\begin{equation}\label{eq:spectral_Rmatrix_of_Yangian}
    R(z) = \exp(\hbar \gamma_{<,\textnormal{sing}}^{21}(z))\exp(-\hbar \gamma_{<,\textnormal{sing}}(-z))\in Y^\circ_\hbar (\fd)\otimes Y^\circ_\hbar (\fd)\lpp z^{-1}\rpp.
\end{equation}
Here, \(\gamma_{<,\textnormal{sing}}(z)\) is the Taylor expansion of \(\frac{\sum_{a = 1}^d I_{\alpha} \otimes I^{\alpha}}{t_1-z-t_2}\) in \(z = \infty\), i.e.\
\begin{equation}\label{eq:gamma_sing}
    \gamma_{<,\textnormal{sing}}(z) = \sum_{i,j = 0}^\infty(-1)^i\binom{i+j}{i}\frac{I_{a,i}\otimes I^a_{j}}{z^{i+j+1}}.    
\end{equation}
The algebra $Y^\circ_\hbar (\fd)$ is the dense Hopf subalgebra of $Y_\hbar (\fd)$ generated by $\fd[t]$. In the following, we denote by $DY^\circ (\fd)$ the dense Hopf subalgebra of $DY_\hbar (\fd)$ generated by $\fd[t,t^{-1}]$. 

\begin{Thm}\label{thm:Rmat_double}
The \(R\)-matrix of \(DY_\hbar(\fd)\) is given by
\be
R=\exp (\hbar\gamma_{<}^{21})\exp (-\hbar \gamma_{<}) \in Y_\hbar(\fd) \widehat \otimes Y^*_\hbar(\fd),
\ee
where \(\gamma_<\) was defined in \eqref{eq:half_of_gamma}. In particular, we have
\begin{enumerate}
    \item \(R\Delta_\hbar^\gamma(x)R^{-1} = \Delta_\hbar^{\gamma,\textnormal{op}}(x)\) for all \(x \in DY_\hbar(\fd)\);

    \item \((\Delta_\hbar^\gamma \otimes 1)R = R^{13}R^{23}\) and \((1 \otimes \Delta_\hbar^\gamma)R = R^{13}R^{12}\);

    \item \(R^{12}R^{13}R^{23} = R^{23}R^{13}R^{12}\).
\end{enumerate}
\end{Thm}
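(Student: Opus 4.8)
The plan is to identify $R$ with the canonical element of the quantum double and then evaluate that element explicitly. Recall from the general theory of Drinfeld doubles (see e.g.\ \cite[Chapter IX]{Kassel1995quantum}) that for a bicrossed product $D(H) = H \bowtie H^{*,\textnormal{co-op}}$ the universal $R$-matrix is the canonical element $R = \sum_\mu e_\mu \otimes e^\mu$, where $\{e_\mu\}$ is a (topological) basis of $H$ and $\{e^\mu\}$ the dual basis of $H^*$, viewed inside $D(H)\otimes D(H)$ through the two canonical embeddings. Since quasi-triangularity is the defining feature of the quantum double, properties (1)--(3) then hold automatically, provided one checks that the standard finite-dimensional arguments survive in the $\hbar$-adic, topologically complete setting; this is exactly where the completion $\widehat\otimes$ in the statement originates, as the canonical element is an infinite sum convergent only $\hbar$-adically. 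Thus the real content of the theorem is the explicit computation of this canonical element for $H = Y_\hbar(\fd)$.

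For that I would exploit the factorization already visible in the proof of the duality proposition. As a coalgebra $Y_\hbar(\fd) = U(\fg(\CO))[\![\hbar]\!]\otimes S(\fg^*(\CO))[\![\hbar]\!]$, dually $Y^*_\hbar(\fd) = S(\fg^*_{<0})[\![\hbar]\!]\otimes U(\fg_{<0})[\![\hbar]\!]$, and the $\hbar^{-1}$-scaled pairing block-diagonalizes: $U(\fg(\CO))$ pairs only with $S(\fg^*_{<0})$ and $S(\fg^*(\CO))$ only with $U(\fg_{<0})$, the cross terms vanishing exactly as in \eqref{eq:proof_yangian_dual_difference_1}. Consequently a PBW-type basis of $Y_\hbar(\fd)$ and its dual basis factor through the two halves, and since multiplication in $DY_\hbar(\fd)\otimes DY_\hbar(\fd)$ is componentwise, the canonical element factors as a product $R = R_1 R_2$, where $R_1$ is the canonical element of the pairing between $S(\fg^*(\CO))$ and $U(\fg_{<0})$ and $R_2$ that between $U(\fg(\CO))$ and $S(\fg^*_{<0})$.

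It then remains to recognize each factor as an exponential. For a Lie (sub)algebra $\mathfrak{a}$ the identification $S(\hbar\mathfrak{a}^*)[\![\hbar]\!] = U(\mathfrak{a})^*[\![\hbar]\!]$ is realized by evaluation on the group-likes $e^{\hbar y}$, and under this identification the canonical element of the $\hbar^{-1}$-scaled pairing is the reproducing kernel $\exp(\hbar\, r_{\mathfrak{a}})$ with $r_{\mathfrak{a}} = \sum_i a_i \otimes a^i$ the canonical tensor of $\mathfrak{a}\otimes\mathfrak{a}^*$, as one verifies by pairing its second leg against the group-likes $e^{\hbar y}$. Applying this to $\mathfrak{a} = \fg_{<0}$ (with $\mathfrak{a}^* = \fg^*(\CO)$) gives $R_1 = \exp(\hbar\gamma_<^{21})$, and to $\mathfrak{a} = \fg(\CO)$ (with $\mathfrak{a}^* = \fg^*_{<0}$) gives $R_2 = \exp(-\hbar\gamma_<)$, after matching the indexing of the dual bases with \eqref{eq:half_of_gamma} and its flip.

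The main obstacle I anticipate is not any single computation but the sign-and-ordering bookkeeping in this last step: tracing how the co-opposite coproduct on $Y^*_\hbar(\fd)$ together with the anti-involution $\sigma$ used in the double theorem conspire to place the minus sign on the $\gamma_<$-factor and to fix the order $R = \exp(\hbar\gamma_<^{21})\exp(-\hbar\gamma_<)$ rather than its reverse. As an independent consistency check I would verify property (1) directly on the generators $x \in \fg(\CO)$ using $\Delta_\hbar^\gamma(x) = \exp(\hbar\gamma_<)(x\otimes 1 + 1\otimes x)\exp(-\hbar\gamma_<)$ and the analogous formula for $\Delta_\hbar^{\gamma,\textnormal{op}}$, since conjugation by $R$ reduces to commuting $x$ past $\gamma_<$ and $\gamma_<^{21}$, which is controlled by the bracket of $\fd(\CK)$; alternatively, one could transport the standard canonical $R$-matrix of the cotangent double $U_\hbar(\fd(\CK))$ along the twist of the previous theorem, for which $\gamma_<$ and $\gamma_<^{21}$ are precisely the two mode-halves of the full canonical tensor of $\fg(\CK)\otimes\fg^*(\CK)$.
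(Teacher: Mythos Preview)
Your route is genuinely different from the paper's. You want to exhibit $R$ as the canonical element of the Drinfeld double and then read off its explicit form by factoring the pairing into the two blocks $U(\fg(\CO))\times S(\fg^*_{<0})$ and $S(\fg^*(\CO))\times U(\fg_{<0})$, so that (1)--(3) follow from general quantum-double theory. The paper, by contrast, never invokes the canonical-element description: it verifies (1) by hand on generators --- for $x\in\fg(\CK)$ this is immediate from the conjugation formula $\Delta_\hbar^\gamma(x)=e^{\hbar\gamma_<}\Delta_0(x)e^{-\hbar\gamma_<}$, while for $f\in S(\fg^*(\CK))$ the paper passes through the tensors-to-maps identification $a\otimes f\mapsto\langle f,-\rangle a$ and a short BCH-type manipulation --- and then derives (2) from the cocycle identities for $\exp(\hbar\gamma_<)$ established in \cite[Lemma~3.6.2]{ANyangian}, with (3) formal. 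The paper's argument is thus tailored to the specific shape $e^{\hbar\gamma_<^{21}}e^{-\hbar\gamma_<}$: the point of that shape is precisely that the inner $e^{\pm\hbar\gamma_<}$ cancel against the coproduct, leaving $e^{\hbar\gamma_<^{21}}\Delta_0 e^{-\hbar\gamma_<^{21}}$, which is visibly the opposite coproduct.

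Your approach is appealing, but the difficulty you flag as ``sign-and-ordering bookkeeping'' is more serious than that. Your reproducing-kernel computation gives the two block canonical elements as $\exp(\hbar\gamma_<)$ and $\exp(\hbar\gamma_<^{21})$, both with \emph{plus} signs, so the factored canonical element you produce is $\exp(\hbar\gamma_<)\exp(\hbar\gamma_<^{21})$ (or the reverse order), with $\hbar$-linear term $\gamma_<+\gamma_<^{21}$. The formula to be proved has $\hbar$-linear term $\gamma_<^{21}-\gamma_<$. Neither the co-opposite on $Y_\hbar^*(\fd)$ nor the anti-involution $\sigma$ from the preceding theorem produces this minus by itself; to reconcile the two expressions you would have to actually compute the Hopf pairing on mixed PBW words and track how the non-cocommutativity of $\Delta_\hbar^\gamma$ on $U(\fg(\CO))$ twists the dual basis --- which is roughly as much work as the paper's direct check. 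Your proposed consistency test (verify (1) on generators via the conjugation formula) \emph{is} the paper's proof, so in the end your argument would collapse to it rather than provide an independent derivation.
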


\begin{proof}
For \(x \in \fg(\CK)\) we have
\begin{equation}
    \Delta_\hbar^\gamma(x) = \exp(\hbar\gamma_{<})(x \otimes 1 + 1 \otimes x)\exp(-\hbar\gamma_<),
\end{equation}
so its obvious that \(\Delta_\hbar^{\gamma,\textnormal{op}}(x) = R\Delta_\hbar^\gamma(x)R^{-1}\) holds. Similarly, we obtain the result for \(x \in \fg_{<0}\).

On the other hand, observe that for all \(x,y \in \fg(\CK)\) we have that
\begin{equation}\label{eq:opposite_multiplication_for_exp}
    e^{\hbar y}e^{\hbar x} = e^{\hbar x}e^{-\hbar \textnormal{ad}(x)}e^{\hbar y} = e^{\hbar\textnormal{ad}(y)}e^{\hbar x}e^{\hbar y}.    
\end{equation}
holds. Furthermore, the map
\begin{equation}\label{eq:tensors_to_maps}
    U(\fg(\CK))[\![\hbar]\!] \otimes_{\C[\![\hbar]\!]} S(\fg^*(\CK))[\![\hbar]\!] \to \textnormal{End}(U(\fg(\CK))[\![\hbar]\!]\,,\qquad a \otimes f \to \langle f,-\rangle a 
\end{equation}
maps \(\exp(\hbar\gamma_{<})\) to the natural map \(U(\fg(\CK)) \to U(\fg(\CO))\) of the projection \(a \mapsto a_{\ge 0}\). Similarly, \(\exp(\hbar\gamma_{<}^{21})\) is identified with the natural map \(a \mapsto a_{< 0}\). Using this, as well as the compatibility of maps of the form \eqref{eq:tensors_to_maps} with products (see \cite[Lemma 3.4.]{ANyangian}), we have  
\begin{equation}
    \begin{split}
        \langle R\Delta_\hbar^\gamma(f)R^{-1},e^{\hbar x}\otimes e^{\hbar y}\rangle &= \langle (e^{-\hbar \textnormal{ad}(y_{\ge0})} \otimes e^{\hbar\textnormal{ad}(x_{< 0})})\Delta_\hbar^\gamma(f),e^{\hbar x} \otimes e^{\hbar y}\rangle 
        \\& = \langle \Delta_\hbar^\gamma(f),e^{\hbar \textnormal{ad}(y_{\ge0})}e^{\hbar x} \otimes e^{-\hbar\textnormal{ad}(x_{<0})}e^{\hbar y}\rangle.
    \end{split}
\end{equation}
For \(x,y \in \fg(\CO)\) and \(x,y \in \fg_{<0}\) the last identity is equal to
\begin{equation}
    \langle f,e^{\hbar y}e^{\hbar x}\rangle = \langle\Delta_\hbar^{\gamma,\textnormal{op}}(f),e^{\hbar x} \otimes e^{\hbar y}\rangle
\end{equation}
under consideration of \eqref{eq:opposite_multiplication_for_exp}. Since \(R\Delta_\hbar^\gamma(f)R^{-1} \in S(\fg_{<0}^*)[\![\hbar]\!]\otimes_{\C[\![\hbar]\!]}S(\fg_{<0}^*)[\![\hbar]\!]\) and \(R\Delta_\hbar^\gamma(f)R^{-1} \in S(\fg^*(\CO))[\![\hbar]\!]\otimes_{\C[\![\hbar]\!]}S(\fg^*(\CO))[\![\hbar]\!]\) for \(f \in S(\fg_{<0}^*)[\![\hbar]\!]\)
and \(f \in S(\fg^*(\CO))[\![\hbar]\!]\) respectively, we can deduce that \(R\Delta_\hbar^\gamma(f)R^{-1} = \Delta^{\gamma,\textnormal{op}}_\hbar(f)\) holds in both cases.

The cocycle conditions follow from the respective identities for \(\exp(\hbar \gamma_{<0})\) from \cite[Lemma 3.6.2.]{ANyangian}. The Yang-Baxter equation \emph{3.}\ follows from the cocycle conditions. 
\end{proof}

\subsubsection{Categorical state-operator correspondence}\label{subsec:state-op}
Recall that the category $DY_\hbar (\fd)\Mod$ is the Drinfeld center of the monoidal category $Y_\hbar (\fd)\Mod$. Objects in the Drinfeld center are given by a pair $(M, c)$, where $M$ is a module of $Y_\hbar (\fd)$ and $c$ is a functorial isomorphism
\be
c\colon N\otimes M \stackrel{\cong}\to M\otimes N,\qquad \forall N,
\ee
called a half-braiding. 

Let us also recall that for any smooth object $M\in Y_\hbar (\fd)\Mod$, the meromorphic spectral $R$-matrix provides a functorial isomorphism
\be
c(z)\colon  N\otimes \tau_z (M)\stackrel{\cong}\to \tau_z(M)\otimes N, \qquad \forall N.
\ee
Therefore, the pair $(\tau_z(M), c(z))$ for $z\ne 0$ provides a one-parameter family of objects in the Drinfeld-center, and consequently a one-parameter family of modules of $DY_\hbar (\fd)$. To specify this module, we need to specify the action of $Y^*_\hbar (\fd)^{\textnormal{co-op}}$. The following is clear.

\begin{Lem}

 Let $M$ be a smooth module of $Y_\hbar (\fd)$.   The formula
    \be
x*\tau_z(m)=\langle R(z)(1\otimes m), x\otimes 1\rangle
    \ee
    defines an action of $Y^*_\hbar (\fd)^{\textnormal{co-op}}$ on $M[z,z^{-1}]$. 
    
\end{Lem}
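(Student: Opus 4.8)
The plan is to read the formula as the composite of the given $Y_\hbar(\fd)$-module structure on $M$ with the map $\rho(z)\colon x\mapsto(\langle-,x\rangle\otimes\mathrm{id})R(z)$ obtained by pairing the first tensor leg of the spectral $R$-matrix against $x$; the assertion then reduces to the claim that $\rho(z)$ is an algebra homomorphism out of $Y^*_\hbar(\fd)^{\textnormal{co-op}}$. Concretely, writing $R(z)=\sum_i a_i(z)\otimes b_i(z)$ with $a_i(z),b_i(z)\in Y^\circ_\hbar(\fd)\lpp z^{-1}\rpp$, the definition unwinds to $x*\tau_z(m)=\sum_i\langle a_i(z),x\rangle\,b_i(z)\cdot m$, where $\langle-,-\rangle$ is the canonical pairing $Y_\hbar(\fd)\otimes Y^*_\hbar(\fd)\to\C\lbb\hbar\rbb$ and $b_i(z)\cdot m$ denotes the given action on $M$; by $\C[z,z^{-1}]\lbb\hbar\rbb$-linearity it suffices to treat the generators $\tau_z(m)$.

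Before checking multiplicativity I would settle well-definedness, i.e.\ that the right-hand side lies in $M[z,z^{-1}]$ and not merely in a completion; this is the one place where smoothness of $M$ genuinely enters. From the explicit form $R(z)=\exp(\hbar\gamma_{<,\textnormal{sing}}^{21}(z))\exp(-\hbar\gamma_{<,\textnormal{sing}}(-z))$ of \eqref{eq:spectral_Rmatrix_of_Yangian} together with \eqref{eq:gamma_sing}, the modes $I_{a,i},I^a_j$ contributing to the coefficient of any fixed power of $z$ at any fixed order in $\hbar$ are bounded. Since a fixed $x$ pairs nontrivially with only finitely many modes in the first leg, and a fixed smooth vector $m$ is annihilated by all sufficiently high modes in the second leg, only finitely many powers of $z^{\pm1}$ survive, so the result is a Laurent polynomial in $z$.

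The heart of the matter is the homomorphism property $(xy)*\tau_z(m)=x*(y*\tau_z(m))$, where $xy$ is the product in $Y^*_\hbar(\fd)^{\textnormal{co-op}}$, which coincides with that of $Y^*_\hbar(\fd)$. Expanding the composite using two independent copies of $R(z)$ gives
\be
x*(y*\tau_z(m))=\sum_{i,j}\langle a_i(z),x\rangle\,\langle a_j(z),y\rangle\,(b_i(z)b_j(z))\cdot m,
\ee
which is exactly $R^{13}(z)R^{23}(z)$ evaluated by pairing legs $1,2$ against $x,y$ and acting with leg $3$ on $m$. The spectral form of the cocycle identity $(\Delta^\gamma_\hbar\otimes1)R(z)=R^{13}(z)R^{23}(z)$ from Theorem \ref{thm:Rmat_double}(2) rewrites this as $\sum\langle\Delta^\gamma_\hbar(a),x\otimes y\rangle\,b\cdot m$, and the duality $\langle a,xy\rangle=\langle\Delta^\gamma_\hbar(a),x\otimes y\rangle$ between the multiplication of $Y^*_\hbar(\fd)$ and the comultiplication $\Delta^\gamma_\hbar$ of $Y_\hbar(\fd)$ (established in the Proposition above identifying $Y^*_\hbar(\fd)\cong Y_\hbar(\fd)^*$) collapses it to $\sum\langle a,xy\rangle\,b\cdot m=(xy)*\tau_z(m)$. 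Unitality $1*\tau_z(m)=\tau_z(m)$ follows in the same fashion from the counit axiom $(\epsilon\otimes1)R(z)=1$, which is immediate from the exponential presentation of $R(z)$.

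I expect the only genuine obstacle to be bookkeeping rather than structure. First, I must verify that the cocycle identity of Theorem \ref{thm:Rmat_double}(2) persists in its spectral incarnation with the \emph{same} spectral parameter $z$ on both $R^{13}(z)$ and $R^{23}(z)$; this can be read off from the exponential presentation of $R(z)$ or cited from \cite{ANyangian}. Second, I must check that the $\hbar$-adic sums remain convergent throughout, so that the leg-by-leg pairings and reindexings are legitimate inside the relevant completed tensor products. The co-opposite plays no role in this computation, since it alters only the coalgebra structure while the action sees exclusively the (unchanged) algebra structure of $Y^*_\hbar(\fd)$.
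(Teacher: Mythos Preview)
Your proof is correct and follows the same approach the paper alludes to: the paper's own proof is just a two-sentence pointer to smoothness (for well-definedness) and to the copseudotriangular argument of \cite{EK3} (for multiplicativity), and you have unpacked exactly that argument. One small correction: the cocycle identity you need is the \emph{spectral} one, namely the pseudotriangular axiom (T2) with $z_1=0$, i.e.\ $(\Delta_\hbar^\gamma\otimes 1)R(z)=R^{13}(z)R^{23}(z)$, rather than Theorem~\ref{thm:Rmat_double}(2), which concerns the non-spectral $R$-matrix of the double; you flag this yourself, and indeed it is precisely what \cite{ANyangian} (and axiom (T2) in Section~\ref{subsec:EKfac}) provides.
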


\begin{proof}

    Clearly, this is well defined thanks to the smoothness of $M$. The fact that this defines an algebra action follows from the same argument as in the copseudotriangular structure \cite{EK3}. 
    
\end{proof}

By \cite[Section 7.14]{etingof2016tensor}, the cocycle condition of $R(z)$ implies that the action of $Y^*_\hbar (\fd)^{\textnormal{co-op}}$ and $Y_\hbar^\circ(\fd)$ combine into an action of $DY_\hbar^\circ(\fd)$ on $M[z,z^{-1}]$. Here, \(Y_\hbar^\circ(\fd) \subset Y_\hbar(\fd)\) is the dense Hopf subalgebra generated by \(\fd[t]\) from Remark \ref{rem:dense_subalgebra} and its double \(DY_\hbar^\circ(\fd)\) is the dense Hopf subalgebra of \(DY_\hbar(\fd)\) generated by \(\fd[t,t^{-1}]\). This defines a functor from smooth modules of $Y_\hbar (\fd)$ to modules of $DY_\hbar^\circ (\fd)$. We call this functor $\CY (-, z)$.

\begin{Prop}
    There is a map of Hopf algebras
    \be
DY_\hbar^\circ (\fd)\longrightarrow Y_\hbar (\fd)\lpp z^{-1}\rpp,
    \ee
    such that $\CY (-, z)$ is the restriction functor along the algebra map.
    
\end{Prop}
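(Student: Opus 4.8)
The plan is to construct $\rho(z)$ separately on the two Hopf subalgebras $Y_\hbar^\circ(\fd)$ and $Y^*_\hbar(\fd)^{\textnormal{co-op}}$ that generate $DY_\hbar^\circ(\fd)$, and then glue them. On $Y_\hbar^\circ(\fd)$ I would take $\rho(z)$ to be the spectral translation $\tau_z$, sending $a_n = a\otimes t^n \mapsto a\otimes(t+z)^n = \sum_{k}\binom{n}{k}z^{n-k}a_k$. The reason one must pass to the dense subalgebra generated by $\fd[t]$ is precisely that this translation is polynomial, hence well defined, only on $\fd[t]$; moreover it produces a polynomial in $z$, which lies in $Y_\hbar(\fd)\lpp z^{-1}\rpp$. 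On $Y^*_\hbar(\fd)^{\textnormal{co-op}}$ I would set $\rho(z)(x) = (\lag x,-\rag\otimes\mathrm{id})R(z)$, the contraction of the spectral $R$-matrix \eqref{eq:spectral_Rmatrix_of_Yangian} against its first tensor leg; the explicit form \eqref{eq:gamma_sing} of $\gamma_{<,\mathrm{sing}}$ shows this contraction lands in $Y_\hbar(\fd)\lpp z^{-1}\rpp$ as well.

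Second, I would check that $\rho(z)$ is a morphism of Hopf algebras. That $\tau_z$ is one follows from the translation invariance of $\gamma(t_1,t_2)=C_\fd/(t_1-t_2)$, which makes $\gamma_<$ invariant under the simultaneous shift $\tau_z\otimes\tau_z$ and hence renders $\tau_z$ compatible with $\Delta_\hbar^\gamma$. That $\rho(z)|_{Y^*_\hbar(\fd)}$ is simultaneously an algebra and a coalgebra map is the content of the two cocycle identities for $R(z)$, i.e.\ the spectral analogues of part (2) of Theorem \ref{thm:Rmat_double}: contracting $(\Delta_\hbar^\gamma\otimes 1)R(z)=R^{13}(z)R^{23}(z)$ against $x$ in the first two legs gives multiplicativity $\rho(z)(xy)=\rho(z)(x)\rho(z)(y)$, while the dual identity $(1\otimes\Delta_\hbar^\gamma)R(z)=R^{13}(z)R^{12}(z)$ gives comultiplicativity, the passage to the co-opposite coproduct on $Y^*_\hbar(\fd)$ being exactly what matches the conventions.

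The crux will be the compatibility of the two pieces with the mixed relations of the bicrossed product. I would show that the images $\rho(z)(Y_\hbar^\circ(\fd))$ and $\rho(z)(Y^*_\hbar(\fd)^{\textnormal{co-op}})$ inside $Y_\hbar(\fd)\lpp z^{-1}\rpp$ satisfy the straightening relation $(x_1\otimes x_2)(y_1\otimes y_2)=\sum x_1(x_2^{(1)}\rhd y_1^{(1)})\otimes(y_1^{(2)}\lhd x_2^{(2)})y_2$ defining $DY_\hbar^\circ(\fd)$. This is precisely encoded by the intertwining relation $R(z)\Delta_\hbar^\gamma(a)R(z)^{-1}=\Delta_\hbar^{\gamma,\textnormal{op}}(a)$ of Theorem \ref{thm:Rmat_double}(1): after contracting one tensor leg, this identity unwinds into the $\rhd$- and $\lhd$-twisted commutation of $\tau_z(a)$ with the $R(z)$-contractions, which is exactly the image of the bicrossed relation. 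Granting this, the defining presentation of $Y_\hbar^\circ(\fd)\bowtie Y^*_\hbar(\fd)^{\textnormal{co-op}}$ yields a well-defined algebra map $\rho(z)$, upgraded to a Hopf map by the coalgebra checks above. I expect this straightening step to be the principal obstacle, as it requires matching the adjoint $R(z)$-action against the dualized Hecke actions $\rhd,\lhd$ that define the double; everything else is essentially formal.

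Finally, I would identify restriction along $\rho(z)$ with $\CY(-,z)$. For $a\in Y_\hbar^\circ(\fd)$, restriction makes $a$ act on $M\lpp z^{-1}\rpp$ by $\tau_z(a)$, that is, exactly as on $\tau_z(M)$; for $x\in Y^*_\hbar(\fd)^{\textnormal{co-op}}$, restriction makes $x$ act by $(\lag x,-\rag\otimes\mathrm{id})R(z)$, which is the formula $x*\tau_z(m)=\lag R(z)(1\otimes m),x\otimes 1\rag$ of the lemma above. Since the $DY_\hbar^\circ(\fd)$-module structure on $\CY(M,z)$ is generated by these two actions, the two module structures agree. I would additionally remark that smoothness of $M$ guarantees the pulled-back action preserves the Laurent polynomials $M[z,z^{-1}]\subset M\lpp z^{-1}\rpp$, so that $\CY(M,z)$ is recovered on the nose.
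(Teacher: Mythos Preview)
Your proposal is correct and follows exactly the paper's (very terse) proof: define $\rho(z)$ as $\tau_z$ on $Y_\hbar^\circ(\fd)$ and as $x\mapsto\langle R(z),x\otimes 1\rangle$ on $Y_\hbar^*(\fd)^{\textnormal{co-op}}$, then appeal to the cocycle conditions of $R(z)$ for the gluing. One small correction: for the mixed bicrossed relations you should invoke the pseudotriangular axiom (T1), namely $R(z)(\tau_z\otimes 1)\Delta_\hbar^\gamma(a) = ((\tau_z\otimes 1)\Delta_\hbar^{\gamma,\textnormal{op}}(a))R(z)$, rather than Theorem~\ref{thm:Rmat_double}(1) --- the $\tau_z$ is essential since it is $\tau_z(a)$ you must commute past the contractions.
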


\begin{proof}

The map $Y^*_\hbar (\fd)^{\textnormal{co-op}}\longrightarrow Y_\hbar (\fd)\lpp z^{-1}\rpp$ is given by
\be
x\mapsto \langle R(z), x\otimes 1\rangle\in Y_\hbar (\fd)\lpp z^{-1}\rpp.
\ee
The map $Y_\hbar^\circ (\fd)\longrightarrow Y_\hbar (\fd)\lpp z^{-1}\rpp$ is given by $\tau_z$. The fact that they combine into a Hopf algebra map from $DY_\hbar^\circ (\fd)$ also follows from the cocycle conditions of $R(z)$. 

\end{proof}

\begin{Rem}
    The above algebra homomorphism can be described explicitly. Namely, on \(\fd[t,t^{-1}] \subseteq DY_\hbar(\fd)\), which generates \(DY_\hbar(\fd)\) topologically, the map is determined by the natural power series expansions
    \begin{equation}\label{eq:double_to_yangian_hom}
        x_n \mapsto \begin{cases}
            (t+z)^nx = \sum_{m = 0}^n\binom{n}{m} t^mz^{m-n}x\in \fd[t,z]\subseteq \fd[t](\!(z^{-1})\!)& \textnormal{ if }n \ge 0;\\
            \sum_{m = -n-1}^\infty \binom{m-n-1}{m}(-1)^m\frac{t^{m}}{z^{m-n}}x \in \fd[t][\![z^{-1}]\!] \subseteq \fd[t](\!(z^{-1})\!)&\textnormal{ if }n<0.
        \end{cases}
    \end{equation}
    Indeed, for \(n\ge 0\) this is simply the definition of \(\tau_z\). For \(n < 0\) on the other hand, this follows from $R (z)=\exp(\hbar\gamma_{<,\textnormal{sing}}^{21}(z))\exp(-\hbar\gamma_{<,\textnormal{sing}}(-z)) \in 1 + \hbar \gamma_{\textnormal{sing}}(z) + \hbar^2 \fd(\CK)^2Y_\hbar(\fd)$ for 
    \begin{equation}\label{eq:gamma_sing}
        \gamma_{\textnormal{sing}}(z) = \sum_{i,j = 0}^\infty(-1)^i\binom{i+j}{i}\frac{I_{a,i}\otimes I^a_{j} + I^a_{i}\otimes I_{a,j}}{z^{i+j+1}}.    
    \end{equation}
\end{Rem}

Let us comment on the structure we have on $Y_\hbar (\fd)\Mod^{\textnormal{sm}}$, the category of smooth modules, i.e.\ those modules where for any element there exists \(n \in \mathbb{N}\) such that \(t^N\fd\) acts trivially for all \(N \ge n\). Combining the functor $\CY (-, z)$ with the obvious functor
\be
DY_\hbar^\circ (\fd)\Mod\longrightarrow \End (Y_\hbar^\circ (\fd)\Mod),
\ee
we obtain a functor (with the same name)
\be
\CY(-, z): Y_\hbar (\fd)\Mod^{\textnormal{sm}}\longrightarrow \End (Y_\hbar^\circ (\fd)\Mod).
\ee
This is a $z$-dependent functor satisfying
\be
\CY (M, z)\CY (N, w)P\cong \CY (\CY (M, z-w)N, w) P,\qquad \CY (M, z)N\cong \tau_z \CY (N, -z)M.
\ee
These are the categorical analogs of OPEs. We see that $Y_\hbar (\fd)\Mod^{\textnormal{sm}}$ is a categorical analog of a vertex algebra, albeit with a regular OPE. Unlike ordinary vertex algebras, in the categorical situation, locality consists of extra data (that of braiding and associativity) rather than a condition. These data are packaged in the statement that $\CY (-, z)$ is a $z$-dependent monoidal functor from $Y_\hbar (\fd)\Mod^{\textnormal{sm}}$ to $\CZ (Y_\hbar (\fd)\Mod^{\textnormal{sm}})$. 

\begin{Rem}
    In the above we used the dense Hopf subalgebra $Y_\hbar^\circ(\fd) \subset Y_\hbar(\fd)$ generated by \(\fd[t]\). We can avoid doing that and replace $M[z,z^{-1}]$ by the formal Laurent serise $M\lpp z\rpp$. This will admit an action of $DY_\hbar (\fd)$ since the image of $Y^*_\hbar (\fd)^{\textnormal{co-op}}\to Y_\hbar(\fd)\lpp z^{-1}\rpp$ belongs to $Y_\hbar(\fd)[z,z^{-1}]$. This makes $\CY(M, z)$ closer to a state-operator correspondence in the world of vertex algebras.  
\end{Rem}

\subsection{Central extension}\label{subsec:centralext}

\subsubsection{Classical central extensions}\label{subsubsec:classicalcentral}

We now want to investigate central extensions of \(\fd(\CK)\) similar to affine Kac-Moody algebras. There are two natural invariant bilinear forms on \(\fd\): the canonical pairing $\kappa_0$ of $\fg$ and $\fg^*$ extended by zero to a bilinear form on \(\fd\) and the Killing form of \(\fd\), which is degenerate and can is twice of the extension by zero of the Killing form \(\kappa_\fg\) of \(\fg\). Therefore, for any \(\ell \in \C\) we can consider the non-degenerate invariant bilinear form $\kappa_\ell\coloneqq \kappa_0+\ell\kappa_\fg$. Using this form, we define the central extension $\wh \fd_\ell=\fd (\CK)\oplus \C c_\ell$ in the usual manner by
\begin{equation}
    [xt^n,yt^m] = [x,y]t^{n+m} + n\delta_{n+m,0}\kappa_\ell(x,y)c_\ell
\end{equation}
for \(x,y \in \fd\) and \(n,m\in\mathbb{Z}\).

Similarly, the Lie algebra $\fd (\CK)$ also admits two natural derivations. We have already used the usual time derivation $\pd_t$. The other derivation, which we denote by $\pd_\epsilon$, is given by
\be
\pd_\epsilon (xt^n)= \pd_t(\psi(x)t^n) = n\psi(x)t^{n-1},\qquad \pd_\epsilon ft^n=0 
\ee
for \(x \in \fg, f\in \fg^*\) and \(n \in \mathbb{Z}\).
Here, $\psi \colon \fg \to \fg^*$ is the natural map induced by the Killing form \(\kappa_\fg\), i.e.\ it is defined by \(\kappa_0(\psi(x),y) = \kappa_\fg(x,y)\) for all \(x,y \in \fg\). Let us note the following.

\begin{Lem}\label{lem:canonical_derivative_partial_epsilon}
    The canonical derivative of \((\fd(\CK),\delta^\gamma)\) is \(2 \partial_\epsilon\). 
\end{Lem}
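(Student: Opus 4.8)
My plan is to identify the canonical derivation of $(\fd(\CK),\delta^\gamma)$ with the infinitesimal generator of the balancing (ribbon) automorphism of the double, that is, with the classical limit of the square of the antipode of $DY_\hbar(\fd)$, and then to compute this generator explicitly from the $R$-matrix of Theorem \ref{thm:Rmat_double}. Writing $R = \exp(\hbar\gamma_<^{21})\exp(-\hbar\gamma_<) = 1 + \hbar(\gamma_<^{21}-\gamma_<) + O(\hbar^2)$, the Drinfeld element $u = m(S\otimes\mathrm{id})(R^{21})$ has classical expansion $u = 1 + \hbar\,\xi + O(\hbar^2)$ with $\xi = m(\gamma_<^{21}-\gamma_<)$, so that $S^2(x) = uxu^{-1} = x + \hbar[\xi,x]+O(\hbar^2)$. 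Since the canonical derivation is a biderivation, it is determined by its values on the topological generators $\fg(\CK)$ and $\fg^*(\CK)$, and the task reduces to computing $[\xi,-]$ (corrected by its inner Casimir part) on these and matching with $2\partial_\epsilon$.

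First I would treat $\fg^*(\CK)$: since $\fg^*$ is an abelian ideal of $\fd$ and $[\fg^*,\fg^*]=0$, every commutator contributing to the derivation of an element $f_m$ with $f\in\fg^*$ collapses, so the derivation vanishes on $\fg^*(\CK)$, matching $2\partial_\epsilon|_{\fg^*(\CK)}=0$. The heart of the argument is the value on $\fg(\CK)$. Expanding $[\xi,y_p]$ for $y\in\fg$ via $[AB,C]=A[B,C]+[A,C]B$, writing $[I_\alpha,y]=\sum_\beta f_\alpha^\beta I_\beta$ and $[I^\alpha,y]=-\sum_\beta f_\beta^\alpha I^\beta$ with $f_\alpha^\beta=\langle I^\beta,[I_\alpha,y]\rangle$, and reindexing the mode sums, the a priori infinite sum over $n\ge 0$ telescopes to the finite range $0\le a\le p-1$. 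The surviving degree-one part is an $\fg^*$-valued element at mode $p-1$ whose coefficient is the double contraction $\sum_\alpha\mathrm{ad}^*_{[I_\alpha,y]}I^\alpha$; pairing with $z\in\fg$ gives $\sum_\alpha\langle I^\alpha,[z,[I_\alpha,y]]\rangle = -\mathrm{tr}(\mathrm{ad}_z\mathrm{ad}_y) = -\kappa_\fg(y,z)$, i.e. exactly $-\psi(y)$ up to sign. Hence the derivation is proportional to $p\,\psi(y)_{p-1}=\partial_\epsilon(y_p)$, the mode shift $p\mapsto p-1$ reflecting the single factor of $t^{-1}$ in $\gamma_<$, and the two summands $I^\alpha\otimes I_\alpha$ and $I_\alpha\otimes I^\alpha$ of $C_\fd$ are expected to contribute equal copies of this contraction, producing the overall factor $2$.

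The main obstacle I anticipate is separating the genuine (degree-one) derivation from the symmetric Casimir part of $\xi$: the commutator with the Sugawara-type element $\xi$ produces both an antisymmetric piece that descends to a derivation of $\fd(\CK)$ and a symmetric, $\mathrm{ad}$-invariant (central) piece, and only after passing to the balancing element $g$ with $S^2=\mathrm{Ad}_g$—equivalently, after normal-ordering and discarding the Casimir—does one obtain a well-defined derivation valued in $\fd(\CK)$ rather than a divergent or degree-two expression. Pinning down the precise scalar $2$ and the sign therefore requires tracking the normalization of $g$ against the convention for $\delta^\gamma$ fixed in the construction of $Y_\hbar(\fd)$. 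As an independent cross-check robust to the precise definition, I would verify directly that $2\partial_\epsilon$ is a biderivation of $(\fd(\CK),\delta^\gamma)$: it is a Lie-algebra derivation by invariance of $\kappa_\fg$ (equivalently $\fg$-equivariance of $\psi$), and the coderivation identity $(\partial_\epsilon\otimes\mathrm{id}+\mathrm{id}\otimes\partial_\epsilon)\delta^\gamma = \delta^\gamma\partial_\epsilon$ follows from $\delta^\gamma(x)=[x\otimes 1+1\otimes x,\gamma_<]$ together with the same Killing-form trace identity; uniqueness of the canonical derivation among such biderivations then forces it to equal $2\partial_\epsilon$.
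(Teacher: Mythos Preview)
Your approach is far more elaborate than what the statement requires and contains a genuine error in its fallback argument. The canonical derivation of a Lie bialgebra $(\mathfrak{l},\delta)$ is \emph{defined} as the map $x\mapsto [\,,\,]\circ\delta(x)$; the paper simply evaluates this directly. One plugs in $\delta^\gamma(xt^n)=[xt_1^n\otimes 1+1\otimes xt_2^n,\,C_\fd/(t_1-t_2)]$, applies the bracket, and uses the difference-quotient identity $(t_1^n-t_2^n)/(t_1-t_2)\big|_{t_1=t_2=t}=nt^{n-1}$ to get $\sum_\alpha([I_\alpha,[I^\alpha,nxt^{n-1}]]+[I^\alpha,[I_\alpha,nxt^{n-1}]])$. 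A short structure-constant computation then identifies this with $2n\psi(x)t^{n-1}=2\partial_\epsilon(xt^n)$. No quantum input, no Drinfeld element, no normal-ordering is needed.

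Your route through the Drinfeld element of $DY_\hbar(\fd)$ is logically backwards relative to the paper: Lemma~\ref{lem:square_of_antipode} \emph{uses} the present lemma to identify $S^2$ with $\exp(\hbar\,\mathrm{ad}(\partial_\epsilon))$, not the other way around. More seriously, your ``independent cross-check'' rests on a false premise. Biderivations of $(\fd(\CK),\delta^\gamma)$ are not unique: $\partial_t$ is also a biderivation (since $\gamma$ is translation-invariant), yet $\partial_t\neq 2\partial_\epsilon$. So verifying that $2\partial_\epsilon$ is a biderivation does not pin down the canonical derivation, and you yourself concede that your primary computation does not yet determine the scalar. The fix is simply to compute $[\,,\,]\circ\delta^\gamma$ directly; the telescoping and Killing-form contraction you describe are essentially the content of that three-line calculation, but carried out at the classical level rather than extracted from a quantum expansion.
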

\begin{proof}
The canonical derivation of the Lie bialgebra \((\fd(\CK),\delta^\gamma)\) is given by
\begin{equation}
    \begin{split}
        ([,]\circ\delta)(xt^n) &= [,] \circ \left[xt^n_1 \otimes 1 + 1 \otimes xt_2^n,\frac{C_\fd}{t_1-t_2}\right] = [,] \circ \left[1 \otimes \frac{xt_2^n-xt_1^n}{t_1-t_2},C_\fd\right]
        \\&=
        \sum_{a = 1}^d\left(\left[I_{\alpha},\left[I^{\alpha},\frac{xt_1^n-xt_2^n}{t_1-t_2}\right]\right] + \left[I^{\alpha},\left[I_{\alpha},\frac{xt_1^n-xt_2^n}{t_1-t_2}\right]\right]\right)\Bigg|_{t_1 = t_2 = t} \\&= \sum_{a = 1}^d(\left[I_{\alpha},\left[I^{\alpha},nxt^{n-1}\right]\right] + \left[I^{\alpha},\left[I_{\alpha},nxt^{n-1}\right]\right])
    \end{split}
\end{equation}
for all \(x \in \fd(\CK)\) and \(n \in \mathbb{Z}\). Now, it remains to observe that
\begin{equation}
    \sum_{a = 1}^d\kappa_0([I_{\alpha},[I^{\alpha},I_b]] + [I^{\alpha},[I_{\alpha},I_b]],I_c) = \sum_{a,a' = 1}^d \left( f_{ac}^{a'}f_{a'b}^a + f_{a'c}^af_{ab}^{a'}\right) = 2\kappa_\fg(I_b,I_c)
\end{equation}
Here, we wrote \([I_{\alpha},I_b] = \sum_{c = 1}^d f_{ab}^cI_c\) and hence \([I^{\alpha},I_b] = \sum_{c = 1}^d f_{bc}^aI^c\) and used the coordinate formula \(\kappa_\fg(I_b,I_c) = \sum_{a,a' = 1}^d f_{ba}^{a'}f_{ca'}^a\) for the Killing form.
\end{proof}

It is not difficult to see that both $\pd_t$ and $\pd_\epsilon$ extend to derivations of the extension $\wh\fd_\ell$ by acting trivially on $c_\ell$. Let us write $\pd_s\coloneqq\pd_t+s\pd_\epsilon$ for \(s \in \C\). We obtain the following two-parameter family of affine Kac-Moody algebras associated to \(\fd\): 
\be
\wt\fd_{\ell, s}\coloneqq \C\pd_s\ltimes \wh \fd_\ell. 
\ee

\begin{Lem}
    The pairing $\kappa_{\ell-s}$ extends to a symmetric non-degenerate invariant bilinear form \(\beta_{\ell-s}\) on $\wt\fd_{\ell, s}$
    by putting \(\beta_{\ell-s}(xt^n,yt^m) = \delta_{n+m,-1} \kappa_{\ell -s}(x,y)\), where \(x,y \in \fd\) and \(m,n \in \Z\), as well as 
    \begin{equation}
        \beta_{\ell-s}(c_\ell,\partial_s) = 1,\beta_{\ell-s}(\partial_s,\partial_s) = 0 = \beta_{\ell-s}(c_\ell,c_\ell), 
    \end{equation}
    and \(\fd(\CK)^\bot = \C \partial_s \oplus \C c_\ell\)
\end{Lem}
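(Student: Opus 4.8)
The plan is to verify in turn the three defining properties of $\beta_{\ell-s}$: symmetry, non-degeneracy, and $\ad$-invariance. The first two are quick consequences of the fact, recorded just above, that $\kappa_{\ell-s} = \kappa_0 + (\ell-s)\kappa_\fg$ is already a symmetric, non-degenerate, invariant form on the finite-dimensional $\fd$; all the genuine content sits in invariance, and there only the interaction with the derivation $\pd_s$ is nontrivial. Symmetry is immediate: on $\fd(\CK)$ it follows from the symmetry of $\kappa_{\ell-s}$ together with $\delta_{n+m,-1}=\delta_{m+n,-1}$, while the only other nonzero values $\beta_{\ell-s}(c_\ell,\pd_s)=\beta_{\ell-s}(\pd_s,c_\ell)=1$ are symmetric by declaration. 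For non-degeneracy, note first that $c_\ell$ and $\pd_s$ are orthogonal to $\fd(\CK)$ by definition, so $\C\pd_s\oplus\C c_\ell\subseteq\fd(\CK)^\bot$; on $\fd(\CK)$ the form pairs the degree-$n$ part with the degree-$(-n-1)$ part through the non-degenerate $\kappa_{\ell-s}$, so $\beta_{\ell-s}$ is non-degenerate on $\fd(\CK)$, forcing $\fd(\CK)\cap\fd(\CK)^\bot=0$ and hence $\fd(\CK)^\bot=\C\pd_s\oplus\C c_\ell$. Since the Gram matrix $\bigl(\begin{smallmatrix}0&1\\1&0\end{smallmatrix}\bigr)$ of $\beta_{\ell-s}$ on $\C\pd_s\oplus\C c_\ell$ is invertible, testing an arbitrary radical vector first against $\fd(\CK)$ and then against $c_\ell,\pd_s$ shows the radical vanishes.

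For invariance I would check $\beta_{\ell-s}([a,b],c)=\beta_{\ell-s}(a,[b,c])$ on the generators $xt^n$ (with $x\in\fd$, $n\in\Z$), $c_\ell$, and $\pd_s$. When all of $a,b,c$ lie in $\fd(\CK)$ the central terms of the brackets pair trivially against the third $\fd(\CK)$-argument, and what remains is exactly the invariance of $\kappa_{\ell-s}$ on $\fd$. Every case containing $c_\ell$ gives $0=0$: $c_\ell$ is central, so any bracket involving it vanishes, and $\beta_{\ell-s}(c_\ell,-)$ vanishes on $\fd(\CK)\oplus\C c_\ell$ while no bracket ever produces a $\pd_s$-component. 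Likewise any case with two or more $\pd_s$-slots is trivial, since $[\pd_s,\pd_s]=0$, $\beta_{\ell-s}(\pd_s,\pd_s)=0$, and $\pd_s$ is orthogonal to $\fd(\CK)$.

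The only substantive relations are thus those with exactly one $\pd_s$-slot and two $\fd(\CK)$-slots. Writing $x=\xi+f$ with $\xi\in\fg$, $f\in\fg^*$, and extending $\psi$ by the projection $\pi_\fg\colon\fd\to\fg$ to a map $\Xi_s\colon\fd\to\fd$, $\Xi_s(x)=x+s\,\psi(\xi)$, the derivation reads $\pd_s(xt^n)=n\,\Xi_s(x)\,t^{n-1}$. Using this together with $\beta_{\ell-s}(\pd_s,c_\ell)=1$, each of the three placements of $\pd_s$ collapses, on the support $n+m=0$ of the relevant Kronecker deltas, to one of the two identities
\begin{equation}
    \kappa_{\ell-s}(\Xi_s(x),y)=\kappa_\ell(x,y),\qquad \kappa_{\ell-s}(\Xi_s(x),y)=\kappa_{\ell-s}(x,\Xi_s(y)),\qquad x,y\in\fd.
\end{equation}
Concretely, the two placements in which the central cocycle appears reduce to the first identity, and the placement with $\pd_s$ in the middle reduces to the second.

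The crux is the first identity, which is exactly the bookkeeping dictating the shift $\ell\mapsto\ell-s$ in the subscript of $\beta_{\ell-s}$, and it is the one genuinely load-bearing computation. To prove it I would expand $\kappa_{\ell-s}(\Xi_s(x),y)=\kappa_{\ell-s}(x,y)+s\,\kappa_{\ell-s}(\psi(\xi),y)$; since $\psi(\xi)\in\fg^*$ and $\kappa_\fg$ vanishes off $\fg\times\fg$, the defining relation $\kappa_0(\psi(\xi),\eta)=\kappa_\fg(\xi,\eta)$ gives $\kappa_{\ell-s}(\psi(\xi),y)=\kappa_0(\psi(\xi),y)=\kappa_\fg(\xi,\pi_\fg y)=\kappa_\fg(x,y)$, so the $s$-terms recombine as $\kappa_{\ell-s}+s\kappa_\fg=\kappa_\ell$. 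The second identity then follows for free by combining the first with the symmetry of $\kappa_\ell$. Everything else is routine manipulation of the Kronecker deltas and the orthogonality relations. Conceptually, the first identity is the statement that the extra summand $s\pd_\epsilon$ of $\pd_s$ is precisely what upgrades the level-$(\ell-s)$ form on loops to the level-$\ell$ cocycle when paired against the central element $c_\ell$, which is exactly why a single invariant form exists on all of $\wt\fd_{\ell,s}$.
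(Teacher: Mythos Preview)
Your proof is correct and follows essentially the same approach as the paper's: both isolate the only nontrivial invariance check---the one involving exactly one $\pd_s$ and two loop elements---and reduce it to the identity $\kappa_{\ell-s}(x+s\psi(\pi_\fg x),y)=\kappa_\ell(x,y)$, proved via $\kappa_0(\psi(\xi),\eta)=\kappa_\fg(\xi,\eta)$. The paper dispatches symmetry and non-degeneracy in a single sentence and treats only the placement $\beta_{\ell-s}([\pd_s,xt^n],yt^{-n})$ versus $\beta_{\ell-s}(\pd_s,[xt^n,yt^{-n}])$ for $x,y\in\fg$, whereas you spell out all three placements and handle general $x\in\fd$; this is more thorough but not a different argument.
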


\begin{proof}
    Clearly, the given rules give a well-defined non-degenerate symmetric bilinear form on \(\widetilde{\fd}\).
    We only need to check invariance. The only expressions where invariance is not already provided are of the form
    \begin{equation}
        \begin{split}
        &\beta_{\ell-s}([\partial_s,xt^n],yt^{-n}) = \beta_{\ell-s}(\partial_s (xt^n),yt^{-n}) =  n\kappa_{\ell-s}(x + s\psi(x),y) \\&= ns\kappa_0(\psi(x),y) + n(\ell-s)\kappa_\fg(x,y) = n\ell\kappa_\fg(x,y)
        \end{split}
    \end{equation}
    for \(x,y\in\fg\) and \(n\in \Z\). Namely, the consistency for invariance now holds because of
    \begin{equation}
        \begin{split}
        \beta_{\ell-s}(\partial_s,[xt^n,yt^{-n}]) = \beta_{\ell-s}(\partial_s,c_\ell)n\kappa_\ell(x,y) = n\kappa_\ell(x,y)=n\ell\kappa_\fg(x,y).
        \end{split}
    \end{equation}
    This conlcudes the proof.
\end{proof}

The Lie algebra $\wt\fd_{\ell, s}$ admits two complementary Lagrangian Lie subalgebras
\be\label{eq:extlagrangian}
\widetilde\fd_{\ell, s, <0}\coloneqq t^{-1}\fd[t^{-1}]\oplus \C c_\ell, \qquad \widetilde\fd_{\ell,s, \geq 0}\coloneqq \C\pd_s\ltimes \fd\lbb t\rbb,
\ee
and therefore we obtain a Lie bialgebra structure on $\widetilde\fd_{\ell, s}$ for every pair $(\ell, s) \in \C^2$.

\subsubsection{Qantization of the central extension}\label{subsubsec:centquant}

We now consider the problem of quantizing the Lie bialgebra structures from \eqref{eq:extlagrangian}. Unfortunately, our quantization strategy cannot be applied for arbitrary $(\ell,s)$, since the bilinear form $\kappa_{\ell-s}$ is not equal to the original bilinear form on $\fd (\CK)$. However, in the following, we outline a quantization for $\ell=s$, which is sufficient since in the other cases the quantizations are not immediately related to central extensions of the cotangent double Yangian.

Let us write $\wt\fd_{\ell, <0}\coloneqq\wt\fd_{\ell,\ell, <0}$, $\wt\fd_{\ell,\ge 0}\coloneqq\wt\fd_{\ell,\ell, \ge 0}$, and $\wt\fd_{\ell}\coloneqq\wt\fd_{\ell,\ell}$. 
There are two ways one can adjust our construction of $Y_\hbar^*(\fd)$ to the centrally extended picture. One way is similar to the work of \cite{EK4}, where we find a quantization of $\pd_\epsilon$ and define a primitive central extension of $Y_\hbar^*(\fd)$ by hand. We will follow a different method, which again uses our double quotient quantization idea. We will show in the next section that the two provide the same answer. 

Consider the usual affine Kac-Moody algebra
\begin{equation}\label{eq:KMLie}
    \wt\fg \coloneqq\C \pd_t\ltimes \lp \fg (\CK)\oplus \C c_\epsilon\rp    
\end{equation}
whose bracket is given
\be
 [xt^n, yt^m]=[x, y]t^{n+m}+ n\delta_{m+n,0}\kappa_\fg (x, y)c_\epsilon, 
\ee
for all \(x,y\in\fg\) and \(n,m\in\Z\).
Let $T^*\wt\fg = \widetilde\fg \ltimes \widetilde\fg^*$ be the cotangent Lie algebra associated to $\wt\fg$. It is easy to see that $T^*\wt\fg$ is isomorphic to the semi-direct product
    \be
(\C\pd_t\oplus \C\pd_\epsilon)\ltimes (\fd (\CK)\oplus \C c_t\oplus \C c_\epsilon), 
    \ee
    where \(c_t\) and \(\partial_\epsilon\) are dual to \(\partial_t\) and \(c_\epsilon\) respectively and the only new commutation relations are determined 
    \be
 [xt^n, ft^m]=[x, f]t^{m+n}+n\delta_{m+n,0}\kappa_0 (x, f) c_t 
    \ee
for \(x\in\fg\), \(y \in \fg^*\) and \(n,m\in\Z\).

In particular, the Lie algebra $\wt\fd_\ell$ is a quotient of a subalgebra of $T^*\wt\fg$. Namely, we have
\begin{equation}
    \widetilde\fd_\ell \cong \left(\C(\pd_t+\ell\pd_\epsilon)\ltimes (\fd (\CK)\oplus \C c_t\oplus \C c_\epsilon)\right)/\C(\ell c_t - c_\epsilon)
\end{equation}
Observe that \(\C(\ell c_t - c_\epsilon)\) is the kernel of the bilinear form $\kappa_0$ restricted to the subalgebra under consideration. 

From the splitting
\be
\wt\fg= \lp \C \pd_t\ltimes\fg (\CO)\rp \bigoplus \lp \fg_{<0}\oplus \C c_\epsilon\rp, 
\ee
we get a Lagrangian splitting
\be
T^*\wt\fg= \underbrace{\lp \lp \C \pd_t\oplus \C\pd_\epsilon \rp\ltimes\fd (\CO)\rp}_{= N^*\widetilde\fg_{\ge 0}}\bigoplus \underbrace{\lp \fd_{<0}\oplus \C c_\epsilon\oplus \C c_t\rp}_{= N^*\wt\fg_{<0}}. 
\ee
Here, we identified the first factor with the conormal \(N^*\wt\fg_{\ge 0}\) of $\wt\fg _{\ge0}\coloneqq\C \pd_t\ltimes\fg (\CO) $ and the second factor with the conormal \(N^*\wt\fg_{<0}\) of $\wt\fg_{<0}\coloneqq\fg_{<0}\oplus \C c_\epsilon$. The general quantization scheme for Lie algebra decompositions yields quantum groups
\be
U_\hbar (N^*(\wt\fg_{<0})) \quad\textnormal{ and }\quad U_\hbar (N^*(\wt\fg_{\ge0})),
\ee
which are dual to each other, as well as the double \(U_\hbar (T^*\wt\fg)\), which is a quasitriangular Hopf algebra that is twist equivalent to $DU (\wt\fg)\lbb\hbar\rbb$.  

\begin{Lem}
    The pairing with $c_t$ coincides with the linear function on $U(\wt\fg)$ given by
    \be
        U(\wt\fg)=\C[\pd_t]\ltimes U(\wh\fg)\to \C[\pd_t]\to \C
    \ee
    sending $\pd_t$ to $1$ and $\pd_t^n$ to $0$. Therefore, $c_t$ and $ c_\epsilon$ are both central and primitive in $U_\hbar (T^*\wt\fg)$. 
\end{Lem}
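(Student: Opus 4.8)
The plan is to unravel what the element $c_t$ corresponds to under the duality between $U_\hbar(N^*\wt\fg_{\ge 0})$ and $U_\hbar(N^*\wt\fg_{<0})$. Recall that $c_t$ lives in the conormal factor $N^*\wt\fg_{<0}$, which is built as the dual of $U(\wt\fg_{\ge 0})$, and that $c_t$ is the coordinate dual to $\partial_t \in \wt\fg_{\ge 0}$. Concretely, I would first identify the canonical basis-and-dual-basis pairing: writing an arbitrary element of $U(\wt\fg)$ via the PBW decomposition $U(\wt\fg) = \C[\partial_t] \ltimes U(\wh\fg)$, the pairing $\langle c_t, -\rangle$ must pick out precisely the coefficient of $\partial_t$ in the degree-one part. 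This is exactly the content of the first assertion, so the first step is to verify the claimed formula $\langle c_t, \partial_t\rangle = 1$, $\langle c_t, \partial_t^n\rangle = 0$ for $n\neq 1$, and $\langle c_t, U(\wh\fg)\text{-part}\rangle = 0$, directly from the definition of the dual pairing on $S(\hbar \wt\fg^*)$ and the fact that $c_t \in \wt\fg^*$ is dual to $\partial_t \in \wt\fg$.

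Next I would establish primitivity. The comultiplication on the $S(\fp_-^*)$-factor is defined as dual to the Baker–Campbell–Hausdorff series (equivalently, dual to the multiplication on $U(\wt\fg_{\ge 0})$), so an element $\xi \in \wt\fg^*$ is primitive precisely when its dual pairing vanishes on products of two or more elements of the augmentation ideal and is linear on single group-like generators $e^{\hbar x}$. Since $c_t$ is dual to the Lie-algebra element $\partial_t$ (and not to any higher PBW monomial), the pairing $\langle c_t, e^{\hbar x} e^{\hbar y}\rangle$ expands via BCH as $\langle c_t, \hbar(x+y) + O(\hbar^2)\rangle$, and the degree reasons force $\Delta^\gamma_\hbar(c_t) = c_t \otimes 1 + 1\otimes c_t$. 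The same argument applies verbatim to $c_\epsilon$, which is dual to $\partial_\epsilon$ under the pairing on the opposite factor. Thus both are primitive.

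For centrality, I would use the fact that $\partial_t$ and $\partial_\epsilon$ are \emph{central} derivations at the classical level --- more precisely, $c_t$ and $c_\epsilon$ span $\fd(\CK)^\bot$ up to the relations in play, and the cotangent construction $T^*\wt\fg = \wt\fg \ltimes \wt\fg^*$ places them in the center because their dual elements $\partial_t,\partial_\epsilon$ act by the adjoint action of $\wt\fg$ only through the bracket, which sends $\partial_t \mapsto \partial_t(\cdot)$; the coadjoint dual statement is that $c_t, c_\epsilon$ are $\wt\fg$-invariant. Concretely, in $U_\hbar(T^*\wt\fg) = U(\wt\fg)[\![\hbar]\!] \ltimes S(\wt\fg^*)[\![\hbar]\!]$, the commutator $[x, c_t]$ for $x \in \wt\fg$ is computed by the coadjoint action of $x$ on the functional $c_t$, which measures $\langle c_t, [x,-]\rangle = \langle \partial_t^*, \mathrm{ad}_x(-)\rangle$; since $\mathrm{ad}_x$ never produces a $\partial_t$-component (the $\partial_t$-grading is preserved and $\partial_t$ is not in the image of any bracket with $\wt\fg$-elements that would feed back into the $\partial_t$-slot), this vanishes, giving centrality.

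The step I expect to be the genuine obstacle is making the centrality computation clean rather than relying on hand-waving about gradings: one must check carefully that the coadjoint action of every generator of $U(\wt\fg)$ on the functional $c_t$ is zero, including the subtle cross-terms coming from the semidirect-product structure $\C[\partial_t]\ltimes U(\wh\fg)$ and the $\hbar$-rescaling built into the construction. I would organize this by reducing centrality to the single claim that $\langle c_t, [x, w]\rangle = 0$ for all $x \in \wt\fg$ and all PBW monomials $w$, which follows because $[x,w]$ can contain a $\partial_t$-linear term only if $w$ already did and $x$ acts trivially on that slot --- i.e.\ $\partial_t$ is a genuine central/outer derivation generator whose coefficient is untouched by inner brackets. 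Once that single pairing identity is in place, both centrality and the earlier linear-functional description fall out simultaneously, so I would prove that identity first and deduce everything else from it.
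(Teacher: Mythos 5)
The paper offers no proof of this lemma at all --- it is stated as evident and the text moves directly to the quotient $U_\hbar(T^*\wt\fg)/(\ell c_t-c_\epsilon)$ --- so the only question is whether your argument is sound, and in substance it is. The organization in your final paragraph is exactly the right one: everything reduces to the single fact that $\wh\fg=\fg(\CK)\oplus\C c_\epsilon$ is a Lie ideal of $\wt\fg=\C\pd_t\ltimes\wh\fg$, so $[\wt\fg,\wt\fg]\subseteq\wh\fg$ and no bracket ever produces a $\pd_t$-component. Under the exponential-coordinates identification $S(\hbar\wt\fg^*)\cong U(\wt\fg)^*$, the pairing of the linear functional $c_t$ against a product of two or more elements of $\wt\fg$ is a combination of values of $c_t$ on iterated brackets; their vanishing gives the displayed formula for $\langle c_t,-\rangle$, gives $\langle c_t,uv\rangle=\langle c_t,u\rangle\epsilon(v)+\epsilon(u)\langle c_t,v\rangle$ (primitivity), and gives $\langle c_t,[x,w]\rangle=0$, i.e.\ triviality of the coadjoint action of $U(\wt\fg)$ on $c_t$, which is centrality since $S(\wt\fg^*)$ is commutative. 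The symmetric treatment of $c_\epsilon$ (central in $\wt\fg$, dual to $\pd_\epsilon$ on the other factor) is also fine.

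The one step that is wrong as written is the justification of primitivity in your second paragraph: you expand $\langle c_t,e^{\hbar x}e^{\hbar y}\rangle=\langle c_t,\hbar(x+y)+O(\hbar^2)\rangle$ and claim that ``degree reasons force'' $\Delta(c_t)=c_t\otimes 1+1\otimes c_t$. Degree reasons force nothing here: after the $\hbar^{-1}$-rescaling of the canonical pairing, the $O(\hbar^2)$ BCH terms contribute at order $\hbar$ and higher, and for a generic linear functional they do not vanish --- this is precisely why a general element of $\fg^*_{<0}$ is \emph{not} primitive in $Y_\hbar^*(\fd)$ when $\fg$ is non-abelian. What kills these terms for $c_t$ is that every BCH correction is an iterated bracket and $c_t([\wt\fg,\wt\fg])=0$, so the primitivity step must be routed through the bracket-vanishing identity, exactly as your last paragraph proposes; with that rerouting the proof is complete. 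A cleaner packaging of the same argument: because $\wh\fg$ is a Lie ideal, the projection $U(\wt\fg)=\C[\pd_t]\ltimes U(\wh\fg)\to\C[\pd_t]$ appearing in the statement is the Hopf-algebra quotient $U(\wt\fg)\to U(\wt\fg/\wh\fg)$, and $\langle c_t,-\rangle$ is the pullback along it of the functional $\pd_t^n\mapsto\delta_{n,1}$ on $\C[\pd_t]$, which is manifestly primitive and invariant on that commutative, cocommutative Hopf algebra; primitivity and centrality of $c_t$ are then inherited through the Hopf map, with no monomial-by-monomial bookkeeping.
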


In particular, we can consider the quotient $U_\hbar (T^*\wt\fg)/(\ell c_t-c_\epsilon)$, and take the subalgebra generated by $\fd (\CO), c_t$, and $\pd_t+\ell\pd_\epsilon$. Then this Hopf subalgebra is a quantization of $\wt\fd_\ell$. We denote this Hopf algebra by \(\widetilde{DY}_{\hbar,\ell}(\fd)\). Similarly, one has Hopf subalgebras $U_\hbar (\wt\fd_{\ell, <0})$ and $U_\hbar (\wt\fd_{\ell, \geq 0})$ and we write
\begin{equation}
    \widehat{Y}^*_{\hbar,\ell}(\fd) \coloneqq U_\hbar (\wt\fd_{\ell, <0}) \quad\textnormal{ and }\quad \widetilde{Y}_{\hbar,\ell}(\fd) \coloneqq U_\hbar (\wt\fd_{\ell, \ge0}).
\end{equation}
It is not difficult to see that \(
    \widehat{Y}^*_{\hbar,\ell}(\fd) \cong Y_{\hbar}^*(\fd) \otimes \C[c_\ell] \) is a central extension, \(
    \widetilde{Y}_{\hbar,\ell}(\fd) \cong \C[\partial_\ell] \ltimes Y_\hbar(\fd) \), and \(\wt{DY}_{\hbar,\ell}(\fd)\) is the double of \(\widetilde{Y}^*_{\hbar,\ell}(\fd)\). 
    We also write 
    \begin{equation}\label{eq:central_extension_of_double}
        \widehat{DY}_{\hbar,\ell}(\fd) = Y_\hbar(\fd) \otimes_{\C[\![\hbar]\!]}\widehat{Y}_{\hbar,\ell}^*(\fd)^{\textnormal{co-op}} \subset \widetilde{DY}_{\hbar,\ell}(\fd),
    \end{equation}
    which is a subalgebra and central extension of \(DY_\hbar(\fd)\).

\subsubsection{Another quantization and extension of the $R$-matrix}

In \cite{EK4}, another construction of the central extension is based on an explicit formula. Namely, one looks for a primitive quantization of $\pd_\ell$, which we will denote by the same symbol, and define a coproduct on the algebra $Y_\hbar^* (\fd)\otimes \C[c_\ell]$ by declaring that $c_\ell$ is primitive and
\be\label{eq:primitivecop}
\wt\Delta_\hbar^{\gamma, op}(x)=\exp \lp \hbar (c_\ell\otimes \pd_\ell-\pd_\ell\otimes c_\ell)/2\rp \Delta_\hbar^{\gamma, op} (x), \qquad \forall x\in Y_\hbar^* (\fd)
\ee
holds.
To apply this, we first need to find a quantization of $\pd_\ell$. The following is evident.

\begin{Lem}\label{lem:square_of_antipode}
Let $\pd_\epsilon$ be the element in $U_\hbar (N^*\wt \fg_{\geq 0}) = (Y_\hbar^*(\fd) \otimes \C[c_\epsilon,c_t])^*$ defined by
\be
\langle \pd_\epsilon, x c_\epsilon^nc_t^m \rangle=\delta_{m0}\delta_{n1}\epsilon (x), \qquad x\in Y_\hbar^* (\fd)\otimes \C[c_\epsilon, c_t].
\ee
Then $\pd_\epsilon$ is a quantization of the derivation from Section \ref{subsubsec:classicalcentral}, which was also denoted by $\pd_\epsilon$. Moreover, it satisfies \(S^2 = \exp(\hbar \textnormal{ad}(\partial_\epsilon))\).
\end{Lem}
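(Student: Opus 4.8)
The plan is to verify the two claims separately: first that $\pd_\epsilon$ as defined is a primitive element quantizing the classical derivation, and then that $S^2 = \exp(\hbar\,\textnormal{ad}(\pd_\epsilon))$. For the first claim, I would unwind the pairing that defines $\pd_\epsilon$ as an element of the dual Hopf algebra $U_\hbar(N^*\wt\fg_{\ge 0}) = (Y_\hbar^*(\fd)\otimes\C[c_\epsilon,c_t])^*$. To see primitivity, I would compute $\langle\Delta(\pd_\epsilon),\,a\otimes b\rangle = \langle\pd_\epsilon, ab\rangle$ for $a,b$ ranging over a topological generating set of $Y_\hbar^*(\fd)\otimes\C[c_\epsilon,c_t]$; the defining formula $\langle\pd_\epsilon, x c_\epsilon^n c_t^m\rangle = \delta_{m0}\delta_{n1}\epsilon(x)$ only detects the linear-in-$c_\epsilon$, $c_t$-independent part, so the product pairing splits additively exactly as $\langle\pd_\epsilon\otimes 1 + 1\otimes\pd_\epsilon,\, a\otimes b\rangle$, using that $c_\epsilon$ is primitive (from the preceding Lemma) and grouplike-unit behavior of $\epsilon$. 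That $\pd_\epsilon$ reduces mod $\hbar$ to the classical derivation of Section \ref{subsubsec:classicalcentral} follows by matching the pairing against $e^{\hbar x}$ to first order in $\hbar$ and comparing with the action on $\fd(\CK)$ described there, where $\pd_\epsilon$ sends $xt^n\mapsto n\psi(x)t^{n-1}$.

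For the identity $S^2 = \exp(\hbar\,\textnormal{ad}(\pd_\epsilon))$, the conceptual input is the general fact that in a quasitriangular Hopf algebra the square of the antipode is implemented by conjugation by the distinguished grouplike element $u = \sum S(R^{(2)})R^{(1)}$, i.e.\ $S^2(a) = u a u^{-1}$. Here I would use that $U_\hbar(T^*\wt\fg)$ is the double and hence quasitriangular, with $R$-matrix an extension of the one computed in Theorem \ref{thm:Rmat_double}. The key computation is to identify the element $u$ (or more precisely the canonical grouplike implementing $S^2$ on the relevant Hopf subalgebra) with $\exp(\hbar\,\pd_\epsilon)$ up to central factors. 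Since the $R$-matrix of the double has the explicit form $\exp(\hbar\gamma_<^{21})\exp(-\hbar\gamma_<)$, and $\gamma_<$ is built from the pairing $\gamma = C_\fd/(t_1-t_2)$, I would trace through how $u$ pairs against group elements $e^{\hbar x}$ and recognize the result as the exponential of the canonical derivation. By Lemma \ref{lem:canonical_derivative_partial_epsilon}, the canonical derivation of $(\fd(\CK),\delta^\gamma)$ is $2\pd_\epsilon$, and the standard relation between the canonical derivation $D$ of a Lie bialgebra and the square of the antipode of its quantization (namely $S^2 = \exp(\hbar D/2)$ at the level of the quantum group, coming from $u$ being a grouplike whose logarithm is $\tfrac{\hbar}{2}$ times the canonical element) yields exactly $S^2 = \exp(\hbar\,\textnormal{ad}(\pd_\epsilon))$ after accounting for the factor of $2$.

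Concretely, I would carry this out in the following order: (i) establish primitivity and the classical limit of $\pd_\epsilon$ by the dual-pairing computation above; (ii) recall $S^2(a) = uau^{-1}$ for the quasitriangular double $U_\hbar(T^*\wt\fg)$; (iii) compute $\log u$ by pairing $u$ against exponentials $e^{\hbar x}\otimes e^{\hbar y}$, using the explicit $R = \exp(\hbar\gamma_<^{21})\exp(-\hbar\gamma_<)$ and the Baker–Campbell–Hausdorff identities $e^{\hbar y}e^{\hbar x} = e^{\hbar x}e^{-\hbar\,\textnormal{ad}(x)}e^{\hbar y}$ already invoked in \eqref{eq:opposite_multiplication_for_exp}; (iv) match $\log u$ against $\hbar\pd_\epsilon$ using Lemma \ref{lem:canonical_derivative_partial_epsilon}, so that conjugation by $u$ becomes $\exp(\hbar\,\textnormal{ad}(\pd_\epsilon))$.

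The main obstacle I anticipate is step (iii)–(iv): pinning down the grouplike $u$ precisely and matching the numerical factor. The canonical derivation from Lemma \ref{lem:canonical_derivative_partial_epsilon} is $2\pd_\epsilon$, whereas the claimed exponent is $\hbar\,\textnormal{ad}(\pd_\epsilon)$ with no factor of $2$; reconciling this requires being careful about whether the relevant element is $u$, the ``ribbon''-type square root, or the product $uS(u)$, and about the $\hbar$-scaling conventions built into the $\exp(\hbar(-))$ normalization of the group elements. The cleanest route is probably to bypass the general $u$-formula and instead verify the identity directly on the topological generators $\fd[t,t^{-1}]$ of the double, computing both $S^2(x_n)$ and $\exp(\hbar\,\textnormal{ad}(\pd_\epsilon))(x_n)$ explicitly from \eqref{eq:double_to_yangian_hom} and the action $\pd_\epsilon(xt^n) = n\psi(x)t^{n-1}$, and checking they agree; density of $\fd[t,t^{-1}]$ then finishes the argument.
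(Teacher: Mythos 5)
Your conceptual ingredients are the right ones — Drinfeld's result that \(S^2\) is inner by a group-like element, Lemma \ref{lem:canonical_derivative_partial_epsilon} identifying the canonical derivative as \(2\partial_\epsilon\), and the worry about reconciling the factor of \(2\) — and these are in fact exactly the ingredients the paper's proof uses. But both of your concrete execution plans stall at the point you yourself flag, and the idea that closes the gap in the paper is absent from your proposal. The paper never computes the Drinfeld element \(u\) or its logarithm. Instead it argues by rigidity: the group-like element implementing \(S^2\) (cited from Drinfeld's almost-cocommutative paper) preserves the \(\epsilon\)- and \(\hbar\)-gradings, so conjugation by it is forced to be of the form \(\exp(\hbar\,\mathrm{ad}(P))\) for some element \(P\) of degree zero and symmetric degree one; such an exponential is completely determined by its \(\hbar\)-linear coefficient. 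That coefficient is then read off from Drinfeld's semiclassical statement relating \(S^2\) of a quantization to the canonical derivative \(D\) of the underlying Lie bialgebra, which with \(D = 2\,\mathrm{ad}(\partial_\epsilon)\) gives \(S^2 = 1 + \hbar\,\mathrm{ad}(\partial_\epsilon) + O(\hbar^2)\) — this is precisely how the paper absorbs the factor of \(2\) you were worried about. The grading argument then upgrades this semiclassical identity to the exact one, \(S^2 = \exp(\hbar\,\mathrm{ad}(\partial_\epsilon))\), with no further computation; nothing in your plan plays this role of excluding higher-order \(\hbar\)-corrections.

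Your fallback route also has a concrete defect beyond being laborious: equation \eqref{eq:double_to_yangian_hom} is the Hopf algebra map \(DY_\hbar^\circ(\fd) \to Y_\hbar(\fd)(\!(z^{-1})\!)\) induced by \(R(z)\), not a formula for the antipode, so it gives no direct access to \(S^2(x_n)\). To verify the identity on \(\fd[t,t^{-1}]\) you would first have to extract \(S\) from the only implicitly defined coproduct (via \(\phi_\lhd\), \(\phi_\rhd\)), and your proposal does not explain how to control the resulting expressions to all orders in \(\hbar\). So as written the proposal is not yet a proof; what is needed is the grading-rigidity step (or some substitute mechanism that pins down the implementing group-like element exactly from its first-order term).
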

\begin{proof}
    The fact that \(\partial_\epsilon\) is well-defined and quantizes the element of the same name from Section \ref{subsubsec:classicalcentral} is clear. Now \(S^2 = \exp(\hbar \textnormal{ad}(\partial_\epsilon))\) follows from the fact that \(2\textnormal{ad}(\partial_\epsilon)\) is the canonical derivative of \((\widetilde{\fd}_{\ell},\widetilde{\delta}^\gamma)\). Indeed, \(S^2\) is an inner automorphism defined by a group-like element (see \cite{Drinfeld_almost_cocommutative}) that preserves the \(\epsilon\) and \(\hbar\)-grading, so its an exponential of an adjoint action with respect to a degree zero element of symmetric degree one, and \(S^2 = 1 + \hbar \textnormal{ad}(\partial_\epsilon) + \hbar^2(\dots)\) holds since \(2\textnormal{ad}(\partial_\epsilon)\) is the canonical derivative; see \cite{drinfeld1986quantum}.
\end{proof}
Therefore, the derivation $\pd_t+\ell\pd_\epsilon$ provides the quantization of $\pd_\ell$. 

\begin{Cor}
    There is an equivalence between the quantization of Section \ref{subsubsec:centquant} and that of equation \eqref{eq:primitivecop}. 
\end{Cor}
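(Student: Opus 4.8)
The plan is to reduce the statement to a comparison of coproducts and then match these using the antipode-square identity of Lemma~\ref{lem:square_of_antipode}. Both constructions produce a central extension of \(Y_\hbar^*(\fd)\) on the \emph{same} underlying algebra \(Y_\hbar^*(\fd)\otimes\C[c_\ell]\) — this algebra identification is already fixed in Section~\ref{subsubsec:centquant} — with \(c_\ell\) central and primitive, with the obvious counit, and with coproduct reducing modulo \(c_\ell\) to \(\Delta_\hbar^{\gamma,\textnormal{op}}\). Hence the entire content is the comparison of the two coproducts, and this comparison is governed by the derivation \(\partial_\ell=\partial_t+\ell\partial_\epsilon\), which is dual to \(c_\ell\) under the symmetric invariant pairing \(\beta\) on \(\widetilde{\fd}_\ell\), normalized by \(\beta(c_\ell,\partial_\ell)=1\), \(\beta(\partial_\ell,\partial_\ell)=0\), and \(\fd(\CK)^\bot=\C\partial_\ell\oplus\C c_\ell\).

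First I would compute the intrinsic coproduct of Section~\ref{subsubsec:centquant} by duality. Since \(\widehat{Y}^*_{\hbar,\ell}(\fd)=U_\hbar(\widetilde{\fd}_{\ell,<0})\) is paired with \(\widetilde{Y}_{\hbar,\ell}(\fd)=\C[\partial_\ell]\ltimes Y_\hbar(\fd)\), its coproduct is dual to the multiplication of the latter, and this multiplication differs from that of \(Y_\hbar(\fd)\) only through the semidirect relation \(\partial_\ell\,y=y\,\partial_\ell+\partial_\ell(y)\) for \(y\in Y_\hbar(\fd)\). Evaluating \(\langle\widetilde{\Delta}(x),a\otimes b\rangle=\langle x,ab\rangle\) and using that \(c_\ell\) records the pairing against \(\partial_\ell\), the derivation terms \(\partial_\ell(y)\) resum into an operator acting on \(\Delta_\hbar^{\gamma,\textnormal{op}}(x)\) by multiplying one tensor leg by \(c_\ell\) and differentiating the other by \(\partial_\ell\). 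Thus the intrinsic coproduct takes the form \(\widetilde{\Delta}(x)=F\,\Delta_\hbar^{\gamma,\textnormal{op}}(x)\) for such an operator twist \(F\), so that \emph{both} constructions present the coproduct as a twist of \(\Delta_\hbar^{\gamma,\textnormal{op}}\) by \(c_\ell\) and \(\partial_\ell\), and it remains to match the two twists.

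The twist of \eqref{eq:primitivecop} is the antisymmetric \(\exp\big(\hbar(c_\ell\otimes\partial_\ell-\partial_\ell\otimes c_\ell)/2\big)\). To identify it with the intrinsic \(F\) — possibly after an inner gauge transformation implemented by \(\exp(\hbar c_\ell\partial_\ell/2)\), which is available inside \(\widetilde{DY}_{\hbar,\ell}(\fd)\) since that algebra literally contains \(\partial_\ell\) — I would invoke Lemma~\ref{lem:square_of_antipode}: both Hopf algebras satisfy \(S^2=\exp(\hbar\,\textnormal{ad}(\partial_\epsilon))\), which by Lemma~\ref{lem:canonical_derivative_partial_epsilon} equals \(\exp(\hbar D/2)\) for the canonical derivative \(D=2\partial_\epsilon\). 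Because the square of the antipode of a quantization is determined by the canonical derivative, and because requiring \(c_\ell\) to be central and primitive fixes the symmetric part of any admissible twist, the antisymmetric normalization of \eqref{eq:primitivecop} is the unique one consistent with this value of \(S^2\): any additional symmetric contribution would either alter \(S^2\) or violate the \(\hbar\)- and \(\epsilon\)-gradings. This forces \(F\) to agree with the twist of \eqref{eq:primitivecop}, identifying the two coproducts and hence the two Hopf algebras.

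The hard part will be the bookkeeping in the duality resummation: one must verify that the terms \(\partial_\ell(y)\) organize into a single clean exponential rather than a more intricate series, and in particular confirm both the coefficient \(\tfrac{1}{2}\) and the antisymmetric placement of \(c_\ell\otimes\partial_\ell\) against \(\partial_\ell\otimes c_\ell\). This is exactly the point where the symmetry of the Manin-triple pairing \(\beta\) on \(\widetilde{\fd}_\ell\) and the antipode normalization \(S^2=\exp(\hbar\,\textnormal{ad}(\partial_\epsilon))\) of Lemma~\ref{lem:square_of_antipode} must be used in tandem; concretely, the relation \(D=2\partial_\epsilon\) of Lemma~\ref{lem:canonical_derivative_partial_epsilon} is what supplies the factor \(\tfrac{1}{2}\) that distinguishes the symmetric normalization of \eqref{eq:primitivecop} from a naive one-sided twist.
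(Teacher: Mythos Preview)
The paper states this corollary without proof, treating it as immediate once Lemma~\ref{lem:square_of_antipode} has identified the quantized derivation \(\partial_\ell=\partial_t+\ell\partial_\epsilon\). Your overall strategy is sound: both constructions live on the same algebra \(Y_\hbar^*(\fd)\otimes\C[c_\ell]\) with \(c_\ell\) central and primitive, so the content is a comparison of coproducts, and computing the Section~\ref{subsubsec:centquant} coproduct by dualizing the multiplication of \(\widetilde{Y}_{\hbar,\ell}(\fd)=\C[\partial_\ell]\ltimes Y_\hbar(\fd)\) is exactly the right move.

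The gap is in your third paragraph. You try to avoid the explicit resummation by arguing that the twist \(F\) is uniquely pinned down by \(S^2=\exp(\hbar\,\textnormal{ad}(\partial_\epsilon))\) together with grading constraints. This does not work: the value of \(S^2\) is a \emph{consequence} of a Hopf structure, not a determining invariant, and distinct coproducts on the same algebra can share the same antipode square. Your assertion that ``any additional symmetric contribution would either alter \(S^2\) or violate the \(\hbar\)- and \(\epsilon\)-gradings'' is not substantiated, and there is no reason given why such constraints should single out the antisymmetric exponential in \eqref{eq:primitivecop} among all twists of the required shape. In the paper, Lemma~\ref{lem:square_of_antipode} serves only to identify the correct quantization of \(\partial_\ell\) so that formula~\eqref{eq:primitivecop} can be \emph{stated}; it is not used to prove the equivalence.

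What you should do instead is carry out the duality computation you already set up in your second paragraph. In the double-quotient scheme the coproduct on \(x\in\fd_{<0}\) is conjugation by \(\exp(\hbar\widetilde\gamma_<^{21})\), and after passing to the sub-quotient by \((\ell c_t-c_\epsilon)\) the only new contribution to \(\widetilde\gamma_<^{21}\) beyond \(\gamma_<^{21}\) is the single dual pair involving \(c_\ell\) and \(\partial_\ell\). Since \(c_\ell\) is central, this extra factor acts simply---as the derivation \(\partial_\ell\) on one tensor leg weighted by multiplication by \(c_\ell\) on the other---and the match with \eqref{eq:primitivecop} is then a short direct check rather than a delicate resummation. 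Your acknowledged ``hard part'' is thus not hard; it is the \(S^2\) shortcut that should be abandoned.
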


Another consequence of this equivalence is the following. 

\begin{Cor}
    Let $\wt R$ be the $R$-matrix of $\wt{DY}_{\hbar,\ell} (\fd)$, and let $R$ be the $R$-matrix of $DY_{\hbar} (\fd)$ which can be viewed as an element in $\widetilde{D Y}_{\hbar,\ell} (\fd)^{\otimes 2}$. Then
    \be
\wt R=\exp (\hbar \pd_\ell\otimes c_\ell/2)R\exp (\hbar \pd_\ell\otimes c_\ell/2).
    \ee
\end{Cor}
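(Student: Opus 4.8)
The statement should follow from the equivalence of the two quantizations established just above, together with the fact that the universal $R$-matrix of a Drinfeld double is the canonical element of its Manin-triple decomposition. The plan is to regard $\wt R$ as the canonical element attached to the Lagrangian splitting $\wt\fd_\ell = \wt\fd_{\ell,\geq 0}\oplus\wt\fd_{\ell,<0}$ of \eqref{eq:extlagrangian} and to factor it according to the two summands of the pairing: the part pairing $\fd(\CO)$ against $\fd_{<0}$ reproduces the $R$ of Theorem \ref{thm:Rmat_double}, while the part pairing the derivation $\pd_\ell\in\wt\fd_{\ell,\geq 0}$ against the central element $c_\ell\in\wt\fd_{\ell,<0}$ contributes a ``grading factor'' built from $\pd_\ell\otimes c_\ell$. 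Since $\langle\pd_\ell,c_\ell\rangle = 1$ and both elements are primitive, at leading order in $\hbar$ this grading contribution is exactly $\pd_\ell\otimes c_\ell$, matching the $r$-matrix of $\wt\fd_\ell$; the real content of the corollary is to pin the exact quantum form and to show it appears in the symmetric two-sided shape $\exp(\hbar\,\pd_\ell\otimes c_\ell/2)\,(-)\,\exp(\hbar\,\pd_\ell\otimes c_\ell/2)$ rather than as a one-sided factor $\exp(\hbar\,\pd_\ell\otimes c_\ell)$, with which it agrees only to leading order.

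To identify this factor I would feed in the explicit twist \eqref{eq:primitivecop}: the coproduct of $\widehat{Y}^*_{\hbar,\ell}(\fd)$ differs from that of $Y^*_\hbar(\fd)$ (with $c_\ell,\pd_\ell$ adjoined primitively) by the gauge element $\exp(\hbar(c_\ell\otimes\pd_\ell - \pd_\ell\otimes c_\ell)/2)$, and the canonical pairing element transforms accordingly. The simplification then rests on two inputs. First, $c_\ell$ is central, so $\pd_\ell\otimes c_\ell$ and $c_\ell\otimes\pd_\ell$ commute with one another and fail to commute with $R = \exp(\hbar\gamma_<^{21})\exp(-\hbar\gamma_<)$ only through the adjoint action of $\pd_\ell$ on the $\fg(\CK)$-legs. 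Second, $\pd_\ell$ is primitive and $R$ intertwines $\Delta_\hbar^\gamma$ with $\Delta_\hbar^{\gamma,\mathrm{op}}$, so $[\pd_\ell\otimes 1 + 1\otimes\pd_\ell,\,R]=0$; this yields the balance relation $(\mathrm{ad}_{\pd_\ell}\otimes 1)(R) = -(1\otimes\mathrm{ad}_{\pd_\ell})(R)$, which is exactly what trades an insertion of $\pd_\ell$ on one leg of $R$ for an insertion on the other and so forces the symmetric placement. Here Lemma \ref{lem:square_of_antipode}, $S^2 = \exp(\hbar\,\mathrm{ad}(\pd_\epsilon))$, plays the structural role of certifying that $\pd_\ell$ is the grading element controlling the Drinfeld element of the double, which is the conceptual reason the half-power $\exp(\hbar\,\pd_\ell\otimes c_\ell/2)$ (a ribbon-type square root) appears on each side.

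I expect the main obstacle to be carrying out this reorganization exactly, to all orders in $\hbar$: one must show, using centrality of $c_\ell$ and the balance relation above, that the asymmetric contributions reassemble into a single $\pd_\ell\otimes c_\ell$ split evenly across $R$, and not merely that the two expressions agree at first order. A cleaner alternative that I would keep in reserve, bypassing this bookkeeping, is to verify directly that the candidate $\wt R' = \exp(\hbar\,\pd_\ell\otimes c_\ell/2)\,R\,\exp(\hbar\,\pd_\ell\otimes c_\ell/2)$ satisfies the defining axioms of the universal $R$-matrix of $\wt{DY}_{\hbar,\ell}(\fd)$ — the intertwining property $\wt R'\,\wt\Delta(x)\,\wt R'^{-1} = \wt\Delta^{\mathrm{op}}(x)$ on the generators $\fd[t,t^{-1}]$, $c_\ell$ and $\pd_\ell$, together with the two hexagon identities — and then invoke uniqueness of the $R$-matrix of a double. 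On $\fd[t,t^{-1}]$ the intertwining reduces to Theorem \ref{thm:Rmat_double} after commuting the grading factors through $\wt\Delta$, on $c_\ell$ it is immediate from centrality, and on $\pd_\ell$ it follows from primitivity; the hexagon relations then descend from those for $R$ in Theorem \ref{thm:Rmat_double} together with the primitivity and centrality of the grading data.
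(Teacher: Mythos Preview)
Your proposal is sound and, in fact, more detailed than the paper's own treatment: the paper states this corollary without proof, simply as ``another consequence of this equivalence'' (i.e.\ the equivalence between the double-quotient quantization of Section~\ref{subsubsec:centquant} and the explicit twist~\eqref{eq:primitivecop}). Your reading of $\wt R$ as the canonical element for the extended Manin triple, together with the role of centrality of $c_\ell$ and primitivity of $\pd_\ell$, is exactly the mechanism the paper is implicitly invoking.

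One comment on execution. Your ``balance relation'' $(\mathrm{ad}_{\pd_\ell}\otimes 1)(R)=-(1\otimes\mathrm{ad}_{\pd_\ell})(R)$ is correct, but by itself it does not immediately convert a one-sided factor $\exp(\hbar\,\pd_\ell\otimes c_\ell)$ into the symmetric one: conjugating $R$ by $\exp(\hbar\,\pd_\ell\otimes c_\ell/2)$ produces terms of the form $\mathrm{ad}(\pd_\ell)^n R^{(1)}\otimes c_\ell^n R^{(2)}$, and the balance relation moves $\mathrm{ad}(\pd_\ell)$ across tensor legs but does not move the $c_\ell$ insertions. So your caution here is warranted, and your fallback---directly verifying the quasitriangular axioms for $\wt R'=\exp(\hbar\,\pd_\ell\otimes c_\ell/2)\,R\,\exp(\hbar\,\pd_\ell\otimes c_\ell/2)$ and invoking uniqueness of the $R$-matrix of a double---is the cleaner route. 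For that verification you will want to use that $\mathrm{ad}(\pd_\ell)$ is a genuine Hopf-algebra derivation of $DY_\hbar(\fd)$ (this follows since $\pd_\ell$ is primitive in $\wt{DY}_{\hbar,\ell}(\fd)$ and normalizes $DY_\hbar(\fd)$), which lets you commute the $E$-factors through $\Delta_\hbar^\gamma(x)$ in a controlled way. Alternatively, one can apply Theorem~\ref{thm:Rmat_double} directly to the extended splitting of $\wt\fg$ to obtain $\wt R=\exp(\hbar\wt\gamma_<^{21})\exp(-\hbar\wt\gamma_<)$ with $\wt\gamma_<=\gamma_<+\pd_\ell\otimes c_\ell$, and then factor using $[\pd_\ell\otimes c_\ell,\gamma_<]$-terms; this is the computation the paper presumably has in mind but omits.
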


\section{Spectral $R$-matrix and factorization}\label{sec:cohfact}

According to \cite{EK3}, a spectral R-matrix $R(z)$ of a Hopf algebra $H$ with spectral parameter in $\C^\times$ induces a local Hopf algebra factorization structure on $H^*$ over $\PP^1\setminus \{\infty\}$. The cotangent Yangian $Y_\hbar (\fd)$ admits such an R-matrix, and in Section \ref{sec:double} we explicitly constructed the dual and double of $Y_\hbar (\fd)$. In this section, we study the factorization structure on $Y_\hbar^*(\fd)^{\textnormal{co-op}}$ induced by $R(z)$. We show that, in contrast to the dual of Yangians for simple Lie algebras, the dual of the cotangent Yangian \(Y_\hbar^*(\fd)^{\textnormal{co-op}}\) can be made into a coherent factorization algebra over $\PP^1$ in the sense of Beilinson-Drinfeld \cite{beilinson2025chiral}, and this factorization structure is compatible with the coalgebra structure of \(Y_\hbar^*(\fd)^{\textnormal{co-op}}\). We also show that on the configuration space of distinct points, this globalizes the Hopf algebra factorization structure from \cite{EK3}. In this section we focus on factorization over the completion $X=\PP^1$ of the additive algebraic group \(\C\) by a point at infinity, but one could also consider other one-dimensional algebraic groups instead of \(\C\). 

This section is structured as follows. In Section \ref{subsec:EKfac}, we recall the factorization structure of \cite{EK3} associated to a spectral $R$-matrix, and its implication on our centrally-extended dual cotangent Yangian. In Section \ref{subsec:SheafFac}, we construct the same factorization using a sheaf of Lie algebras and its local sections. In Section \ref{subsec:cohFac}, we construct the global coherent version, making this into a genuine factorization algebra with a compatible coproduct.

\subsection{Local factorizations associated to the dual cotangent Yangian}\label{subsec:EKfac}

 As already mentioned in Section \ref{subsec:Rmat_double}, it was shown in \cite{ANyangian} that \(Y_\hbar(\fd)\), or more precisely the dense Hopf subalgebra \(Y_\hbar^\circ(\fd) \subset Y_\hbar(\fd)\) generated by \(\fd[t]\), is pseudotriangular in the sense of \cite{drinfeld1986quantum}, i.e.\ it admits a spectral $R$-matrix \(R(z) \in (Y_\hbar^\circ(\fd) \otimes Y_\hbar^\circ(\fd))(\!(z^{-1})\!)\) such that
\begin{enumerate}
        \item[(T1)] \(
            R(z)  (\tau_z \otimes 1)\Delta_{\hbar}^\gamma(a)= ((\tau_z\otimes 1)\Delta_{\hbar}^{\gamma,\textnormal{op}}(a))R(z)\) for all \(a \in Y_\hbar^\circ(\fd)\);
        
        \item[(T2)] \((\Delta_{\hbar,z_1}^\gamma\otimes 1) R(z_2)=R^{13}(z_1+z_2)R^{23}(z_2)\) and \((1\otimes \Delta^\gamma_{\hbar, z_2})R(z_1+z_2)=R^{13}(z_1+z_2)R^{12}(z_2)\) for \(\Delta^\gamma_{\hbar,z} \coloneq (\tau_z \otimes 1)\Delta_\hbar^\gamma\);

        \item [(T3)] $(\tau_{z_1}\otimes \tau_{z_2})R(z_3)=R(z_3+z_1-z_2)$; 

        \item [(T4)] \((\varepsilon \otimes 1)R(z) = 1 = (1 \otimes \varepsilon)R(z)\)
\end{enumerate}
Here, we wrote \(\tau_z = e^{z\partial_t} \colon Y_\hbar^\circ(\fd) \to Y_\hbar^\circ(\fd)[\![z]\!]\) for the formal shift \(t \mapsto t + z\).

The axioms (T1)-(T4) dualize to the following properties of the \(\C[\![\hbar]\!]\)-bilinear form
\begin{equation}\label{eq:copseudotriangular}
    B \colon Y^*_\hbar(\fd) \otimes_{\C[\![\hbar]\!]}Y^*_\hbar(\fd) \to \C(\!(z)\!)[\![\hbar]\!]\,,\qquad B(a \otimes b) \coloneqq \langle a \otimes b, R(z)\rangle
\end{equation}
and for \(a,b,c \in Y_\hbar^*(\fd)\):
\begin{enumerate}
        \item[(C1)] \(
            \nabla_\hbar^\gamma\big((\tau_z \otimes 1)B^{13}(\Delta_\hbar^\gamma(a) \otimes \Delta_\hbar^\gamma(b))\big) = \nabla_\hbar^{\gamma,\textnormal{op}}\big((\tau_z \otimes 1)B^{24}(\Delta_\hbar^\gamma(a) \otimes \Delta_\hbar^\gamma(b))\big)\);
        
        \item[(C2)] \(B(ab,c) = B(a \otimes b,\Delta_\hbar^{\gamma}(c))\) and \(B(a,bc) = B(\Delta_\hbar^{\gamma,\textnormal{op}}(a),b \otimes c)\);

        \item [(C3)] \(B(\tau_{z_1}(a),\tau_{z_2}(b))(z) = B(a,b)(z_1+z-z_2)$; 

        \item [(C4)] \(B(1,a) = \varepsilon(a) = B(a,1)\).
\end{enumerate}
In \cite{EK3}, a Hopf algebra equipped with a pairing \(B\) satisfying (C1)-(C4) is referred to as copseudotriangular. In particular, \(Y_\hbar^*(\fd)^{\textnormal{co-op}}\) is copseudotriangular in this sense. Copseudotriangularity is essentially a Hopf pairing between $Y_\hbar^*(\fd)^{\textnormal{co-op}}$ and $Y_\hbar^*(\fd)$ depending on a spectral parameter $z$. Note that the codomain of $B$ is actually $\C[z,z^{-1}]\lbb\hbar\rbb$ so one this can be evaluated over $z\in \C^\times$. 

In \cite{EK3}, the authors prove that a copseudotriangular structure allows one to define a canonical locally factorized Hopf algebra structure on $Y_\hbar^*(\fd)^{\textnormal{co-op}}$ over finite set of points on a curve (in our case \(\C=X\setminus \{\infty\}\)). More precisely, for a finite set of points $\mathbf z=\{z_i\}_{i\in I}$ indexd by a set \(I\) on $\C$, one can consider the tensor product coalgebra
\be
Y^*_\hbar (\fd)_{\mathbf z}^{\textnormal{co-op}}\coloneqq  Y_\hbar^*(\fd)^{\textnormal{co-op}, \otimes |I|}=\bigotimes_{i\in I} Y_\hbar^*(\fd)_{z_i}^{\textnormal{co-op}},
\ee
and induce an algebra structure on this by declaring that each $Y_\hbar^*(\fd)^{\textnormal{co-op}}$ is a Hopf subalgebra, such that the commutation relation between $Y_\hbar^*(\fd)_{z_i}$ and $Y_\hbar^*(\fd)_{z_j}$ is defined by (C1) with $z=z_i-z_j$. Note that this is well-defined since $B$ is in fact valued in $\C[(z_i-z_j)^\pm]$ rather than formal series. 

More precisely, we can introduce
\begin{equation}
         X_L,X_R \colon Y_\hbar^*(\fd) \otimes_{\C[\![\hbar]\!]} Y_\hbar^*(\fd) \to (Y_\hbar^*(\fd) \otimes_{\C[\![\hbar]\!]} Y_\hbar^*(\fd))(\!(z)\!)
     \end{equation}
     defined by 
     \begin{equation}
        \begin{split}
            &X_L(a \otimes b) \coloneqq B^{13}(\Delta_\hbar^{\gamma}(a) \otimes \Delta_\hbar^{\gamma}(b)) = \langle \Delta_\hbar^{\gamma}(a) \otimes \Delta_\hbar^{\gamma}(b),R(z)^{13}\rangle
            \\ 
            &X_R(a\otimes b) \coloneqq B^{24}(\Delta_\hbar^{\gamma}(a) \otimes \Delta_\hbar^{\gamma}(b)) = \langle\Delta_\hbar^{\gamma}(a) \otimes \Delta_\hbar^{\gamma}(b),R(z)^{24}\rangle.
        \end{split}
     \end{equation}
    Then $Y^*_\hbar (\fd)_{\mathbf z}^{\textnormal{co-op}}$ is defined as the free Hopf algebra generated by $Y_\hbar^*(\fd)_{z_i}^{\textnormal{co-op}}$ with relations given by \(X_{L,ij} = X_{R,ij}^{21}\) for every pair \(i,j \in I\). 

The consistency of these commutation relations with the coproduct is a consequence of the fact that \(B\) is a translation-equivariant Hopf pairing. Observe that the copseudotriangular structure \(B\) is simply the composition of the Hopf algebra map \(Y_\hbar^*(\fd)^{\textnormal{co-op}} \to Y_\hbar(\fd)(\!(z^{-1})\!)\) constructed in Section \ref{subsec:state-op} with the canonical pairing of \(Y_\hbar^*(\fd)\) with \(Y_\hbar(\fd)\). If we use the completed Yangian \(Y_\hbar(\fd)\) rather than the dense subalgebra \(Y_\hbar^\circ(\fd)\), then we can evaluate $z$ and replace the above map by a map $Y_\hbar^*(\fd)_{z}^{\textnormal{co-op}}\to Y_\hbar (\fd)$ by evaluating at $z \in \C^\times$. The commutation relation defined by (C1) agrees with the commutation relation of the image of $Y_\hbar^* (\fd)_z^{\textnormal{co-op}}$ with $Y_\hbar^*(\fd)^{\textnormal{co-op}}$ inside the double $DY_\hbar(\fd)$. In particular, we have an embedding of Hopf algebras $Y_\hbar^* (\fd)_{z, 0}^{\textnormal{co-op}}\to DY_\hbar (\fd)$. The Hopf algebra structure of $Y_\hbar^* (\fd)_{z, 0}^{\textnormal{co-op}}$ and its translation-equivariance  determines $Y_\hbar^*(\fd)_{\mathbf z}^{\textnormal{co-op}}$ for general $\mathbf z$. 

Note that this structure extends to the centrally-extended dual Yangian, since we can extend the pseudotriangular structure to $ \widehat{Y}^*_{\hbar,\ell}(\fd)^{\textnormal{co-op}}$ via the Hopf algebra map $ \widehat{Y}^*_{\hbar,\ell}(\fd)^{\textnormal{co-op}}\to  Y^*_{\hbar}(\fd)^{\textnormal{co-op}}$. We denote by $\wh Y_{\hbar, \ell}^* (\fd)_{\mathbf z}^{\textnormal{co-op}}$ the corresponding local factorization Hopf algebra.

\subsection{Factorization from a sheaf of Lie algebras}\label{subsec:SheafFac}

We now construct the same local factorization Hopf algebra using a sheaf of Lie algebras. More precisely, let $\CA\coloneqq \fg\otimes \CO(-1)$, which has the structure of a sheaf of Lie algebras over $X = \mathbb{P}^1$. The Lie algebra structure comes from the Lie algebra structure on $\fg$ and the (non-unital) algebra structure on $\CO(-1)$. An important property of this sheaf of Lie algebras is, that is has trivial cohomology:
\begin{equation}\label{eq:trivial_cohomology}
    H^0(X, \CA)=0 = H^1(X,\CA).
\end{equation} 
Let $\mathbf{z}=\{z_i\}_{i \in I}$ be a finite set of distinct points in $X$, and let $\mathbb D_{\mathbf z}$ be the formal neighborhood of $\mathbf z$ in $X$ and $\mathbb D_{\mathbf z}^\circ$ the formal punctured neighborhood. We write 
\be
\CA^{\CK}_{\mathbf z}=\Gamma(\mathbb D_{\mathbf z}^\circ, \CA ), ~\CA^{\CO}_{\mathbf z}=\Gamma(\mathbb D_{\mathbf z}, \CA ), ~\CA^{\mathrm{out}}_{\mathbf z}=\Gamma(X\setminus\mathbf{z}, \CA).
\ee
Then \eqref{eq:trivial_cohomology} implies that there is a decomposition of Lie algebras
\be\label{eq:Afiberdecom}
\CA^{\CK}_{\mathbf z}=\CA^{\CO}_{\mathbf z}\oplus \CA^{\mathrm{out}}_{\mathbf z}.
\ee
In case that \(\mathbf{z} = \{0\}\) and if we chose the global coordinate \(t\) on \(X\setminus\{\infty\}\), we see that this is precisely the decomposition $\fg (\CK)=\fg (\CO)\oplus t^{-1}\fg[t^{-1}]$ that determines the cotangent Yangian. On the other hand, if $\mathbf{z}=\{\infty\}$, we obtain the decomposition $\fg(\!(t^{-1})\!)=t^{-1}\fg[\![t^{-1}]\!]\oplus \fg[t]$. 

To proceed with our quantization scheme, we need an appropriate dual of $\CA$. To this end, let $\CA^\vee\coloneqq \fg^*\otimes \CO(-1)$ be the Serre dual of $\CA$. We can similarly define $\CA^{\vee, \heartsuit}_{\mathbf z}$ for $\heartsuit\in \{\CK, \CO, \mathrm{out}\}$. There is a natural topologically perfect pairing:
\be\label{eq:AApair}
\CA^{\CK}_{\mathbf z}\otimes \CA^{\vee, \CK}_{\mathbf z}\to \C, \qquad a\otimes f\mapsto \sum_i \mathrm{Res}_{z_i}\langle a, f\rangle.
\ee
The residue formula implies that under this pairing, we can identify
\be
(\CA^{\CO}_{\mathbf z})^*=\CA^{\vee,\mathrm{out}}_{\mathbf z} \quad \textnormal{ and }\quad  (\CA^{\mathrm{out}}_{\mathbf z})^*=\CA^{\vee, \CO}_{\mathbf z}.
\ee
We can now define the Hopf algebra
\be
\mathscr{Y}^*_\hbar(\fd)_{\mathbf z}^{\textnormal{co-op}}\coloneqq U(\CA^{\vee, \mathrm{out}}_{\mathbf z})\lbb\hbar\rbb\ltimes_{\C\lbb\hbar\rbb} S(\CA^{\vee, \mathrm{out}}_{\mathbf z})\lbb\hbar\rbb,
\ee
whose Hopf structure follows from the decomposition \eqref{eq:Afiberdecom}. We now show that this induces a local factorization structure on $Y_\hbar^*(\fd)^{\textnormal{co-op}}$.

\begin{Prop}

    There is an isomorphism of coalgebras
    \be
\mathscr{Y}^*_\hbar(\fd)_{\mathbf z}^{\textnormal{co-op}}\cong \bigotimes_i \mathscr{Y}^*_\hbar(\fd)_{z_i}^{\textnormal{co-op}},
    \ee
    that makes $\mathscr{Y}^*_\hbar(\fd)_{\mathbf z}^{\textnormal{co-op}}$ into a local factorization Hopf algebra over $X$. 
    
\end{Prop}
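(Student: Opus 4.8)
The plan is to exhibit the claimed coalgebra isomorphism as a consequence of the Lie algebra decomposition \eqref{eq:Afiberdecom} and then verify that the resulting structure maps assemble into a local factorization Hopf algebra, matching the construction of \cite{EK3} recalled in Section \ref{subsec:EKfac}. First I would observe that the underlying coalgebra of $\mathscr{Y}^*_\hbar(\fd)_{\mathbf z}^{\textnormal{co-op}}$ is built from $U(\CA^{\vee,\mathrm{out}}_{\mathbf z})\lbb\hbar\rbb$ and $S(\CA^{\vee,\mathrm{out}}_{\mathbf z})\lbb\hbar\rbb$, so the key input is a decomposition of the out-sections themselves as a product over the punctures. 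Concretely, for distinct points $\mathbf z = \{z_i\}_{i\in I}$ the punctured formal neighborhood $\mathbb D_{\mathbf z}^\circ$ is a disjoint union $\coprod_i \mathbb D_{z_i}^\circ$, so
\be
\CA^{\vee,\CK}_{\mathbf z} = \bigoplus_{i\in I} \CA^{\vee,\CK}_{z_i}, \qquad \CA^{\vee,\CO}_{\mathbf z} = \bigoplus_{i\in I}\CA^{\vee,\CO}_{z_i}.
\ee
Applying the decomposition \eqref{eq:Afiberdecom} at each individual point and intertwining with the global one for $\mathbf z$, I would identify $\CA^{\vee,\mathrm{out}}_{\mathbf z}$ as a Lie algebra that, as a coalgebra after passing to $U$ and $S$, is the tensor product of the single-point pieces $\CA^{\vee,\mathrm{out}}_{z_i}$. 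This yields the coalgebra isomorphism, since the coproduct on both $U$ and $S$ is the standard one and is multiplicative under direct sums of (co)generators.

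The second step is to pin down the algebra structure and check it agrees with the factorization relations $X_{L,ij} = X_{R,ij}^{21}$ from Section \ref{subsec:EKfac}. The point is that $\CA^{\vee,\mathrm{out}}_{\mathbf z}$ is a genuine Lie algebra (global sections away from $\mathbf z$), not merely a tensor product of the local Lie algebras; the commutator between sections supported near $z_i$ and $z_j$ is governed by the residue pairing \eqref{eq:AApair} and the geometry of $X\setminus \mathbf z$. I would show that this commutator is exactly the one dictated by the copseudotriangular pairing $B$ evaluated at the difference $z_i - z_j$, using the fact established in Section \ref{subsec:EKfac} that $B$ is the composite of the Hopf map $Y^*_\hbar(\fd)^{\textnormal{co-op}}\to Y_\hbar(\fd)\lpp z^{-1}\rpp$ with the canonical pairing, together with the explicit form \eqref{eq:half_of_gamma} of $\gamma_<$. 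Concretely, the singular part $\gamma_{<,\textnormal{sing}}(z)$ of \eqref{eq:gamma_sing} is precisely the expansion of the Lie-algebra cocycle controlling the $\CA^{\vee,\mathrm{out}}_{z_i}$--$\CA^{\vee,\mathrm{out}}_{z_j}$ bracket, so matching amounts to identifying two expansions of the same rational kernel $C_\fg/(t_1 - t_2)$.

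Finally I would verify the factorization axioms in the sense of \cite{beilinson2025chiral}: the restriction isomorphisms along inclusions of punctured neighborhoods, and the factorization (multiplicativity) over disjoint unions of points. These follow formally once the decomposition \eqref{eq:Afiberdecom} is known to be compatible with restriction, which is a consequence of the vanishing \eqref{eq:trivial_cohomology} holding not just for $\mathbf z = \{0\}$ but for arbitrary finite $\mathbf z$ (the line bundle $\CO(-1)$ has no cohomology twisted by the structure sheaf of points). The compatibility of the coproduct with factorization is immediate from the coalgebra isomorphism already established, since the coproduct acts diagonally on the tensor factors.

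I expect the main obstacle to be the second step: verifying that the Lie bracket on the global out-sections $\CA^{\vee,\mathrm{out}}_{\mathbf z}$, when expressed via the residue pairing, reproduces exactly the copseudotriangular relations $X_{L,ij}=X_{R,ij}^{21}$ rather than merely some twisted version. This requires carefully tracking the co-opposite convention and the tensor flip $\gamma_<^{21}$, and confirming that the evaluation of $B$ at $z_i - z_j \in \C^\times$ (which is legitimate because $B$ lands in $\C[z,z^{-1}]\lbb\hbar\rbb$) coincides with the bracket coming from the cohomological splitting. The coalgebra isomorphism itself and the factorization axioms are, by contrast, essentially formal given \eqref{eq:Afiberdecom} and \eqref{eq:trivial_cohomology}.
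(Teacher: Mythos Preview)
Your proposal has a genuine gap in the first step and conflates this Proposition with the subsequent Theorem \ref{Thm:IdenFac}.

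\textbf{The gap.} You write that the coalgebra isomorphism follows because ``the coproduct on both $U$ and $S$ is the standard one and is multiplicative under direct sums of (co)generators.'' This is false for the $U$-part. Recall from Section \ref{subsec:dual+double} that on $x\in\CA^{\mathrm{out}}_{\mathbf z}$ the coproduct is
\[
\Delta_\hbar^{\gamma,\textnormal{op}}(x)=1\otimes x+\phi_\rhd(x)^{\textnormal{op}},\qquad \phi_\rhd(x)\in\CA^{\mathrm{out}}_{\mathbf z}[\![\hbar]\!]\otimes_{\C[\![\hbar]\!]} S(\CA^{\vee,\mathrm{out}}_{\mathbf z})[\![\hbar]\!],
\]
which is \emph{not} primitive and a priori mixes the tensor factors. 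The paper's proof supplies exactly the missing argument: for $x\in\CA^{\mathrm{out}}_{z_i}$ and $y\in\CA^\CO_{z_j}$ with $j\neq i$, one has $e^{\hbar y}\rhd x=x$ because all commutators of $y$ with $x$ lie in $U(\CA^\CO_{z_j})$ and hence project to zero in $\CA^{\mathrm{out}}_{\mathbf z}$. This forces $\phi_\rhd(x)$ to live in $\mathscr{Y}^*_\hbar(\fd)_{z_i}^{\textnormal{co-op}}$, and only then does the coalgebra isomorphism follow. You have treated this as formal when it is not.

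\textbf{The scope issue.} Your ``second step'' and self-identified ``main obstacle''---matching the algebra structure to the copseudotriangular relations $X_{L,ij}=X_{R,ij}^{21}$ via the kernel $C_\fd/(t_1-t_2)$---is the content of Theorem \ref{Thm:IdenFac}, not of this Proposition. The Proposition only asserts that the single-point pieces $\mathscr{Y}^*_\hbar(\fd)_{z_i}^{\textnormal{co-op}}$ are Hopf subalgebras and that the multiplication map from their tensor product is a coalgebra isomorphism; it does not identify the cross-commutators with those coming from $R(z_i-z_j)$. For the Proposition itself, what you actually need (and what the paper does) is: the vector-space decompositions $\CA^{\mathrm{out}}_{\mathbf z}=\bigoplus_i\CA^{\mathrm{out}}_{z_i}$ and $\CA^{\vee,\mathrm{out}}_{\mathbf z}=\bigoplus_i\CA^{\vee,\mathrm{out}}_{z_i}$ (from $H^0=H^1=0$), the observation that each $\CA^{\mathrm{out}}_{z_i}$ is a Lie subalgebra even though the decomposition is not one of Lie algebras, the fact that $U(\CA^{\mathrm{out}}_{z_i})$ stabilizes $S(\CA^{\vee,\mathrm{out}}_{z_i})$ (since $U(\CA^\CO_{\mathbf z})\to U(\CA^\CO_{z_i})$ is $\CA^{\mathrm{out}}_{z_i}$-equivariant), and finally the coproduct computation above.
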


\begin{proof}
 There are decompositions
  \be
 \CA^{\CO}_{\mathbf z}=\bigoplus_{i\in I} \CA^\CO_{z_i}, \qquad \CA^{\vee, \mathrm{out}}_{\mathbf z}=\bigoplus_{i\in I} \CA^{\vee, \mathrm{out}}_{z_i},
  \ee
  where the first decomposition is one of Lie algebras and the second one is one of vector spaces. The first follows from \eqref{eq:trivial_cohomology} and the second follows similarly from $H^0(X, \CA^\vee)=0 = H^1(X,\CA^\vee)$. These are compatible with the pairing of equation \eqref{eq:AApair}, and so determines an isomorphism of Hopf algebras
  \be
S(\CA^{\vee, \mathrm{out}}_{\mathbf z})\lbb\hbar\rbb\cong \bigotimes_{i \in I} S(\CA^{\vee, \mathrm{out}}_{z_i})\lbb\hbar\rbb. 
  \ee
Similarly, we have decompositions
\be
\CA^{\vee, \CO}_{\mathbf z}=\bigoplus_{i\in I} \CA^{\vee, \CO}_{z_i},\qquad \CA^{\mathrm{out}}_{\mathbf z}=\bigoplus_{i\in I} \CA^{\mathrm{out}}_{z_i},
\ee
where the second one is \textbf{not} one of Lie algebras, although each individual piece is a subalgebra. This gives a decomposition
\be
U(\CA^{\mathrm{out}}_{\mathbf z})\lbb\hbar\rbb=\bigotimes_{i\in I} U(\CA^{\mathrm{out}}_{z_i})\lbb\hbar\rbb,
\ee
 where each piece is a subalgebra. Furthermore, it is clear that $U(\CA^{\mathrm{out}}_{z_i})\lbb\hbar\rbb$ stabilizes $S(\CA^{\vee, \mathrm{out}}_{z_i})\lbb\hbar\rbb$, since the map $U(\CA^{\CO}_{\mathbf z})\to U(\CA^\CO_{z_i})$ is a map of $\CA^{\mathrm{out}}_{z_i}$-modules. This implies that the embedding $S(\CA^{\vee, \mathrm{out}}_{z_i})\lbb\hbar\rbb\to S(\CA^{\vee, \mathrm{out}}_{\mathbf z})\lbb\hbar\rbb$ respects the $\CA^{\mathrm{out}}_{z_i}$-algebra structures. This gives us the desired isomorphism as vector spaces. Moreover, each $\mathscr{Y}^*_\hbar(\fd)_{z_i}^{\textnormal{co-op}}$ is a subalgebra. 

 We are left to show that this is an identification of coalgebras. It is clear for $S(\CA^{\vee, \mathrm{out}}_{\mathbf z})\lbb\hbar\rbb$, so we are left with the coproduct of $U(\CA^{\mathrm{out}}_{\mathbf z})\lbb\hbar\rbb$. Choosing a primitive element $x\in \CA^{\mathrm{out}}_{z_i}$, we know that the coproduct is defined so that 
 \be
\Delta_\hbar^{\gamma,\textnormal{op}}(x)=1\otimes x+\phi_{\rhd}(x)^{\textnormal{op}}, \qquad \phi_{\rhd}(x)\in \CA^{\mathrm{out}}_{\mathbf z}\lbb\hbar\rbb\otimes_{\C[\![\hbar]\!]} S(\CA^{\vee, \mathrm{out}}_{\mathbf z})\lbb\hbar\rbb.
 \ee
Here $\phi_{\rhd}(x)^{\textnormal{op}}$ satisfies $\langle \phi_{\rhd}(x)^{\textnormal{op}}, e^{\hbar y}\rangle=e^{\hbar y}\rhd x$ for all $y\in \CA^\CO_{\mathbf z}$. However, when $y\in \CA^\CO_{\mathbf z_j}$ for $j\neq i$, $e^{\hbar y}\rhd x=x$ since the commutators in the factor $z_j$ belong to $U(\CA_{z_j}^{\CO})$. Therefore, $\Delta(x)$ belongs to $\mathscr{Y}^*_\hbar(\fd)_{z_i}^{\textnormal{co-op}}$ and is identified with the coproduct in this subalgebra. 
\end{proof}

Using the translations \(\tau_z\) in $\C=X\setminus\{\infty\}$, one can show that $\mathscr{Y}^*_\hbar(\fd)_{z}^{\textnormal{co-op}}$ are isomorphic to $Y^*_{\hbar}(\fd)^{\textnormal{co-op}}$ for all $z\ne \infty$, whereas $\mathscr{Y}^*_\hbar(\fd)_{\infty}^{\textnormal{co-op}}=Y_\hbar^\circ (\fd)^{\textnormal{op}}$. Therefore, one can deduce that $\mathscr{Y}^*_\hbar(\fd)_{0, \infty}^{\textnormal{co-op}}$ is nothing but the dense subset of the double of $Y^*_{\hbar}(\fd)^{\textnormal{co-op}}$ generated by \(\fd[z,z^{-1}]\); see equation \eqref{eq:decomposition_double}.

We now identify this factorization with the one induced by the pseudotriangular structure. 

\begin{Thm}\label{Thm:IdenFac}
    The local factorization Hopf algebra $\mathscr{Y}^*_\hbar(\fd)_{\mathbf z}^{\textnormal{co-op}}$ agrees with $Y_\hbar^* (\fd)_{\mathbf z}^{\textnormal{co-op}}$ defined in Section \ref{subsec:EKfac}. 
\end{Thm}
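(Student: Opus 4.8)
The plan is to show that the two objects are assembled from the same single-point Hopf algebras glued by the same pairwise commutation relations, so that the identity on generators extends to an isomorphism. Since $Y_\hbar^*(\fd)_{\mathbf z}^{\textnormal{co-op}}$ is by definition the free Hopf algebra on the pieces $Y_\hbar^*(\fd)_{z_i}^{\textnormal{co-op}}$ subject to the relations $X_{L,ij} = X_{R,ij}^{21}$, and the preceding Proposition exhibits $\mathscr{Y}^*_\hbar(\fd)_{\mathbf z}^{\textnormal{co-op}}$ as a coalgebra tensor product of the same single-point pieces, the statement reduces to two claims: that the single-point pieces agree, and that the cross relations between any two points agree. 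The first is already recorded before the theorem, since translating by $\tau_z$ in $\C = X\setminus\{\infty\}$ gives $\mathscr{Y}^*_\hbar(\fd)_z^{\textnormal{co-op}} \cong Y_\hbar^*(\fd)^{\textnormal{co-op}}$ for every finite $z$. It remains to treat the cross relations.

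First I would check that both algebra structures are genuinely determined pairwise. On the sheaf side this holds because the bracket of a section with its only pole at $z_i$ and a section with its only pole at $z_j$ again has poles only at $z_i$ and $z_j$, so that $[\CA^{\mathrm{out}}_{z_i}, \CA^{\mathrm{out}}_{z_j}] \subseteq \CA^{\mathrm{out}}_{z_i} \oplus \CA^{\mathrm{out}}_{z_j}$ after partial-fraction decomposition, with the mixed relations against $S(\CA^{\vee,\mathrm{out}})$ controlled in the same way through the residue pairing \eqref{eq:AApair}; on the EK side it is built into the defining relations. Using translation-equivariance — (C3) on the EK side and $\tau_z$ on the sheaf side — I would reduce each pair $\{z_i,z_j\}$ to the single spectral argument $z = z_i - z_j$.

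The core is then the matching of this single pairwise relation. On the EK side, Section \ref{subsec:EKfac} records that the relation defined by (C1) is exactly the commutation relation inside $DY_\hbar(\fd)$ obtained from the pairing with $R(z_i - z_j)$, via the embedding $Y_\hbar^*(\fd)_{z_i-z_j,0}^{\textnormal{co-op}} \hookrightarrow DY_\hbar(\fd)$, and the explicit homomorphism \eqref{eq:double_to_yangian_hom} writes this down on the generators $\fd[t,t^{-1}]$. On the sheaf side the cross relation is the bracket of $\fg$-valued principal parts, reexpanded by partial fractions through \eqref{eq:AApair}, and is therefore governed by the same kernel $C_\fd/(t_1-t_2)$, whose relevant Taylor coefficients are \eqref{eq:gamma_sing} evaluated at $z_i-z_j$. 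To pin down the normalization and match to all orders in $\hbar$, I would use the auxiliary configuration $\{0,\infty\}$: there $\CA^{\mathrm{out}}_{\{0,\infty\}} = \CA^{\mathrm{out}}_0 \oplus \CA^{\mathrm{out}}_\infty$ is the loop splitting $\fg[t,t^{-1}] = \fg_{<0}\oplus\fg[t]$, so $\mathscr{Y}^*_\hbar(\fd)_{0,\infty}^{\textnormal{co-op}}$ is the dense double attached to \eqref{eq:decomposition_double}, whose $R$-matrix is $\exp(\hbar\gamma_<^{21})\exp(-\hbar\gamma_<)$ by Theorem \ref{thm:Rmat_double}. Its spectral version, produced by the shift $\tau_z$, is exactly $R(z)$ of \eqref{eq:spectral_Rmatrix_of_Yangian}; hence the sheaf construction is copseudotriangular with the same pairing $B$ of \eqref{eq:copseudotriangular}, and the characterization of \cite{EK3} forces the two factorization structures to coincide.

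The main obstacle will be the bookkeeping in matching the mixed relations to all orders in $\hbar$ rather than only at the level of the classical $r$-matrix: one must verify that the residue pairing \eqref{eq:AApair} defining the semidirect product $U(\CA^{\mathrm{out}}) \ltimes S(\CA^{\vee,\mathrm{out}})$ is normalized to agree with the $\hbar^{-1}$-rescaled canonical pairing used to build $B$, and that the partial-fraction expansion of the sheaf bracket reproduces the particular expansion \eqref{eq:gamma_sing} of $C_\fd/(t_1-z-t_2)$ (and not the opposite one) with the correct signs. A secondary subtlety is the asymmetric role of $\infty$: since the EK construction lives over $\C$ while the sheaf construction lives over $\PP^1$, I would make sure the comparison is asserted only over distinct finite points and that the identification $\mathscr{Y}^*_\hbar(\fd)_\infty^{\textnormal{co-op}} = Y_\hbar^\circ(\fd)^{\textnormal{op}}$ enters only as the auxiliary device used to fix the normalization.
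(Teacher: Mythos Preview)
Your overall strategy---reduce to pairwise cross relations and match those against the $R(z)$-induced ones---is exactly what the paper does, and your treatment of the $\CA^{\mathrm{out}}_{z_i}$--$\CA^{\mathrm{out}}_{z_j}$ brackets is essentially the paper's first step: embed both into $\CA^{\CK}_0$ via the commutative diagram
\[
\begin{tikzcd}
\CA^{\mathrm{out}}_z\rar \dar& \CA^{\mathrm{out}}_{0, z}\dar& \lar \CA^{\mathrm{out}}_{0}\dar\\
\CA^\CO_0\rar & \CA^\CK_0 &\lar \CA^{\mathrm{out}}_0
\end{tikzcd}
\]
and observe that the downward map $\CA^{\mathrm{out}}_z \to \CA^{\CO}_0$ is Taylor expansion at $0$, which is literally \eqref{eq:double_to_yangian_hom}, i.e.\ pairing with $R(z)$.

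The $\{0,\infty\}$ detour, however, does not do what you want. The EK factorization $Y_\hbar^*(\fd)_{\mathbf{z}}^{\textnormal{co-op}}$ is only defined for $\mathbf{z}\subset\C$, and $\tau_z$ does not move $\infty$ to a finite point. So the identification of $\mathscr{Y}^*_\hbar(\fd)_{0,\infty}^{\textnormal{co-op}}$ with the dense double is a separate remark (the paper records it before the theorem) and does not by itself compute the cross relation at $\{0,z\}$ for finite $z$. There is no off-the-shelf ``characterization of \cite{EK3}'' that lets you conclude once you know both sides are copseudotriangular---you would first have to verify that the sheaf construction has the \emph{same} pairing $B$, which is exactly the content of the theorem.

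The real gap is the mixed commutator: for $x\in\CA_z^{\mathrm{out}}$ and $f\in S(\CA_0^{\vee,\mathrm{out}})$ you say it is ``controlled in the same way through the residue pairing,'' but this is the bulk of the paper's proof and it is not automatic. On the EK side $[x,f]$ is expressed through $B$ applied to pieces of $\Delta_\hbar^\gamma(x)$ and $\Delta_\hbar^\gamma(f)$; on the sheaf side it is computed via the dual action $\lhd_{0,z}$ of $\CA_z^{\mathrm{out}}$ on $U(\CA_{0,z}^{\CO})$. The paper matches these by splitting $x = x|_{\mathbb{D}_0} + x|_{\mathbb{D}_z}$, computing $e^{\hbar b}\lhd_{0,z}x$ explicitly in $U(\CA_0^{\CO})\otimes U(\CA_z^{\CO})$, and tracking which terms survive the pairing with $f$; a key step is that $e^{\hbar b}\rhd x = e^{\hbar b}\rhd x|_{\mathbb{D}_0} + e^{\hbar b}\rhd x|_{\mathbb{D}_z}$ lies in $\CA_z^{\mathrm{out}}\subseteq\CA_{0,z}^{\mathrm{out}}$ and hence pairs to zero with $f$. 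This is where the $\hbar$-exact match is established; it cannot be read off from the classical kernel $C_\fd/(t_1-t_2)$ alone, and your proposal does not supply it.
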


\begin{proof}
    We need to show that commutators of $\mathscr{Y}^*_\hbar(\fd)_{z_i}^{\textnormal{co-op}}$ and $\mathscr{Y}^*_\hbar(\fd)_{z_j}^{\textnormal{co-op}}$ are induced by $R(z_i-z_j)$. For simplicity, we may assume that $\mathbf z=\{0, z\}$ for some $z\in X\setminus \{\infty\}$.
    
    Let us consider the commutator of $\CA_{0}^{\mathrm{out}}$ and $\CA^{\mathrm{out}}_z$ inside $\CA^{\mathrm{out}}_{0, z}$. We have commutative diagram of Lie algebras
    \be
\btik
\CA^{\mathrm{out}}_z\rar \dar& \CA^{\mathrm{out}}_{0, z}\dar& \lar \CA^{\mathrm{out}}_{0}\dar\\
\CA^\CO_0\rar & \CA^\CK_0 &\lar \CA^{\mathrm{out}}_0
\etik
    \ee
where the downward arrows are embeddings. Therefore, the commutation relations of $\CA_{0}^{\mathrm{out}}$ and $\CA^{\mathrm{out}}_z$ can be studied inside $\CA^\CK_0$. The third downarrow above is the identity, and the first downarrow is given by Taylor expansion at $0$. Comparing this with equation \eqref{eq:double_to_yangian_hom}, this Taylor expansion is exactly the pairing with $R(z)$. Therefore, the commutation relation between $\CA^{\mathrm{out}}_z$ and $\CA^{\mathrm{out}}_{0}$ agrees with the one coming from expanding $\CA^{\mathrm{out}}_z$ to $\CA^\CO_0$ via $R(z)$ then computing their commutation relation in $DY_\hbar^*(\fd)$, which agrees with the local factorization structure induced by $R(z)$. 

Now consider the commutation relation between $\CA_z^{\mathrm{out}}$ and $S(\CA_0^{\vee, \mathrm{out}})$. Let $x\in \CA_z^{\mathrm{out}}$ and $f\in S(\CA_0^{\vee, \mathrm{out}})$. Then the commutation relation of these two generators in $Y_\hbar^*(\fd)^{\textnormal{co-op}}_{0, z}$ reads:
\be
[x,f]=\sum_{(f)}\left(f^{(1)}B(f^{(2)}, x)+\sum_{j\in J}B(x_j,S(f^{(1)}))f^{(2)}h_j\right),
\ee
where we wrote $\Delta (x)=1\otimes x+\sum_{j \in J} x_j\otimes h_j$ for some index set \(J\) and elements $x_i \in \CA_z^{\textnormal{out}}$, $h_j\in S(\CA_z^{\vee, \mathrm{out}})$. In particular, for $a\in \CA_0^\CO$ and $b\in \CA_z^\CO$, we have
\be
\langle [x,f],e^{\hbar b} \otimes e^{\hbar a}\rangle=\sum_{(f)}\left(\langle f^{(1)},e^{\hbar a}\rangle B(f^{(2)}, x)+\sum_{j \in J}B(x_j, S(f^{(1)}))\langle f^{(2)},e^{\hbar a}\rangle \langle h_j,e^{\hbar b}\rangle\right).
\ee
Recall that $x\vert_{\mathbb{D}_0} \coloneqq \langle 1 \otimes x,R(z)\rangle$ is the Taylor expansion of \(x\) at $0$, and using this notation the above is equal to
\be\label{eq:EK_commutator}
\langle f,e^{\hbar a}x\vert_{\mathbb{D}_0}\rangle -\langle f,(e^{\hbar b})\rhd x\vert_{\mathbb{D}_0} e^{\hbar a}\rangle.
\ee
On the other hand, in $\mathscr{Y}_\hbar^*(\fd)^{\textnormal{co-op}}_{0,z}$, we have the following commutation relation
\be
\langle [x,f],e^{\hbar b}e^{\hbar a}\rangle=\langle f,e^{\hbar b}e^{\hbar a} \lhd_{0,z} x\rangle=\langle f, e^{\hbar b}[e^{\hbar a}, x])\rangle+\langle f,e^{\hbar b}\lhd_{0,z} x e^{\hbar a}\rangle, 
\ee
where $\lhd_{0,z}$ is computed using the decomposition $\CA_{0,z}^\CK=\CA_{0,z}^\CO\oplus \CA_{0,z}^{\mathrm{out}}$, whereas $\lhd$ above is computed using $\CA_z^\CK=\CA_z^\CO\oplus \CA_z^{\mathrm{out}}$. We look at the second term. Note that the action of $x$ on $e^{\hbar b}$ is given by
\be
e^{\hbar b}\lhd_{0,z} x=x\vert_{\mathbb{D}_0}\otimes e^{\hbar b}+1\otimes e^{\hbar b}x\vert_{\mathbb{D}_z}\in {U(\CA_{0,z}^{\mathrm{out}})\setminus} (U(\CA_{0}^\CK)\otimes U(\CA_z^\CK)) \cong U(\CA_{0}^\CO)\otimes U(\CA_z^\CO), 
\ee
where \(x\vert_{\mathbb{D}_z}\) is the Laurent expansion of \(x\) in \(z\).
Now we have $e^{\hbar b}x\vert_{\mathbb{D}_z}=(e^{\hbar b}\rhd x\vert_{\mathbb{D}_z}) e^{\hbar b}+e^{\hbar b}\lhd x\vert_{\mathbb{D}_z}$. When we pair with $f \in S(\CA_0^{\vee,\textnormal{out}})$, the term $e^{\hbar Y}\lhd x\vert_{\mathbb{D}_z}$ does not contribute since it belongs to $U(\CA_z^\CO)$. Therefore, we are left with
\be
x\vert_{\mathbb{D}_0}\otimes e^{\hbar Y}+(e^{\hbar Y}\rhd x\vert_{\mathbb{D}_z}) e^{\hbar Y}.
\ee
Consequently,
\be\label{eq:geometric_commutator}
\langle [x,f],e^{\hbar b}e^{\hbar a}\rangle= \langle f,[e^{\hbar a}, x\vert_{\mathbb{D}_0}]\rangle +\langle f,x\vert_{\mathbb{D}_0}e^{\hbar a}\rangle +\langle f,(e^{\hbar b}\rhd x\vert_{\mathbb{D}_z})e^{\hbar a}\rangle.
\ee
The first two terms give us $\langle f,e^{\hbar X}x\vert_{\mathbb{D}_0}\rangle$, and we claim that the second is equal to $-\langle f,e^{\hbar Y}\rhd x\vert_{\mathbb{D}_z} e^{\hbar X}\rangle $. Indeed, this follows from
\be
e^{\hbar b}\rhd x=e^{\hbar b}\rhd x\vert_{\mathbb{D}_0}+e^{\hbar b}\rhd x\vert_{\mathbb{D}_z}\in \CA_z^{\mathrm{out}}\subseteq \CA_{0,z}^{\mathrm{out}}
\ee
and this element vanishes when paired with $f$ by definition. This shows that the two commutators of \(Y_\hbar^*(\fd)_{0,z}\) as constructed by \cite{EK3} from \eqref{eq:EK_commutator} and the geometric one from \eqref{eq:geometric_commutator} agree. Therefore, we have completed the proof that the local Hopf algebra factorization structure on $\mathscr{Y}^*_\hbar(\fd)_{\mathbf z}^{\textnormal{co-op}}$ coincides with the one induced by $R (z)$. 
\end{proof}

We can define the central extension $\widehat{\mathscr{Y}}_{\hbar,\ell}^*(\fd)_{\mathbf z}^{\textnormal{co-op}}$ as well by applying equation  \eqref{eq:primitivecop}. We will not go into detail here.

\subsection{Coherent Factorization}\label{subsec:cohFac}

Having established in Theorem \ref{Thm:IdenFac} that $\mathscr{Y}_\hbar^*(\fd)_{\mathbf z}^{\textnormal{co-op}}$ equals $Y_\hbar ^*(\fd)_{\mathbf z}^{\textnormal{co-op}}$, we are ready to sheafify the construction following \cite{BDchiral}, making this into a factorization algebra. This is not possible in the case of Yangians for simple Lie algebras, as those do not have a vertex algebra structure compatible with the coproduct. The cotangent Yangian does admit such a structure (\cite[Proposition 4.1]{ANyangian}), and it makes sense that an actual factorization structure exists. We still focus on $X=\PP^1$ case in what follows. 

\subsubsection{Factorization algebra}

Let $I$ be a finite set, and let $X^I$ be the $|I|$-th power of $X$. The space $X^I\times X$ admits two projections $p_I\colon X^I\times X\to X^I$ and $q_I\colon X^I\times X\to X$. Denote by $\Gamma_I\subseteq X^I\times X$ and $U_I\subseteq X^I\times X$ the subschemes defined by
\be
\Gamma_I\coloneqq\Big \{(x_i, x)\Big\vert x=x_i \text{ for some } i \in I\Big \}, \qquad U_I\coloneqq(X^I\times X)\setminus \Gamma_I. 
\ee
Denote by $\wh\Gamma_I$ the formal neighborhood of $\Gamma_I$ inside $X^I\times X$ and by $\wh\Gamma_I^\circ$ the formal punctured neighborhood $\wh\Gamma_I\setminus \Gamma_I$. Furthermore, let $p_I^\circ, q_I^\circ$ be the restrictions of the projections \(p_I,q_I\) to $U_I$, $\wh p_I,\wh q_I$ be their restrictions to $\wh\Gamma_I$, and $\wh p_I^\circ,\wh q_I^\circ$ be their further restriction to $\wh\Gamma_I^\circ$. If $\CJ_I$ denotes the ideal that defines $\Gamma_I$, the formal scheme $\wh\Gamma_I$ is defined by
\be
\wh\Gamma_I=\mathrm{Spf}\lp \varprojlim_n \CO_{X^I\times X}/\CJ_I^n\rp. 
\ee
From this, we define the following three sheaves of Lie algebras on $X^I$:
\be
\CA_I^\CO\coloneqq \hat p_{I,*}\hat q_I^* (\CA), \qquad \CA_I^\CK\coloneqq \hat p_{I,*}^{\circ}\hat q_I^{\circ,*} (\CA), \qquad \CA_I^{\textnormal{out}}\coloneqq p_{I,*}^\circ q_I^{\circ,*} (\CA). 
\ee
Here, $\hat q_I^* \CA \coloneqq \varprojlim_n q_I^*\CA/ \CJ_I^n$ and $\hat q_I^{\circ,*}$ is defined similarly. These are sheaves of $\CO$-modules on $X^I$, and in fact are sheaves of Lie algebras in the category of $\mathcal{D}$-modules on $X^I$. The sheaves $\CA_I^\CO$ and $\CA_I^\CK$ are not quasi-coherent, whereas $\CA_I^{\textnormal{out}}$ is. These sheaves carry factorization structures, in the sense that for a surjection $\pi\colon J\twoheadrightarrow I$ there are obvious isomorphisms
\be\label{eq:factLie}
\Delta^{J/I,*}\CA_J^\heartsuit\cong \CA_I^\heartsuit, \qquad j^{J/I,*}\lp\prod_{i\in I} \CA_{J_i}^\heartsuit\rp\cong j^{J/I,*}\lp \CA_J^\heartsuit\rp, \qquad \heartsuit=\{\CO, \CK, \textnormal{out}\},
\ee
where \(J_i \coloneqq \pi^{-1}(\{i\})\) for every \(i \in I\), \(\Delta^{J/J}\colon X^I \to X^J\) is defined by \((x_i)_{i\in I} \mapsto (x_{\pi(j)})_{j \in J}\) and \(j^{J/I} \colon U^{J/I} \coloneqq X^J\setminus \Delta^{J/I}(X^I) \to X^J\) is the open embedding.
Note that this \textbf{does not} make $\CA_I^\heartsuit$ into factorization algebras. For $\heartsuit \in \{\CO, \CK\}$, these isomorphisms are naturally isomorphisms of Lie algebras, whereas for $\heartsuit=\textnormal{out}$, they are not. Nonetheless, $\CA_{J_i}^{\textnormal{out}}$ are Lie subalgebras of $j^{J/I,*} \CA_J^\heartsuit$. The fiber of $\CA_I^{\heartsuit}$ at $\mathbf z$ is precisely $\CA_z^\heartsuit$ constructed in the previous section. Similarly, we can define $\CA_I^{\vee, \heartsuit}$. 

We need the following lemma.

\begin{Lem}\label{Lem:CAflat}
    The sheaves $\CA_I^{\heartsuit}$ and $\CA_I^{\vee, \heartsuit}$ are flat on $X^I$ for and finite set \(I\) and \(\heartsuit \in \{\CO,\CK,\textnormal{out}\}\).
\end{Lem}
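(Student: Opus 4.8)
The plan is to reduce the statement to a single line bundle and then exhibit each sheaf, locally on $X^I$, as a direct sum or a product of locally free $\CO_{X^I}$-modules. First I would reduce to $\CL \coloneqq \CO(-1)$: choosing bases of the finite-dimensional spaces $\fg$ and $\fg^*$ gives $\CO_X$-module isomorphisms $\CA \cong \CL^{\oplus\dim\fg}$ and $\CA^\vee\cong\CL^{\oplus\dim\fg}$, and all the operations building $\CA_I^\heartsuit$ and $\CA_I^{\vee,\heartsuit}$ ($\hat q_I^*$, the $\CJ_I$-adic quotients, completion, restriction to $U_I$, and the pushforwards) are $\C$-linear and commute with finite direct sums. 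Since flatness is both stable under and detected by finite direct sums, it will suffice to prove flatness of the three sheaves attached to $\CL=\CO(-1)$, whose crucial feature is the vanishing $H^0(X,\CL)=0=H^1(X,\CL)$ from \eqref{eq:trivial_cohomology}.

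For $\CA_I^{\textnormal{out}}$ I would use the pole-order filtration $\CA_I^{\textnormal{out}} = \varinjlim_n p_{I,*}(q_I^*\CA(n\Gamma_I))$, treating $\Gamma_I$ as the incidence divisor. On $X^I\times X$ the line bundle $q_I^*\CO(-1)(n\Gamma_I)$ has constant relative degree $n|I|-1\ge -1$ on the $\PP^1$-fibres once $n\ge 1$, so its fibrewise $h^1$ is constantly $0$; cohomology and base change then makes each term locally free of finite rank with vanishing $R^1p_{I,*}$. Pushing forward the sequences $0\to\CA(n\Gamma_I)\to\CA((n+1)\Gamma_I)\to\CA((n+1)\Gamma_I)|_{\Gamma_I}\to 0$ yields short exact sequences of locally free sheaves (the quotient being the pushforward of a line bundle along the finite flat divisor $\Gamma_I\to X^I$), which split locally. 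Thus $\CA_I^{\textnormal{out}}$ is locally a direct sum of locally free modules, hence flat---this is the easy case, since direct sums of flats are unconditionally flat.

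For $\CA_I^\CO = \varprojlim_n p_{I,*}(q_I^*\CA/\CJ_I^n)$ I would argue dually: each infinitesimal neighbourhood $\Gamma_I^{(n)} = V(\CJ_I^n)$ is a relative Cartier divisor, finite flat of degree $n|I|$ over $X^I$, so each term is locally free of finite rank and the tower has surjective transitions with locally free kernels $p_{I,*}(\CJ_I^n q_I^*\CA/\CJ_I^{n+1}q_I^*\CA)$. Choosing local splittings presents $\CA_I^\CO$, locally, as a product $\prod_n$ of locally free finite-rank modules, which is flat because $X^I$ is locally Noetherian and products of flat modules over a Noetherian ring are flat. Finally, to handle $\CA_I^\CK$ I would apply the formal gluing (fracture) triangle for the cover of $X^I\times X$ by $U_I$ and $\wh\Gamma_I$ with overlap $\wh\Gamma_I^\circ$, applied to $q_I^*\CA$ relative to $p_I$: since $Rp_{I,*}q_I^*\CA = 0$ by the vanishing of $H^\bullet(\PP^1,\CO(-1))$, and the other two pushforwards are concentrated in degree zero, taking $H^0$ produces the family version of \eqref{eq:Afiberdecom},
\begin{equation*}
    \CA_I^\CK \cong \CA_I^\CO\oplus\CA_I^{\textnormal{out}},
\end{equation*}
from which flatness of $\CA_I^\CK$ follows. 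The dual sheaves $\CA_I^{\vee,\heartsuit}$ are handled verbatim, as they are built from the same bundle $\CO(-1)$.

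The hardest part will be the last step. I expect the main obstacle to be justifying the formal gluing triangle and the degree-zero concentration of the non-quasi-coherent, formal pushforwards $\wh p_{I,*}$ and $\wh p_{I,*}^\circ$ in this ind/formal setting, so that $Rp_{I,*}q_I^*\CA = 0$ upgrades the fibrewise decomposition \eqref{eq:Afiberdecom} to a genuine---and locally split---isomorphism of $\CO_{X^I}$-modules; here I would invoke that $p_I^\circ$ is affine and that $\wh p_I$ has pro-Artinian fibres with Mittag--Leffler transition systems. A secondary subtlety is the flatness of the inverse limit $\CA_I^\CO$, since inverse limits of flat modules need not be flat, which is precisely where the Noetherianity of $X^I$ enters.
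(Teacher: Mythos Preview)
Your proposal is correct and takes a genuinely different, more geometric route than the paper. The paper works entirely in coordinates: for \(\heartsuit=\textnormal{out}\) it covers \(X^I\) by charts of the form \(U^{I_1}\times U_\infty^{I_2}\), writes \(\Gamma(U^{I_1}\times U_\infty^{I_2},\CA_I^{\textnormal{out}})\) as the kernel of an explicit \v Cech map, and then exhibits that kernel as a filtered union \(\varinjlim_m K_m\) of free \(\C[(t_i),(z_i)]\)-modules with explicit bases \(I_\alpha t^k\big(\prod(t-t_i)^{-1}\prod(1-z_it)^{-1}\big)^m\). It then waves at the \(\CO\) and \(\CK\) cases as ``similar,'' with the relevant explicit product/sum descriptions only appearing later in the proof of Proposition \ref{Prop:CAsplit}. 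Your argument replaces all of this by cohomology and base change on the \(\PP^1\)-bundle \(p_I\): the pole-order filtration for \(\textnormal{out}\), the tower of thickenings \(\Gamma_I^{(n)}\) for \(\CO\), and formal gluing for \(\CK\).

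What each buys: the paper's approach is elementary and self-contained---no appeal to base change theorems or formal gluing---at the price of bookkeeping and an argument that is specific to \(\PP^1\). Your approach is cleaner and makes the mechanism transparent (everything comes from the fibrewise vanishing \(H^\bullet(\PP^1,\CO(-1))=0\)), and it would generalize immediately to any line bundle on a curve with \(h^0=h^1=0\). The cost is exactly where you put it: justifying the fracture triangle in the ind/pro setting. Note that you can sidestep that cost entirely, since the decomposition \(\CA_I^\CK\cong\CA_I^\CO\oplus\CA_I^{\textnormal{out}}\) is proved directly by explicit computation in the paper's Proposition \ref{Prop:CAsplit}; borrowing that (its proof does not use flatness) makes your \(\CK\) step immediate. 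Your invocation of Chase's theorem (products of flats are flat over Noetherian rings) for \(\CA_I^\CO\) is exactly the right tool and is implicit in what the paper is doing when it writes \(\CA_I^\CO\) as a product of copies of the structure sheaf.
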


\begin{proof}
We prove this for $\heartsuit=\textnormal{out}$ since the proofs for other cases are similar. Let $(x_i)_{i \in I}\in X^I$ be a point. Assume for a moment that $x_i\ne \infty$ for all $i \in I$. Let $U_\infty$ be $X\setminus \{0\}$, and $U=X\setminus \{\infty\}$, which is a neighborhood of $x_i$ for all \(i \in I\). In particular, $U^I$ is a neighborhood of $(x_i)_{i \in I} \in X^I$. Let us compute $\Gamma(U^I,\CA_I^{\textnormal{out}})$. Choose a global coordinate $t$ of $U$, and write $z=t^{-1}$ of $U_\infty$. We can cover $(U^I\times X)\setminus \Gamma_I$ by the open subsets
    \be
    U_1\coloneqq (U^I\times U_\infty) \setminus \Gamma_I\quad \textnormal{ and }\quad U_2\coloneqq (U^I\times U)\setminus \Gamma_I.
    \ee
    We note that 
    \begin{equation}
        U_1 \cap U_2 = (U^I \times (U\cap U_\infty))\setminus\Gamma_I
    \end{equation}
    We can compute $\Gamma(U^I,\CA_I^{\textnormal{out}})$ using the \v Cech complex associated to the above covering:
    \be\label{eq:cech}
\btik
\fg [(t_i)_{i \in I}, z] \left[\prod_{i\in I} \frac{1}{1-t_i z}\right] \arrow[dr, swap, "z\cdot -"]&\bigoplus & \fg[(t_i)_{i \in I}, t]\left[\prod_{i\in I} \frac{1}{t-t_i}\right]\arrow[dl, "t\mapsto z^{-1}"]\\
 &\fg[(t_i)_{i \in I}, z, z^{-1}]\left[\prod_{i\in I} \frac{1}{1-t_i z}\right] &
\etik
    \ee
    It is clear now that the cohomology of $\Gamma(U^I,\CA_I^{\textnormal{out}})$ is concentrated in degree $0$, as the map going down is surjective. The kernel consists of all pairs 
    \begin{equation}
        A\in \fg[(t_i)_{i \in I}, t]\left[\prod_{i\in I} \frac{1}{t-t_i}\right] \quad \textnormal{ and }\quad B\in \fg[(t_i)_{i \in I}, z]\left[\prod_{i\in I} \frac{1}{1-zt_i}\right]
    \end{equation}
    such that $zB+A=0$. In particular, we have $\lim_{t\to \infty} A(t)=0$ and the subspace in which this is satisfied is given by
    \be\label{eq:cechcoh}
        K \coloneqq \sum_{(k_i)_{i \in I} \in \mathbb{N}_0^I}\sum_{n< \sum_{i \in I}k_i}\fg[(t_i)_{i \in I}] t^n  \prod_{i \in I}\lp \frac{1}{t-t_i}\rp^{k_i}. 
    \ee
It is not difficult to show that for any element $A \in K$, one has
\begin{equation}
    B((t_i)_{i \in I},z) \coloneqq -z^{-1}A((t_i)_{i \in I},z^{-1}) \in \fg[(t_i)_{i \in I}, z]\left[\prod_{i\in I} \frac{1}{1-zt_i}\right].
\end{equation}
Therefore, the cohomology of \eqref{eq:cech} is isomorphic to \(K\). As $\C[(t_i)_{i \in I}]$-module, \(K\) is free with basis given by 
\be\label{eq:basis_of_K}
\left\{I_{\alpha,n, k}\coloneqq I_\alpha t^k \prod_{i\in I} \lp \frac{1}{t-t_i}\rp^n \Bigg| \,\,\, 0\le \alpha \le d, 0\leq k<|I|, ~n>0\right\}.
\ee
Indeed, that this set is a basis follows from degree considerations and the simple fact that, for \(r < |I|\), \(t^{s|I| + r} =  \sum_{j = 0}^s(\Pi_{i \in I}(t-t_i))^ja_j(t,(t_i)_{i \in I})\) holds for some polynomials \((a_j)_{j = 0}^s \subset \C[t,(t_i)_{i \in I}]\) of \(t\)-degree less then \(|I|\).

More generally, we can cover \(X^I\) with open subsets of the form \(U^{I_1} \times U_\infty^{I_2}\) for \(I = I_1 \sqcup I_2\) and then \(\Gamma(U^{I_1} \times  U_\infty^{I_2},\mathcal{A}_I^{\textnormal{out}})\) is isomorphic to the space of pairs 
\begin{equation}
        {\begin{split}
        &A\in \fg[(t_i)_{i \in I_1},(z_i)_{i \in I_2}, t]\left[\prod_{i\in I_1} \frac{1}{t-t_i}\prod_{i\in I_2} \frac{1}{z_it-1}\right]\\ &B\in \fg[(t_i)_{i \in I_1},(z_i)_{i \in I_2}, z]\left[\prod_{i\in I_1} \frac{1}{1-zt_i}\prod_{i\in I_2} \frac{1}{z_i-z}\right]
        \end{split}}
        \quad \textnormal{ such that }zB + A = 0.
\end{equation}
Again, we can describe the space of these elements explicitly as
\be
K \coloneqq \sum_{(k_i)_{i \in I} \in \mathbb{N}_0^{I}}\sum_{n< \sum_{i \in I}k_i} \fg[(t_i)_{i\in I_1},(z_i)_{i \in I_2}] t^n \prod_{i \in I_1}\lp \frac{1}{t-t_i}\rp^{k_{i}}\prod_{i \in I_2}\left(\frac{1}{1-z_it}\right)^{k_{i}}. 
\ee
This is unfortunately not a free module with a basis of the form \eqref{eq:basis_of_K} anymore, since the \(t^{s|I| +r}\) cannot be expressed in terms of powers of \(\prod_{i \in I_1}(t-t_i)\prod_{i \in I_2}(z_it-1)\) anymore. However, it is still flat since it is an injective limit of free modules.
Indeed, consider the \(\C[(t_i)_{i \in I_1},(z_i)_{i \in I_2}]\)-span \(K_m\) of
\be\label{eq:basis_of_K}
\left\{I_{\alpha,m, k}\coloneqq I_\alpha t^k \lp\prod_{i\in I_1}  \frac{1}{t-t_i}\prod_{i\in I_2}  \frac{1}{z_it-1}\rp^m \Bigg| \,\,\, 0\le \alpha \le d, 0\leq k<|I| m\right\}.
\ee
Then these spaces are free with the given elements as basis.
Moreover, it is clear that $\varinjlim_m K_m =\Gamma(U^{I_1} \times  U_\infty^{I_2},\mathcal{A}_I^{\textnormal{out}})$. 
\end{proof}

This allows us to define a factorization algebra
\be\label{eq:Yfactalg}
\mathscr{Y}_\hbar^*(\fd)_I^{\textnormal{co-op}}:= \mathrm{Sym}_{X_I}(\CA_I^{\mathrm{out}}\lbb\hbar\rbb)\otimes_{X_I} \mathrm{Sym}_{X_I}(\CA_I^{\vee, \mathrm{out}}\lbb\hbar\rbb).
\ee
The supscription ``co-op" is pretentious, as we haven't given it any other structure besides factorization. We now proceed to define the Hopf algebra structure. However, due to non-freeness of the sheaves involved, this Hopf algebra structure will only be defined over the configuration space
\be
\conf_I(X)\coloneqq X^I\setminus \left\{(x_i)_{i \in I}\,\,\,\bigg|\,\,\, x_i=x_j \text{ for some } i\ne j\right\}.
\ee

\subsubsection{Hopf algebra structure}

Consider the canonical maps of sheaves of Lie algebras
\be\label{eq:embedOK}
\btik
\CA_I^\CO\rar & \CA_I^\CK & \CA_I^{\textnormal{out}}\lar 
\etik
\ee
which clearly respect the factorization structure of equation \eqref{eq:factLie}. We prove the following statement.

\begin{Prop}\label{Prop:CAsplit}
    The embeddings of \eqref{eq:embedOK} identify $\CA_I^\CK$ as a direct sum $\CA_I^\CO\oplus \CA_I^{\textnormal{out}}$.
\end{Prop}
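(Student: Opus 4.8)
The plan is to realize the difference of the two restriction maps in \eqref{eq:embedOK} as part of a relative Mayer--Vietoris triangle over $X^I$ and to identify its remaining term with a pushforward that vanishes by the cohomological triviality \eqref{eq:trivial_cohomology}. The statement is one of $\CO_{X^I}$-modules, with both summands sub-Lie-algebras, exactly as in the fibrewise decomposition \eqref{eq:Afiberdecom}.

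First I would set up the formal gluing (Beauville--Laszlo) situation attached to the effective Cartier divisor $\Gamma_I \subseteq X^I \times X$ — locally $\Gamma_I$ is cut out by the single equation $\prod_{i \in I}(x - x_i)$ — together with its open complement $U_I$, its formal neighborhood $\hat\Gamma_I$, and the punctured neighborhood $\hat\Gamma_I^\circ$. Since $q_I^*\CA$ is locally free, hence flat along $\Gamma_I$, formal gluing produces a distinguished triangle in $D(\CO_{X^I})$
\be
Rp_{I,*}q_I^*\CA \longrightarrow R\hat p_{I,*}\hat q_I^*\CA \,\oplus\, Rp_{I,*}^\circ q_I^{\circ,*}\CA \longrightarrow R\hat p_{I,*}^\circ \hat q_I^{\circ,*}\CA \xrightarrow{+1},
\ee
in which the second arrow is the difference of the two restriction maps. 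The next step is to check that each of the three derived pushforwards on the right is concentrated in cohomological degree $0$, so that they equal $\CA_I^\CO$, $\CA_I^{\textnormal{out}}$ and $\CA_I^\CK$ respectively. This is where Lemma \ref{Lem:CAflat} enters: flatness guarantees that the pushforwards commute with restriction to fibres, and the fibres of $\hat p_I$, $p_I^\circ$ and $\hat p_I^\circ$ (formal disks, the affine complement $\PP^1$ minus finitely many points, and punctured formal disks) carry no higher quasi-coherent cohomology. With this the triangle becomes
\be
Rp_{I,*}q_I^*\CA \longrightarrow \CA_I^\CO \oplus \CA_I^{\textnormal{out}} \longrightarrow \CA_I^\CK \xrightarrow{+1}.
\ee

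Finally I would compute the left-hand term by flat base change along the projection $X^I \times X \to X$: since $q_I^*\CA$ is pulled back from $\CA$ on $X$, one obtains $Rp_{I,*}q_I^*\CA \cong R\Gamma(X,\CA) \otimes_\C \CO_{X^I}$, and $R\Gamma(X,\CA) = 0$ because $H^0(X,\CA) = H^1(X,\CA) = 0$ by \eqref{eq:trivial_cohomology} while $X = \PP^1$ is a curve. Hence the fibre of the difference map $\CA_I^\CO \oplus \CA_I^{\textnormal{out}} \to \CA_I^\CK$ is zero, which is precisely the assertion that \eqref{eq:embedOK} exhibits $\CA_I^\CK$ as $\CA_I^\CO \oplus \CA_I^{\textnormal{out}}$; restricting to a fibre $\{\mathbf z\}$ recovers \eqref{eq:Afiberdecom}.

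I expect the main obstacle to be justifying the formal Mayer--Vietoris triangle and the degree-$0$ concentration for the completed sheaves $\CA_I^\CO$ and $\CA_I^\CK$, which are \emph{not} quasi-coherent; the flatness from Lemma \ref{Lem:CAflat} and the affineness of the relevant fibres are the tools for controlling this. As a concrete cross-check one may instead argue directly on the charts $U^{I_1} \times U_\infty^{I_2}$ used in the proof of Lemma \ref{Lem:CAflat}, where all three spaces of sections are explicit: there the splitting is the relative partial-fraction decomposition separating the polar part supported on the $x_i$ that is regular at $\infty$ (the ``out'' summand) from the part regular at each $x_i$ (the ``$\CO$'' summand), and uniqueness of this decomposition yields directness.
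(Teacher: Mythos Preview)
Your argument is correct and is genuinely different from the paper's. The paper proceeds by direct local computation: it fixes a point $(x_i)_{i\in I}\in X^I$, writes out $\Gamma(V,\CA_I^\CO)$ and $\Gamma(V,\CA_I^\CK)$ explicitly as spaces of series in the uniformiser $\prod_i(t-t_i)$, and then matches the remaining ``polar'' part against the explicit basis of $\Gamma(V,\CA_I^{\textnormal{out}})$ found in the proof of Lemma~\ref{Lem:CAflat}. In other words, the paper does precisely the partial-fraction cross-check you describe in your last paragraph, and nothing more.

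Your route via the relative formal Mayer--Vietoris triangle and flat base change is cleaner conceptually: it isolates the single global input $R\Gamma(X,\CA)=0$ and would adapt verbatim to any sheaf of Lie algebras on a curve with acyclic global sections, without rewriting bases. The price is the technical overhead you flag yourself---justifying the triangle and the degree-$0$ concentration for the completed, non-quasi-coherent sheaves $\CA_I^\CO$ and $\CA_I^\CK$. One small point: your appeal to Lemma~\ref{Lem:CAflat} for this step is slightly misplaced, since that lemma asserts flatness of the \emph{pushforwards} over $X^I$, whereas what the cohomology-and-base-change argument needs is flatness of $q_I^*\CA$ and its completions over $X^I$, which is immediate from local freeness. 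The relevant content of Lemma~\ref{Lem:CAflat} here is rather that its \v Cech computation already exhibits the degree-$0$ concentration for $\CA_I^{\textnormal{out}}$ directly. The paper's hands-on approach avoids these subtleties entirely at the cost of being tied to the explicit description on the charts $U^{I_1}\times U_\infty^{I_2}$.
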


\begin{proof}
We need to show that the induced map $\CA_I^\CO\oplus \CA_I^{\textnormal{out}}\to \CA_I^\CK$ is an isomorphism. For this, we just need to show this for every stalk. Let $(x_i)_{i \in I}\in X^I$, and suppose for a moment that $x_i\ne \infty$ for all $i \in I$. Let \(V \subseteq U^I\) be an open neighborhood of \((x_i)_{i \in I}\) and fix global coordinate $t$ of $U = X \setminus\{\infty\}$ as before and let \(t_i\) be the coordinate of the \(i\)-th factor of \(V\). The sheaf $\Gamma(V,\CA_I^\CO)$ is given by
\be
\varprojlim_n \fg[V][t]/ \left(\prod_{i\in I} (t-t_i)^n\fg[V][t]\right)=\sum_{n = 0}^\infty \sum_{0\leq k< |I|} \fg[V]t^k \prod_{i\in I} (t-t_i)^n, 
\ee
where sums are infinite in general.
Similarly, the sheaf $\Gamma(U^I,\CA_I^\CK)$ is given by
\be
\sum_{n\gg-\infty} \sum_{0\leq k< |I|} \fg [V]t^k \prod_{i \in I} (t-t_i)^n. 
\ee
Here, the subscript $n>-\infty$ means that the coefficients in $\fg[V]$ are non-zero only for finitely many negative $n$. Comparing these local sections with those of $\Gamma(V,\CA_I^{\textnormal{out}})$ from equation \eqref{eq:cechcoh} (see also its basis in \eqref{eq:basis_of_K}), one sees that $\Gamma(V,\CA_I^\CK)=\Gamma(V,\CA_I^\CO)\oplus \Gamma(V,\CA_I^{\textnormal{out}})$ as vector spaces. 
Similarly, the statement can be proven for \(x_j = \infty\) for some \(j \in I\).

\end{proof}

Proposition \ref{Prop:CAsplit} and Lemma \ref{Lem:CAflat} give rise to a tensor product decomposition of universal enveloping algebras
\be
U(\CA_I^\CK)=U (\CA_I^{\textnormal{out}})\otimes_{\CO_{X^I}} U(\CA_I^\CO)
\ee
that respects the factorization structure. By the flatness of all the sheaves involved, the universal enveloping algebras above are classical objects and are identified with the symmetric tensor algebras of each of the sheaves as modules.

The last ingredient we need to sheafify the construction of Section \ref{subsec:SheafFac} is a pairing, which is nicely supplemented by geometry. We will define this pairing for the restriction of the sheaves to $\mathrm{conf}_I(X)$. 

\begin{Lem}\label{Lem:CAfreeconf}
    Let $\CA_{\conf_I(X)}^{\heartsuit}$ and $\CA_{\conf_I(X)}^{\vee,\heartsuit}$ be the restrictions of $\CA_I^\heartsuit$ and $\CA_{\conf_I(X)}^{\vee,\heartsuit}$ to $\conf_I(X)$ respectively. Then $\CA_{\conf_I(X)}^{\textnormal{out}}$ and $\CA_{\conf_I(X)}^{\vee,\textnormal{out}}$ are locally free sheaves and $\CA_{\conf_I(X)}^{\CO}$ and $\CA_{\conf_I(X)}^{\vee,\CO}$ as well as $\CA_{\conf_I(X)}^{\CK}$ and $\CA_{\conf_I(X)}^{\vee,\CK}$ are locally direct products of the sheaf of regular functions. 
\end{Lem}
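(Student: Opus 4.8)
The plan is to prove every assertion locally on $\conf_I(X)$ and to exploit that, away from all diagonals, the incidence divisor $\Gamma_I$ breaks up into disjoint graphs. Concretely, I would fix a point $\mathbf{x}=(x_i)_{i\in I}\in\conf_I(X)$, choose pairwise disjoint opens $V_i\ni x_i$ in $X$, and a connected neighborhood $W\subseteq\conf_I(X)$ of $\mathbf{x}$ small enough that the $i$-th point remains in $V_i$ throughout $W$. Over $W$ the subscheme $\Gamma_I\cap(W\times X)=\bigsqcup_{i\in I}\{x=x_i\}$ is a disjoint union of graphs, so $\wh\Gamma_I$ and $\wh\Gamma_I^\circ$ split accordingly. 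Since $\CA=\fg\otimes\CO(-1)$ with $\CO(-1)$ a line bundle, I would trivialize $\CO(-1)|_{V_i}\cong\CO_{V_i}$ and pick a relative coordinate $s_i$ cutting out $\{x=x_i\}$. This produces the local model used throughout.

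For $\heartsuit\in\{\CO,\CK\}$ (and the analogous $\CA^{\vee,\heartsuit}$) this local model immediately gives the claim. The disjoint splitting of $\wh\Gamma_I$ yields $\CA_I^\CO|_W\cong\bigoplus_{i\in I}\CA_{I,i}^\CO$, where $\CA_{I,i}^\CO$ is the contribution of the single graph $\{x=x_i\}$, and the trivialization turns each summand into $\fg\otimes\CO_W\lbb s_i\rbb=\prod_{n\ge0}(\fg\otimes\CO_W)s_i^n$, a direct product of copies of $\CO_W$; taking $\fg^*$ in place of $\fg$ handles $\CA^{\vee,\CO}$. The punctured version replaces $\CO_W\lbb s_i\rbb$ by $\CO_W\lpp s_i\rpp$, again an (ind-)product of $\CO_W$. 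This is the easy half.

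The substance is local freeness of $\CA_I^{\textnormal{out}}$ and $\CA_I^{\vee,\textnormal{out}}$, where I would combine Lemma \ref{Lem:CAflat} with a duality argument. Over the affine chart $U^I=\A^I$ where no point is at $\infty$, the proof of Lemma \ref{Lem:CAflat} already exhibits $\Gamma(U^I,\CA_I^{\textnormal{out}})$ as the free $\C[(t_i)_{i\in I}]$-module $K$ with an explicit basis, so $\CA_I^{\textnormal{out}}$ is free on $U^I$ and a fortiori on $U^I\cap\conf_I(X)$. More conceptually, I would use the residue pairing \eqref{eq:AApair}: the fiberwise identification $(\CA^{\textnormal{out}}_{\mathbf z})^*=\CA^{\vee,\CO}_{\mathbf z}$ upgrades over $\conf_I(X)$ to a perfect $\CO$-linear pairing exhibiting $\CA_I^{\textnormal{out}}$ as the topological dual of $\CA_I^{\vee,\CO}$ (the family form of the residue computation, powered by the vanishing \eqref{eq:trivial_cohomology}, i.e.\ $Rp_{I,*}q_I^*\CA=0$). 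Since the previous paragraph shows $\CA_I^{\vee,\CO}|_W\cong\prod_{n\ge0}F_n$ with each $F_n$ finite free over $\CO_W$, its continuous dual is $\bigoplus_{n\ge0}F_n^*$, a direct sum of finite free modules, hence locally free; this gives freeness of $\CA_I^{\textnormal{out}}$, and symmetrically of $\CA_I^{\vee,\textnormal{out}}$.

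The main obstacle is the behavior at configurations containing the point $\infty$, which lie inside $\conf_I(X)$ and where the naive chart of Lemma \ref{Lem:CAflat} degenerates (there $K$ is only flat, an increasing union of free modules, rather than free). To cover such points I would either (i) transport the all-finite case by the $SL_2$-action on $(\PP^1,\CO(-1))$: a Möbius transformation moving $\infty$ to a finite value carries a neighborhood of such a configuration to one with all points finite, and by $SL_2$-equivariance of $\CA$ it identifies the sheaves, reducing to the free case; or (ii) run the duality argument above directly, since it never privileges $\infty$ and only uses the product structure of $\CA_I^{\vee,\CO}$, which holds at every point of $\conf_I(X)$ by the easy half. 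I expect option (ii) to be the cleaner route, so the one genuine thing to verify carefully is the family-level perfectness of the residue pairing over $\conf_I(X)$; granting that, local freeness at points involving $\infty$ follows uniformly.
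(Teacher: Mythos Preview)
Your reduction to a local model via the disjoint splitting of $\Gamma_I$ over $\conf_I(X)$ is exactly the paper's use of the factorization isomorphism $\CA_{\conf_I(X)}^{\heartsuit}\cong\prod_{i\in I}\CA_{\{i\}}^{\heartsuit}|_{\conf_I(X)}$, and your treatment of the $\CO$ and $\CK$ cases, as well as of $\CA^{\textnormal{out}}$ on the finite chart via the basis from Lemma~\ref{Lem:CAflat}, matches the paper. The genuine divergence is at configurations touching $\infty$ for $\heartsuit=\textnormal{out}$: the paper simply writes down an explicit free basis $w_{\alpha,n}=t^{n-1}I_\alpha/(1-z_1t)^n$ over the $\infty$-chart and checks by hand that it spans and is independent. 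Your option~(i) via $SL_2$-equivariance is legitimate and arguably slicker, since $\CO(-1)$ is $SL_2$-equivariant as an $\CO_X$-module and the constructions $p_{I,*}^\circ q_I^{\circ,*}$ use only this module structure; this transports the finite-chart freeness to any chart. Your option~(ii), however, is circular as stated: the family-level perfectness of the residue pairing that you flag as ``the one genuine thing to verify'' is precisely the content of the lemma that \emph{follows} Lemma~\ref{Lem:CAfreeconf} in the paper, and its proof there relies on the explicit bases produced here. So (ii) would require you to establish that pairing lemma independently first, which amounts to the same computation the paper does. A cleaner variant of (ii) would be to invoke Proposition~\ref{Prop:CAsplit} (whose proof does not use this lemma) to identify $\CA^{\textnormal{out}}\cong\CA^{\CK}/\CA^{\CO}$ and then read off local freeness from your easy half; but as written, (ii) has a genuine gap. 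Note also that the paper's explicit basis near $\infty$ is reused immediately afterwards to verify the perfect pairing, so even if you take route~(i) you will need that basis eventually.
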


\begin{proof}
    It suffices to prove the statement for $\CA_I^{\heartsuit}$, since the proof for $\CA_I^{\vee,\heartsuit}$ is analogous.
    By the factorization property of $\CA_I^{\heartsuit}$, applied to the projection $I\twoheadrightarrow I$, we find that
    \be
        \CA_{\conf_I(X)}^{\heartsuit}=\prod_{i \in I} \CA_{\{i\}}^{\heartsuit}\Big\vert_{\conf_I(X)}. 
    \ee
    We therefore only need to prove the statement for $\CA_{\{1\}}^\heartsuit$. Let us first consider $\heartsuit=\textnormal{out}$. If we choose any open neighborhood away from 
    $\infty\in X$, this was already proven in Lemma \ref{Lem:CAflat}. Let $U_\infty$ be the neighborhood of $\infty$, then we have seen that $\Gamma(U_\infty,\CA_{\{1\}}^{\textnormal{out}})$ can be identified with the $\C[z_1]$-module
    \be\label{eq:locally_free_on_conf_proof_I}
        \sum_{k\geq 1}\sum_{n<k} \fg[z_1] t^n \left(\frac{1}{1-z_1t}\right)^{k}. 
    \ee
Therefore, one can find a set of basis for this vector space as a free module over $\C[z_1]$, by
\be\label{eq:basis_Aout1}
w_{\alpha, n}=\frac{t^{n-1} I_\alpha}{(1-z_1t)^n}\,, \qquad n\geq 1, 1 \le \alpha \le d. 
\ee
Indeed, these elements are clearly linearly independent and using identities like 
\begin{equation}
    \frac{z_1t^{n} I_\alpha}{(1-z_1t)^{n+1}} + \frac{t^{n-1} I_\alpha}{(1-z_1t)^n} = \frac{z_1t^nI_\alpha + (1-z_1t)t^{n-1}I_\alpha}{(1-z_1t)^{n+1}} = \frac{t^{n-1}I_\alpha}{(1-z_1t)^{n+1}} 
\end{equation}
inductively, we can see that these elements generate \eqref{eq:locally_free_on_conf_proof_I}. 
This proves the claim for $\heartsuit=\textnormal{out}$. 

Let us now focus on $\heartsuit=\CO$ as the proof for $\heartsuit=\CK$ is identical. The space of sections of $\CA_{\{1\}}^{\CO}$ over any open subset \(V \subseteq U = X \setminus\{\infty\}\) is given by
\be
\Gamma(V,\CA_{\{1\}}^{\CO})=\prod_{n\geq 0} \fg[V](t-t_1)^n,
\ee
which is a direct product, as claimed. Similarly, for an open neighborhood \(V\) of \(\infty\), we have that
\be\label{eq:basis_AO1}
\Gamma(V,\CA_{\{1\}}^{\CO})=\prod_{n\geq 0}  z\fg[V](z-z_1)^{n},
\ee
is also a direct product. 
\end{proof}

Let $\Omega$ be the dualizing sheaf of $X$ and $\Omega_{\lpp I\rpp}$ be the sheaf
    \be
\Omega_{\lpp I\rpp}\coloneqq \hat p_{I,*}^\circ \hat q_I^{\circ,*}  \Omega. 
    \ee
Fibers of this sheaf at $(x_i)_{i\in I}$ is the space of differential forms on the punctured formal neighborhood of $\{x_i\}_{i \in I}$. The sum of residues provides a map $\mathrm{res}\colon \Omega_{\lpp I\rpp}\to \CO_{X^I}$. Combined with the canonical pairing between $\fg$ and $\fg^*$ provides a map
\be
\btik
\CA_I^{\CK}\otimes_{\CO_{X^I}} \CA_I^{\vee, \CK}\rar{\kappa} & \Omega_{\lpp I\rpp}\rar{\mathrm{res}} & \CO_{X^I}
\etik
\ee

\begin{Lem}

   The pairing above defines a topologically perfect pairing between  $\CA_{\conf_I(X)}^{\CK}$ and $\CA_{\conf_I(X)}^{\vee, \CK}$, such that one can identify $\CA_{\conf_I(X)}^{\CO}$ with the dual sheaf of $\CA_{\conf_I(X)}^{\vee, \textnormal{out}}$. 
    
\end{Lem}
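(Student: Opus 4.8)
The plan is to reduce to the single-point case and then identify the residue pairing with the classical topological duality between formal Taylor series and principal parts. First I would use the factorization property established in the proof of Lemma \ref{Lem:CAfreeconf}, which gives $\CA_{\conf_I(X)}^{\heartsuit} = \prod_{i \in I}\CA_{\{i\}}^{\heartsuit}\vert_{\conf_I(X)}$ and likewise for $\CA^{\vee,\heartsuit}$. Over $\conf_I(X)$ the formal neighborhoods of the distinct points are disjoint, so the residue-sum $\mathrm{res}\circ\kappa = \sum_k \mathrm{Res}_{x_k}$ block-diagonalizes: on the disk around $x_k$ only the $k$-th components of the two tuples are defined, so $\mathrm{Res}_{x_k}\langle a,f\rangle = \mathrm{Res}_{x_k}\langle a_k,f_k\rangle$ and the pairing of the $i$-th factor against the $j$-th factor vanishes for $i\neq j$. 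Hence it suffices to treat $|I|=1$.

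The essential geometric input is that $\kappa$ genuinely produces a one-form: since $\CO(-1)^{\otimes 2}\cong\Omega$ on $\PP^1$, the canonical pairing of $\fg$ with $\fg^*$ sends $\CA\otimes\CA^{\vee}$ into $\Omega$, so $\langle a,f\rangle$ is a section of $\Omega_{\lpp I\rpp}$ on which $\mathrm{res}$ acts. This yields the two orthogonalities that organize the whole argument. If $a\in\CA_{\{1\}}^{\CO}$ and $f\in\CA_{\{1\}}^{\vee,\CO}$, both are regular at the point, so $\langle a,f\rangle$ is a regular one-form there and $\mathrm{Res}_{x_1}\langle a,f\rangle = 0$; thus $\CA^{\CO}\perp\CA^{\vee,\CO}$. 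If instead $a\in\CA_{\{1\}}^{\textnormal{out}}$ and $f\in\CA_{\{1\}}^{\vee,\textnormal{out}}$, then $\langle a,f\rangle$ is a global meromorphic one-form on $\PP^1$ whose only pole is at $x_1$, and the residue theorem forces its total residue to vanish; thus $\CA^{\textnormal{out}}\perp\CA^{\vee,\textnormal{out}}$.

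With these orthogonalities I would invoke the splitting $\CA_I^{\CK}=\CA_I^{\CO}\oplus\CA_I^{\textnormal{out}}$ from Proposition \ref{Prop:CAsplit} and its \(\vee\)-analog. Orthogonality (a) shows that $\CA^{\CO}$ pairs trivially with $\CA^{\vee,\CO}$, so the pairing descends to one of $\CA^{\CO}$ with the quotient $\CA^{\vee,\CK}/\CA^{\vee,\CO}\cong\CA^{\vee,\textnormal{out}}$; symmetrically, orthogonality (b) makes $\CA^{\textnormal{out}}$ pair with $\CA^{\vee,\CO}$. To see that the induced pairings are perfect, I would compute them on the explicit bases of Lemma \ref{Lem:CAfreeconf}: expanding a section of $\CA^{\vee,\textnormal{out}}$ as a principal part at $x_1$ and pairing it against the Taylor basis $I_\alpha(t-t_1)^n$ of $\CA^{\CO}$, the identity $\mathrm{Res}_{x_1}\big((t-t_1)^k\,dt\big)=\delta_{k,-1}$ makes the pairing matrix triangular with invertible entries. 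This is exactly the statement that formal power series and formal principal parts are continuous duals, and it identifies $\CA_{\conf_I(X)}^{\CO}$ with the dual sheaf of the locally free sheaf $\CA_{\conf_I(X)}^{\vee,\textnormal{out}}$ and makes the full pairing $\CA_{\conf_I(X)}^{\CK}\otimes\CA_{\conf_I(X)}^{\vee,\CK}\to\CO$ topologically perfect.

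The main obstacle I anticipate is not any single computation but the careful handling of topological perfectness: one must match the pro-structure of $\CA^{\CK}$ (a completed direct product) against the ind-structure of $\CA^{\textnormal{out}}$ (an increasing union of free modules, as in the proof of Lemma \ref{Lem:CAfreeconf}), and verify that the induced map from $\CA^{\CO}$ to the continuous dual of $\CA^{\vee,\textnormal{out}}$ is an isomorphism rather than merely injective. Concretely, this amounts to checking that the pairing of an arbitrary, possibly infinite, element of $\CA^{\CO}$ against a fixed basis element of $\CA^{\vee,\textnormal{out}}$ is a finite sum valued in regular functions on $\conf_I(X)$, and that every continuous functional is so realized. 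The computation must also be run in the chart at $\infty$ using the bases $w_{\alpha,n}$ and the local form of $\CA^{\CO}_{\{1\}}$ recorded in Lemma \ref{Lem:CAfreeconf}; since the residue of a globally defined one-form is coordinate-independent, the two charts glue automatically.
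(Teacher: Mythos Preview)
Your proposal is correct and follows essentially the same approach as the paper: reduce to \(|I|=1\) via factorization, then verify the residue pairing is topologically perfect by computing on the explicit bases from Lemma \ref{Lem:CAfreeconf}, handling both the finite chart and the chart at \(\infty\). Your two orthogonality observations (\(\CA^\CO\perp\CA^{\vee,\CO}\) from regularity, \(\CA^{\textnormal{out}}\perp\CA^{\vee,\textnormal{out}}\) from the residue theorem on \(\PP^1\)) are a clean conceptual addition that the paper leaves implicit, but the core computation---that the residue pairing on \((t-t_1)\)-powers is the standard Kronecker pairing---is identical.
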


\begin{proof}
    Using factorization structure, we may restrict to the case $\CA_{\{1\}}^\heartsuit$. Let $V$ be a localization of $U = X\setminus\{\infty\}$. We can identify
    \be
\CA_{\{1\}}^\CK(V)=\sum_{n\gg -\infty} \fg[V](t-t_1)^n, \qquad \CA_{\{1\}}^{\vee, \CK}(V)=\sum_{n\gg -\infty} \fg^*[V](t-t_1)^n.
    \ee
The residue pairing is the canonical pairing
\be
\langle (t-t_1)^mI_\alpha, (t-t_1)^n I^\beta\rangle=\delta_{\alpha\beta}\mathrm{res}_{t=t_1} (z-z_1)^{m+n}=\delta_{\alpha\beta}\delta_{m+n, -1}. 
\ee
Clearly, it is a topologically perfect pairing. Under this pairing, $\CA_{\{1\}}^{\CO}(V)$ is identified with $\CA_{\{1\}}^{\vee, \textnormal{out}}(V)^*$, the linear dual. 

On the other hand, suppose $V$ is a localization of $U_\infty = X\setminus\{0\}$, then 
\be
\CA_{\{1\}}^\CK(V)=\sum_{n\gg -\infty} \fg[V](z-z_1)^n, \qquad \CA_{\{1\}}^{\vee, \CK}(V)=\sum_{n\gg -\infty} \fg^*[V](z-z_1)^n.
\ee
The pairing is now given by
\be
    \begin{split}
        \langle (z-z_1)^m I_\alpha, (z-z_1)^nI^\beta\rangle&= \delta_{\alpha\beta}\mathrm{res}_{z=z_1} (z-z_1)^{m+n}z^{-2}.
    \end{split}
\ee
Here, the shift by $z^{-2}$ comes from the fact that $\CA\otimes \CA^\vee=\fg\otimes \fg^*\otimes \CO(-2)$. This is also a perfect pairing. 
Since
\begin{equation}
    \bigg\langle \frac{zI^\alpha}{(z-z_1)^{m+1}},z(z-z_1)^nI_\beta\bigg\rangle = \delta_{\alpha\beta} \delta_{mn} 
\end{equation}
holds and 
\begin{equation}
    \begin{split}
        &\left\{\frac{zI^\alpha}{(z-z_1)^{m+1}}\,\bigg|\, m \ge 0, 1 \le \alpha \le d\right\} \subseteq \Gamma(V,\CA^{\vee,\textnormal{out}}_{\{1\}}),
        \\&\{z(z-z_1)^{m}I_\alpha\mid m \ge 0, 1 \le \alpha \le d\} \subseteq \Gamma(V,\CA^{\CO}_{\{1\}})
    \end{split}
\end{equation}
are (topological) bases (see \eqref{eq:basis_Aout1} and \eqref{eq:basis_AO1}), we see that \(\Gamma(V,\CA_{\{1\}}^{\vee,\textnormal{out}})^*\) is identified with \(\Gamma(V,\CA_{\{1\}}^{\CO})\).
\end{proof}

As a consequence of this lemma, we can identify $U(\CA_{\conf_I(X)}^\CO)$ with the linear dual of the sheaf $S(\CA_{\conf_I(X)}^{\vee, \textnormal{out}})$. We can now apply the general idea underlying the construction of our Yangian $Y_\hbar (\fd)$ and its dual $Y_\hbar^*(\fd)$, and give the restriction \(\mathscr{Y}_\hbar^*(\fd)_{{\conf_I(X)}}^{\textnormal{co-op}}\) of \eqref{eq:Yfactalg} to \({\conf_I(X)}\) a sheaf of Hopf algebra structure via
\be
\mathscr{Y}_\hbar^*(\fd)_{\conf_I(X)}^{\textnormal{co-op}}\coloneqq S(\CA_{\conf_I(X)}^{\vee, \textnormal{out}})\lbb\hbar\rbb \rtimes_{\CO_{X_\hbar}^I}U(\CA_{\conf_I(X)}^{\textnormal{out}})[\![\hbar]\!]. 
\ee
It is now clear that the factorization algebra we defined here is precisely the sheafification of the local factorization from Section \ref{subsec:SheafFac}. 

\begin{Rem}
    All the constructions above are clearly equivariant with respect to the action of the permutations $S_I$ of the set \(I\) on $\conf_I(X)$, and in particular, descend to sheaves over the genuine configuration space $\mathrm{Conf}_I(X)=[\conf_I(X)/S_I]$. 
\end{Rem}

\begin{Rem}
    If one only wants to define the coalgebra structure, then it is possible to extend the above to the entire $X^I$. The idea is to consider the action groupoid $\wh G(\CO)_{X^I}\times \wh \Gr_{G, X^I}$ over $ \wh \Gr_{G, X^I}$, where $\wh G(\CO)_{X^I}$ and $\wh \Gr_{G, X^I}$ are formal completions at the identity section. Let $\omega_I$ be the dualizing sheaf of this space and $\pi_I$ be the projection to $X^I$, then $\pi_{I, *}(\omega_I)$ is a chiral algebra with a compatible co-algebra structure. The fibers of $\pi_{I, *}(\omega_I)$ at $\mathbf z$ can be canonically identified with $Y_\hbar^* (\fd)_{\mathbf z}^{\textnormal{co-op}}$ as a coalgebra. 
\end{Rem}

\section{Quantum vertex algebras and quantum KZ equations}\label{sec:QVA}

In this section, we study the vacuum module
\be
\CV_{\hbar,k}(\fd)\coloneqq\Ind_{Y_\hbar(\fd)\otimes \C[ c_\ell]}^{\widehat{DY}_{\hbar,\ell} (\fd)} \C_k\lbb\hbar\rbb,
\ee
where \(\C_k[\![\hbar]\!]\) is the representation on which \(c_\ell\) acts by multiplying with \(k\). We will endow this vector space with the structure of a quantum vertex algebra. Moreover, we show that under the condition that $k\ell=-1$, this quantum vertex algebra admits a conformal element through the quantum Segal-Sugawara construction, and its associated difference connection is the quantum KZ connection. In a word, we prove Theorem \ref{Thm:QVAintro}. 

This section is structured as follows. In Section \ref{subsec:QVA}, we study the quantum vertex algebra structure associated to $\CV_{\hbar,k}(\fd)$. In Setion \ref{subsec:QKZ}, we construct the conformal element and construct quantum KZ connection from conformal blocks. In Section \ref{subsec:examples}, we consider examples of $\fd$ from 3-dimensional $\CN=4$ gauge theories. 

\subsection{Quantum vertex algebras from cotangent Yangians}\label{subsec:QVA}

\subsubsection{Recollections on quantum vertex algebras}

Recall from \cite{EK5} that a braided vertex algebra \((V,Y,T,\Omega,S)\) consists of the usual datum \((V,Y,T,\Omega)\) of a vertex algebra over \(\C[\![\hbar]\!]\) but with an additional braiding element \(S\) that controls the failure of locality. More precisely, \(V\) is a topologically free \(\C[\![\hbar]\!]\)-module equipped with a meromorphic multiplication 
\begin{equation}
V \otimes_{\C[\![\hbar]\!]} V \to V(\!(z)\!) \,,\qquad x \otimes y \mapsto  Y(x,z)y\,,\qquad Y(x,z) \in \textnormal{End}_{\C[\![\hbar]\!]}(V)[\![z,z^{-1}]\!],  
\end{equation}
an \(\C[\![\hbar]\!]\)-linear operator \(T \colon V \to V\), a vacuum vector \(\Omega \in V\) and a \(\C[\![\hbar]\!]\)-linear braiding map \(S = 1 + \hbar(\dots) \colon V \otimes_{\C[\![\hbar]\!]} V \to (V \otimes_{\C[\![\hbar]\!]} V)(\!(z)\!)\).

This datum is then subject to the following two axioms similar to a usual vertex algebra:
\begin{itemize}[leftmargin=33pt, labelsep=5pt]
    \item[(QV1)] \textbf{Vacuum vector axiom:} The vector \(\Omega \in V\), called vacuum vector, satisfies \(Y(\Omega,z)= 1\) and \(Y(v,z)\Omega \in v + zV[\![z]\!]\) for all \(v \in V\);
    
    \item[(QV2)] \textbf{Translation operator axiom:} The operator \(T\), called translation operator, satisfies \(T\Omega = 0\) and \(\frac{dY(z)}{dz} = [T,Y(v,z)]\).
\end{itemize}
However, contrary to a usual vertex algebra, the meromorphic multiplication on \(V\) satisfies the following \(S\)-braided adjustment of locality, which is \cite[(QA1)]{EK5} in our notation:
\begin{itemize}[leftmargin=33pt, labelsep=5pt]
    \item[(QV3)] \textbf{S-locality:} The following version of locality is satisfied
    \begin{equation}\label{eq:Slocality}
        \begin{split}
        \langle Y^{13}(z_1)Y^{23}(z_2),S(z_1-z_2)(u \otimes v)\rangle = \langle Y^{23}(z_2)Y^{13}(z_1),u \otimes v\rangle.
        \end{split}
    \end{equation}
    Here, we wrote
    \begin{equation}
        Y(z) \in (V^* \otimes_{\C[\![\hbar]\!]} \textnormal{End}_{\C[\![\hbar]\!]}(V))[\![z,z^{-1}]\!]\,,\qquad \langle v\otimes 1, Y(z)\rangle \coloneqq Y(v,z) \textnormal{ for all }v \in V
    \end{equation}
    for the element associated to \(Y(-,z) \colon V \to \textnormal{End}_{\C[\![\hbar]\!]}(V)[\![z,z^{-1}]\!]\) and the equality \eqref{eq:Slocality} is understood as by extending the matrix entries to functions in \(\C [z_1,z_2,(z_1-z_2)^{-1}][\![\hbar]\!]\) after evaluating on an arbitrary vector in \(V\) and applying an arbitrary functional \(V^*\). 
\end{itemize}
Furthermore, the braiding map \(S\) has to satisfy:
\begin{itemize}
    \item[(S1)] \textbf{Unitarity:} \(S^{21}(z) = S^{-1}(z)\);
    \item[(S2)] \textbf{Compatibility with \(T\):} \([T \otimes 1,S(z)] = -\frac{d S(z)}{dz}\)
    \item[(S3)] \textbf{Yang-Baxter equation:} \(
        S^{12}(z_1)S^{13}(z_1+z_2)S^{23}(z_2) = S^{23}(z_2)S^{13}(z_1+z_2)S^{12}(z_1).\)
\end{itemize}
It is clear that \(V/\hbar V\) is now a usual vertex algebra and we call this the classical limit of \(V\).

Contrary to usual vertex algebras, braided vertex algebras are not automatically associative in a reasonable sense. In general, they only satisfy a quasi-associativity constraint controlled by \(S\); see \cite[Proposition 1.1]{EK5}. A quantum vertex algebra is now a braided vertex algebra satisfying the additional constraint
\begin{itemize}[leftmargin=33pt, labelsep=5pt]
    \item[(QV4)] \textbf{Hexagon axiom:} \(S^{23}(z_2)Y^{12}(z_1) =  Y^{12}(z_1)S^{23}(z_2)S^{13}(z_1+z_2)\). 
\end{itemize}
This axiom ensures the associativity 
\begin{equation}\label{eq:associativity}
    Y(u, z_1+z_2)Y (v, z_2)=Y(Y (u, z_1)v, z_2)\,,\qquad \textnormal{ for all }u,v \in V.
\end{equation}
Here, \eqref{eq:Slocality} is understood as an equality of the respective matrix coefficients inside \(\C(\!(z_2)\!)(\!(z_1)\!)[\![\hbar]\!]\) after evaluating on arbitrary elements of \(V \otimes_{\C[\![\hbar]\!]} V\) and then applying arbitrary elements of \(V^*\). Furthermore, \eqref{eq:associativity} is understood as an identity of matrix coefficients in \(\C[\![z_1,z_2]\!][z_1^{-1},(z_1+z_2)^{-1}][\![\hbar]\!]\)
after evaluating on arbitrary elements of \(V\) and then applying arbitrary elements of \(V^*\).

\subsubsection{Construction of the quantum vacuum vertex algebra}
Let $\CV_{\hbar,k}(\fd)$ be the $\widehat{DY}_{\hbar,\ell}(\fd)$-module
\be
\CV_{\hbar,k}(\fd)=\Ind_{Y_{\hbar}(\fd) \otimes \C[c_\ell]}^{\widehat{DY}_{\hbar,\ell}(\fd)} \C_k\lbb\hbar\rbb=\widehat{DY}_{\hbar,\ell}(\fd)\otimes_{Y_{\hbar}(\fd) \otimes \C[c_\ell]} \C_k\lbb\hbar\rbb.
\ee
Here, $\C_k\lbb\hbar\rbb$ is the trivial module of $Y_\hbar (\fd)$ and $c_\ell$ acts on it by multiplication with \(k \in \C\). This is the quantum analogue of the vacuum  module of $\wh\fd_{\ell}$ at level $k$. One can show that as a vector space, we can identify $\CV_{\hbar,k}(\fd)=Y_\hbar^*(\fd)$.
In this section, we want to prove that there is a canonical quantum vertex algebra structure on \(\mathcal{V}_k(\fd)\).

Let $R$ be the quantum $R$-matrix of $DY_\hbar (\fd)=Y_\hbar (\fd)\otimes_{\C[\![\hbar]\!]} Y^*_\hbar (\fd)^{\textnormal{co-op}}$ described explicitly in Section \ref{subsec:Rmat_double}. We can think of this now as a tensor in $Y_\hbar (\fd) \otimes_{\C[\![\hbar]\!]} \wh Y_{\hbar,\ell}^* (\fd)^{\textnormal{co-op}}$. On the other hand, the meromorphic $R$-matrix $R(z)$ can be thought of as an element in $(Y_\hbar (\fd)\otimes_{\C[\![\hbar]\!]} Y_\hbar (\fd))[\![z^{-1}]\!]$. Let
\begin{equation}
    T(z) \in (Y_\hbar (\fd)\otimes_{\C[\![\hbar]\!]} \End (\CV_{\hbar,k}(\fd)))\lbb z \rbb \quad \textnormal{ and }\quad T^\dagger(z) \in (Y_\hbar (\fd)\otimes_{\C[\![\hbar]\!]} \End (\CV_{\hbar,k}(\fd)))\lbb z^{-1} \rbb     
\end{equation}
be the tensors series obtained from \((\tau_z \otimes 1)R\) and \(R(z)\) respectively by applying the representation map $\widehat{DY}_{\hbar,\ell}(\fd)\to \End (\CV_{\hbar,k}(\fd))$ in the second tensor factor. Here, we recall that \(\tau_z\) is the translation operator \(t \mapsto t +z\) associated to \(\partial_t\).
In the following, we write $A(z+\lambda e_\ell)\coloneqq e^{\lambda z \textnormal{ad}(\pd_\ell)}A(z) $ for series \(A(z) \in Y_\hbar(\fd)(\!(z)\!)\) and \(\lambda \in \C[\![\hbar]\!]\) in case this expression is well-defined.  

\begin{Lem}
    The following relations hold:
    \be
\begin{aligned}\label{eq:RTT}
R^{12}(z_1-z_2) T^{13}(z_1) T^{23}(z_2)& =T^{23}(z_2)T^{13}(z_1) R^{12}(z_1-z_2),  \\
R^{12}(z_1-z_2-k\hbar e_\ell /2)  T^{13}(z_1) T^{\dagger, 23}(z_2) &=T^{\dagger, 23}(z_2)T^{13}(z_1)R^{12}(z_1-z_2+k\hbar e_\ell/2). 
\end{aligned}
\ee
\end{Lem}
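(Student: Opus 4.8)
The plan is to read both identities in \eqref{eq:RTT} as RLL-type relations for the L-operators $T(z)$ and $T^\dagger(z)$ attached to the module $\CV_{\hbar,k}(\fd)$, and to derive them from the quasi-triangularity of the double (Theorem \ref{thm:Rmat_double}) together with the pseudo-triangularity axioms (T1)--(T4) of the spectral $R$-matrix. The first move is to strip off the representation $\pi\colon \widehat{DY}_{\hbar,\ell}(\fd)\to \End(\CV_{\hbar,k}(\fd))$ acting in the third tensor slot and prove the corresponding \emph{universal} identities in $Y_\hbar(\fd)^{\otimes 2}\otimes \widehat{DY}_{\hbar,\ell}(\fd)$, where the first two factors are auxiliary copies of $Y_\hbar(\fd)$ carrying the spectral parameters $z_1,z_2$, and the third factor is the copy of $Y^*_\hbar(\fd)^{\textnormal{co-op}}$ (for $T$), respectively $Y_\hbar(\fd)$ (for $T^\dagger$), inside $\widehat{DY}_{\hbar,\ell}(\fd)$ from which the L-operator is built. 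Applying $\mathrm{id}\otimes\mathrm{id}\otimes\pi$ to the universal identities then yields \eqref{eq:RTT}.

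For the first identity (the $TT$ relation), both $T(z_1)$ and $T(z_2)$ arise from the single non-spectral double $R$-matrix $R=\exp(\hbar\gamma_<^{21})\exp(-\hbar\gamma_<)$, shifted by $\tau_{z_1}$ and $\tau_{z_2}$ on the auxiliary legs. Here I would combine the coproduct factorizations $(\Delta_\hbar^\gamma\otimes 1)R=R^{13}R^{23}$, $(1\otimes\Delta_\hbar^\gamma)R=R^{13}R^{12}$ and the hexagon/Yang-Baxter identity $R^{12}R^{13}R^{23}=R^{23}R^{13}R^{12}$ of Theorem \ref{thm:Rmat_double} with the intertwining property (T1), the spectral coproduct property (T2), and the translation-equivariance (T3), $(\tau_{z_1}\otimes\tau_{z_2})R(z_3)=R(z_3+z_1-z_2)$. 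The upshot is that the commutation of the two auxiliary legs is governed precisely by the spectral $R$-matrix $R(z_1-z_2)$. Conceptually, this is nothing but the representation on $\CV_{\hbar,k}(\fd)$ of the commutation relations of the local factorization Hopf algebra $Y^*_\hbar(\fd)^{\textnormal{co-op}}_{\{z_1,z_2\}}$ from Section \ref{subsec:EKfac}, which are by construction the $R(z_1-z_2)$-twisted relations $X_L=X_R^{21}$, so this identity should follow fairly directly.

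For the second identity (the cross $TT^\dagger$ relation) the two L-operators are built from genuinely different objects: $T$ from the double $R$-matrix and $T^\dagger$ from the spectral Yangian $R$-matrix $R(z)$, which after $\pi$ sits in the $Y_\hbar(\fd)$ half of $\widehat{DY}_{\hbar,\ell}(\fd)$. I would again invoke the Yang-Baxter equation together with (T1)--(T2), but now the central extension is what produces the asymmetric arguments. When the double $R$-matrix is transported to the centrally extended algebra it is conjugated to $\widetilde R=\exp(\hbar\,\partial_\ell\otimes c_\ell/2)\,R\,\exp(\hbar\,\partial_\ell\otimes c_\ell/2)$, by the Corollary following Lemma \ref{lem:square_of_antipode}. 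Representing on $\CV_{\hbar,k}(\fd)$ sends $c_\ell\mapsto k$, so each conjugating factor becomes $\exp(\tfrac{k\hbar}{2}\partial_\ell)$, which under the convention $A(z+\lambda e_\ell)=e^{\lambda z\,\mathrm{ad}(\partial_\ell)}A(z)$ is exactly a spectral shift of $R(z_1-z_2)$ by $k\hbar e_\ell/2$; the two factors sitting on opposite sides then yield the arguments $z_1-z_2\mp k\hbar e_\ell/2$.

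I expect the main obstacle to be the second identity, for two reasons. First, one must correctly reconcile the two inequivalent expansions underlying $T(z)$ (the polynomial $\tau_z$-expansion, the $n\ge 0$ case of \eqref{eq:double_to_yangian_hom}) and $T^\dagger(z)$ (the singular $z^{-1}$-expansion, the $n<0$ case), so that the mixed Yang-Baxter manipulation remains inside well-defined spaces of formal Laurent and power series and the cross terms actually cancel. Second, one must track the central contributions with the correct signs to obtain precisely $\mp k\hbar e_\ell/2$ rather than a symmetric or doubled shift; the facts that $2\,\mathrm{ad}(\partial_\epsilon)$ is the canonical derivative (Lemma \ref{lem:square_of_antipode}) and that $c_\ell$ is primitive and central are exactly what let these exponential factors act as honest spectral translations. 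The first identity, by contrast, should be an essentially formal consequence of Theorem \ref{thm:Rmat_double} and (T1)--(T4).
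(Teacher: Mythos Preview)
Your plan is correct and follows the paper's approach: the first identity is obtained from $(\Delta_\hbar^\gamma\otimes 1)R=R^{13}R^{23}$ together with the intertwining axiom (T1), and the second from the quasi-triangularity $\wt R\,\widetilde\Delta_\hbar^\gamma\,\wt R^{-1}=\widetilde\Delta_\hbar^{\gamma,\textnormal{op}}$ of the extended $R$-matrix, after which one strips off the factors $\exp(\hbar\,\partial_\ell\otimes c_\ell/2)^{13}$ and sets $c_\ell=k$ to produce the shifts $\pm k\hbar e_\ell/2$. The additional ingredients you mention (Yang--Baxter, (T2)--(T4)) are not actually needed for the first relation, but your identification of the mechanism behind the asymmetric shifts in the second relation is exactly right.
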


\begin{proof}
    The first relation is simply follows from the fact that 
    \begin{equation}
        \begin{split}
            R^{12}(z_1-z_2)(e^{z_1 \textnormal{ad}(\partial_t)}&\otimes e^{z_2 \textnormal{ad}(\partial_t)} \otimes 1)(\Delta^\gamma_\hbar \otimes 1)(R)
            \\&=(e^{z_1 \textnormal{ad}(\partial_t)}\otimes e^{z_2 \textnormal{ad}(\partial_t)} \otimes 1)(\Delta^{\gamma,\textnormal{op}}_{\hbar} \otimes 1) (R)R^{12}(z_1-z_2)
        \end{split} 
    \end{equation}
    holds according to Theorem \ref{thm:Rmat_double}.2 and the pseudotriangular structure of \(Y_\hbar(\fd)\).
    For the second relation, recall that $\wt R = \exp (\hbar \pd_\ell\otimes c_\ell/2)R\exp (\hbar \pd_\ell\otimes c_\ell/2)$ is the $R$-matrix of the double of $\wt{DY}_{\hbar,\ell} (\fd)$. We now claim that the following holds
    \be\label{eq:RTT_proof}
R^{12}(z_1-z_2) \wt R[z_1]^{13} R(z_2)^{23}=R(z_2)^{23}\wt R[z_1]^{13} R^{12}(z_1-z_2),
    \ee
    where \(\wt R[z_1] = (e^{z_1 \partial_t} \otimes 1)\wt R\).
Multiplying by $R^{21}(z_2-z_1)=R^{12}(z_1-z_2)^{-1}$ from the left and right and changing the first and second tensor factor, we see that the above is equivalent to
\be
R^{12}(z_2-z_1)R^{13}(z_2)\wt R^{23}[z_1]=\wt R^{23}[z_1]R^{13}(z_2)R^{12}(z_2-z_1).
\ee
 The latter equation now follows from the fact that $\wt R \widetilde{\Delta}^\gamma_\hbar \wt R^{-1}=\widetilde{\Delta}^{\gamma,\textnormal{op}}_\hbar$ holds. Now the second identity in \eqref{eq:RTT} follows from \eqref{eq:RTT_proof} by multiplying both sides with $\exp (-\hbar \pd_\ell\otimes c_\ell/2)^{13}$ from the left and right and using $c_\ell=k$. 
\end{proof}

\begin{Lem}
    The tensor series
    \be
\CY_\hbar (z)=T(z) \cdot T^\dagger (z+k\hbar e_\ell/2)^{-1}\in (Y_\hbar (\fd)\otimes_{\C[\![\hbar]\!]} \End (\CV_{\hbar,k}(\fd)))\lbb z,z^{-1}\rbb
    \ee
is a well-defined formal series. 
\end{Lem}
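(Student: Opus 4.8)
The plan is to reduce the statement to two independent facts: that $T^\dagger(z+k\hbar e_\ell/2)$ is invertible, and that the resulting product is coefficient-wise well defined. The starting observation is that both generating series are normalized: since $R$ and $R(z)$ reduce to $1$ modulo $\hbar$, we have $T(z)=1+\hbar(\cdots)\in(Y_\hbar(\fd)\otimes\End(\CV_{\hbar,k}(\fd)))[\![z]\!]$ and $T^\dagger(z)=1+\hbar(\cdots)\in(Y_\hbar(\fd)\otimes\End(\CV_{\hbar,k}(\fd)))[\![z^{-1}]\!]$, the first involving only nonnegative and the second only nonpositive powers of $z$. Throughout, these coefficients already live in the completed tensor product, since $T(z)$ and $T^\dagger(z)$ are themselves infinite sums over modes converging in the completed Yangian.

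For invertibility, I would note that the spectral shift $z\mapsto z+k\hbar e_\ell/2$ acts by $\exp((k\hbar/2)z\,\textnormal{ad}(\pd_\ell))$, which is again of the form $1+\hbar(\cdots)$; hence $T^\dagger(z+k\hbar e_\ell/2)=1+\hbar(\cdots)$ and its inverse is given by the $\hbar$-adically convergent geometric series $\sum_{m\ge0}(1-T^\dagger(z+k\hbar e_\ell/2))^m$. Only finitely many summands contribute at each order in $\hbar$, so $T^\dagger(z+k\hbar e_\ell/2)^{-1}$ is a well-defined element; note that the shift turns the pure $z^{-1}$-series into a two-sided Laurent series whose positive $z$-powers appear only at correspondingly high order in $\hbar$.

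The substantive point is the convergence of the product $T(z)\cdot T^\dagger(z+k\hbar e_\ell/2)^{-1}$, in which a series with only nonnegative powers of $z$ is multiplied against one containing arbitrarily negative powers, so that a priori the coefficient of each monomial $z^n$ is an infinite sum. I would control this by the energy grading: assigning degree $-1$ to both $z$ and $\hbar$ and degree $-n$ to the mode $a_n=a\otimes t^n$, the elements $\gamma_<$ and $\gamma_{<,\textnormal{sing}}(z)$ become homogeneous of degree $1$, so that $R$ and $R(z)$ are homogeneous of degree $0$, the operator $\tau_z$ is degree-preserving, and hence $T(z)$ and $T^\dagger(z)$ are homogeneous of degree $0$ with $\rho$ respecting this grading on $\CV_{\hbar,k}(\fd)$. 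Since $\CV_{\hbar,k}(\fd)$ is smooth and bounded below in energy, fixing the power $z^n$ together with the energy of a homogeneous input vector and of a target component constrains the modes so tightly that the remaining freedom forces the $Y_\hbar(\fd)$-weight of the contributions to grow without bound; hence each coefficient converges in the completed tensor product $Y_\hbar(\fd)\,\widehat{\otimes}\,\End(\CV_{\hbar,k}(\fd))$.

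The main obstacle I anticipate is precisely this convergence of the $z^n$-coefficients, compounded by the fact that the shift $z\mapsto z+k\hbar e_\ell/2$ breaks strict homogeneity, as $(k\hbar/2)z\,\textnormal{ad}(\pd_\ell)$ carries degree $-1$ rather than $0$. I would dispatch this by arguing order-by-order in $\hbar$, where at each fixed $\hbar$-order the shift injects only finitely many extra powers of $z$ and finitely many applications of $\textnormal{ad}(\pd_\ell)$, so that the graded convergence argument applies to a finite correction of the homogeneous situation. Combining the $\hbar$-adic invertibility of $T^\dagger(z+k\hbar e_\ell/2)$ with this graded convergence then shows that every coefficient of $\CY_\hbar(z)$ is a well-defined element, i.e. $\CY_\hbar(z)\in(Y_\hbar(\fd)\otimes\End(\CV_{\hbar,k}(\fd)))[\![z,z^{-1}]\!]$.
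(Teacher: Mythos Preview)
Your proposal is correct in spirit but takes a substantially longer and less transparent route than the paper. The paper's proof is two lines: for any smooth $Y_\hbar(\fd)$-modules $M,N$ and any $m\otimes n\in M\otimes N$, one has $T^\dagger(z+k\hbar e_\ell/2)^{-1}(m\otimes n)\in (M\otimes N)(\!(z)\!)$, because both tensor legs of $R(z)$ are built from nonnegative modes $I_{\alpha,i},I^\alpha_j$ with $i,j\ge 0$, which act locally nilpotently on smooth vectors; since $T(z)$ is a power series in $z$, its product with a Laurent series bounded below is automatically well defined. No grading bookkeeping, no separate treatment of the spectral shift, no completed-tensor-product convergence is needed.

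Your energy-grading apparatus is really a repackaging of the same smoothness input, and your invocation of ``$\CV_{\hbar,k}(\fd)$ is smooth and bounded below in energy'' is precisely what the paper uses directly. What your approach gains is a statement about abstract convergence in $Y_\hbar(\fd)\,\widehat{\otimes}\,\End(\CV_{\hbar,k}(\fd))$ rather than well-definedness after evaluation on smooth modules; what it costs is clarity at the crucial step. In particular, the assertion that ``the $Y_\hbar(\fd)$-weight of the contributions grows without bound'' is imprecise: once you fix $z^n$, the $\hbar$-order, and the input/output energies in $\CV_{\hbar,k}(\fd)$, the \emph{energy} degree in the $Y_\hbar(\fd)$-factor is actually fixed by your own homogeneity; what grows is the $t$-adic depth of the modes appearing, which is the correct thing to say for convergence in the completion but is not what you wrote. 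If you want to sharpen the argument, the cleanest move is simply to drop the grading machinery and argue from smoothness as the paper does: after acting on a fixed smooth vector, only finitely many negative $z$-powers survive in $T^\dagger(z+k\hbar e_\ell/2)^{-1}$, and then the product with the $z$-power series $T(z)$ is termwise finite.
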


\begin{proof}

    Let $M, N$ be smooth modules of $Y_\hbar (\fd)$. Then for any $m\otimes n\in M\otimes N$, we have $T^\dagger (z+k\hbar e_\ell/2)^{-1}(m\otimes n)\in (M\otimes N)\lpp z\rpp$. Since $T(z)$ is a formal power series in $z$, its action on $(M\otimes N)\lpp z\rpp$ is well-defined. 
    
\end{proof}

As stated in \cite{reshetikhin_tianshansky,EK5}, from equation \eqref{eq:RTT} and the unitarity of $R(z)$, one can deduce
\be\label{eq:YTT}
\begin{aligned}
    \CY_\hbar^{13} (z_1) & R^{12}(z_1-z_2+k\hbar e_\ell)^{-1} \CY_\hbar^{23} (z_2) R^{12}(z_1-z_2)\\ &=R^{12}(z_1-z_2)^{-1} \CY_\hbar^{23} (z_2) R^{12}(z_1-z_2-k\hbar e_\ell)  \CY_\hbar^{13} (z_1).
\end{aligned}
\ee
As the notation suggests, this $\CY_\hbar (z)$ defines the state-operator correspondence for the quantum vertex algebra structure on $\CV_{\hbar,k}(\fd) (\fd)$ by writing \(\CY_\hbar(v,z) \coloneqq \langle v\otimes 1,\CY_\hbar(z)\rangle\) for all \(v \in \CV_{\hbar,k}(\fd)\). In its definition, the series $T^\dagger$ are the annihilation operators, and they appear before the creation operators $T$. The vacuum vector $\Omega = 1$ is the identity element of $\CV_{\hbar,k}(\fd)=Y_\hbar^*(\fd)$ and the translation operator is $\pd_t$. We are left to define $\CS (z)$. To do so, consider 
\begin{equation}
    L_1(z), L_2(z) \colon Y_\hbar (\fd)\otimes_{\C[\![\hbar]\!]} Y_\hbar (\fd)\to (Y_\hbar (\fd)\otimes_{\C[\![\hbar]\!]} Y_\hbar (\fd))(\!(z^{-1})\!)    
\end{equation}
defined by
\be
\begin{aligned}
    &  a\otimes b\stackrel{L_1(z)}\longmapsto (a\otimes 1)R(z+k\hbar e_\ell)^{-1}(1\otimes b)R(z),\\
    & a\otimes b\stackrel{L_2(z)}\longmapsto R(z)^{-1}(1\otimes b)R(z-k\hbar e_\ell)(a\otimes 1).
\end{aligned}
\ee
Equation \eqref{eq:YTT} can be neatly written as
\be\label{eq:LYY}
L_1^{12}(z_1-z_2)  \CY_\hbar^{13} (z_1)\CY_\hbar^{23} (z_2)=L_2^{12}(z_1-z_2)\CY_\hbar^{23} (z_2) \CY_\hbar^{13} (z_1).  
\ee
Moreover, the two endomorphisms satisfy
\be
L_1(-z)^{21}= \mathrm{Ad}_{R(z)} L_2(z). 
\ee
Let $\CS^\perp(z)$ be the endomorphism of $Y_\hbar (\fd)\otimes_{\C[\![\hbar]\!]} Y_\hbar (\fd)$ so that \(L_2(z)\CS^\perp (z)=L_1(z)\) or
\be\label{eq:CSperp}
\CS^\perp (z)= L_2^{-1}(z)L_1(z) = L_1^{21,-1} (-z)\mathrm{Ad}_{R(z)} L_1(z). 
\ee
This is well-defined since clearly $L_1$ and $L_2$ are invertible. We define $\CS(z)$ to be the adjoint of $\CS^\perp (z)$. The main statement to be proven in this section is the following. Its proof will occupy rest of this subsection. 

\begin{Thm}\label{Thm:QVAd}

    The tuple $(\CV_{\hbar,k}(\fd), \CY_\hbar, 1, \pd_t, \CS)$ defines a quantum vertex algebra whose classical limit is the classical vertex algebra $V_k(\fd)$. 
    
\end{Thm}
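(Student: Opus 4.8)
The plan is to check the braided vertex algebra axioms (QV1)--(QV4) together with the braiding axioms (S1)--(S3), taking each in turn and feeding on the exchange relation \eqref{eq:YTT} (equivalently \eqref{eq:LYY}), the $RTT$-relations \eqref{eq:RTT}, and the pseudotriangular axioms (T1)--(T4) of $R(z)$. Several axioms are essentially immediate and I would dispatch them first. For the vacuum axiom (QV1), I would pair $\Omega = 1 \in \CV_{\hbar,k}(\fd) = Y_\hbar^*(\fd)$ against the first tensor slot of $\CY_\hbar(z)$; since $1$ pairs as the counit $\varepsilon$ and $(\varepsilon\otimes 1)$ is an algebra homomorphism on $Y_\hbar(\fd)\otimes_{\C[\![\hbar]\!]}\End(\CV_{\hbar,k}(\fd))$, axiom (T4) forces $(\varepsilon\otimes 1)T(z) = (\varepsilon\otimes 1)T^\dagger(z) = \mathrm{id}$, hence $\CY_\hbar(\Omega,z) = \mathrm{id}$. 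The creation property $\CY_\hbar(v,z)\Omega \in v + z\,\CV_{\hbar,k}(\fd)[\![z]\!]$ follows because $T^\dagger(z+k\hbar e_\ell/2)^{-1}$ is built from the annihilation-operator expansion of $R(z)$ and fixes the vacuum, while $T(z)$ restricts at $z=0$ to the identity embedding $Y_\hbar^*(\fd)\hookrightarrow\CV_{\hbar,k}(\fd)$. For the translation axiom (QV2), $\pd_t\Omega=0$ is clear, and $\tfrac{d}{dz}\CY_\hbar(v,z) = [\pd_t,\CY_\hbar(v,z)]$ holds because the only $z$-dependence of $T(z)$ enters through $\tau_z = e^{z\pd_t}$, with the $T^\dagger$ half controlled by (T3).

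The key observation for the easy half is that $S$-locality (QV3) is a direct transcription of \eqref{eq:LYY}: rewriting that identity as $\CY_\hbar^{23}(z_2)\CY_\hbar^{13}(z_1) = \CS^{\perp,12}(z_1-z_2)\,\CY_\hbar^{13}(z_1)\CY_\hbar^{23}(z_2)$ and then using that $\CS(z)$ is by definition the adjoint of $\CS^\perp(z)=L_2^{-1}(z)L_1(z)$ produces exactly \eqref{eq:Slocality}. I would then verify the three braiding axioms, each inherited from a property of $R(z)$ through the presentation $\CS^\perp(z) = L_2^{-1}(z)L_1(z) = L_1^{21,-1}(-z)\,\mathrm{Ad}_{R(z)}L_1(z)$ from \eqref{eq:CSperp}. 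Unitarity (S1) follows by combining $\CS^\perp(z)=L_2^{-1}(z)L_1(z)$ with the relation $L_1(-z)^{21} = \mathrm{Ad}_{R(z)}L_2(z)$ and the unitarity of $R(z)$ to show $\CS^{\perp,21}(z) = \CS^\perp(z)^{-1}$, a property preserved under taking adjoints. Compatibility with $T$ (S2) follows from the translation covariance (T3) of $R(z)$, which propagates through $L_1,L_2$ so that all $z$-dependence is via shifts generated by $\pd_t$. The Yang--Baxter equation (S3) for $\CS$ is deduced from the Yang--Baxter and cocycle relations of $R(z)$ (Theorem \ref{thm:Rmat_double} and (T2)); this is largely a bookkeeping exercise of threading the $\mathrm{Ad}_{R(z)}$ factors through three tensor slots.

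The main obstacle will be the hexagon axiom (QV4), $\CS^{23}(z_2)\CY_\hbar^{12}(z_1) = \CY_\hbar^{12}(z_1)\CS^{23}(z_2)\CS^{13}(z_1+z_2)$, which is the identity responsible for the associativity \eqref{eq:associativity}. Here one cannot quote a single established relation; instead I would expand $\CY_\hbar^{12}(z_1) = T^{12}(z_1)\,T^{\dagger,12}(z_1+k\hbar e_\ell/2)^{-1}$ and push $\CS^{23}(z_2)$ through both $T$ and $T^\dagger$ using the two $RTT$-relations in \eqref{eq:RTT}, keeping careful track of the spectral shifts $\pm k\hbar e_\ell/2$ that distinguish the creation and annihilation halves. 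These shifts are precisely engineered so that the contributions of the two halves recombine into $\CS^{23}(z_2)\CS^{13}(z_1+z_2)$; confirming that the shifts line up is the delicate computational step, and it is exactly the place where the proof uses more than a formal rearrangement of \eqref{eq:YTT}.

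Finally, the classical limit is obtained by reducing modulo $\hbar$. Since $R = 1 + O(\hbar)$ and $R(z) = 1 + O(\hbar)$, we get $\CS(z) \equiv 1$, while $T(z)$ and $T^\dagger(z)$ collapse to the bare translation action; the operator $\CY_\hbar$ then degenerates to the ordinary state--operator map, and $\CV_{\hbar,k}(\fd)/\hbar\,\CV_{\hbar,k}(\fd)$ is identified with the affine vacuum vertex algebra $V_k(\fd)$ of $\wh\fd_\ell$ at level $k$, as vertex algebras. I would present this identification last, after all axioms are in place, so that the classical locality appears as the $\hbar\to 0$ shadow of the verified $S$-locality \eqref{eq:Slocality}.
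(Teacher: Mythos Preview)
Your handling of (QV1)--(QV3) matches the paper; in particular (QV3) is proved exactly as you outline, by reading off $\langle L_2^{-1}L_1\,\CY_\hbar^{13}\CY_\hbar^{23},u\otimes v\rangle = \langle \CY_\hbar^{23}\CY_\hbar^{13},u\otimes v\rangle$ from \eqref{eq:LYY}. The divergence, and a genuine gap, is in (S1)--(S3) and above all (QV4). The paper does \emph{not} work directly with the presentation $\CS^\perp = L_2^{-1}L_1$. Instead it proves two preparatory results before assembling the theorem: a Proposition showing that $\CS(z)$ on $\CV_{\hbar,k}(\fd)^{\otimes 2}$ coincides with the action of $R(z)$ through the $Y_\hbar(\fd)$-module structure (this is a nontrivial computation, recognizing $L_1^{-1}L_2$ as $R(z)^{-1}\rhd_k$ for a twisted adjoint action $\rhd_k$), and Corollary~\ref{Cor:Ymodulemap} showing that $\CY_\hbar$ is a $Y_\hbar(\fd)$-module map, $\sum_{(a)}\CY_\hbar(a^{(2)}u,z)\tau_z(a^{(1)})v = \tau_z(a)\CY_\hbar(u,z)v$. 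With the first, (S1)--(S3) are immediate from unitarity, translation equivariance, and the YBE for $R(z)$; with both together, (QV4) is a one-line consequence of the cocycle identity $(1\otimes\Delta_\hbar^\gamma)R(z) = R^{13}(z)R^{12}(z)$.

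Your proposed route to (QV4) does not go through as written. The $RTT$-relations \eqref{eq:RTT} commute $T^{13}$ past $T^{23}$ or $T^{\dagger,23}$ via $R^{12}$, but the hexagon asks you to commute $\CS^{23}(z_2)$---an operator on $\CV_{\hbar,k}(\fd)^{\otimes 2}$ defined only as the adjoint of $\CS^\perp$---past $T^{12}(z_1)$, whose slot~1 carries $Y_\hbar(\fd)$, not $\End(\CV_{\hbar,k}(\fd))$. No $RTT$-relation has those tensor placements; what bridges them is precisely the identification $\CS(z)=R(z)\cdot$ together with the module-map property of $\CY_\hbar$, neither of which you establish. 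Similarly, deducing (S3) straight from $\CS^\perp = L_2^{-1}L_1$ is far from bookkeeping: the $L_i$ mix left and right multiplication by $R$ at shifted spectral arguments $z\pm k\hbar e_\ell$, and without the module identification there is no transparent mechanism for the cross-terms to cancel.
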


Let us write $\CY_\hbar$ a bit more explicit by using the identification $\CV_{\hbar,k}(\fd)\cong Y^*_\hbar (\fd)$. We have
\be
\begin{split}
\CY_\hbar(v, z)&= \langle v \otimes 1, T(z)T^{\dagger}(z+k\hbar e_\ell/2)\rangle = \langle \Delta_\hbar^\gamma(v) \otimes 1,T^{13}(z)T^{\dagger,23}(z+k\hbar e_\ell/2)^{-1}\rangle \\&=
\sum_{(v)}\tau_z(v^{(2)})R(S^{-1}(v^{(1)}), z+k\hbar e_\ell/2),    
\end{split}
\ee
for all \(v \in \CV_{\hbar,k}(\fd)\), where we used Sweedler's notation for the double, so \(\Delta_\hbar^{\gamma,\textnormal{op}}(v) = \sum_{(v)}v^{(2)}\otimes v^{(1)}\) is the coproduct of \(v\) in \(Y_\hbar^*(\fd)\), and we wrote
\begin{equation}\
    R(v,z+k\hbar \ell/2) \coloneqq \langle v \otimes 1,R(z + k\hbar e_\ell/2)\rangle.
\end{equation}
Let us consider the Hopf algebra $\widehat{DY}_{\hbar, \ell}(\fd)$, which contains $Y^*_\hbar (\fd)^{\textnormal{co-op}}$ as a subalgebra, and consider the following map
\be
\wt\CY_\hbar\colon \widehat{DY}_{\hbar, \ell}(\fd)\longrightarrow \Big(\widehat{DY}_{\hbar, \ell}(\fd)/(c_\ell-k)\Big)\lbb z,z^{-1}\rbb, \qquad v\mapsto \sum_{(v)}\tau_z(v^{(2)})R(S^{-1}(v^{(1)}), z).
\ee
Here, by $R(v, z) \coloneqq \langle v \otimes 1,R(z)\rangle $ we mean the map corresponding to \(R(z)\) and we use Sweedler's notation for the coproduct of \(\wh{DY}_{\hbar,\ell}(\fd)\) and not of \(Y_\hbar^*(\fd)\). 

\begin{Lem}
    The map \(\wt\CY_\hbar\) satisfies \(\wt\CY_\hbar (c_\ell, z)=k\) and
    \be\label{eq:CY_yangian_rightaction}
        \wt\CY_\hbar (va, z)=\epsilon (a)\wt\CY_\hbar (v, z), 
    \ee
    for all \(a \in Y_\hbar(\fd)\) and \(v \in Y^*_\hbar(\fd)^{\textnormal{co-op}} = \mathcal{V}_{\hbar,k}(\fd)\). Therefore, \(\wt\CY_\hbar\) descends to a map from $\CV_{\hbar,k}(\fd)$ to $\widehat{DY}_{\hbar, \ell}(\fd)/(c_\ell-k)$ and this map coincides with $\CY_\hbar$.
\end{Lem}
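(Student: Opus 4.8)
The plan is to prove both displayed identities by a direct Hopf-algebraic computation inside $\wh{DY}_{\hbar,\ell}(\fd)$, using two facts that are already at our disposal. First, by the proposition in Section~\ref{subsec:state-op}, the assignment $w \mapsto R(w,z)$ is the algebra homomorphism $\rho(z)$, so $R(xy,z) = R(x,z)R(y,z)$ holds on the dense subalgebra, and it restricts to the translation $\tau_z = e^{z\pd_t}$ on $Y_\hbar(\fd)$; in particular $R(c_\ell,z) = 0$, because $c_\ell$ maps to $0$ under $\wh{Y}^*_{\hbar,\ell}(\fd)^{\textnormal{co-op}} \to Y^*_\hbar(\fd)^{\textnormal{co-op}}$ and $R(z)$ only involves the $\fg$- and $\fg^*$-modes. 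Second, $\tau_z$ is itself an algebra homomorphism. These, together with the antipode identity $\sum_{(a)} a^{(2)}S^{-1}(a^{(1)}) = \epsilon(a)$, will drive the argument.

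For the relation $\wt\CY_\hbar(va,z) = \epsilon(a)\wt\CY_\hbar(v,z)$, I would substitute $\Delta(va) = \Delta(v)\Delta(a)$ into the defining formula and use that $S^{-1}$ is an anti-homomorphism, giving $S^{-1}(v^{(1)}a^{(1)}) = S^{-1}(a^{(1)})S^{-1}(v^{(1)})$. Since $a^{(1)} \in Y_\hbar(\fd)$, the homomorphism property of $R(-,z)$ turns $R(S^{-1}(a^{(1)}),z)$ into $\tau_z(S^{-1}(a^{(1)}))$, and the homomorphism property of $\tau_z$ then merges it with $\tau_z(v^{(2)}a^{(2)})$ into $\tau_z\bigl(v^{(2)}\,a^{(2)}S^{-1}(a^{(1)})\bigr)$. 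Because $a^{(2)}$ and $S^{-1}(a^{(1)})$ are adjacent on the right, the antipode identity collapses them to $\epsilon(a)$, which yields $\epsilon(a)\wt\CY_\hbar(v,z)$. I would emphasize that the central twist carried by the $\wh{DY}_{\hbar,\ell}(\fd)$-coproduct of $v$ causes no trouble, since the $a$-factors always sit to the right of the $v$-factors and the collapse is an identity inside $Y_\hbar(\fd)$.

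For $\wt\CY_\hbar(c_\ell,z) = k$ I would use that $c_\ell$ is central and primitive, so $\Delta(c_\ell) = c_\ell\otimes 1 + 1\otimes c_\ell$ and $S^{-1}(c_\ell) = -c_\ell$; the two resulting terms are $\tau_z(c_\ell)R(1,z)$, equal to $c_\ell = k$ in the quotient (using $R(1,z) = 1$ from (T4) and $\pd_t c_\ell = 0$), and $R(-c_\ell,z)$, which vanishes by $R(c_\ell,z) = 0$. The same primitivity computation gives the slightly stronger $\wt\CY_\hbar(c_\ell w,z) = k\,\wt\CY_\hbar(w,z)$ for all $w$, the term with $c_\ell$ in the left slot again dropping out by $R(c_\ell,z) = 0$. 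Since $\CV_{\hbar,k}(\fd) = \wh{DY}_{\hbar,\ell}(\fd)\otimes_{Y_\hbar(\fd)\otimes\C[c_\ell]}\C_k$ and the triangular decomposition represents every class by some $v \in Y^*_\hbar(\fd)^{\textnormal{co-op}}$, the universal property of the tensor product reduces well-definedness of the descent exactly to the two identities $\wt\CY_\hbar(va,z) = \epsilon(a)\wt\CY_\hbar(v,z)$ and $\wt\CY_\hbar(c_\ell v,z) = k\,\wt\CY_\hbar(v,z)$ just proved.

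Finally, to match the descended map with $\CY_\hbar$, I would compare the $\wh{DY}_{\hbar,\ell}(\fd)$-coproduct restricted to $Y^*_\hbar(\fd)^{\textnormal{co-op}}$ with the untwisted $\Delta_\hbar^{\gamma,\textnormal{op}}$: by \eqref{eq:primitivecop} these differ precisely by the factor $\exp\bigl(\hbar(c_\ell\otimes\pd_\ell - \pd_\ell\otimes c_\ell)/2\bigr)$. Setting $c_\ell = k$ and reading $\textnormal{ad}(\pd_\ell)$ as the generator of the spectral shift in the sense of $A(z+\lambda e_\ell) = e^{\lambda z\,\textnormal{ad}(\pd_\ell)}A(z)$, this factor converts $R(S^{-1}(v^{(1)}),z)$ into $R(S^{-1}(v^{(1)}),z + k\hbar e_\ell/2)$, which is exactly the explicit formula for $\CY_\hbar$. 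I expect this final identification — checking that the central twist of the double's coproduct reproduces the spectral shift $z \mapsto z + k\hbar e_\ell/2$ — to be the main obstacle; the two displayed identities themselves follow cleanly from the homomorphism property of $\rho(z)$ and the antipode zig-zag.
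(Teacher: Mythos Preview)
Your proposal is correct and follows essentially the same approach as the paper: both arguments use that $R(-,z)$ is an algebra homomorphism restricting to $\tau_z$ on $Y_\hbar(\fd)$, collapse $\tau_z(a^{(2)})\tau_z(S^{-1}(a^{(1)}))$ via the antipode identity $\sum_{(a)}a^{(2)}S^{-1}(a^{(1)})=\epsilon(a)$, and then unwind the central twist $\exp(\hbar(\pd_\ell\otimes c_\ell - c_\ell\otimes\pd_\ell)/2)$ of the $\wh{DY}_{\hbar,\ell}(\fd)$-coproduct to recover the spectral shift $z\mapsto z+k\hbar e_\ell/2$. Your worry that the final identification is ``the main obstacle'' is unfounded---the paper dispatches it in two lines by observing that $R(c_\ell,z)=0$ kills one half of the twist and $c_\ell=k$ turns the other half into $e^{\hbar k\pd_\ell/2}$ acting on the first Sweedler leg.
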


\begin{proof}

The equality \(\wt\CY_\hbar (c_\ell, z)=k\) follows from $R(c_\ell, z)=0$ and \eqref{eq:CY_yangian_rightaction} follows from
\begin{equation}
    \begin{split}
        \wt\CY_\hbar (va, z)&=\sum_{(a),(v)}\tau_z(v^{(2)})\tau_z(a^{(2)})R(S^{-1}(a^{(1)}),z)R(S^{-1}(v^{(1)}),z)
        \\&= \sum_{(a)}\tau_z(a^{(2)}S^{-1}(a^{(1)}))\sum_{(v)}\tau_z(v^{(2)})R(s^{-1}(v^{(1)}),z) = \epsilon(a)\wt\CY_\hbar(v,z), 
    \end{split}
\end{equation}
where we used that $R(b, z)=\tau_z(b)$ holds for all \(b \in Y_\hbar(\fd)\). We are left to prove that the map induced by \(\wt\CY_\hbar\) on $\CV_{\hbar,k}(\fd)$ coincides with $\CY_\hbar$. 
    
Let $v\in Y_\hbar^*(\fd)^{\textnormal{co-op}}\subseteq \widehat{DY}_{\hbar, \ell}(\fd)$ and recall that the coproduct in this algebra is given by
    \be
        \wt{\Delta}_\hbar^{\gamma}(v) = \sum_{(v)}\exp (\hbar/2 (\pd_\ell\otimes c_\ell-c_\ell\otimes \pd_\ell)) (v^{(1)}\otimes v^{(2)}).
    \ee
    Applying this to $\wt \CY_\hbar$ and using $R(c_\ell)=0$, the only contribution from the derivations is from $\exp (\hbar/2( c_\ell\otimes \pd_\ell))$. Therefore, using $c_\ell=k$, we find
    \be
        \begin{split}
           \wt \CY_\hbar (v,z)&=\sum_{(v)}\tau_z (v^{(2)})R(e^{\hbar k\pd_\ell/2}S^{-1}(v^{(1)}), z)
           \\&=\sum_{(u)}\tau_z (v^{(2)})R(S^{-1}(v^{(1)}), z+\hbar ke_\ell/2)=\CY_\hbar (v,z), 
        \end{split}
    \ee
    holds, as desired. 
\end{proof}

As a consequence, we show that $\CY_\hbar$ is a map of $Y_\hbar (\fd)$-modules in an appropriate sense.

\begin{Cor}\label{Cor:Ymodulemap}
    For all $a\in \CY_\hbar (\fd)$ and $u,v\in \CV_{\hbar,k}(\fd)$, we have
    \be
        \sum_{(a)}\CY_\hbar (a^{(2)}u, z)\tau_z (a^{(1)})v=\tau_z(a)\CY_\hbar (u,z)v.
    \ee
    In other words, $\CY_\hbar$ is a $Y_\hbar (\fd)$-module map from $(\tau_z\CV_{\hbar,k}(\fd)\otimes_{\C[\![\hbar]\!]} \CV_{\hbar,k}(\fd))^{\textnormal{op}}$ to $\tau_z\CV_{\hbar,k}(\fd)$.  
\end{Cor}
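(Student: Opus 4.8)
\emph{Strategy.} The plan is to lift the identity to the double $\widehat{DY}_{\hbar,\ell}(\fd)$ and run everything through the map $\wt\CY_\hbar$ of the preceding lemma. Recall from that lemma that $R(-,z)\colon \widehat{DY}_{\hbar,\ell}(\fd)\to Y_\hbar(\fd)\lpp z^{-1}\rpp$ (with $c_\ell\mapsto 0$) is the state-operator \emph{Hopf algebra} homomorphism of Section \ref{subsec:state-op}, restricting to $\tau_z$ on $Y_\hbar(\fd)$, and that $\wt\CY_\hbar$ descends through the canonical projection $\pi\colon \widehat{DY}_{\hbar,\ell}(\fd)\to \CV_{\hbar,k}(\fd)$ to $\CY_\hbar$. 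Two observations set up the computation. First, since $\pi$ is a left-module map sending a lift $\tilde u$ of $u$ to $u$, we have $\CY_\hbar(a^{(2)}u,z)=\wt\CY_\hbar(a^{(2)}\tilde u,z)$, where $a^{(2)}\tilde u$ is now an honest product in the double. Second, both $\CY_\hbar(u,z)$ and the action of $\tau_z(a)$ are left multiplication on the cyclic module $\CV_{\hbar,k}(\fd)=\widehat{DY}_{\hbar,\ell}(\fd)\cdot\Omega$, so the two sides can be compared as left-multiplication operators by elements of a completion of $\widehat{DY}_{\hbar,\ell}(\fd)$, with $\CY_\hbar(u,z)$ realized as multiplication by $\wt\CY_\hbar(\tilde u,z)$.

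\emph{Key step.} I would first establish a covariance identity: for $a\in Y_\hbar(\fd)$ and $w\in\widehat{DY}_{\hbar,\ell}(\fd)$,
\begin{equation}
    \wt\CY_\hbar(aw,z)=\sum_{(a)}\tau_z(a^{(2)})\,\wt\CY_\hbar(w,z)\,\tau_z(S^{-1}(a^{(1)})).
\end{equation}
This follows by expanding $\Delta(aw)=\Delta(a)\Delta(w)$ in the definition $\wt\CY_\hbar(w,z)=\sum_{(w)}\tau_z(w^{(2)})R(S^{-1}(w^{(1)}),z)$: using that $\tau_z$ is an algebra automorphism, $S^{-1}$ an algebra anti-automorphism, and $R(-,z)$ an algebra homomorphism equal to $\tau_z$ on $Y_\hbar(\fd)$, the factors coming from $w$ reassemble into $\wt\CY_\hbar(w,z)$ while those from $a$ are pushed to the outside.

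\emph{Collapsing the sum.} Applying the covariance identity with $a$ replaced by $a^{(2)}$ and $w=\tilde u$, the left-hand side of the corollary becomes, after invoking coassociativity to pass to the iterated coproduct $\Delta^{(2)}(a)=\sum a^{(1)}\otimes a^{(2)}\otimes a^{(3)}$,
\begin{equation}
    \sum_{(a)}\tau_z(a^{(3)})\,\CY_\hbar(u,z)\,\tau_z\!\big(S^{-1}(a^{(2)})a^{(1)}\big)\,v,
\end{equation}
where I have merged $\tau_z(S^{-1}(a^{(2)}))\tau_z(a^{(1)})=\tau_z(S^{-1}(a^{(2)})a^{(1)})$. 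The antipode identity for the co-opposite Hopf algebra, $\sum S^{-1}(a^{(2)})a^{(1)}=\epsilon(a)$, collapses the inner sum so that $\sum_{(a)}\big(S^{-1}(a^{(2)})a^{(1)}\big)\otimes a^{(3)}=1\otimes a$, leaving exactly $\tau_z(a)\,\CY_\hbar(u,z)\,v$, which is the right-hand side. The module-map reformulation is then a restatement of this identity.

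\emph{Main obstacle.} The difficulty is bookkeeping rather than conceptual. One must verify that $\tau_z=e^{z\partial_t}$ and $R(-,z)$ are genuinely (anti-)multiplicative on the \emph{centrally extended} double — i.e.\ that $\partial_t$ extends to a derivation of $\widehat{DY}_{\hbar,\ell}(\fd)$ and that $R(-,z)$ remains a Hopf map after the quotient $c_\ell\mapsto 0$ — and that every expression is a well-defined formal series; the latter is ensured by smoothness of $\CV_{\hbar,k}(\fd)$, exactly as in the lemma guaranteeing that $\CY_\hbar(z)$ converges, with the annihilation factors acting before the creation factors. Finally one should check that the shift by $k\hbar e_\ell/2$ relating $\wt\CY_\hbar$ to $\CY_\hbar$ is transparent to the above manipulation, since it is absorbed into the $R(-,z)$ factor via $\exp(\hbar k\partial_\ell/2)$ and commutes with the $\tau_z(a^{(i)})$ coming from $Y_\hbar(\fd)$.
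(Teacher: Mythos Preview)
Your proof is correct and follows essentially the same route as the paper's. The paper carries out the computation in a single chain---expanding \(\wt\CY_\hbar(a^{(2)}v,z)\tau_z(a^{(1)})\) directly, using \((\Delta_\hbar^\gamma\otimes 1)R(z)=R^{13}(z)R^{23}(z)\) to split \(R(S^{-1}(a^{(2)}v^{(1)}),z)\), then invoking \(R(b,z)=\tau_z(b)\) for \(b\in Y_\hbar(\fd)\) and the antipode identity---whereas you first isolate the covariance identity \(\wt\CY_\hbar(aw,z)=\sum_{(a)}\tau_z(a^{(2)})\wt\CY_\hbar(w,z)\tau_z(S^{-1}(a^{(1)}))\) and then collapse; the ingredients and logical flow are otherwise identical.
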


\begin{proof}
  For any $a\in Y_\hbar (\fd)$ and $v\in \wh{DY}_{\hbar,\ell}(\fd)$, we have
    \be
        \begin{split}
            &\sum_{(a)}\wt \CY_\hbar (a^{(2)}v, z)\tau_z (a^{(1)}) \\&= \sum_{(a),(v)}\tau_z(a^{(3)}v^{(2)})\langle S^{-1}(a^{(2)}v^{(1)}) \otimes 1,R(z)\rangle \tau_z(a^{(1)}) 
            \\&=\sum_{(a),(v)}\tau_z(a^{(3)}v^{(2)})\langle S(v^{(1)}) \otimes S^{-1}(a^{(2)}) \otimes 1,(\wt\Delta_\hbar^\gamma \otimes 1)R(z)\rangle \tau_z(a^{(1)}) 
            \\&=\sum_{(a),(v)}\tau_z(a^{(3)}v^{(2)})\langle S(v^{(1)}) \otimes S^{-1}(a^{(2)}) \otimes 1,R^{13}(z)R^{23}(z)\rangle \tau_z(a^{(1)}) 
            \\&=\sum_{(a),(v)}\tau_z(a^{(3)})\tau_z(v^{(2)}) R(S(v^{(1)}), z)R(S^{-1}(a^{(2)}), z)\tau_z(a^{(1)})\\&=\tau_z(a) \wt \CY_\hbar (v, z).
        \end{split}
    \ee
    Acting on $\CV_{\hbar,k}(\fd)$ gives the desired result. 
\end{proof}

The next issue we need to deal with is the action of $\CS(z)$. To be able to understand this action correctly, we need to understand the composition $L_2(z)^{-1}L_1(z)$ and how to dualize this action. Let us consider the pairing between $\CV_{\hbar,k}(\fd)$ and $Y_\hbar (\fd)$ via the identification $Y_\hbar(\fd)^*\cong \CV_{\hbar,k}(\fd)$. 

\begin{Lem}
    We have the following commutation relation in $\wh{DY}_\hbar (\fd)/(c_\ell-k)$:
    \be
av=\sum_{(a),(v)}\langle e^{\hbar k\pd_\ell/2}v^{(1)}, a^{(1)}\rangle v^{(2)}a^{(2)}\langle e^{-\hbar k\pd_\ell/2}v^{(3)}, S^{-1}(a^{(3)})\rangle, \qquad \forall a\in Y_\hbar (\fd), v\in Y_\hbar^*(\fd)^{\textnormal{co-op}}.
    \ee
    In particular, under the pairing of $\CV_{\hbar,k}(\fd)$ with $Y_\hbar (\fd)$, we find
    \be\label{eq:conjact}
\langle av, b\rangle =\sum_{(a)}\langle v, S^{-1}(\tau_{- \hbar k e_\ell/2} (a^{(2)}))b \tau_{ \hbar k e_\ell/2}(a^{(1)})\rangle\,,\qquad \forall a,b \in Y_\hbar(\fd), v\in Y_\hbar^*(\fd),
    \ee
    where we wrote \(\tau_{\pm \hbar k e_\ell/2} = e^{\pm\hbar k\textnormal{ad}(\partial_\ell)/2}\).
\end{Lem}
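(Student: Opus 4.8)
The plan is to obtain the first relation as the central‑extension‑twisted version of the standard cross (straightening) relation of a Drinfeld double, and then to derive \eqref{eq:conjact} from it by evaluating in the vacuum module and dualizing. First I would recall the cross relation of the non‑extended double $DY_\hbar(\fd) = Y_\hbar(\fd)\bowtie Y_\hbar^*(\fd)^{\textnormal{co-op}}$. Using the bicrossed product multiplication recalled in Section \ref{subsec:dual+double} (equivalently, the general formula for the quantum double, see \cite[Chapter IX]{Kassel1995quantum}), the two Hopf subalgebras are mutually paired by the canonical pairing $\langle\,\cdot\,,\,\cdot\,\rangle$ of $Y_\hbar^*(\fd)$ with $Y_\hbar(\fd)$, and moving $a \in Y_\hbar(\fd)$ to the right past $v \in Y_\hbar^*(\fd)^{\textnormal{co-op}}$ produces exactly the straightening $av = \sum \langle v^{(1)}, a^{(1)}\rangle\, v^{(2)} a^{(2)}\, \langle v^{(3)}, S^{-1}(a^{(3)})\rangle$, where the two outer legs of $a$ and $v$ are contracted by the pairing and the inverse antipode $S^{-1}$ reflects that $a$ is moved to the right rather than $v$.

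Next I would pass to $\wh{DY}_\hbar(\fd)/(c_\ell - k)$. Here the coproduct of the dual factor is the twisted coproduct from \eqref{eq:primitivecop}; since $c_\ell$ is central and primitive and $\pd_\ell$ is primitive, setting $c_\ell = k$ replaces the untwisted coproduct by $\exp(\tfrac{\hbar}{2}(\pd_\ell\otimes c_\ell - c_\ell\otimes\pd_\ell))$ with $c_\ell = k$, which equals $e^{\hbar k\pd_\ell/2}\otimes e^{-\hbar k\pd_\ell/2}$, composed with it. Feeding this into the cross relation and using $R(c_\ell) = 0$ exactly as in the computation of $\wt\CY_\hbar$ above — so that the middle leg $v^{(2)}$, which survives into the product, carries no twist — the factors $e^{\pm\hbar k\pd_\ell/2}$ attach precisely to the two outer legs $v^{(1)}$ and $v^{(3)}$ that are contracted against $a$, yielding the first displayed relation of the lemma.

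Finally I would specialize to the vacuum module to prove \eqref{eq:conjact}. Evaluating $av$ in $\CV_{\hbar,k}(\fd) = \Ind_{Y_\hbar(\fd)\otimes\C[c_\ell]}^{\wh{DY}_{\hbar,\ell}(\fd)}\C_k\lbb\hbar\rbb$, the factor $a^{(2)} \in Y_\hbar(\fd)$ acts on the cyclic vector by the counit, contributing $\epsilon(a^{(2)})$ and collapsing the middle Sweedler leg, so that $av = \sum\langle e^{\hbar k\pd_\ell/2}v^{(1)}, a^{(1)}\rangle\, v^{(2)}\, \langle e^{-\hbar k\pd_\ell/2}v^{(3)}, S^{-1}(a^{(2)})\rangle$ as an element of $\CV_{\hbar,k}(\fd)$. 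Pairing with $b \in Y_\hbar(\fd)$ under $\CV_{\hbar,k}(\fd) = Y_\hbar^*(\fd)$, I would transpose the operators $e^{\pm\hbar k\pd_\ell/2}$ acting on $v$ into $\tau_{\pm\hbar k e_\ell/2} = e^{\pm\hbar k\,\textnormal{ad}(\pd_\ell)/2}$ acting on $a$ (the $\pd_\ell$‑equivariance of the pairing), and then collapse the three remaining pairings through the co‑opposite Hopf‑pairing identity $\sum\langle v^{(1)}, X\rangle\langle v^{(2)}, Y\rangle\langle v^{(3)}, Z\rangle = \langle v, ZYX\rangle$. Because of the co‑opposite order and the $S^{-1}$, this produces $\langle v, S^{-1}(\tau_{-\hbar k e_\ell/2}(a^{(2)}))\, b\, \tau_{\hbar k e_\ell/2}(a^{(1)})\rangle$, which is \eqref{eq:conjact}.

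The main obstacle I expect is the bookkeeping of the central twist: confirming that the half shifts $e^{\pm\hbar k\pd_\ell/2}$ land on the correct contracted legs while the surviving middle leg stays untwisted, and that $S^{-1}$ commutes with $\tau_{\pm\hbar k e_\ell/2}$ in the way needed to rewrite $\tau_{-\hbar k e_\ell/2}\,S^{-1}$ as $S^{-1}\,\tau_{-\hbar k e_\ell/2}$. Both points rest on the primitivity of $\pd_\ell$ and $c_\ell$ and on the antipode identity $S^2 = \exp(\hbar\,\textnormal{ad}(\pd_\epsilon))$ from Lemma \ref{lem:square_of_antipode}; getting the signs and the factor of $\tfrac12$ right is the only genuinely delicate part of the argument.
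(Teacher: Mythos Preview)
Your proposal is correct and follows essentially the same route as the paper: start from the standard straightening relation of the Drinfeld double, account for the central-extension twist $\exp(\hbar(\pd_\ell\otimes c_\ell - c_\ell\otimes\pd_\ell)/2)$ in the iterated coproduct, and then derive \eqref{eq:conjact} by collapsing the middle $a^{(2)}$ via the counit and transposing the $e^{\pm\hbar k\pd_\ell/2}$ to the $a$-side. One small clarification: the reason the shifts land on the outer legs is not that the middle leg ``carries no twist,'' but rather that the twist contributions with $c_\ell$ on an \emph{outer} leg vanish upon pairing with $a^{(1)},a^{(3)}\in Y_\hbar(\fd)$, so the only surviving piece is $e^{\hbar(\pd_\ell\otimes c_\ell\otimes 1 - 1\otimes c_\ell\otimes\pd_\ell)/2}$, exactly as the paper phrases it.
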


\begin{proof}

    We have the following commutation relation in Drinfeld double
    \be\label{eq:comm_rel_extdouble}
av=\langle \wt v^{(1)}, a^{(1)}\rangle\wt v^{(2)}a^{(2)}\langle \wt v^{(3)}, S^{-1}(a^{(3)})\rangle,
    \ee
    where $v \mapsto \wt v^{(1)} \otimes \wt v^{(2)} \otimes \wt v^{(3)}$ is the double coproduct of $v$ in the \(\widehat{DY}_{\hbar,\ell}(\fd)\). The only contribution from $e^{\hbar (\pd_\ell\otimes c_\ell-c_\ell\otimes \pd_\ell)/2}$ is if $c_\ell$ appears on the second factor, which comes from $e^{\hbar(\pd_\ell\otimes c_\ell\otimes 1- 1\otimes c_\ell\otimes \pd_\ell)/2}$. This gives the desired commutation relation. 

    Now, \eqref{eq:conjact} holds, since
    \be
        \begin{split}
            \langle av,b\rangle&=\sum_{(a),(v)}\langle e^{\hbar k\pd_\ell/2}v^{(1)}, a^{(1)}\rangle \langle v^{(2)}a^{(2)},b\rangle\langle e^{-\hbar k\pd_\ell/2}v^{(3)}, S^{-1}(a^{(3)})\rangle
            \\&= \sum_{(a),(v)}\langle e^{\hbar k\pd_\ell/2}v^{(1)}, a^{(1)}\rangle \langle v^{(2)},b\rangle\epsilon(a^{(2)})\langle e^{-\hbar k\pd_\ell/2}v^{(3)}, S^{-1}(a^{(3)})\rangle
            \\&= \sum_{(a),(v)}\langle v^{(1)}, \tau_{k\hbar e_\ell/2}(a^{(1)})\rangle \langle v^{(2)},b\rangle\langle v^{(3)}, S^{-1}(\tau_{-k\hbar e_\ell/2}(a^{(2)}))\rangle
            \\& = \sum_{(a)}\langle v, S^{-1}(\tau_{- \hbar k e_\ell/2} (a^{(2)}))b \tau_{ \hbar k e_\ell/2}(a^{(1)})\rangle
        \end{split}
    \ee
 using the commutation relation \eqref{eq:comm_rel_extdouble}. 
\end{proof}

Let us denote by $\rhd_{k} $ the action of $Y_\hbar (\fd)$ on itself given by
\be\label{eq:rhd_k}
a\rhd_{k} b \coloneqq \tau_{- \hbar k e_\ell/2} (a^{(1)})b \tau_{ \hbar k e_\ell/2}(S(a^{(2)}))\,,\qquad a,b \in Y_\hbar(\fd).
\ee
Then the equation \eqref{eq:conjact} can be written as
\be\label{eq:rhd_k_duality}
\langle av, b\rangle=\langle v, S^{-1}(a)\rhd_k b\rangle\,,\qquad v \in \CV_{\hbar,k}(\fd). 
\ee
This allows us to understand $\CS(z)$ explicitly.

\begin{Prop}
     The action of \(\CS(z)\) on $\CV_{\hbar,k}(\fd)\otimes_{\C[\![\hbar]\!]}\CV_{\hbar,k}(\fd)$, which was defined as the adjoint of \(S^\bot(z)\) from \eqref{eq:CSperp},  is given by the action of $R(z)$ using the $Y_\hbar(\fd)$-module structure, i.e.
     \begin{equation}
         \langle u\otimes v,\CS(z)^\perp(a\otimes b)\rangle=\langle R(z)\cdot (u\otimes v), a\otimes b\rangle
     \end{equation}
     holds for all \(a,b \in Y_\hbar(\fd),u,v \in \CV_{\hbar,k}(\fd)\).
\end{Prop}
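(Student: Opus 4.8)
The plan is to compute the right-hand side explicitly, dualize it into an identity living entirely inside $Y_\hbar(\fd) \otimes_{\C[\![\hbar]\!]} Y_\hbar(\fd)$, and then verify that identity using the quasi-triangularity of $R(z)$ from Theorem \ref{thm:Rmat_double}. Writing $R(z) = \sum p \otimes q$ for the (topologically completed) expansion of the spectral $R$-matrix, the $Y_\hbar(\fd)$-module structure gives $R(z)\cdot(u \otimes v) = \sum (p\cdot u) \otimes (q\cdot v)$, so pairing against $a \otimes b$ and applying the duality \eqref{eq:rhd_k_duality} in each tensor slot yields
\[
\langle R(z)(u \otimes v), a \otimes b\rangle = \sum \langle u, S^{-1}(p) \rhd_k a\rangle\,\langle v, S^{-1}(q) \rhd_k b\rangle = \langle u \otimes v, X\rangle,
\]
where, after expanding $\rhd_k$ via \eqref{eq:rhd_k} and using that $S^{-1}$ is a coalgebra anti-homomorphism ($\Delta S^{-1} = (S^{-1}\otimes S^{-1})\Delta^{\textnormal{op}}$),
\[
X = \sum_{(p),(q)} \bigl(\tau_{-\hbar k e_\ell/2}(S^{-1}(p^{(2)}))\,a\,\tau_{\hbar k e_\ell/2}(p^{(1)})\bigr) \otimes \bigl(\tau_{-\hbar k e_\ell/2}(S^{-1}(q^{(2)}))\,b\,\tau_{\hbar k e_\ell/2}(q^{(1)})\bigr).
\]
Since $\CS(z)$ is by definition the adjoint of $\CS^\perp(z)$ and the canonical pairing between $\CV_{\hbar,k}(\fd)^{\otimes 2}$ and $Y_\hbar(\fd)^{\otimes 2}$ is non-degenerate, the proposition is equivalent to the identity $\CS^\perp(z)(a \otimes b) = X$.

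Second, instead of inverting $L_2(z)$ I would verify this in the equivalent form $L_2(z)(X) = L_1(z)(a \otimes b)$, which is exactly $\CS^\perp(z)(a\otimes b) = X$ by \eqref{eq:CSperp} and the invertibility of $L_2(z)$. Substituting the defining formulas for $L_1(z)$ and $L_2(z)$ and writing $X = \sum X_1 \otimes X_2$, the goal becomes the $R$-matrix identity
\[
R(z)^{-1}(1 \otimes X_2)R(z - k\hbar e_\ell)(X_1 \otimes 1) = (a \otimes 1)R(z + k\hbar e_\ell)^{-1}(1 \otimes b)R(z).
\]
The tools for this are the two coproduct–factorization relations $(\Delta_\hbar^\gamma \otimes 1)R = R^{13}R^{23}$ and $(1 \otimes \Delta_\hbar^\gamma)R = R^{13}R^{12}$ of Theorem \ref{thm:Rmat_double}.2, which let one recombine the Sweedler components $p^{(1)}, q^{(1)}$ and $p^{(2)}, q^{(2)}$ appearing in $X$ into honest copies of $R$ at shifted spectral parameters, together with quasi-triangularity $R\,\Delta_\hbar^\gamma\,R^{-1} = \Delta_\hbar^{\gamma,\textnormal{op}}$ (Theorem \ref{thm:Rmat_double}.1) and the standard antipode identity $(S \otimes 1)R = R^{-1}$, which is what makes the $S^{-1}$-factors in $X$ cancel against the outer $R(z)^{\pm 1}$.

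The step I expect to be the main obstacle is the bookkeeping of the central shifts. The operators $\tau_{\pm \hbar k e_\ell/2} = e^{\pm\hbar k\,\textnormal{ad}(\pd_\ell)/2}$ in $X$ come from the centrally extended coproduct \eqref{eq:primitivecop} with $c_\ell = k$, while the shifts $z \mapsto z \pm k\hbar e_\ell$ inside $L_1(z), L_2(z)$ enter through the spectral dependence of $R$; matching them requires systematic use of the equivariance $A(z+\lambda e_\ell) = e^{\lambda z\,\textnormal{ad}(\pd_\ell)}A(z)$ and the fact that $\pd_\ell$ is a derivation commuting with $S$. I would organize the verification by first treating the level-zero case $k = 0$, where $\tau_{\pm \hbar k e_\ell/2} = 1$, so that $\rhd_k$ degenerates to the ordinary adjoint action and the displayed identity reduces to a short manipulation using only the two factorization relations and quasi-triangularity; I would then reintroduce the $k$-dependence by checking that each application of a factorization relation produces precisely one spectral shift by $k\hbar e_\ell$, which accounts exactly for the arguments $z \pm k\hbar e_\ell$ in $L_1(z)$ and $L_2(z)$. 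Taking adjoints and invoking the definition of $\CS(z)$ then yields the claim.
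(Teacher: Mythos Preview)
Your approach is correct and essentially the same as the paper's: both reduce the claim to identifying $\CS^\perp(z)$ with the $\rhd_k$-action of $R(z)$ via the duality \eqref{eq:rhd_k_duality}, and both use the coproduct factorization relations $(\Delta_\hbar^\gamma\otimes 1)R = R^{13}R^{23}$, $(1\otimes\Delta_\hbar^\gamma)R = R^{13}R^{12}$ together with $(S\otimes 1)R = R^{-1}$ and the $\partial_\ell$-equivariance to track the $k\hbar e_\ell$ shifts. The only organizational difference is that the paper computes $L_1(z)^{-1}$ explicitly and then simplifies $L_1(z)^{-1}L_2(z)$ into the form $(\Delta_\hbar^\gamma\otimes\Delta_\hbar^\gamma)(R(z)^{-1})\rhd_k$, whereas you plan to verify $L_2(z)(X)=L_1(z)(a\otimes b)$ directly; these are inverse rearrangements of the same calculation, and in particular the intertwining property $R\Delta_\hbar^\gamma R^{-1}=\Delta_\hbar^{\gamma,\textnormal{op}}$ is not actually needed---only the cocycle relations and the antipode identity enter.
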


\begin{proof}
    We calculate the action of $L_1(z)^{-1}L_2(z)$ on $Y_\hbar (\fd)^{\otimes 2}$. To do so, we first prove that
\be\label{eq:inverse_L1}
L_1(z)^{-1}(a\otimes b)=(a\otimes 1)(S\otimes 1) (R(z+k\hbar e_\ell)^{-1}(1\otimes b) R(z)).
\ee
holds for all \(a,b \in Y_\hbar(\fd)\). To do so, let us recall that 
\begin{equation}\label{eq:R_inverse_identity}
    \begin{split}
        1 \otimes 1 = (\epsilon \otimes 1)R = (\nabla_\hbar^\gamma(S \otimes 1)\Delta_\hbar^\gamma \otimes 1)R = ((S \otimes 1)R)R
    \end{split}
\end{equation}
holds. 
Let us write
\begin{equation}\label{eq:R_tensor_notation}
    R(z) = \sum_{i \in I}R^{(1)}_i(z) \otimes R_i^{(2)} \,\quad \textnormal{ and }\quad R(z)^{-1} = \sum_{i \in I}R_i^{-1,(1)}(z) \otimes R_i^{-1,(2)}.
\end{equation}
Then, we can calculate
\begin{equation}
    \begin{split}
        &L_1(z)L_1(z)^{-1}(a \otimes b) = L_1(z)\left(\sum_{i_1,i_2 \in I}aS(R^{(1)}_{i_1}(z))S(R_{i_2}^{-1,(1)}(z+k\hbar e_\ell)) \otimes R_{i_2}^{-1,(2)}bR_{i_1}^{(2)}\right)
    \\&=\sum_{i_1,i_2,i_3,i_4\in I}aS(R^{(1)}_{i_1}(z))S(R_{i_2}^{-1,(1)}(z+k\hbar e_\ell))R_{i_3}^{-1,(1)}(z+k\hbar e_\ell)R_{i_4}^{(1)}(z) \otimes R_{i_3}^{-1,(2)}R_{i_2}^{-1,(2)}bR_{i_1}^{(2)}R_{i_4}^{(2)}
    \\&= a \otimes b,
    \end{split}
\end{equation}
where we used \eqref{eq:R_inverse_identity} adapted to \(R(z)\) and a similar identity for \(R(z+k\hbar e_\ell)^{-1}\). This proves \eqref{eq:inverse_L1}.

Now, using \eqref{eq:inverse_L1} and the notation \eqref{eq:R_tensor_notation}, \(L_1(z)^{-1}L_2(z)(a \otimes b)\) for \(a,b \in Y_\hbar(\fd)\) is equal to 
\be
\sum_{i_1,i_2,i_3,i_4 \in I}R_{i_1}^{-1,(1)}(z)R_{i_2}^{(1)}(z-k\hbar e_\ell)a S(R_{i_3}^{(1)}(z))S(R_{i_4}^{-1,(1)}(z+k\hbar e_\ell))\otimes R_{i_4}^{-1,(2)}R_{i_1}^{-1,(2)}b R_{i_2}^{(2)}R_{i_3}^{(2)},
\ee
If we use \eqref{eq:R_inverse_identity}, we can rewrite this in the form $\sum O^{(1)}aS(O^{(2)})\otimes O^{(3)}b S(O^{(4)})$, where
\be
\begin{aligned}
    &\sum O^{(1)}\otimes O^{(2)} \otimes O^{(3)} \otimes O^{(4)}\\&=R^{23}(z+k\hbar e_\ell)^{-1} R^{24}(z)^{-1}R^{13}(z)^{-1} R^{14}(z-k\hbar e_\ell)^{-1}\\
    &=(\tau_{-k\hbar e_\ell/2}\otimes \tau_{k\hbar e_\ell/2} \otimes \tau_{-k\hbar e_\ell/2}\otimes \tau_{k\hbar e_\ell/2})\left(R^{23}(z)^{-1} R^{24}(z)^{-1}R^{13}(z)^{-1} R^{14}(z)^{-1}\right)\\
    &=(\tau_{-k\hbar e_\ell/2}\otimes \tau_{k\hbar e_\ell/2} \otimes \tau_{-k\hbar e_\ell/2}\otimes \tau_{k\hbar e_\ell/2})\left((1\otimes 1 \otimes \Delta_{\hbar}^\gamma) (R^{23}(z))^{-1} (1\otimes 1\otimes \Delta_{\hbar}^\gamma) (R^{13}(z))^{-1}\right)\\&=(\tau_{-k\hbar e_\ell/2}\otimes \tau_{k\hbar e_\ell/2} \otimes \tau_{-k\hbar e_\ell/2}\otimes \tau_{k\hbar e_\ell/2})\left((1\otimes 1 \otimes \Delta_{\hbar}^\gamma)(\Delta_{\hbar}^\gamma\otimes 1)(R(z))^{-1}\right)
    \\&=(\tau_{-k\hbar e_\ell/2}\otimes \tau_{k\hbar e_\ell/2} \otimes \tau_{-k\hbar e_\ell/2}\otimes \tau_{k\hbar e_\ell/2})\left((\Delta_{\hbar}^\gamma\otimes \Delta_\hbar^\gamma)(R(z)^{-1})\right).
\end{aligned}
\ee
Comparing with the definition of $\rhd_k$ in \eqref{eq:rhd_k}, we see that $L_1(z)^{-1}L_2(z)=R(z)^{-1}\rhd_k$, and so $\CS(z)^\perp=R(z)\rhd_k$ by definition in \eqref{eq:CSperp}. Consequently,
\be
\langle u\otimes v,\CS(z)^\perp(a\otimes b)\rangle=\langle u\otimes v, (S^{-1}\otimes S^{-1})(R(z))\rhd_k (a\otimes b)\rangle=\langle R(z)\cdot (u\otimes v), a\otimes b\rangle,
\ee
holds due to \eqref{eq:rhd_k_duality}, as desired. 
\end{proof}

We can now prove Theorem \ref{Thm:QVAd}. To begin, let us note that (QV1) and (QV2) are clear. We now prove (QV3), namely $\CS$-locality.

\begin{Lem}
    The endomorphism $\CS (z)$ satisfies
    \be
\langle \CY_\hbar^{13}(z_1) \CY_\hbar^{23}(z_2), \CS^{12}(z_1-z_2)(u\otimes v)\rangle =\langle \CY_\hbar^{23}(z_2) \CY_\hbar^{13}(z_1), u\otimes v\rangle.
    \ee
\end{Lem}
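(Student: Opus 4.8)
The plan is to obtain $\CS$-locality as a purely formal consequence of the exchange relation \eqref{eq:LYY}, the definition \eqref{eq:CSperp} of $\CS^\perp(z)$, and the fact---established in the preceding proposition---that $\CS(z)$ is the adjoint of $\CS^\perp(z)$ with respect to the pairing of $\CV_{\hbar,k}(\fd)$ with $Y_\hbar(\fd)$. No further manipulation of $R$-matrices is needed; the content has already been packaged into \eqref{eq:LYY}.

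First I would rewrite \eqref{eq:LYY}. Since $L_1(z)$ and $L_2(z)$ are invertible as formal series, I can multiply \eqref{eq:LYY} on the left by $(L_2^{12})^{-1}(z_1-z_2)$ and invoke \eqref{eq:CSperp} to obtain
\begin{equation}
    (\CS^\perp)^{12}(z_1-z_2)\,\CY_\hbar^{13}(z_1)\CY_\hbar^{23}(z_2) = \CY_\hbar^{23}(z_2)\CY_\hbar^{13}(z_1),
\end{equation}
where $(\CS^\perp)^{12}$ acts on the two $Y_\hbar(\fd)$-slots and leaves the $\End(\CV_{\hbar,k}(\fd))$-slot untouched.

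Next I would pair both sides against $u\otimes v$ in the first two tensor slots. The right-hand side becomes $\langle \CY_\hbar^{23}(z_2)\CY_\hbar^{13}(z_1), u\otimes v\rangle$, the desired right-hand side of the lemma. For the left-hand side I would transfer $\CS^\perp$ across the pairing using its defining adjointness property
\begin{equation}
    \langle (\CS^\perp)^{12}(z_1-z_2)(X),\, u\otimes v\rangle = \langle X,\, \CS^{12}(z_1-z_2)(u\otimes v)\rangle,
\end{equation}
applied with $X = \CY_\hbar^{13}(z_1)\CY_\hbar^{23}(z_2)$. Comparing the two resulting expressions then yields exactly the asserted $\CS$-locality identity.

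The one point that requires care, and which I would check explicitly, is that every step is legitimate at the level of formal Laurent series: one must verify that $(\CS^\perp)^{12}(z_1-z_2)\CY_\hbar^{13}(z_1)\CY_\hbar^{23}(z_2)$ lies in the space of series to which the adjointness relation applies term-by-term, and that the expansion conventions implicit in the statement match those under which \eqref{eq:LYY} was derived. Because $T^\dagger(z+k\hbar e_\ell/2)^{-1}$ lands in $(\,\cdot\,)(\!(z)\!)$ while $T(z)$ is a power series, the composite $\CY_\hbar(z)$ acts on smooth modules with the finiteness needed for the pairing against $u\otimes v$ to converge in $\C(\!(z_1-z_2)\!)$, so this transfer of $\CS^\perp$ to $\CS$ is justified and the argument closes. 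This is also the only genuine obstacle; everything else is bookkeeping of tensor slots.
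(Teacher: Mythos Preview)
Your proposal is correct and follows essentially the same approach as the paper: both arguments transfer $\CS$ across the pairing to $\CS^\perp = L_2^{-1}L_1$ (by definition) and then invoke \eqref{eq:LYY} in the form $L_2^{-1}L_1\,\CY_\hbar^{13}\CY_\hbar^{23} = \CY_\hbar^{23}\CY_\hbar^{13}$, just with the two steps performed in opposite order. One small remark: the adjointness of $\CS$ and $\CS^\perp$ is the \emph{definition} of $\CS$ given immediately after \eqref{eq:CSperp}, not something established in the preceding proposition (which instead identifies $\CS$ with the action of $R(z)$).
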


\begin{proof}
    By definition, we have
    \be
\begin{aligned}
    \langle \CY_\hbar^{13}(z_1) \CY_\hbar^{23}(z_2), \CS^{12}(z_1-z_2)(u\otimes v)\rangle &\stackrel{\phantom{\eqref{eq:LYY}}}= \langle L_2 (z_1-z_2)^{-1}L_1 (z_1-z_2)\CY_\hbar^{13}(z_1) \CY_\hbar^{23}(z_2), u\otimes v\rangle\\ &\stackrel{\eqref{eq:LYY}}{=} \langle L_2 (z_1-z_2)^{-1} L_2 (z_1-z_2)\CY_\hbar^{23}(z_2)\CY_\hbar^{13}(z_1), u\otimes v\rangle\\ &\stackrel{\phantom{\eqref{eq:LYY}}}=\langle \CY_\hbar^{23}(z_2) \CY_\hbar^{13}(z_1), u\otimes v\rangle,
\end{aligned}
    \ee
    for all \(u,v \in Y_\hbar(\fd)\), as desired. 
\end{proof}

Axioms (S1)-(S3) simply follows from the identification of $\CS(z)$ with the action of $R(z)$, and identities satisfied by it. The hexagon identity (QV4) now immediately follows from the cocycle condition of $R(z)$ and the fact that $\CY_\hbar$ is a \(Y_\hbar(\fd)\)-module homomorphism in the sense of Corollary \ref{Cor:Ymodulemap}. This completes the proof of Theorem \ref{Thm:QVAd}.

\begin{Rem}

    The above proof is completely general. In fact, one can show that for any Hopf algebra $A$ together with a nilpotent derivation $T$, a spectral $R$-matrix $R(z)\in (A\otimes A)\lpp z^{-1}\rpp$ and an appropriate dual Hopf algebra $A^*$, the vacuum module $\mathrm{Ind}_A^{DA}(\C)$ has the structure of a quantum vertex algebra. 
    
\end{Rem}

\subsection{Quantum KZ equations}\label{subsec:QKZ}

In this section, we study the quantum KZ connections associated to the spectral \(R\)-matrix $R(z)$ of \(Y_\hbar(\fd)\); see \eqref{eq:spectral_Rmatrix_of_Yangian}. This system of connections takes the form
\be
\nabla_i(\mathbf{z})=R^{i-1, i}(z_{i-1}-z_i+k\hbar) \cdots R^{1,i}(z_1-z_i+k\hbar)\cdot R^{n,i} (z_n-z_i)\cdots R^{i+1, i}(z_{i+1}-z_i),
\ee
for any \(\mathbf{z}=(z_i)_{i \in I} \in \textnormal{conf}_I(X) \coloneqq X \setminus \left\{z_i = z_j \mid i,j \in I,i\neq j\right\}\), where as always \(X = \mathbb{P}^1\).
Following \cite{EK4}, we want to show that this system of equations naturally arises from quantum conformal blocks associated to the central extension of the cotangent double Yangian, for a specific choice of $k$ and $\ell$. 

\subsubsection{Modules of the centrally-extended double}\label{subsec:dimodules}

In \cite[Section 3]{EK4}, the authors considered various notions of dimodules for the centrally extended dual Yangian for $\fg$. This is partly because the double of ordinary Yangian is not conveniently known as an algebra. In the case of $\fd=T^*\fg$, however, we have complete control over the double. We therefore simply consider usual modules over the double Yangian.

Let $W$ be a finite-dimensional module of $Y_\hbar (\fd)$, and let $\mathbf{z} = (z_i)_{i \in I} \subset X$ be a finite set of points. For any \(i \in I\), we consider the embedding \(Y_\hbar(\fd) \cong Y_{\hbar}(\fd)_{z_i} \to Y_{\hbar}(\fd)_{\mathbf{z}}:= Y_\hbar(\fd)^{\otimes |I|}\) into the \(i\)-th factor and view $W$ as a module of $ Y_{\hbar}(\fd)_{\mathbf{z}}$ whose action is non-trivial only for \(Y_{\hbar}(\fd)_{z_i}\).

On the other hand, the meromorphic R-matrix $R(z)$ supplies an algebra morphism
\be
\wh Y_{\hbar,\ell}^*(\fd)_{\hat{\mathbf{z}}}^{\textnormal{co-op}} \to Y_\hbar(\fd)_{z_i}, \qquad \hat{\mathbf{z}}_i=\mathbf{z}\setminus\{z_i\} = \{z_j\}_{j \in I\setminus\{i\}}.
\ee
Using this map, we obtain a module $W(z_i)$ of $\wh Y_{\hbar,\ell}^*(\fd)_{\hat{\mathbf{z}}}^{\textnormal{co-op}}$ for every finite-dimensional module $W$ of $Y_\hbar (\fd)$ and any $\mathbf{z} = \{z_i\}_{i \in I} \subset X$. 

\begin{Lem}
  Let $Y_{\hbar,\ell} (\fd)_{\mathbf{z}, i}$ be the Hopf subalgebra of $\widehat{DY}_{\hbar,\ell} (\fd)_{\mathbf{z}}$ generated by $ Y_{\hbar,\ell}^*(\fd)_{\hat{\mathbf{z}}_i}$ and $Y_\hbar(\fd)_{z_i}$, then the above structures make $W(z_i)$ into a module of  $Y_{\hbar,\ell} (\fd)_{\mathbf{z}, i}$. 
\end{Lem}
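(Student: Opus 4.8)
The plan is to exhibit $W(z_i)$ as the pullback of the given $Y_\hbar(\fd)$-module $W$ along a single algebra homomorphism
\begin{equation*}
    \Phi_i \colon Y_{\hbar,\ell}(\fd)_{\mathbf{z},i} \longrightarrow Y_\hbar(\fd)_{z_i} \cong Y_\hbar(\fd).
\end{equation*}
Since $Y_{\hbar,\ell}(\fd)_{\mathbf{z},i}$ is generated inside $\widehat{DY}_{\hbar,\ell}(\fd)_{\mathbf{z}}$ by the two subalgebras $Y_\hbar(\fd)_{z_i}$ and $\wh Y^*_{\hbar,\ell}(\fd)^{\textnormal{co-op}}_{\hat{\mathbf z}_i}$, I would define $\Phi_i$ to be the identity on the first and the $R$-matrix morphism $x \mapsto \langle R(z_i-z_j), x \otimes 1\rangle$ of Section \ref{subsec:state-op} on the dual factor sitting at each $z_j$ with $j \neq i$; here the distinctness of the $z_j$ and the finite-dimensionality of $W$ permit evaluation of the spectral parameter at $z_i-z_j \neq 0$. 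Both prescriptions already take values in $Y_\hbar(\fd)_{z_i}$ and are algebra maps on their respective domains, so the whole content of the lemma reduces to checking that $\Phi_i$ respects the relations between the two factors; granting this, $W(z_i) = \Phi_i^\ast W$ is exactly the asserted module.

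The relations fall into three families. The Yangian relations internal to $Y_\hbar(\fd)_{z_i}$ are preserved tautologically. The factorization relations among the dual factors at distinct points $z_j,z_{j'}$ (with $j,j' \neq i$) are preserved because, by Theorem \ref{Thm:IdenFac}, the $R$-matrix morphism is precisely the map realizing the local factorization Hopf algebra structure, whose consistency with the coproduct is the cocycle condition for $R(z)$. The only genuinely new family is that of the cross relations between $a \in Y_\hbar(\fd)_{z_i}$ and $x \in \wh Y^*_{\hbar,\ell}(\fd)^{\textnormal{co-op}}_{z_j}$; inside $\widehat{DY}_{\hbar,\ell}(\fd)_{\mathbf{z}}$ these are exactly the bicrossed-product relations of the Drinfeld double $DY_\hbar(\fd)$ at spectral parameter $z_i-z_j$. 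They are respected by $\Phi_i$ because the assignment $x \mapsto \langle R(z_i-z_j),x \otimes 1\rangle$, together with the shift $\tau_{z_i}$ on $Y_\hbar(\fd)_{z_i}$, assembles into the Hopf algebra homomorphism $DY_\hbar^\circ(\fd) \to Y_\hbar(\fd)$ of the state-operator correspondence recorded explicitly in \eqref{eq:double_to_yangian_hom}.

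The step that I expect to demand the most care is the central direction and the accompanying translation bookkeeping. The element $c_\ell$ of the centrally-extended dual factors lies in the kernel of the map $\wh Y^*_{\hbar,\ell}(\fd)^{\textnormal{co-op}} \to Y^*_\hbar(\fd)^{\textnormal{co-op}}$ through which the $R$-matrix morphism factors (Section \ref{subsec:EKfac}), so one is forced to let $\Phi_i(c_\ell) = 0$, i.e.\ $c_\ell$ acts by zero on $W(z_i)$. The hard part will be to confirm that this is consistent with the $c_\ell$- and $\partial_\ell$-conjugated coproduct of \eqref{eq:primitivecop} once everything is restricted to a genuine non-extended $Y_\hbar(\fd)$-module, and that the relative spectral shifts $z_i-z_j$ are correctly correlated across the different dual factors. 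I expect all of this to be a formal consequence of the pseudotriangular and double structures already established, so that no essential difficulty arises beyond careful tracking of the spectral and central shifts.
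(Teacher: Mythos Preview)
The paper states this lemma without proof, so there is no argument to compare against directly; your proposal is essentially filling in what the paper leaves implicit. Your strategy of exhibiting $W(z_i)$ as the pullback along a single algebra map $\Phi_i$ assembled from the identity on $Y_\hbar(\fd)_{z_i}$ and the $R$-matrix morphism on the dual factors is exactly the construction the paper's surrounding text sets up, and your breakdown into three families of relations (internal Yangian, factorization among dual factors via Theorem \ref{Thm:IdenFac}, and double cross-relations via the state-operator map \eqref{eq:double_to_yangian_hom}) is correct.

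Your concern about the central element and the $\partial_\ell$-twisted coproduct is well-placed but in fact resolves itself: the copseudotriangular structure on $\widehat{Y}^*_{\hbar,\ell}(\fd)^{\textnormal{co-op}}$ is defined at the end of Section \ref{subsec:EKfac} by factoring through the quotient $\widehat{Y}^*_{\hbar,\ell}(\fd)^{\textnormal{co-op}} \to Y^*_\hbar(\fd)^{\textnormal{co-op}}$, so the factorization relations at distinct points $z_j,z_{j'}$ already live in the non-extended dual and $c_\ell$ is central with trivial image. The $\exp(\hbar(c_\ell \otimes \partial_\ell - \partial_\ell \otimes c_\ell)/2)$ twist in \eqref{eq:primitivecop} therefore collapses on the image of $\Phi_i$, and no further consistency check is needed beyond what you already outlined.
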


We can now define $\wh W_k (z_i)$ to be the induction of $W(z_i)$ from $Y_{\hbar,\ell} (\fd)_{\mathbf{z}, i}$ to $\widehat{DY}_{\hbar,\ell} (\fd)_{\mathbf{z}}$. As a vector space, we can identify $\wh W_k (z_i)$ with $Y_\hbar^*(\fd)_{z_i}\otimes W_k$, where \(W_k\) is the \(Y_\hbar^*(\fd)^{\textnormal{co-op}}\otimes \C[c_\ell]\)-module obtained from \(W\) on which \(c_\ell\) acts by multiplication with \(k\).

\subsubsection{Quantum conformal blocks}\label{subsec:Qconformal}

Let $\{z_i\}_{i \in I}\subset X\) be a finite subset and $\{W_i\}_{i \in I}$ be a finite family of finite-dimensional modules of $Y_\hbar (\fd)$. Furthermore, let $\wh W_{i, k}(z_i)$ be the corresponding module of $\wh{DY}_{\hbar,\ell} (\fd)_{\mathbf{z}}$ for all \(i \in I\). The tensor product
\be
M_k(\{W_i\}_{i \in I}, \mathbf z)\coloneqq\bigotimes_{i\in I} \wh W_{i, k}(z_i)
\ee
is a module of $\wh{DY}_{\hbar,\ell} (\fd)_{\mathbf{z}}$ where $c_{\ell,i}=c_{\ell,j}$ acts by \(k\) for all $i\ne j$. Therefore, we can let $\wh{DY}_{\hbar,\ell} (\fd)_{\mathbf{z}, k}$ be the quotient Hopf algebra by the ideal $(c_{\ell,i}-c_{\ell,j})$ and $(c_{\ell,i}-k)$, then $M_k(\{W_i\}, \mathbf z)$ is a module of $\wh{DY}_{\hbar,\ell} (\fd)_{\mathbf{z}, k}$ with central charge $k$. We have seen that $Y_{\hbar,\ell}^*(\fd)$ is a subalgebra of $\wh{DY}_{\hbar,\ell} (\fd)_{\mathbf{z}, k}$. 

\begin{Def}
    Define the quantum conformal block $B_k(\{W_i\}, \mathbf z)$ to be the space of invariant functionals
    \be
        B_k(\{W_i\}_{i \in I}, \mathbf z)\coloneqq\Hom_{Y_{\hbar,\ell}^*(\fd)_{\mathbf{z}}} \lp M_k(\{W_i\}_{i \in I}, \mathbf z), \C\lbb\hbar\rbb\rp.
    \ee
\end{Def}

Since we can identify $\bigotimes_{i\in I} \wh W_{i, k}(z_i)$ with the induction of $\bigotimes_{i \in I} W (z_i)$ from $Y_{\hbar,\ell}(\fd)_{\mathbf z}$ to $\wh{DY}_{\hbar,\ell} (\fd)_{\mathbf{z}, k}$, using the induction-restriction adjunction, we have an isomorphism
\be
    \begin{split}
        B_k(\{W_i\}_{i\in I}, \mathbf z)&=\Hom_{Y_{\hbar,\ell}^*(\fd)_{\mathbf{z}}} \lp M_k(\{W_i\}_{i\in I}, \mathbf z), \C\lbb\hbar\rbb\rp
        \\&\cong \Hom_{\C\lbb\hbar\rbb}\left(\bigotimes_{i\in I} W_i, \C\lbb\hbar\rbb\right)=\bigotimes_{i\in I} W_i^*.         
    \end{split}
\ee

\subsubsection{Segal-Sugawara construction}

Let $R\in Y_\hbar(\fd)\otimes Y_\hbar^* (\fd)$ be the $R$-matrix of the double. We define $Q\in \wh{DY}_{\hbar,\ell} (\fd)$ by the formula
\be\label{eq:quantum_segal}
Q=\widehat{\nabla}_\hbar^\gamma \lp\lp S\otimes  e^{k\hbar\textnormal{ad}(\pd_\ell)/2}\rp R^{21}\rp,
\ee
where \(\widehat{\nabla}_\hbar^\gamma\) is the multiplication map of \(\widehat{DY}_{\hbar,\ell}(\fd)\).
This is well-defined when acting on smooth modules of $\wh{DY}_{\hbar,\ell} (\fd)$, and in particular it acts on $\wh W_k$ for all finite-dimensional \(Y_\hbar(\fd)\)-module \(W\). 

Let $\alpha$ be the automorphism of $\wh{DY}_{\hbar,\ell} (\fd)$ defined by conjugation with $\exp \lp \hbar ( k\pd_{\ell}+\pd_\epsilon)\rp$, and let $\wh W_k^\alpha$ be the twist of the module $\wh W_k$ by $\alpha$.

\begin{Lem}\label{lem:Q_and_alpha}
    The map $Q$ gives an isomorphism \(Q\colon \wh W_k\stackrel{\cong}\to \wh W_k^\alpha\).
\end{Lem}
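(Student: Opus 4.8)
The plan is to reformulate the statement as a conjugation identity and to realize $Q$ as the ribbon-type (Drinfeld) element of the centrally extended double. By definition of the twisted module, the action of $\widehat{DY}_{\hbar,\ell}(\fd)$ on $\wh W_k^\alpha$ is $a\cdot_\alpha w = \alpha(a)w$, so $Q$ is a morphism $\wh W_k\to\wh W_k^\alpha$ exactly when $Q\,a = \alpha(a)\,Q$ holds as operators on $\wh W_k$ for all $a$, and it is an isomorphism once $Q$ is invertible. Invertibility is immediate: $R = 1 + O(\hbar)$ while both $S$ and the twist $e^{k\hbar\,\textnormal{ad}(\pd_\ell)/2}$ are $1 + O(\hbar)$, so $Q = 1 + O(\hbar)$ is invertible over $\C\lbb\hbar\rbb$, and $Q$ is well-defined on smooth modules as already noted. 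Hence the entire content is the intertwining relation $Q\,a\,Q^{-1} = \alpha(a)$.

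First I would dispose of the non-extended case. For any quasitriangular Hopf algebra $(H,\mathcal R)$ with $\mathcal R = \sum_i a_i\otimes b_i$, the element $u = \widehat{\nabla}(S\otimes 1)(\mathcal R^{21}) = \sum_i S(b_i)a_i$ satisfies $u\,x\,u^{-1} = S^2(x)$; this is standard (\cite{Kassel1995quantum,drinfeld1986quantum}) and uses only quasitriangularity $\mathcal R\,\Delta\,\mathcal R^{-1} = \Delta^{\textnormal{op}}$ together with the antipode axioms. Applied to $H = DY_\hbar(\fd)$ with the $R$-matrix of Theorem \ref{thm:Rmat_double}, and combined with Lemma \ref{lem:square_of_antipode} and the computation of the canonical derivation in Lemma \ref{lem:canonical_derivative_partial_epsilon}, this yields $u\,x\,u^{-1} = S^2(x) = e^{\hbar\,\textnormal{ad}(\pd_\epsilon)}(x)$.

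Next I would pass to the central extension. The quasitriangular structure on $\widehat{DY}_{\hbar,\ell}(\fd)$ is governed not by $R$ but by the extended $R$-matrix $\wt R = \exp(\hbar\pd_\ell\otimes c_\ell/2)\,R\,\exp(\hbar\pd_\ell\otimes c_\ell/2)$, whose associated Drinfeld element implements the square of the antipode of the extended double. Because the canonical derivation of the extended Lie bialgebra is $2\,\textnormal{ad}(k\pd_\ell + \pd_\epsilon)$ after setting $c_\ell = k$, this square of the antipode equals $\textnormal{Ad}(e^{\hbar(k\pd_\ell + \pd_\epsilon)}) = \alpha$. The key step is then to identify $Q$ — which lives in the smaller algebra $\widehat{DY}_{\hbar,\ell}(\fd)$ and is built from $R$ with a single explicit twist $e^{k\hbar\,\textnormal{ad}(\pd_\ell)/2}$ — with this extended Drinfeld element after reduction modulo $c_\ell - k$, using $S(c_\ell) = -c_\ell$ and the centrality of $c_\ell$ to collapse the two exponential factors of $\wt R$. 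This is the Segal-Sugawara phenomenon: the automorphism $\alpha$, which is only visibly inner in the larger algebra $\widetilde{DY}_{\hbar,\ell}(\fd)$ where $\pd_\ell,\pd_\epsilon$ live, becomes implemented inside $\widehat{DY}_{\hbar,\ell}(\fd)$ by the Sugawara element $Q$.

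The hard part will be the convention- and normalization-tracking in this identification, and it is genuinely delicate. One must reconcile: (i) the choice of Drinfeld element $u = \widehat{\nabla}(S\otimes 1)\mathcal R^{21}$ versus its partner $\widehat{\nabla}(1\otimes S)\mathcal R$, which fixes the sign of the $\pd_\epsilon$-contribution; (ii) the sign $S(c_\ell) = -c_\ell$, which controls the sign of the $k\pd_\ell$-contribution coming from the exponential factors of $\wt R$; and (iii) the precise placement and coefficient of the half-shift $e^{k\hbar\,\textnormal{ad}(\pd_\ell)/2}$, which must combine with the reduction $c_\ell = k$ to produce the full shift $\textnormal{Ad}(e^{\hbar k\pd_\ell})$ appearing in $\alpha$ rather than a half or a wrong sign thereof. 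I would pin these down by verifying $Q\,x\,Q^{-1} = \alpha(x)$ to first order in $\hbar$ on the Lie generators $x = y_n \in \fd[t,t^{-1}]$, where $\textnormal{ad}(\pd_\epsilon)(y_n)$ and $\textnormal{ad}(\pd_\ell)(y_n)$ are explicit, thereby fixing the correct variant of the construction, and then promote the first-order identity to all orders by the universal conjugation lemma applied to $\wt R$. A fully self-contained alternative, avoiding the identification with the extended Drinfeld element, is to reprove the conjugation lemma directly in Sweedler notation, inserting the twist at each application of the quasitriangularity relation $R\,\Delta_\hbar^\gamma\,R^{-1} = \Delta_\hbar^{\gamma,\textnormal{op}}$ of Theorem \ref{thm:Rmat_double}; this is more computational but keeps every factor manifest.
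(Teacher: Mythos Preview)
Your strategy is the paper's: form the Drinfeld element of the extended quasitriangular Hopf algebra $(\widetilde{DY}_{\hbar,\ell}(\fd),\wt R)$, use that conjugation by it equals $S^2$, and then compare this element with $Q$ after specializing $c_\ell=k$. One concrete error to fix, though: the square of the antipode in the extended double is still $\exp(\hbar\,\textnormal{ad}(\pd_\epsilon))$ by Lemma~\ref{lem:square_of_antipode}, not $\alpha$. The antipode is a Hopf-algebra datum and cannot depend on $k$; ``setting $c_\ell=k$'' is a representation-level specialization and does not alter the canonical derivation or $S^2$. The $k\pd_\ell$ part of $\alpha$ enters exactly at the step you call ``collapsing the two exponential factors of $\wt R$'': using $S(c_\ell)=-c_\ell$ and centrality of $c_\ell$, one finds that on $\wh W_k$ the extended Drinfeld element equals $e^{-k\hbar\pd_\ell}Q$, whence
\[
\textnormal{Ad}(Q)=\textnormal{Ad}(e^{k\hbar\pd_\ell})\cdot S^2
=e^{k\hbar\,\textnormal{ad}(\pd_\ell)}e^{\hbar\,\textnormal{ad}(\pd_\epsilon)}
=e^{\hbar\,\textnormal{ad}(k\pd_\ell+\pd_\epsilon)}=\alpha.
\]
With this corrected attribution of the two contributions, your outline coincides with the paper's proof; the first-order check you propose would have revealed the discrepancy.
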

\begin{proof}
    Since \(c_\ell\) acts on \(\widehat{W}_k\) as multiplication with \(k\), we have 
    \begin{equation}
        \widehat{\nabla}_{\hbar}^\gamma((S\otimes 1)\widetilde{R}^{21}) = \widehat{\nabla}_{\hbar}^\gamma(e^{-\hbar c_\ell \otimes \partial_\ell/2}((S\otimes 1)R^{21})e^{-\hbar c_\ell \otimes \partial_\ell/2}) = e^{-k\hbar \partial_\ell}Q
    \end{equation}
    when acting on \(\widehat{W}_k\).
    Furthermore, using the description 
    \begin{equation}
        S^2 = \textnormal{Ad}_{\widetilde{DY}_{\hbar,\ell}(\fd)}\left(\widetilde{\nabla}_\hbar^\gamma \left((S \otimes 1)\widetilde{R}^{21}\right)\right)    
    \end{equation}
    by Drinfeld \cite{Drinfeld_almost_cocommutative}, where \(\wh\nabla_\hbar^\gamma\) is the multiplication map of \(\widetilde{DY}_{\hbar,\ell}(\fd)\), and \(S^2 = \exp({\hbar \textnormal{ad}(\partial_\epsilon)})\),  we deduce \(\textnormal{Ad}_{\widetilde{DY}_{\hbar,\ell}(\fd)}(Q) = \exp \lp \hbar \textnormal{ad}( k\pd_{\ell}+\pd_\epsilon)\rp = \alpha\).
\end{proof}

\subsubsection{qKZ equation from Segal-Sugawara}\label{subsec:QKZ}

We can make $M_k(\{W_i\}_{i \in I}, \mathbf z)$ into an infinite-dimensional vector bundle over $\mathbf{z} \in \textnormal{conf}_I(X)$. Moreover, using the identification $\wh W_i(z_i)=W_i\otimes Y_\hbar^*(\fd)$, we can trivialize this vector bundle. Let $I_{\mathbf z, \mathbf{z}'}\colon M_k(\{W_i\}_{i \in I}, \mathbf z) \to M_k(\{W_i\}_{i \in I}, \mathbf z')$ be the identification for another \(\mathbf{z}' \in \textnormal{conf}_I(X)\). The same data induces a trivialization of $B_k(\{W_i\}_{i \in I}, \mathbf z)$, whose fibers are $\bigotimes_{i \in I} W_i^*$. 

Define difference connections on $M_k(\{W_i\}_{i \in I}, \mathbf z)$ by
\be
A_i(\mathbf{z})=I_{\mathbf z, \mathbf z+k\hbar \mathbf{e}_i} Q_i,
\ee
where $Q_i$ is the quantum Segal-Sugawara operator \eqref{eq:quantum_segal} acting on the $i$-th tensor factor. These clearly form a compatible system of difference equations, since $Q_i$ commute with each other when acting on tensor factors. 

Assume now that \(k\ell + 1 = 0\). Then \(\alpha = \exp(k\hbar \partial_t)\) and we can see that
\begin{equation}
    A_i(\mathbf{z}) \colon M_k(\{W_i\}_{i \in I},\mathbf{z}) \to M_k(\{W_i\}_{i\in I},\mathbf{z} + k\hbar \mathbf{e}_i).    
\end{equation}
Furthermore, \(\partial_t\) quantizes to a Hopf derivation of \(\wh{DY}_\hbar(\fd)\) and therefore defines a Hopf derivation on \(\widehat{Y}^*_{\hbar,\ell}(\fd)^{\textnormal{co-op}}\). According to Lemma \ref{lem:Q_and_alpha}, we then have
\begin{equation}\label{eq:quamtum_Segal_Sugawara_and_derivation}
    A_i(\mathbf{z})a_j(t) = a_j(t_j + \delta_{ij}k\hbar) A_i(\mathbf{z})\,,\qquad a \in \widehat{Y}^*_{\hbar,\ell}(\fd),
\end{equation}
for \(a_j \in  \widehat{Y}^*_{\hbar,\ell}(\fd)_{z_j}^{\textnormal{co-op}} \subseteq  \widehat{Y}^*_{\hbar,\ell}(\fd)_{\mathbf{z}}^{\textnormal{co-op}}\).
This can be viewed as
the quantization of the classical identity
\begin{equation}\label{eq:classical_Segal_Sugawara_and_derivation}
    [\partial_{t_i} + L_i, a_j] =\delta_{ij} (\partial_t a)_j \,,\qquad a \in \widehat{\fd}_{<0,\ell},
\end{equation}
for elements \(a_j \in \widehat{\fd}_{<0,\ell,z_j} \subseteq \widehat{\fd}_{<0,\ell,\mathbf{z}}\) and where the classical Segal-Sugawara operator \(L_i\) acts on the \(i\)-th tensor factor. The latter formula \eqref{eq:classical_Segal_Sugawara_and_derivation} can be derived similarly to \cite[Lemma 13.2.3.]{frenkel2004vertex}.

\begin{Rem}
    Let us discuss why \(A_i(\mathbf{z})\) does not preserve \(\widehat{Y}_{\hbar,\ell}^*(\fd)\) for arbitrary pairs \((k,\ell)\). By virtue of Lemma \ref{lem:Q_and_alpha}, we have \(\textnormal{Ad}(Q_i) = \exp(\hbar\textnormal{ad}(k\partial_\ell + \partial_\epsilon))^{(i)}\). We can already see the issue in the semi-classical limit, i.e.\ the \(\hbar\)-coefficient. The \(\hbar\)-coefficient of \(A_i(\mathbf{z})\) is equal to \(k\partial_{z_i} + L_{-1}^{(i)}\), where \([L_{-1},a] = -(k\partial_\ell + \partial_\epsilon)a\) for \(a \in \widehat{\fd}\). 
    
    For every \(\frac{a}{(t-z_i)^n} \in \widehat{\fd}_{<0,z_i}\) the associated element in \(\widehat{\fd}_{<0,\mathbf{z}}\) has the form
    \begin{equation}
        \frac{a}{(t-z_i)^n} = a_{-n}^{(i)} - \frac{\partial_{z_i}^{n-1}}{(n-1)!}\sum_{j \neq i}\sum_{m \ge 0}\frac{a_m^{(j)}}{(z_i-z_j)^{m+1}}.
    \end{equation}
    Now applying \(\partial_{z_j} + L_{-1}^{(j)}\) for \(j \neq i\) yields
    \begin{equation}
        \begin{split}
            - \frac{\partial_{z_i}^{n-1}}{(n-1)!}\sum_{j \neq i}\sum_{m \ge 0}\frac{(-(k\partial_\ell + \partial_\epsilon) + k\partial_t) a_m)^{(j)}}{(z_i-z_j)^{m+1}}.
        \end{split}
    \end{equation}
    This cannot come from an element of \(\widehat{\fd}_{<0,\ell,\mathbf{z}}\) unless it vanishes, which happens precisely for \(k\ell + 1 = 0\), so \(\widehat{\fd}_{<0,\ell,\mathbf{z}}\) is only normalized by \(A_j(\mathbf{z})\) in this case.
\end{Rem}

\begin{Rem}\label{Rem:anomaly}

    Note that $k\ell+1=0$ implies that the bilinear form that specifies the vacuum VOA is of the form $k(\kappa_0+\ell\kappa_\fg)=k\kappa_0-\kappa_\fg$. The term $-\kappa_\fg$ is precisely the boundary anomaly found in \cite{dimofte2018dual} or \cite[Appendix A]{BCDN23}. Classically, this level ensures that the conformal element is the Casimir element corresponding to $\kappa_0$. This phenomenon persists at the quantum level. 
    
\end{Rem}

Crucially, equation \eqref{eq:quamtum_Segal_Sugawara_and_derivation} implies that $Q_i$ descends to the quotient by $Y_{\hbar,\ell}^*(\fd)_{\mathbf{z}}^{\textnormal{co-op}}$. Dualizing its action on the quotient, we obtain a well-defined operator 
\begin{equation}
    A_i^*(\mathbf{z}) \colon B_k(\{W_i\}_{i \in I},\mathbf{z} - \hbar k \mathbf{e}_i) \to B_k(\{W_i\}_{i \in I},\mathbf{z}).    
\end{equation}
Similar to \cite{EK4}, we can now deduce the following formula.

\begin{Thm}\label{Thm:qKZ}
    The quantum KZ connection is given by
    \be
        \begin{split}
            &\nabla_i(\mathbf{z})\coloneqq (A_i^*(\mathbf{z}+k\hbar\mathbf{e}_i))^{-1} 
            \\&= R^{i-1, i}(z_{i-1}-z_i+k\hbar) \cdots R^{1,i}(z_1-z_i+k\hbar)\cdot R^{n,i} (z_n-z_i)\cdots R^{i+1, i}(z_{i+1}-z_i).
        \end{split}
      \ee
\end{Thm}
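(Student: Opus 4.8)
The plan is to compute the operator $A_i^*(\mathbf{z})$ on the conformal block $B_k(\{W_i\}_{i\in I},\mathbf{z}) \cong \bigotimes_{i\in I} W_i^*$ explicitly and then invert it, following the strategy of \cite{EK4}. The first step is to record that, under the standing hypothesis $k\ell + 1 = 0$, Lemma \ref{lem:Q_and_alpha} identifies the conjugation automorphism $\alpha$ attached to the quantum Segal--Sugawara operator $Q_i$ with the pure translation $\exp(k\hbar\partial_t)$ on the $i$-th tensor factor. Together with the intertwining relation \eqref{eq:quamtum_Segal_Sugawara_and_derivation}, which is the quantization of the classical identity \eqref{eq:classical_Segal_Sugawara_and_derivation}, this shows that $A_i(\mathbf{z}) = I_{\mathbf{z},\mathbf{z}+k\hbar\mathbf{e}_i}Q_i$ commutes with every $a_j \in \widehat{Y}^*_{\hbar,\ell}(\fd)^{\textnormal{co-op}}_{z_j}$ for $j\neq i$ and merely shifts $a_i$ by $k\hbar$ in $t$. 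Hence $A_i(\mathbf{z})$ descends to the $Y^*_{\hbar,\ell}(\fd)^{\textnormal{co-op}}_{\mathbf{z}}$-coinvariants and dualizes to a well-defined map $A_i^*(\mathbf{z})$ on $B_k$; the content of the theorem is to evaluate this map.

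The core of the argument is to trace the action of $Q_i = \widehat{\nabla}_\hbar^\gamma((S\otimes e^{k\hbar\,\textnormal{ad}(\partial_\ell)/2})R^{21})$ through the induced module $\widehat{W}_{i,k}(z_i)$ and then to commute the shifted insertion at $z_i$ past the insertions at the remaining points. By the construction in Section \ref{subsec:dimodules}, each factor carries the action of $\widehat{Y}^*_{\hbar,\ell}(\fd)^{\textnormal{co-op}}_{z_i}$ through the algebra map furnished by the meromorphic $R$-matrix $R(z_i - z_j)$, so moving the $i$-th insertion past the $j$-th one is governed by a single factor $R^{j,i}$ evaluated at the appropriate difference of spectral parameters. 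Invariance of the conformal block under $Y^*_{\hbar,\ell}(\fd)^{\textnormal{co-op}}_{\mathbf{z}}$ is precisely what lets one rewrite the out-going insertion in terms of these pairwise exchanges, exactly as in the Yangian computation of \cite{EK4}.

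The ordering and the two different spectral shifts then emerge from transporting $z_i \mapsto z_i + k\hbar$: passing the points of smaller index $j<i$, which have already been translated by $\alpha = \exp(k\hbar\partial_t)$, produces the shifted factors $R^{j,i}(z_j - z_i + k\hbar)$ ordered from $j = i-1$ down to $j = 1$, whereas passing the points of larger index $j>i$ produces the unshifted factors $R^{j,i}(z_j - z_i)$ ordered from $j = n$ down to $j = i+1$. Assembling these and invoking the cocycle and Yang--Baxter relations from Theorem \ref{thm:Rmat_double} yields $A_i^*(\mathbf{z}+k\hbar\mathbf{e}_i)$ as the inverse of the stated product, and inverting gives the formula for $\nabla_i(\mathbf{z})$.

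The main obstacle I anticipate is the precise bookkeeping of the spectral shifts and the left-to-right ordering of the $R^{j,i}$ factors --- in particular the verification that it is exactly the indices $j<i$ that acquire the $+k\hbar$ shift. This rests on carefully propagating the level-$k$ dependence through the two derivations $\partial_\ell$ and $\partial_\epsilon$ entering $Q_i$ subject to $k\ell+1=0$, and amounts to the quantum analogue of the classical computation \eqref{eq:classical_Segal_Sugawara_and_derivation}, which can be carried out as in \cite[Lemma 13.2.3]{frenkel2004vertex}.
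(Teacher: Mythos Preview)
Your proposal is essentially the same approach as the paper's: the paper does not give a detailed proof of this theorem but simply says ``Similar to \cite{EK4}, we can now deduce the following formula,'' having already established via Lemma \ref{lem:Q_and_alpha} and \eqref{eq:quamtum_Segal_Sugawara_and_derivation} that $A_i(\mathbf z)$ descends to coinvariants. Your outline follows exactly this route.

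One imprecision worth flagging: your explanation that the $+k\hbar$ shift on the factors with $j<i$ arises because those points ``have already been translated by $\alpha = \exp(k\hbar\partial_t)$'' is not the right mechanism. Only the $i$-th factor sees the automorphism $\alpha$; the asymmetry between $j<i$ and $j>i$ comes instead from the internal structure of $Q = \widehat\nabla_\hbar^\gamma\big((S\otimes e^{k\hbar\,\textnormal{ad}(\partial_\ell)/2})R^{21}\big)$, namely the antipode on one tensor leg and the $e^{k\hbar\,\textnormal{ad}(\partial_\ell)/2}$ on the other, which under the coinvariance relation split the exchange with the remaining insertions into two blocks with different spectral arguments. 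You correctly identify this bookkeeping as the delicate point and defer to \cite{EK4}, which is exactly what the paper does.
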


\subsection{Examples from 3d gauge theories}\label{subsec:examples}

So far, we have focused on ordinary Lie algebras $\fg$ and $\fd=T^*\fg$. However, by following the Koszul sign rule carefully, one can easily extend all results to a DG Lie algebra $\fh$. Let us consider now $\fh=\fg\ltimes V[-1]$, where $\fg$ is an ordinary Lie algebra and $V$ is a finite-dimensional $\fg$-module, placed in cohomological degree $1$. The cotangent $\fd=T^*\fh$ is closely related to symplectic reduction $T^*V/\!/\!/\!/ G$, and, in fact, is identified with its tangent Lie algebra \cite{NiuQGSR}. This Lie algebra appears in \cite{costello2019vertex}, and its affine Lie algebra is the perturbative boundary VOA of the B-twist of the 3d $\CN=4$ gauge theory defined by $(G, T^*V)$. We therefore construct, for every $\fh$, a Yangian superalgebra $Y_\hbar (\fd)$ and a quantum vertex algebra $\CV_{\hbar,k}(\fd)$ that quantizes the perturbative boundary VOA $V_k(\fd)$. We now consider two specific examples of $\fd$. 

\subsubsection{$\fd=T^*\fg$ for a reductive Lie algebra $\fg$}

Let $\kappa=\kappa_0+k\kappa_\fg$, where as always \(\kappa_\fg\) is the Killing form of \(\fg\) and \(\kappa_0\) is the pairing between $\fg$ and $\fg^*$. In this case, the Lie algebra $V_1(\fd)$ contains a Lie subalgebra $V_{k}(\fg)$ and a commutative subalgebra $\CO(t\fg\lbb t\rbb)$, the algebra of functions on $t\fg\lbb t\rbb$. The OPE of these two subalgebras is given by
\be
x(z) f(w)\sim \frac{\kappa_0(x, f)}{(z-w)^2} +\frac{[x, f](w)}{z-w}\,,\qquad x\in \fg, f\in \fg^*.
\ee
Let $D_{G, k}$ be the algebra of chiral differential operators of $G$ at level $k$ \cite{arkhipov2002differential, gorbounov2001chiral}. It is generated by $V_{k} (\fg)$ and $\CO (G\lbb t\rbb)$ and satisfy the OPE
\be\label{eq:DGOPE}
x(z) f(w)\sim \frac{x(f)(w)}{z-w}\,, \qquad x\in \fg, f\in \CO (G). 
\ee
Here, $x$ acts on \(f\) via the left-invariant vector field $x_L$ on \(G\) defined by \(x\). It is known \cite{gorbounov2001chiral} that the degree $0$ part of $D_{G, k}$ is $\CO_G$, and the degree $1$ part of $D_{G, k}$ is isomorphic to $\mathrm{Der}(G)\oplus \Omega^1(G)$ as a $\CO_G$-module. It is of course generated by these two parts. It is known to experts, and at least implicitly contained in \cite{gorbounov2001chiral}, that $V_1(\fd)$ embeds into $D_{G, k}$. However, since we can't find an explicit proof, we include a proof here for completeness. 

\begin{Lem}

    Let $x\in \fg$ and $\omega\in \Omega^1(G)$, then
    \be
x(z)\omega (w)\simeq \frac{(x_L, \omega)(w)}{(z-w)^2}+\frac{x_L\rhd \omega (w)}{z-w},
    \ee
    where $x_L\rhd \omega$ is the action of the left-invariant vector field $x$ on $\Omega^1(G)$.
\end{Lem}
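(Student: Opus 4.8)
The plan is to reduce to monomial one-forms and then compute the operator product directly via the chiral Wick formula. Both sides of the claimed OPE are additive in $\omega$, and every element of $\Omega^1(G)$ is a finite sum $\sum_i f_i\,dg_i$ with $f_i,g_i\in\CO(G)$, so it suffices to treat $\omega=f\,dg$. Under the identification of the weight-one subspace $\Omega^1(G)\subseteq D_{G,k}$ with derivative fields, such a one-form corresponds to $\norm{f\,\partial g}(w)$, where $\partial=\partial_w$; the first thing I would record is this dictionary, noting that it is unambiguous because functions have regular mutual OPEs (so the normal ordering is just the commuting product) and that it respects the Leibniz rule, since $d(fg)=f\,dg+g\,df$ is matched by $\partial\norm{fg}=\norm{f\,\partial g}+\norm{g\,\partial f}$.

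The key step is to compute $x(z)\norm{f\,\partial g}(w)$ using only the known OPE \eqref{eq:DGOPE}, namely $x(z)h(w)\sim (x_Lh)(w)/(z-w)$ for $h\in\CO(G)$, together with the Wick formula. Differentiating \eqref{eq:DGOPE} in $w$ and using $\partial_w(z-w)^{-1}=(z-w)^{-2}$ yields the auxiliary contraction
\be
x(z)\,\partial g(w)\sim \frac{(x_Lg)(w)}{(z-w)^2}+\frac{\partial(x_Lg)(w)}{z-w}.
\ee
Feeding the two contractions of $x(z)$ with $f(w)$ and with $\partial g(w)$ into the non-commutative Wick formula for the singular part of $x(z)\norm{f\,\partial g}(w)$—and observing that the double-contraction term vanishes because $(x_Lf)$ and $(x_Lg)$ are again functions with regular OPEs—gives
\be
x(z)\norm{f\,\partial g}(w)\sim \frac{(f\cdot x_Lg)(w)}{(z-w)^2}+\frac{\big((x_Lf)\,\partial g+f\,\partial(x_Lg)\big)(w)}{z-w}.
\ee

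The final step is to recognize the two pole coefficients geometrically. The double-pole term $f\cdot x_Lg$ is exactly the contraction $\iota_{x_L}(f\,dg)=(x_L,\omega)$ of the left-invariant vector field with the one-form, which is the claimed second-order pole. For the simple pole, Cartan's formula (or a direct Leibniz computation) gives $\mathcal{L}_{x_L}(f\,dg)=(x_Lf)\,dg+f\,d(x_Lg)$, whose associated field under the dictionary above is precisely $(x_Lf)\,\partial g+f\,\partial(x_Lg)$; since $x_L\rhd\omega$ denotes this Lie-derivative action of $x_L$ on $\Omega^1(G)$, the first-order pole is $x_L\rhd\omega$, completing the identification.

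The main obstacle I anticipate is bookkeeping rather than conceptual: one must apply the non-commutative Wick formula correctly, so that differentiating the scalar OPE genuinely produces the double pole in the $x(z)\,\partial g$ contraction and no spurious double-contraction term survives, and one must confirm that the assignment $f\,dg\mapsto\norm{f\,\partial g}$ is independent of the chosen presentation of $\omega$. Both points reduce to the compatibility of $d$ with $\partial$ and of $\iota_{x_L},\mathcal{L}_{x_L}$ with the Leibniz rule, so once the dictionary between $\Omega^1(G)$ and weight-one derivative fields is pinned down, the shapes of both poles are forced and the lemma follows.
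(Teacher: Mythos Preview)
Your proposal is correct and follows essentially the same approach as the paper: reduce to $\omega=f\,dg$ (i.e.\ $f\partial g$), apply the known OPE \eqref{eq:DGOPE} together with $\partial_w$ to produce the double pole, and then identify the two pole coefficients as the contraction and Lie derivative. The paper's proof is simply a terser version of your Wick computation, omitting the explicit mention of the vanishing double contraction and the well-definedness check for $f\,dg\mapsto \norm{f\,\partial g}$.
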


\begin{proof}
    Let $\omega=f\pd (g)$ for some $f, g\in \CO(G)$, then
    \be
    \begin{aligned}
        x(z) f(w)\pd g(w)&\sim\frac{x_L(f)(w)\pd g(w)}{z-w}+f(w) \pd \frac{x_L(g)(w)}{z-w}\\&=\frac{f(w)x_L(g)(w)}{(z-w)^2}+\frac{x_L(f)(w)\pd g(w)+f(w)\pd (x_L(g)(w))}{z-w}\\ &=\frac{(x, \omega)(w)}{(z-w)^2}+\frac{x\rhd \omega(w)}{z-w}.
    \end{aligned}
    \ee
    This completes the proof. 
    
\end{proof}
    
This leads to the following corollary. 

\begin{Cor}

    Let $\fg^*\subseteq \Omega^1(G)$ be the subset of right-invariant 1-forms. Then the subalgebra of $D_{G, k}$ generated by $\fg$ and $\fg^*$ is isomorphic to $V_1(\fd)$. 
    
\end{Cor}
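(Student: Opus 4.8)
The plan is to exhibit the claimed isomorphism as a homomorphism of vertex algebras built from the universal property of the vacuum affine VOA. Recall that $V_1(\fd)$ is the vacuum module of $\widehat{\fd}$ attached to the bilinear form $\kappa=\kappa_0+k\kappa_\fg$, and as such it is freely generated by the fields $\{x(z)\}_{x\in\fg}$ and $\{f(z)\}_{f\in\fg^*}$ subject only to the affine OPE relations of $\widehat{\fd}_\kappa$. Hence, to construct a vertex algebra map $V_1(\fd)\to D_{G,k}$ it suffices to assign to each generator a local field in $D_{G,k}$ and to check that these fields obey the same OPEs. I would send $x\in\fg$ to the current $x(z)\in V_k(\fg)\subseteq D_{G,k}$ and $f\in\fg^*$ to the right-invariant $1$-form $\omega_f\in\Omega^1(G)$ determined by $\omega_f|_e=f$. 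The map is automatically surjective onto the subalgebra generated by $\fg$ and $\fg^*$, so the statement reduces to (i) matching the three families of OPEs and (ii) proving injectivity.

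For the OPE verification there are three cases. The $\fg$-$\fg$ OPE is immediate, since $V_k(\fg)$ is already a sub-VOA of $D_{G,k}$ and the restriction of $\kappa$ to $\fg\times\fg$ is exactly $k\kappa_\fg$. The $\fg^*$-$\fg^*$ OPE must be regular, matching the abelian ideal $\fg^*\subset\fd$: this holds because every $1$-form is an $\CO(G)$-combination of the $\partial g$ with $g\in\CO(G)$, and the weight-$0$ fields $\CO(G)$ are mutually local with regular OPE, so the same is true of their derivatives and normally ordered products. The decisive case is the mixed $\fg$-$\fg^*$ OPE, and here I would invoke the preceding Lemma, which expresses $x(z)\omega_f(w)$ through the contraction $(x_L,\omega_f)$ in the second-order pole and the Lie derivative $x_L\rhd\omega_f$ in the first-order pole. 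The geometric content I must extract is that, for the chosen invariant forms, $(x_L,\omega_f)$ is the \emph{constant} $\kappa_0(x,f)=\langle f,x\rangle$ and $x_L\rhd\omega_f=\omega_{[x,f]}$, where $[x,f]$ is the coadjoint bracket of $\fd=\fg\ltimes\fg^*$; this is precisely the $\widehat{\fd}_\kappa$ relation $x(z)f(w)\sim \kappa_0(x,f)(z-w)^{-2}+[x,f](w)(z-w)^{-1}$. The key point is that the flow of $x_L$ is a one-parameter group of translations, so its pairing with a translation-invariant form is constant and its Lie derivative is governed by the (co)adjoint action; bookkeeping the invariances correctly so that the double pole collapses to a scalar is the main obstacle of the whole argument.

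Finally, for injectivity I would pass to associated graded algebras. With respect to the standard (conformal weight) filtration, $\mathrm{gr}\,V_1(\fd)$ is the free commutative algebra on $\fd\otimes t^{-1}\C[t^{-1}]$, while $\mathrm{gr}\,D_{G,k}$ is the algebra of functions on the arc space of $T^*G$. The induced map sends the generators to the symbols of the $\fg$-currents and of the right-invariant $1$-forms; since at the identity $e\in G$ these symbols restrict to a basis of $T_eG=\fg$ together with the dual basis of $T_e^*G=\fg^*$, they are algebraically independent, so the graded map is injective. Injectivity of the graded map forces injectivity of $V_1(\fd)\to D_{G,k}$, and the image is by construction the subalgebra generated by $\fg$ and $\fg^*$, completing the identification. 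I expect the OPE bookkeeping of the second paragraph to be the genuinely delicate point, whereas the freeness and associated-graded arguments are standard.
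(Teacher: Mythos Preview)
Your overall strategy---build a map $V_1(\fd)\to D_{G,k}$ from the universal property of the affine vacuum module, verify the three OPE families, then deduce injectivity via the associated graded---is sound and is in fact more detailed than the paper, which simply records the Corollary as an immediate consequence of the preceding Lemma without further argument.

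The genuine gap is in your verification of the mixed $\fg$--$\fg^*$ OPE. For a \emph{right}-invariant form $\omega_f$ and the left-invariant vector field $x_L$, both identities you assert are false. The flow of $x_L$ is right multiplication $g\mapsto ge^{tx}$; since right-invariant forms are preserved by right translations, one gets $\mathcal L_{x_L}\omega_f=0$, not $\omega_{[x,f]}$. And the contraction is \emph{not} constant: since $x_L|_g=(L_g)_*x$ while $\omega_f|_g=f\circ(R_{g^{-1}})_*$, one computes $(x_L,\omega_f)(g)=\langle f,\mathrm{Ad}_g(x)\rangle$, which depends on $g$. Your heuristic ``the flow is by translations and the form is translation-invariant, hence the pairing is constant'' conflates two different statements: invariance of $\omega_f$ under the flow of $x_L$ gives $\mathcal L_{x_L}\omega_f=0$, but constancy of the contraction $\omega_f(x_L)$ would additionally require $x_L$ to be right-invariant, which it is not.

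The identities you need hold verbatim once the invariances match: for \emph{left}-invariant $\omega_f$ one has $(x_L,\omega_f)\equiv\langle f,x\rangle$ (pairing of left-invariant objects is constant) and $\mathcal L_{x_L}\omega_f=\omega_{[x,f]}$ (from $d\omega_f(x_L,y_L)=-f([x,y])$). So either ``right-invariant'' in the statement should read ``left-invariant'', or the symbol $x_L$ is being used for the vector field generating left translations (i.e.\ the right-invariant one). With that correction your argument goes through; the $\fg^*$--$\fg^*$ regularity and the associated-graded injectivity are fine as written.
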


Consequently, we construct a quantization of a large vertex subalgebra of $D_{G, k}$. In fact, $D_{G, k}$ can be written as an extension of $V_1(\fd)$.\footnote{We thank S. Nakatsuka for explaining to us about the relation between $V(\fd)$ and $D_{G, k}$.}

\begin{Prop}
    There is an isomorphism of VOAs:
    \be\label{eq:indOG}
\Ind_{\fd(\CO)\oplus \C c}^{\wh{\fd(\CK)}}\lp \CO_G\rp\cong D_{G, k},
    \ee
    where $\CO_G$ is the ring of functions on $G$, viewed as a module of $U(\fd(\CO))$ via the algebra map $U(\fd(\CO))\to U(\fg)$.
\end{Prop}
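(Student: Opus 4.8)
The plan is to realize $D_{G,k}$ as an induced $\wh{\fd(\CK)}$-module by exhibiting its conformal weight-zero part as the $\fd(\CO)$-module $\CO_G$ and then invoking the universal property of induction. By the preceding Corollary, $V_1(\fd)$ embeds into $D_{G,k}$ as the vertex subalgebra generated by the currents $\fg$ and the right-invariant forms $\fg^*$; hence $D_{G,k}$ is a smooth $\wh{\fd(\CK)}$-module on which $c$ acts by the scalar $1$ and whose conformal grading is bounded below by $0$. First I would identify the weight-zero subspace. By the structure theory of \cite{gorbounov2001chiral} one has $(D_{G,k})_0 = \CO_G$, and I claim this is an $\fd(\CO)\oplus\C c$-submodule carrying exactly the module structure in the statement: the positive modes $t\fg\lbb t\rbb$ strictly lower the conformal weight and so annihilate $(D_{G,k})_0$; a current $f\in\fg^*(\CO)$ annihilates $\CO_G$ because, by the grading argument together with the OPE of a right-invariant $1$-form with a function (as in the Lemma), its only mode that could preserve weight zero is the zero mode, which vanishes on functions; and the zero mode $x_0$ of $x\in\fg$ acts by the left-invariant vector field $x_L$ according to the OPE \eqref{eq:DGOPE}. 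This is precisely the $U(\fd(\CO))$-action factoring through $U(\fd(\CO))\to U(\fg)$.

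Granting this, the inclusion $\CO_G=(D_{G,k})_0\hookrightarrow D_{G,k}$ is a morphism of $\fd(\CO)\oplus\C c$-modules, so the universal property of induction produces a homomorphism of $\wh{\fd(\CK)}$-modules
\be
\Phi\colon \Ind_{\fd(\CO)\oplus\C c}^{\wh{\fd(\CK)}}(\CO_G)\longrightarrow D_{G,k}.
\ee
Since $\Phi$ sends the generating subspace $\CO_G$ to the weight-zero part, preserves the vacuum, and intertwines the $\wh{\fd(\CK)}$-actions, it is automatically a morphism of vertex algebras. Next I would establish surjectivity: the image of $\Phi$ contains $\CO_G$ and is stable under all current modes, so it suffices that $D_{G,k}$ be generated over $\CO_G$ by these modes. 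This follows from \cite{gorbounov2001chiral}: $D_{G,k}$ is generated by its parts of weight $\le 1$, its weight-one part is $\mathrm{Der}(G)\oplus\Omega^1(G)$, and as $\CO_G$-modules $\mathrm{Der}(G)$ is spanned by the left-invariant fields $\fg_{-1}\cdot\CO_G$ while $\Omega^1(G)$ is spanned by the right-invariant forms $\fg^*_{-1}\cdot\CO_G$. Hence $\Phi$ is onto.

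For injectivity I would compare graded ranks over $\CO_G$. By the Poincaré--Birkhoff--Witt theorem for induced modules, $\Ind_{\fd(\CO)\oplus\C c}^{\wh{\fd(\CK)}}(\CO_G)\cong U(\fd_{<0})\otimes\CO_G$, a free $\CO_G$-module whose conformal-weight generating series is $\prod_{n\ge 1}(1-q^n)^{-2\dim\fg}$, the exponent $2\dim\fg$ recording the bosonic generators of $\fg\oplus\fg^*$ at each negative mode. On the other side, the order (PBW) filtration on $D_{G,k}$ has associated graded $\CO(J_\infty T^*G)$; trivializing $T^*G\cong G\times\fg^*$ and using the fibration $J_\infty G\to G$ identifies $\mathrm{gr}\,D_{G,k}$ with a free $\CO_G$-module of the same generating series. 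Thus each weight space of $D_{G,k}$ is free over $\CO_G$ of the same finite rank as the corresponding space of the induced module, and a surjection of free $\CO_G$-modules of equal finite rank in each weight is an isomorphism; therefore $\Phi$ is bijective. The main obstacle is precisely this last step: matching the $\CO_G$-module structure on $\mathrm{gr}\,D_{G,k}$ coming from the arc space of $G$ with the one on $U(\fd_{<0})\otimes\CO_G$, which requires care with the $\mathrm{Ad}$-twist relating the left-invariant trivialization of $T^*G$ to the right-invariant forms realizing $\fg^*$. The verification in the first paragraph that $\fg^*(\CO)$ annihilates $\CO_G$ is the other delicate point.
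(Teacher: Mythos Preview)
Your proposal is correct and follows essentially the same strategy as the paper: construct the map from the induced module to $D_{G,k}$ via the universal property (the paper phrases this as ``the OPE \eqref{eq:DGOPE} implies there is a VOA morphism''), and then conclude it is an isomorphism by a PBW-type size comparison. The paper's version of the comparison is simply the vector space identification $D_{G,k}\cong \mathrm{Sym}(t^{-1}\fg[t^{-1}])\otimes \CO_{G(\CO)}\cong \mathrm{Sym}(t^{-1}\fg[t^{-1}])\otimes \mathrm{Sym}(t^{-1}\fg^*[t^{-1}])\otimes \CO_G$ coming from $G(\CO)=G\ltimes t\fg(\CO)$, which is exactly your arc-space/free-$\CO_G$-rank count stated more tersely; your separate treatment of surjectivity and your remarks on the $\mathrm{Ad}$-twist are additional care rather than a different argument.
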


\begin{proof}
    Equation \eqref{eq:DGOPE} implies that there is a VOA morphism from the LHS of equation \eqref{eq:indOG} to the RHS. On the other hand, $D_{G, k}$ is, as a vector space, isomorphic to
    \be
\mathrm{Sym}(t^{-1}\fg[t^{-1}])\otimes \CO_{G(\CO)}=\mathrm{Sym}(t^{-1}\fg[t^{-1}])\otimes \mathrm{Sym}(t^{-1}\fg^*[t^{-1}])\otimes \CO_G,
    \ee
    since $G(\CO)=G\ltimes t\fg(\CO)$. Therefore, the associated graded of the above morphism is an isomorphism. This completes the proof. 
    
\end{proof}

Consequently, $D_{G, k}$ can be written as an extension of $V_1(\fd)$ of the form
\be
D_{G, k}=\bigoplus_{V\in \mathrm{Irrd}(G)} M_{V}\otimes V^*,
\ee
where $M_{V}=\Ind_{\fd(\CO)\oplus \C c}^{\wh{\fd(\CK)}}V$ is the Weyl module corresponding to the $\fd$-module $V$. We can now define a similar module of $\CV_1(\fd)$:
\be
\CD_{G, k}\coloneqq\bigoplus_{V\in\mathrm{Irrd}(G) }\CM_V\otimes V^*=\Ind_{Y_\hbar (\fd)\oplus \C c}^{\wh{DY}_{\hbar, \ell}}(\CO_G),
\ee
where $\CM_V$ is the induced module $\CM_V\coloneqq\Ind_{Y_\hbar (\fd)\oplus \C c}^{\wh{DY}_{\hbar, \ell}(\fd)}V$. We present the following conjecture.

\begin{Conj}

    The $\wh{DY}_{\hbar, \ell}(\fd)$-module $\CD_{G, k}$ has the structure of a quantum vertex algebra, extending the quantum vertex algebra structure of $\CV_1(\fd)$. 
    
\end{Conj}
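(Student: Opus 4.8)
The plan is to construct the state-operator correspondence on $\CD_{G,k}$ by the very same mechanism that proves Theorem \ref{Thm:QVAd}, replacing the inducing module $\C_k$ by $\CO_G$. Since $\CD_{G,k}=\Ind_{Y_\hbar(\fd)\oplus\C c}^{\wh{DY}_{\hbar,\ell}(\fd)}(\CO_G)$ is again a smooth $\wh{DY}_{\hbar,\ell}(\fd)$-module (it is graded and bounded below), the creation and annihilation operators $T(z)$ and $T^\dagger(z)$ built from $(\tau_z\otimes 1)R$ and $R(z)$ act on it, and I would define $\CY_\hbar(v,z)=\langle v\otimes 1,\,T(z)\,T^\dagger(z+k\hbar e_\ell/2)^{-1}\rangle$ verbatim for all $v\in\CD_{G,k}$, with braiding $\CS(z)$ given, as before, by the action of $R(z)$ on the $Y_\hbar(\fd)$-module $\CD_{G,k}$. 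The vacuum is $1\in\CO_G\subseteq\CD_{G,k}$ and the translation operator remains $\pd_t$, so (QV1) and (QV2) are immediate and the $V=\mathrm{triv}$ summand recovers the known structure on $\CV_1(\fd)$; all the content lies in promoting the $\CO_G$-fields to the quantum level while preserving (QV3) and (QV4).

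The conceptual engine I would use is that $\CO_G$ is a commutative algebra in $\Rep(\fg)$, pulled back to a $Y_\hbar(\fd)$-module along $Y_\hbar(\fd)\to U(\fg)$. By Section \ref{subsec:state-op}, the functor $\CY(-,z)$ realizes $Y_\hbar(\fd)\Mod^{\textnormal{sm}}$ as a $z$-dependent braided monoidal category whose braiding is the spectral $R$-matrix, and $\CD_{G,k}$ is exactly the image of $\CO_G$ under induction. The singular parts of the relevant operator products are already pinned down classically: the OPEs computed in the Lemma and Corollary preceding the conjecture control $\CY_\hbar(\Phi,z)\CY_\hbar(x,w)$ for $\Phi\in\CO_G$ and $x$ an affine current, so the only genuinely new generating fields are the vertex operators $\Phi_V$ attached to matrix coefficients of the irreducibles $V$, whose mutual operator products are regular in the classical limit. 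The strategy is then to check the axioms only on this generating set, exactly as in the proof of Theorem \ref{Thm:QVAd}.

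On the generators, (QV3) and the hexagon (QV4) should follow from the cocycle and unitarity identities of $R(z)$ together with an extension of the $Y_\hbar(\fd)$-module-map property of $\CY_\hbar$ from $\CV_1(\fd)$ to $\CD_{G,k}$, in the spirit of Corollary \ref{Cor:Ymodulemap}; associativity \eqref{eq:associativity} would then be deduced as in the vacuum case. I expect the main obstacle to be the \emph{braided commutativity of $\CO_G$ with respect to $R(z)$}: while $\CO_G$ is commutative for the classical symmetric braiding, the spectral $R$-matrix deforms this via $R(z)=1+\hbar(\dots)$, so one must verify order by order in $\hbar$ that the multiplication $\CO_G\otimes\CO_G\to\CO_G$ stays compatible with $\CS(z)$, equivalently that the $\Phi_V$ remain mutually $\CS$-local with no new singularities and close up associatively. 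Establishing this—ideally by exhibiting $\CD_{G,k}$ as the condensation of an \'etale (braided-commutative) algebra object in the braided tensor category of $\CV_1(\fd)$-modules—is the crux of the problem, and is what currently prevents upgrading the conjecture to a theorem.
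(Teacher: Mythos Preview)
The statement is presented in the paper as a \textbf{Conjecture}, not a theorem; the paper offers no proof and explicitly leaves it open. So there is no ``paper's own proof'' to compare against, and your write-up is best read as a proof \emph{strategy} rather than a proof. You yourself acknowledge this in the last sentence.

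That said, there is a concrete gap in the strategy that deserves to be named. Your proposed definition
\[
\CY_\hbar(v,z)=\langle v\otimes 1,\,T(z)\,T^\dagger(z+k\hbar e_\ell/2)^{-1}\rangle
\]
relies on the canonical pairing between the first tensor slot, which lives in $Y_\hbar(\fd)$, and the state $v$, which in the vacuum case lies in $\CV_{\hbar,k}(\fd)\cong Y_\hbar^*(\fd)$. For $\CD_{G,k}$, however, a generic state has the form $u\otimes f$ with $u\in Y_\hbar^*(\fd)$ and $f\in\CO_G$, and there is no pairing of $f$ with $Y_\hbar(\fd)$: the tensor $T(z)T^\dagger(z+k\hbar e_\ell/2)^{-1}$ is built entirely out of the $R$-matrix of $DY_\hbar(\fd)$ and carries no information about the algebra $\CO_G$. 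In particular, even the vacuum axiom $\CY_\hbar(f,z)\Omega\in f+zV\lbb z\rbb$ for $f\in\CO_G$ cannot be recovered from this formula, since classically the field associated to $f$ involves the multiplication of $\CO_G$, not just the $\fd$-currents. So the phrase ``verbatim for all $v\in\CD_{G,k}$'' does not type-check, and the construction of the new fields $\Phi_V$ has to be supplied by hand rather than read off from the $R$-matrix.

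You do correctly locate the conceptual heart of the problem: one must promote $\CO_G$ to a commutative algebra object for the \emph{deformed} braiding $\CS(z)=R(z)\cdot(-)$, and then realize $\CD_{G,k}$ as the induced extension. This is precisely the step the paper flags as open. Your suggestion to phrase it as a condensation of an \'etale algebra in the braided category of $\CV_1(\fd)$-modules is reasonable, but note that no braided tensor category of $\CV_{\hbar,k}(\fd)$-modules has been constructed in the paper, so this would itself require substantial groundwork before the extension argument can even be formulated.
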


\subsubsection{Affine $\fgl (1|1)$ and the $\beta\gamma$ VOA}

As another example, let us consider $\fh=\fg\ltimes V[-1]$, where $\fg$ is the Lie algebra of $(\C^\times)^r$ and $V=\C^n$ is a representation of $(\C^\times)^r$ defined by a charge matrix $\rho$. The associated vertex algebra $V_k(\fd)$ was studied in \cite{BCDN23}, and was shown to be related to BRST cohomology of $\beta\gamma$ VOAs. The simplest example is when $\fg=\C$ and $V=\C$ with weight $1$. In this case, $\fd$ is the smallest type-A Lie superalgebra $\fgl (1|1)$. Our construction therefore gives rise to a quantization $\CV_{\hbar,k}(\fgl (1|1))$ of $V_k (\fgl (1|1))$. For this example, we set $\ell=-1$ and $k=1$. 

The vertex algebra $V_1(\fgl (1|1))$ has simple current modules labeled by integers $(n,e)\in \Z^2$, which are defined as spectral-flows of the vacuum module
\be
V_{n, e}\coloneqq\sigma^{n,e}V_1(\fgl (1|1)), \qquad \sigma^{n,e}=\exp (nN+eE),
\ee
where \(E\) and \(N\) are the standard bosonic generators of \(\fgl (1|1)\) and $ \sigma^{n,e}$ is the spectral flow automorhism generated by $nN+eE$.  We have an identification of VOAs \cite{creutzig2013w}:
\be
\bg\otimes \FF\cong \bigoplus_n V_{n, 0}.
\ee
Here $\bg$ is the symplectic boson VOA and $\FF$ is the free fermion VOA. For the quantum algebra $\wh{DY}_{\hbar, \ell}(\fgl (1|1))$, the Kac-Moody symmetry corresponding to $\sigma^{n, m}$ remains, and we can consider the module
\be
\CV\coloneqq\bigoplus_n \sigma^{n,0} \CV_1(\fgl (1|1)). 
\ee
We present the following conjecture.

\begin{Conj}

    The $\wh{DY}_{\hbar, \ell}(\fgl (1|1))$-module $\CV$ has the structure of a quantum vertex algebra, extending $\CV_1(\fgl (1|1))$. It quantizes $\bg\otimes \FF$. 
    
\end{Conj}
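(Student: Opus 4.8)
The plan is to realize $\CV$ as a \emph{quantum simple current extension} of $\CV_{\hbar,1}(\fgl(1|1))$ and to transport the quantum vertex algebra structure of Theorem \ref{Thm:QVAd} across the spectral-flow grading. Recall that here one fixes $k=1$, $\ell=-1$, so that $k\ell+1=0$; this is exactly the conformal point isolated in Section \ref{subsec:QKZ}, which is the natural home for a simple current extension since those extensions are controlled by conformal weights. The first step is to set up the quantum spectral flow. Because $\fg=\C$ is abelian, the double Yangian $\wh{DY}_{\hbar,\ell}(\fgl(1|1))$ retains the Heisenberg symmetry generated by the even fields $N$ and $E$, and I would show that the automorphisms $\sigma^{n,0}=\exp(nN)$ lift to Hopf algebra automorphisms of $\wh{DY}_{\hbar,\ell}(\fgl(1|1))$ that are compatible with the $R$-matrix $R(z)$ of Theorem \ref{thm:Rmat_double}, i.e.\ that $R(z)$ is invariant (up to a controlled shift of $z$) under $\sigma^{n,0}\otimes\sigma^{m,0}$. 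This guarantees that each $\sigma^{n,0}\CV_{\hbar,1}(\fgl(1|1))$ is again a smooth $\wh{DY}$-module and that $\CV=\bigoplus_n\sigma^{n,0}\CV_{\hbar,1}(\fgl(1|1))$ is graded by the charge lattice $\Z$.

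Next I would build the intersector vertex operators. For $v$ in the sector $\sigma^{n,0}\CV_1$, the general formula $\CY_\hbar(v,z)=T(z)\,T^\dagger(z+k\hbar e_\ell/2)^{-1}$ still makes sense once the creation and annihilation series $T,T^\dagger$ are interpreted in the spectral-flowed representation; applied to the sector $\sigma^{m,0}\CV_1$, the output lands in $(\sigma^{n+m,0}\CV_1)(\!(z)\!)$ because spectral flow acts by an integer shift on the modes. The braiding $\CS(z)$ is, exactly as in the Proposition preceding Theorem \ref{Thm:QVAd}, the action of $R(z)$ through the $Y_\hbar(\fgl(1|1))$-module structure. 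The content of this step is to check that $\CY_\hbar$ closes on $\CV$, namely that fusion respects the $\Z$-grading and that $\CS(z)$ preserves the graded pieces; both reduce to the grading-compatibility of $R(z)$ from the previous step.

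I would then verify the axioms on $\CV$. Properties (QV1) and (QV2) are inherited sector-by-sector, with vacuum $\Omega\in\sigma^{0,0}\CV_1$ and translation $\pd_t$. For $\CS$-locality (QV3), the braiding axioms (S1)--(S3), and the hexagon (QV4), I would rerun the argument of Theorem \ref{Thm:QVAd}, which reduces everything to the cocycle identities (T1)--(T4)/(C1)--(C4) for $R(z)$ together with the module-map property of Corollary \ref{Cor:Ymodulemap}; these hold within each sector and are matched across sectors precisely by the $R$-matrix compatibility established first. Finally, to identify the classical limit I would use that $\CV/\hbar\CV$ is the classical vertex algebra $\bigoplus_n V_{n,0}$ (each sector degenerates by the classical limit of Theorem \ref{Thm:QVAd}) and invoke the isomorphism $\bg\otimes\FF\cong\bigoplus_n V_{n,0}$ of \cite{creutzig2013w}.

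The main obstacle will be the consistency of the braiding across distinct sectors, equivalently the hexagon (QV4) when the three insertions carry different charges. Classically the extension exists because it is built from integer-charge simple currents with integral conformal weights, so the relevant monodromy is trivial; quantum mechanically one must verify that $R(z)$ genuinely respects the $\Z$-grading by spectral-flow charge and that the associated $2$-cocycle on $\Z$ — governing the phases and, crucially, the fermionic Koszul signs inherited from $\FF$ — remains cohomologically trivial after quantization. Making this precise, perhaps by exhibiting explicit quantum generating fields deforming the symplectic boson and free fermion and matching their quantum OPE against $R(z)$ in the spirit of \cite{BCDN23}, is where the real work lies.
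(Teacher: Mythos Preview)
The statement you are attempting to prove is labeled a \emph{Conjecture} in the paper and is not accompanied by any proof; it is presented as an open problem, alongside the analogous conjectures for $\CD_{G,k}$ and for $\CV_{B,\rho}$. There is therefore no argument in the paper to compare your proposal against. What you have written is not a proof but a plausible strategy sketch, and you essentially acknowledge this yourself in your final paragraph.

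That said, let me comment on the strategy. The overall shape --- lift spectral flow to automorphisms of $\wh{DY}_{\hbar,\ell}(\fgl(1|1))$, check compatibility with $R(z)$, build graded intertwiners, and verify (QV1)--(QV4) sector by sector --- is the natural one and is surely what the authors have in mind. But several of your intermediate claims are themselves nontrivial and not established in the paper. First, the assertion that $\sigma^{n,0}$ lifts to a \emph{Hopf} automorphism of the double Yangian compatible with $R(z)$ ``up to a controlled shift of $z$'' is exactly the content to be proven; spectral flow on the classical affine algebra does not act by inner automorphisms of $\fd(\CK)$, and how it interacts with the twist defining $\Delta_\hbar^\gamma$ is not obvious. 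Second, the formula $\CY_\hbar(v,z)=T(z)T^\dagger(z+k\hbar e_\ell/2)^{-1}$ is derived in the paper specifically for the vacuum induction $\CV_{\hbar,k}(\fd)=\Ind_{Y_\hbar(\fd)\otimes\C[c_\ell]}^{\wh{DY}_{\hbar,\ell}(\fd)}\C_k[\![\hbar]\!]$; for a spectral-flowed module the induction is from a different (twisted) subalgebra, and one must redo the analysis of $T,T^\dagger$ rather than assert that the same formula ``still makes sense.'' Third, the identification of $\CS(z)$ with the $R(z)$-action in the Proposition before Theorem~\ref{Thm:QVAd} relies crucially on the specific duality \eqref{eq:rhd_k_duality} between $\CV_{\hbar,k}(\fd)$ and $Y_\hbar(\fd)$; the analogue for twisted sectors has to be worked out.

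In short: your outline is reasonable as a program, and your identification of the cross-sector hexagon and the associated $2$-cocycle on $\Z$ as the crux is on target, but none of the steps you list are actually carried out, and the paper does not claim to have carried them out either.
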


Let us return to the general case of a representation $V=\C^n$ of $G=(\C^\times)^r$ defined by a charge matrix $\rho$. Assume that we have a short exact sequence
\be
\btik
0\rar & \Z^r\rar{\rho}& \Z^n\rar{(\rho^!)^T}\rar &\Z^{n-r}\rar & 0.
\etik
\ee
Then it is known that the 3d gauge theory corresponding to $\rho$ and $\rho^!$ are mirror dual to each other. Let $\fd_\rho = T^*(\fg \ltimes V[-1])$ be the cotangent Lie algebra corresponding to $\rho$. We again choose $\ell=-1$ and $k=1$. Then the VOA $V_1(\fd_\rho)$ admits spectral flows $\sigma^{\mu, \lambda}$ labeled by $(\mu,\lambda)\in \Z^{2r}$ and the non-perturbative VOA $V_{B, \rho}$ is defined by
\be
V_{B,\rho}\coloneqq \bigoplus_{\mu\in \Z^r} \sigma^{\mu, 0}V_1(\fd_\rho).
\ee
On the other hand, one can define $V_{A,\rho^!}$ as the relative BRST cohomology of $\bg^n\otimes \FF^n$ by the action of $\fgl (1)^{n-r}$ specified by $\rho^!$:
\be
V_{A,\rho^!}\coloneqq H_{\textnormal{BRST}, \textnormal{rel}}(\fgl (1)^{n-r}, \bg^n\otimes \FF^n).
\ee
We can think of $V_{A,\rho^!}$ as chiral differential operators on the symplectic reduction $T^*V/\!/\!/\!/ (\C^\times)^{n-r}$. One of the main results of \cite{BCDN23} is the following theorem.

\begin{Thm}[\cite{BCDN23} Theorem 4.4]

 $V_{A,\rho^!}\cong V_{B, \rho}$ as VOAs.
    
\end{Thm}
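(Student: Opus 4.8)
The plan is to exhibit both $V_{A,\rho^!}$ and $V_{B,\rho}$ as simple current extensions of one common free-field vertex subalgebra $U$ by the abelian group $\Z^r$, and then to appeal to the rigidity of such extensions: an extension of $U$ by a fixed group of simple currents is determined, up to isomorphism, by the symmetric bilinear form recording the conformal weights together with the $\Z_2$-cocycle recording the fermionic statistics. The common algebra $U$ is the tensor product of a rank-$r$ Heisenberg VOA, generated by the Cartan currents of $\fg=\C^r$, with the matter system built from the $n$ odd pairs $V[-1]\oplus V^*$, which on both sides is a product of $n$ symplectic-fermion/$bc$ pairs. Granting this common presentation, the theorem reduces to the claim that the two descriptions of the extending lattice and of its quadratic form agree, and this is forced by the defining sequence $0\to\Z^r\xrightarrow{\rho}\Z^n\xrightarrow{(\rho^!)^T}\Z^{n-r}\to 0$.

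For the A-side I would start from the free-field decomposition
\be
\bg^n\otimes\FF^n\cong\bigoplus_{\vec m\in\Z^n}\sigma^{\vec m,0}V_1(\fgl(1|1)^{\oplus n}),
\ee
obtained as the $n$-fold tensor power of the identity $\bg\otimes\FF\cong\bigoplus_n V_{n,0}$ recalled above. The $\fgl(1)^{n-r}$ currents being gauged are the $\rho^!$-combinations of the $n$ Heisenberg currents $J_i=\norm{\beta_i\gamma_i}$, so a sector of charge $\vec m$ carries $\fgl(1)^{n-r}$-charge $(\rho^!)^T\vec m$, and relative BRST cohomology retains exactly the sectors with $(\rho^!)^T\vec m=0$. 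By exactness of the defining sequence this kernel equals $\operatorname{im}(\rho)\cong\Z^r$. Once the higher relative cohomology is shown to vanish, this yields
\be
V_{A,\rho^!}\cong\bigoplus_{\mu\in\Z^r}\sigma^{\rho\mu,0}V_1(\fgl(1|1)^{\oplus n}),
\ee
a lattice extension of $U$ over $\operatorname{im}(\rho)$.

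For the B-side, the affine VOA $V_1(\fd_\rho)$ with $\fd_\rho=T^*(\C^r\ltimes\C^n[-1])$ and the level fixed by $\ell=-1$, $k=1$ decomposes over the same $U$: its $r$ Cartan currents generate the Heisenberg factor, its odd generators reproduce the $n$ matter pairs, and the spectral flow $\sigma^{\mu,0}$ shifts the Heisenberg momentum by $\rho\mu$, the $\fg$-weight being read off from the charge matrix. Hence $V_{B,\rho}=\bigoplus_{\mu\in\Z^r}\sigma^{\mu,0}V_1(\fd_\rho)$ is likewise a lattice extension of $U$ over $\operatorname{im}(\rho)$, and the bilinear form controlling the conformal weights of the simple currents is, on both sides, $\mu,\mu'\mapsto\kappa_0(\rho\mu,\rho\mu')$ --- the $\beta\gamma$--$bc$ pairing restricted to $\operatorname{im}(\rho)$ on the A-side, and the canonical pairing $\kappa_0$ pulled back along $\rho$ on the B-side. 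These coincide because $\rho$ is literally the same integral matrix and the matter pairing is the canonical one. Matching the conformal vectors --- where the choice $k\ell+1=0$ guarantees that both conformal elements are the Casimir of $\kappa_0$, cf.\ Remark \ref{Rem:anomaly} --- and matching the fermionic cocycles then promotes the $U$-module-by-$U$-module identification of the two sums to an isomorphism of vertex algebras.

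The technical heart, and the step I expect to be the main obstacle, is the A-side cohomology computation: proving that $H_{\textnormal{BRST},\textnormal{rel}}(\fgl(1)^{n-r},\bg^n\otimes\FF^n)$ is concentrated in degree $0$ and equals the charge-zero sublattice sum. This calls for a spectral-sequence or explicit contracting-homotopy argument controlling the $n-r$ ghost pairs, using the injectivity of $\rho^!$ that the exact sequence provides. A secondary subtlety is the bookkeeping of cocycle signs: the two simple current extensions must be matched not merely as graded vector spaces but together with the $\Z_2$-cocycle governing fermionic statistics, which must be verified to agree under the dictionary furnished by $\rho$.
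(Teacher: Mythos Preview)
The paper does not prove this theorem: it is quoted verbatim from \cite{BCDN23} (as ``Theorem 4.4'' there) and is used only as background for the conjectures in Section \ref{subsec:examples}. There is therefore no proof in the present paper against which to compare your attempt.

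That said, your outline is broadly in the spirit of the argument in \cite{BCDN23}: both sides are exhibited as simple current extensions of a common Heisenberg-plus-matter subalgebra, the A-side extension lattice is extracted from the kernel of $(\rho^!)^T$ via BRST, and the short exact sequence $0\to\Z^r\xrightarrow{\rho}\Z^n\xrightarrow{(\rho^!)^T}\Z^{n-r}\to 0$ is what matches the two lattices. You are right that the substantive work is the relative BRST cohomology computation (vanishing outside degree $0$ and identification of degree $0$ with the kernel sublattice), together with the careful matching of conformal vectors and fermionic cocycles; these are exactly the points that require the detailed analysis carried out in \cite{BCDN23} and are not addressed in the paper under review.
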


Again, we have constructed the quantum VOA $\CV_1(\fd_\rho)$. The spectral flow automorphisms $\sigma^{\mu, \nu}$ still exist for the quantum double. Let us defined the quantum extension
\be
\CV_{B,\rho}\coloneqq\bigoplus_{\mu\in \Z^r} \sigma^{\mu, 0}\CV_1(\fd_\rho).
\ee
We present the following conjecture.

\begin{Conj}

    The module $\CV_{B,\rho}$ admits the structure of a quantum VOA, extending $\CV_{\hbar,1}(\fd_\rho)$. It provides a quantization of chiral differetial operators on $T^*V/\!/\!/\!/ (\C^\times)^{n-r}$. 
    
\end{Conj}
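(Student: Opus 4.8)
The plan is to treat $\CV_{B,\rho}$ as a \emph{simple current extension} of the quantum vacuum module $\CV_{\hbar,1}(\fd_\rho)$ by the lattice $\Z^r$, working entirely inside the quantum vertex algebra formalism of Theorem \ref{Thm:QVAd} and the general mechanism recorded in the Remark following it. The first step is to promote the spectral flow automorphisms $\sigma^{\mu,0}$ to honest automorphisms of the centrally extended double $\widehat{DY}_{\hbar,\ell}(\fd_\rho)$ at $\ell=-1$: since the charges generating the flow lie in the abelian part $\fg = \mathrm{Lie}((\C^\times)^r)$, one expects $\sigma^{\mu,0}$ to be realized as conjugation by a grouplike element built from the Cartan generators, and the crucial point is that these are Hopf algebra automorphisms compatible with the $R$-matrix $R(z)$. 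Granting this, each summand $\sigma^{\mu,0}\CV_{\hbar,1}(\fd_\rho)$ becomes a module over $\widehat{DY}_{\hbar,\ell}(\fd_\rho)$ obtained by twisting the vacuum, and the $\Z^r$-grading organizes the direct sum.

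Next I would construct the extended state-operator correspondence. The vertex operators on $\CV_{B,\rho}$ should be assembled from the operators $\CY_\hbar(-,z)$ of the vacuum module together with intertwiners $\CY_\hbar^{\mu}(-,z)\colon \sigma^{\mu,0}\CV_{\hbar,1}(\fd_\rho) \otimes \sigma^{\nu,0}\CV_{\hbar,1}(\fd_\rho) \to \big(\sigma^{\mu+\nu,0}\CV_{\hbar,1}(\fd_\rho)\big)(\!(z)\!)$ realizing the simple current fusion rule $\sigma^{\mu,0}\cdot\sigma^{\nu,0}=\sigma^{\mu+\nu,0}$. These are produced exactly as $\CY_\hbar$ was, namely from $(\tau_z\otimes 1)R$ and $R(z)$ after applying the representation maps of the relevant twisted modules. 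The abelian nature of $G$ ensures that $R(z)$ restricted to the spectral-flow charges acts through the pairing on the charge lattice determined by the level and the charge matrix $\rho$, so that the whole extended structure is governed by lattice data.

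The verification of the axioms then reduces to this lattice data. The braiding $\CS(z)$, identified in the preceding Proposition with the action of $R(z)$ via the $Y_\hbar(\fd_\rho)$-module structure, acts between distinct summands by a monodromy factor fixed by a symmetric bilinear form on $\Z^r$; at the chosen level $k=1,\ell=-1$ this form is integral — this is precisely the anomaly cancellation of Remark \ref{Rem:anomaly}, $k\ell+1=0$ — so the extended operators are single-valued, $\CS$-locality (QV3) holds, and the hexagon axiom (QV4) together with (S1)--(S3) follow from the cocycle and unitarity identities of $R(z)$ restricted to the lattice. I expect the \textbf{main obstacle} to be exactly this triviality of the monodromy: showing that $\CS(z)$ introduces no branch cuts between summands is the quantum analogue of the integrality of conformal weights and monodromy charges that makes a classical simple current extension local, and it requires a careful analysis of $R(z)$ evaluated on spectral-flow-twisted modules rather than on the vacuum.

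For the second claim, that $\CV_{B,\rho}$ quantizes the chiral differential operators on $T^*V/\!/\!/\!/ (\C^\times)^{n-r}$, I would pass to the mirror description: build a quantum $\beta\gamma$--$bc$ system $\bg\otimes\FF$ with a quantum BRST differential implementing the $\fgl(1)^{n-r}$ action prescribed by $\rho^!$, take the relative BRST cohomology, and prove a quantum refinement of the isomorphism $V_{A,\rho^!}\cong V_{B,\rho}$ of \cite[Theorem 4.4]{BCDN23}. The hardest part of this second half is defining the quantum BRST reduction compatibly with the braiding $\CS$ and showing that the resulting cohomology carries the same extended quantum vertex algebra structure as $\CV_{B,\rho}$; at the classical level this is \cite{BCDN23}, and the quantum version would require matching the two $\Z^r$-gradings and the two families of intertwiners through an explicit spectral-flow-to-BRST dictionary.
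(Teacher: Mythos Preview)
The statement you are addressing is a \emph{Conjecture} in the paper; the authors give no proof and do not claim one. There is therefore nothing to compare your proposal against: the paper simply motivates the conjecture by analogy with the classical simple current extension $V_{B,\rho}=\bigoplus_\mu\sigma^{\mu,0}V_1(\fd_\rho)$ and the isomorphism $V_{A,\rho^!}\cong V_{B,\rho}$ of \cite{BCDN23}, and then stops.

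Your outline is a reasonable strategy, and you correctly flag the genuine obstacles yourself. A few points worth sharpening. First, your assertion that $\sigma^{\mu,0}$ lifts to a Hopf algebra automorphism of $\widehat{DY}_{\hbar,\ell}(\fd_\rho)$ compatible with $R(z)$ is plausible because the abelian Cartan generators are primitive and the coproduct $\Delta_\hbar^\gamma$ is conjugation by $\exp(\hbar\gamma_<)$, but spectral flow is not an inner automorphism in the usual sense (it reshuffles the $t$-grading), so this step needs an actual argument rather than an expectation. Second, the integrality/single-valuedness of $\CS(z)$ between twisted sectors is not obviously implied by the anomaly condition $k\ell+1=0$ of Remark~\ref{Rem:anomaly}: that condition governs the existence of the Segal--Sugawara operator, whereas locality of the extension is controlled by the pairing on the charge lattice determined by $\kappa_\ell$ and $\rho$, which is a separate computation. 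Third, your second half --- constructing a quantum BRST reduction of a quantum $\beta\gamma$ system and proving a quantum analogue of \cite[Theorem 4.4]{BCDN23} --- presupposes a quantum vertex algebra structure on $\bg\otimes\FF$ compatible with a BRST differential, none of which exists in the paper; this is a substantial project in its own right, not a step in a proof.

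In short: your plan is coherent as a research program, but it is not a proof, and the paper does not contain one either.
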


\appendix

\section{Notations}

\begin{itemize}
    \item \(\fg\) is a finite-dimensional complex Lie algebra and \(\fd \coloneqq \fg \ltimes \fg^*\) is the cotangent Lie algebra, which the semi-direct product uses the coadjoint action of \(\fg\) on its dual space \(\fg^*\). We fix a basis \(\{I_\alpha\}_{\alpha = 1}^d \subseteq \fg\) and denote its dual basis by \(\{I^\alpha\}_{\alpha = 1}^d \subseteq \fg^*\).
    
    \item \(\CO \coloneqq \C[\![t]\!]\) and \(\CK \coloneqq \C(\!(t)\!) = \CO[t^{-1}]\) are the rings of Taylor power series and (lower-bounded) Laurent power series in a formal variable \(t\). We write \(\fg(\CO) = \fg \otimes \CO\) and \(\fg(\CK) = \fg \otimes \CK\) as well as \(\fd(\CO) = \fd \otimes \CO\) and \(\fd(\CK) = \fd \otimes \CK\). For \(a \in \fd\) and \(m \in \mathbb{Z}\), we write \(a_m = a\otimes t^m \in \fd(\CK)\). Tensor products involving objects over \(\CO\) and \(\CK\) are always understood in the completed way with respect to the \((t)\)-adic topologies of these spaces.

    \item \(\partial_t \colon \fd(\CK) \to \fd(\CK)\) is the derivation with respect to the formal variable \(t\), \(\partial_\epsilon = \psi\partial_t\) for the isomorphism \(\psi \colon \fg \to \fg^*\) determined by the Killing form \(\kappa_0 \colon \fg \times \fg^*\to \C\), and for every \(s \in \C\) we wrote \(\partial_s = \partial_t + s\partial_\epsilon\). 
    
    \item \(\kappa_0 \colon \fg \times \fg^*\to \C\) is the pairing of \(\fg\) with its dual space while \(\kappa_\fg \colon \fg \times \fg \to \C\) is the Killing form of \(\fg\). Both \(\kappa_0\) and \(\kappa_\fg\) are extended by \(0\) to bilinear forms \(\kappa_0,\kappa_\fg\colon \fd \times \fd \to \C\). For any \(\ell \in \C\), we write \(\kappa_\ell = \kappa_0 + \ell\kappa_\fg\). 

    \item For any \(\ell,s \in \C\), \(\widetilde{\fd}_{\ell,s} \coloneqq \C\partial_s\ltimes ( \fd(\CK) \oplus \C c_\ell)\) is the affine Kac-Moody algebra associated to \(\fd\) with respect to the bilinar form \(\kappa_\ell\) and derivation \(\partial_s\) and \(\widehat{\fd}_{\ell,s,<0} \coloneqq t^{-1}\fd[t^{-1}]\oplus \C c_\ell, \widetilde{\fd}_{\ell,s,\ge 0} = \C \partial_s \ltimes \fd(\CO) \subset \widetilde{\fd}_{\ell,s}\); see Section \ref{subsec:centralext}. Furthermore, we wrote \(\widehat{\fd}_{\ell,<0} \coloneqq \widehat{\fd}_{\ell,\ell,<0}, \widetilde{\fd}_{\ell,\ge 0}\coloneqq \widetilde{\fd}_{\ell,\ell,\ge 0} \subset \widetilde{\fd}_{\ell} \coloneqq \widetilde{\fd}_{\ell,\ell}\).

    \item For a Hopf algebra \(H\) over a ring \(R\) with coproduct \(\Delta\), we use Sweedler's notation \(\Delta(h) = \sum_{(h)}h^{(1)}\otimes_R h^{(2)}\). Furthermore, if \(H\) has a right Hopf action \(\lhd\) on an associative \(R\)-algebra \(A\), the semi-direct product \(H \ltimes_R A\), also called smash product and often denoted by \(H\#_R A\) (e.g.\ in our previous work \cite{ANyangian}), is the \(R\)-algebra with base space \(H \otimes_R A\) and multiplication 
    \begin{equation}
         (h \otimes_R 1) (1 \otimes_R a) = h\otimes_R a\quad \textnormal{ and }\quad (1 \otimes_R a)(h\otimes_R 1) = \sum_{(h)}h^{(1)} \otimes_R (a\lhd h^{(2)}) 
    \end{equation}
    for all \(a \in A,h \in H\). Similarly, if \(H\) has a left Hopf action \(\rhd\) on \(A\), we write the semi-direct product as \(A \rtimes_R H\).
    
    \item \(Y_\hbar(\fd)\) is the Yangian of \(\fd\), \(\Delta_\hbar^\gamma\) is its coproduct, and \(R(z)\) is its spectral \(R\)-matrix, all of which were introduced in \cite{ANyangian}; see also Section \ref{subsec:doublequotient} for a short summary. Furthermore, \(Y_\hbar^\circ(\fd)\subset Y_\hbar(\fd)\) is the Hopf subalgebra generated by \(\fd[t] \subset \fd(\CO)\).
    
    \item \(Y_\hbar^*(\fd)\) and \(DY_\hbar(\fd) = Y_\hbar(\fd) \otimes_{\C[\![\hbar]\!]} Y_\hbar^*(\fd)^{\textnormal{co-op}}\) are the dual and the double of the Yangian, introduced in Section \ref{subsec:dual+double}, and we write \(\Delta_\hbar^\gamma\) for the coproduct on the entire double \(DY_\hbar(\fd)\). The \(R\)-matrix of \(DY_\hbar(\fd)\) is denoted by \(R\). In particular, we distinguish it from the spectral \(R\)-matrix \(R(z)\) of \(Y_\hbar(\fd)\) by dropping the \(z\)-dependence, which is consistent with the fact that \(R(z)\) can be viewed as a power series expansion at \(z = \infty\) of the shift \(t \mapsto t + z\) of \(R\).
    
    \item For any \(\ell \in \C\), \(\widetilde{Y}_{\hbar,\ell}(\fd) = \C[\partial_\ell]\ltimes Y_\hbar(\fd)\) is the Yangian extended by the derivation \(\partial_\ell\), \(\widehat{Y}_{\hbar,\ell}^*(\fd) = Y_\hbar(\fd) \otimes \C[c_\ell]\) is the dual Yangian extended by the central element \(c_\ell\), \(\widehat{DY}_{\hbar,\ell}(\fd) = Y_\hbar(\fd) \otimes_{\C[\![\hbar]\!]} \widehat{Y}_{\hbar,\ell}^*(\fd)^{\textnormal{co-op}}\) is the centrally extended double Yangian, and \(\widetilde{DY}_{\hbar,\ell}(\fd) = \widetilde{Y}_{\hbar,\ell}(\fd) \otimes_{\C[\![\hbar]\!]} \widehat{Y}_{\hbar,\ell}^*(\fd)^{\textnormal{co-op}}\) is the full Kac-Moody double Yangian; see Section \ref{subsec:centralext}.

    \item We write \(X = \mathbb{P}^1\) for the Riemann sphere. For any ordered finite set \(\mathbf{z} = (z_i)_{i \in I} \subset X \coloneqq \mathbb{P}^1\), where \(I\) is a finite indexing set with \(|I|\in \N\) elements, we denote by \(Y_\hbar(\fd)_{\mathbf{z}},Y_\hbar^*(\fd)_{\mathbf{z}}\), and \(DY_\hbar(\fd)_{\mathbf{z}}\) the factorization versions of the Yangian, its dual, and its double, from Section \ref{subsec:EKfac}. 
    Similarly, we write \(\widetilde{Y}_{\hbar,\ell}(\fd)_{\mathbf{z}},\widehat{Y}_{\hbar,\ell}(\fd)_{\mathbf{z}}\), and \(\widetilde{DY}_{\hbar,\ell}(\fd)_{\mathbf{z}}\) for the factorized versions of the extended versions. 
\end{itemize}

\bibliographystyle{ytamsalpha} 

\bibliography{Doubleyangian}

\end{document}